\documentclass[12pt]{amsart}    

\usepackage[margin=1in]{geometry}  
\usepackage[all]{xy}    
\usepackage{amsmath,amsthm,amssymb,mathrsfs,mathtools,bm,eucal,tensor}                  
\usepackage{bbm}

\usepackage[colorlinks]{hyperref}
\hypersetup{linkcolor=black, citecolor=black}

\usepackage{fancyhdr}
\pagestyle{fancy}
\usepackage{xcolor}

\pagestyle{plain}
\setcounter{secnumdepth}{2}

\pagestyle{headings}

\usepackage[utf8]{inputenc}

\usepackage{chngcntr}
\numberwithin{equation}{subsection}
\newcounter{keepeqno}
\newenvironment{num}
 {\setcounter{keepeqno}{\value{equation}}%
  \begin{list}{(\theequation)}{\usecounter{equation}}%
  \setcounter{equation}{\value{keepeqno}}}
 {\end{list}}
 
\usepackage[colorinlistoftodos,prependcaption,textsize=tiny]{todonotes}

%


\newcommand{\BA}{{\mathbb {A}}}

\newcommand{\BC}{{\mathbb {C}}}

\newcommand{\BF}{{\mathbb {F}}}
\newcommand{\BG}{{\mathbb {G}}}

\newcommand{\BL}{{\mathbb {L}}}

\newcommand{\BN}{{\mathbb {N}}}

\newcommand{\BR}{{\mathbb {R}}}
\newcommand{\BS}{{\mathbb {S}}}

\newcommand{\BX}{{\mathbb {X}}}

\newcommand{\BZ}{{\mathbb {Z}}}

\newcommand{\CA}{{\mathcal {A}}}

\newcommand{\CC}{{\mathcal {C}}}
\newcommand{\CE}{{\mathcal {E}}}
\newcommand{\CF}{{\mathcal {F}}}
\newcommand{\CG}{{\mathcal {G}}}
\newcommand{\CH}{{\mathcal {H}}}
\newcommand{\CI}{{\mathcal {I}}}

\newcommand{\CM}{{\mathcal {M}}}
\newcommand{\CN}{{\mathcal {N}}}
\newcommand{\CO}{{\mathcal {O}}}
\newcommand{\CP}{{\mathcal {P}}}

\newcommand{\CS}{{\mathcal {S}}}
\newcommand{\CT}{{\mathcal {T}}}

\newcommand{\CW}{{\mathcal {W}}}

\newcommand{\CZ}{{\mathcal {Z}}}

\newcommand{\FD}{{\mathfrak {D}}}

\newcommand{\FI}{{\mathfrak {I}}}

\newcommand{\FS}{{\mathfrak {S}}}

\newcommand{\Fc}{{\mathfrak {c}}}

\newcommand{\Fo}{{\mathfrak {o}}}
\newcommand{\Fp}{{\mathfrak {p}}}

\newcommand{\RD}{{\mathrm {D}}}

\newcommand{\RF}{{\mathrm {F}}}

\newcommand{\RI}{{\mathrm {I}}}
\newcommand{\RJ}{{\mathrm {J}}}

\newcommand{\RM}{{\mathrm {M}}}

\newcommand{\RO}{{\mathrm {O}}}

\newcommand{\RS}{{\mathrm {S}}}
\newcommand{\RT}{{\mathrm {T}}}
\newcommand{\RU}{{\mathrm {U}}}

\newcommand{\Ad}{{\mathrm{Ad}}}

\newcommand{\cusp}{{\mathrm{cus}}}

\newcommand{\disc}{{\mathrm{disc}}}

\newcommand{\End}{{\mathrm{End}}}
\newcommand{\el}{{\mathrm{ell}}}

\newcommand{\Gal}{{\mathrm{Gal}}}
\newcommand{\GL}{{\mathrm{GL}}}

\newcommand{\Hom}{{\mathrm{Hom}}}

\renewcommand{\Im}{{\mathrm{Im}}}
\newcommand{\Ind}{{\mathrm{Ind}}}

\newcommand{\Id}{{\mathrm{Id}}}

\newcommand{\Ker}{{\mathrm{Ker}}}

\newcommand{\ord}{{\mathrm{ord}}}

\renewcommand{\Re}{{\mathrm{Re}}}
\newcommand{\reg}{{\mathrm{reg}}}
\newcommand{\Res}{{\mathrm{Res}}}

\newcommand{\SL}{{\mathrm{SL}}}

\newcommand{\Spec}{{\mathrm{Spec}}}
\newcommand{\SO}{{\mathrm{SO}}}

\newcommand{\Sym}{{\mathrm{Sym}}}
\newcommand{\sgn}{{\mathrm{sgn}}}
\newcommand{\Sp}{{\mathrm{Sp}}}

\newcommand{\st}{{\mathrm{st}}}

\newcommand{\tr}{{\mathrm{tr}}}

\newcommand{\ud}{\,\mathrm{d}}
\newcommand{\vol}{{\mathrm{vol}}}

\newcommand{\wt}{\widetilde}
\newcommand{\wh}{\widehat}

\newcommand{\bs}{\backslash}

\def\alp{{\alpha}}

\def\bet{{\beta}}

\def\del{{\delta}}
\def\Del{{\Delta}}
\def\diag{{\rm diag}}

\def\eps{{\epsilon}}

\def\veps{{\varepsilon}}

\def\sig{{\sigma}}

\def\std{\rm std}
\def\ome{{\omega}}
\def\Ome{{\Omega}}
\def\lam{{\lambda}}
\def\Lam{{\Lambda}}

\def\gam{{\gamma}}
\def\Gam{{\Gamma}}

\def\wb{\overline} 

\def\vpi{\varpi}
\def\LG{{}^{L}G}
\def\LT{{}^{L}T}

\def\vphi{\varphi}

\def\p{\prime}

\newcommand{\sslash}{\mathbin{/\mkern-6mu/}}

\def\scusp{\mathrm{scusp}}
\def\pv{\mathrm{pv}}
\def\Irr{\mathrm{Irr}}

\def\qd{\mathrm{qd}}
\def\Tr{\mathrm{Tr}}
\def\CD{\mathcal{D}}

\def\St{\mathrm{St}}
\def\fin{\mathrm{fin}}
\def\Laf{\mathrm{Laf}}

\def\triv{\mathrm{triv}}
\def\im{\mathrm{im}}
\def\Sat{\mathrm{Sat}}
\def\unr{\mathrm{unr}}
\def\LafSec{\mathrm{LafSec}}

\def\RJ{\mathrm{J}}
\def\RF{\mathrm{F}}

\newtheorem{thm}{Theorem}[subsection]
\newtheorem{defin}[thm]{Definition}

\newtheorem{pro}[thm]{Proposition}
\newtheorem{lem}[thm]{Lemma}

\newtheorem{conjec}[thm]{Conjecture}
\newtheorem{quest}[thm]{Question}

\newtheorem{cor}[thm]{Corollary}
\newtheorem{theorem}[thm]{Theorem}

\newtheorem{corollary}[thm]{Corollary}

\theoremstyle{remark}
\newtheorem{rmk}[thm]{Remark}

\makeatletter

\newcommand{\Rmnum}[1]{\expandafter\@slowromancap\romannumeral #1@}
\makeatother


\def\C{\mathbb{C}}

\def\Nr{\mathrm{Nr}}

\def\c{\mathbf{c}}

\def\v{\mathbf{v}}

\def\ind{\mathrm{ind}}

\begin{document}

\title[Nonabelian Fourier kernels on $\SL_2$ and $\GL_2$]{Nonabelian Fourier kernels on $\SL_2$ and $\GL_2$}

\author{Zhilin Luo}
\address{Department of Mathematics\\
University of Chicago\\
Chicago IL, 60637, USA}
\email{zhilinchicago@uchicago.edu}

\author{Bao Châu Ngô}
\address{Department of Mathematics\\
University of Chicago\\
Chicago IL, 60637, USA}
\email{ngo@uchicago.edu}

\subjclass[2020]{Primary 11F70, 11F70, 22E50; Secondary 11F27, 43A32, 43A80}

\date{\today}

\keywords{Fourier Operator, Invariant Distribution, Stable Transfer}

\begin{abstract}
For $G = \SL_2$ or $\GL_2$, we present explicit formulas for the nonabelian Fourier kernels on $G$, as conjectured by A. Braverman and D. Kazhdan. Additionally, we furnish explicit formulas for the orbital Hankel transform on $G$, a topic investigated by the second author, and provide an explicit formula for the stable orbital integral of the basic function. These results are applicable to local fields with residual characteristics other than two.
\end{abstract}

\maketitle

\tableofcontents


\section{Introduction}\label{intro}

\subsection{Historical context}
In his celebrated thesis (\cite{tatethesis}), J. Tate established the basic analytical properties (the Hecke theory) of the Hecke $L$-functions through Fourier analysis on the additive group. The Hecke $L$-functions $L(s,\chi)$ associated to Grössencharacters $\chi$ of the idele group $\BA^\times$ are expressed as the poles of the global zeta integrals attached to $\chi$ and Schwartz-Bruhat functions on the adele ring $\BA$. The meromorphic continuation and the functional equation are consequences of the Fourier transform and the Poisson summation formula.

Developments in the modern theory of automorphic forms rest in a set of conjectures formulated by R. Langlands in \cite{langlandsproblems} with the reciprocity and functoriality conjectures being the two pillars. The concept of the automorphic $L$-function $L(s,\pi,\rho)$ attached to an automorphic representation $\pi$ of a reductive group $G$ over a global field $k$ and an $L$-homomorphism $\rho:\LG\to \GL(V_\rho)$ of the Langlands dual group, which is a vast generation of the Riemann zeta function, plays a central role in these conjectures. The functions $L(s,\pi,\rho)$ are defined via the Euler product, which only converges on a half-plane. One of the major implications of the functoriality conjecture is that these $L$-functions have a meromorphic continuation to the whole complex plane satisfying a functional equation analogous to the Riemann zeta function.

It is natural to ask if one can establish the basic analytical properties for general automorphic $L$-functions $L(s,\pi,\rho)$ for general reductive groups $G$ over $k$ via harmonic analysis on $G(\BA_k)$ following the approach of the Tate thesis, thus bypassing the functoriality conjecture. By the Fourier analysis on the vector space of $n\times n$ matrices, R. Godement and H. Jacquet (\cite{gjzeta}) generalized the work of Tate to $G=\GL_n$ over a central simple algebra and $\rho$ the standard representation of its dual group. Around 2000, A. Braverman and D. Kazhdan (\cite{BK00}) formalized Tate and Godement-Jacquet's approaches in a series of conjectures aiming at the meromorphic continuation and functional equation general automorphic $L$-functions. These proposals were elaborated by L. Lafforgue (\cite{lafforgue2016principe}) and given in a more precise form by the second author (\cite{ngo2016hankel}).

The local aspect of the Braverman-Kazhdan program stipulated the existence of a space of $\rho$-Schwartz functions, generalizing the space of usual Schwartz functions on the space of matrices, and the $\rho$-Fourier transform generalizing the usual Fourier transform on the space of matrices. Over a local field, the expected form of the $\rho$-Fourier transform is the convolution with a stably invariant distribution $\RJ^\rho_\psi$ on $G(F)$. {\it The main focus of this work is to construct the distribution $\RJ^\rho_\psi$, which we call the nonabelian Fourier kernel.}  

The distribution $\RJ^\rho_\psi$ depends on a nontrivial additive character $\psi$ of the local field, such that after regularization, the action of $\RJ^\rho_\psi$ on any irreducible admissible representation $\pi$ of $G(F)$ is well-defined and is equal to the multiplication by the Langlands local gamma factor $\gam(\cdot,\pi,\rho,\psi)$. The distribution $\RJ^\rho_\psi$ is expected to be represented by a locally integrable function, which is smooth over the regular semisimple locus. Because it is stably invariant, it descends to a function on the Steinberg quotient, which, in the case $\SL_2$, is the affine line of the trace variable. The main result of this paper is to provide a formula for $\RJ^\rho_\psi$ in the case of  $\SL_2$ (and ${\GL}_2$), as a function of the trace (and the determinant) variables, thus answering a question of Braverman and Kazhdan in \cite{BK00}. Beyond the ${\rm SL}_2$ and ${\rm GL}_2$ cases, we elaborate a general framework based on Langlands' stable transfer (\cite{langlands-singularites-transfert}) and what we call the Lafforgue transform (\cite{lafforguegl2}). These transforms, which exist in general, can be made explicit in the case of ${\GL}_2$ and ${\SL}_2$. 

The finite fields analog of the distribution $\RJ^\rho_\psi$ is an equivariant perverse sheaf. Its construction was conjectured in \cite[\S 9]{BK00} and studied for $G$ of semi-simple rank one in \cite{braverman2003sheaves}. The conjecture has been verified by S. Cheng and the second author in \cite{cheng2017conjecture} for $G=\GL_n$, by G. Laumon and E. Letellier for general $G$ in \cite{laumon2019notes}, and by T.-H. Chen in \cite{MR4467308} for general $G$ when the characteristic of the underlying finite field is large and $\CD$-module setting.

Much less is known in the $p$-adic case. When $G=\GL_n$ and $\rho$ is the standard representation, this is the case studied by Godement and Jacquet (\cite{gjzeta}), and $\RJ^\rho_\psi = \psi(\tr(\cdot))$ is the standard Fourier kernel. The Fourier kernel is also known in the case of tori as suggested by Braverman and Kazhdan (\cite{BK00}) and made more precise by the second author (\cite{ngo2016hankel}). The first author calculates the spherical part of the distribution $\RJ^\rho_\psi$ for any $G$ and $\rho$ in \cite{MR3990815} over a local field of characteristic zero. In \cite{JLZ}, Jiang, Zhang, and the first-named author give a formula for $\RJ^\rho_\psi$ for $G=\BG_m\times \Sp_{2n}$ and the standard representation of $\LG$.

Motivated by the work of Godement and Jacquet, the second author (\cite{ngo2016hankel}) speculated that the distribution $\RJ^\rho_{\psi,G}$ should be patched from $\{\RJ^\rho_{\psi,T}\}_{T\subset G}$ where $T$ runs over all maximal tori of $G$. However, as pointed out by L. Lafforgue (\cite{lafforguegl2}), the naive patching construction fails to provide the correct kernel for the $\rho$-Fourier transform for $\rho=\Sym^2$ of $G=\BG_m\times \SL_2$ because it does not act on principal series representations with the correct gamma factors. For $G=\SL_2$ or $\GL_2$, Lafforgue proposed a recipe to correct the naive patching construction so that the action of the resulting distribution on the principal series representations is given by the multiplication by the correct Langlands local gamma factor. However, whether the resulting distribution acts on supercuspidal representations by the correct gamma factors or not is unclear. Comparing with the result of \cite{JLZ}, one can show that the recipe of Lafforgue works for $\rho=\Sym^2$ of $G=\BG_m\times \SL_2$, but it fails for supercuspidal representations and for higher symmetric powers $\rho$. The main result of this paper is to provide an explicit formula of $\RJ^\rho_\psi$ in the case $G=\SL_2$ and $\GL_2$ and for an arbitrary representation $\rho$ under the assumption that the residual characteristic of the underlying local field $F$ is not equal to two. When $\rho$ is the standard representation attached to $G=\BG_m\times \SL_2$, the formula found in this paper matches with the formula of \cite{JLZ}. We will show in this work that the naive patching construction and the Lafforgue transform, although not sufficient, are still essential ingredients in the construction of the correct kernel of the $\rho$-Fourier transform. 

We also discuss the $\rho$-orbital Hankel transform considered by the second author (\cite{ngo2016hankel}), which is the descent of the $\rho$-Fourier transform to the level of stable orbital integrals of $G(F)$. Everling has calculated the orbital Hankel transform for the usual Fourier transform, i.e., when $\rho$ is the standard representation, and found rather complicated formulae (\cite{Everling-Fourier}).  It is, therefore, a good surprise that the stable orbital $\rho$-Hankel transform can be given a nice formula. 

The nonabelian Fourier and Hankel transform will be given in completely explicit integral formulae, which can be better understood only from the representation theoretic perspective. In particular, they are given in terms of Langlands' stable transfers in combination with their canonical partial splittings, and also the Lafforgue transform, which are new ingredients we introduce in this work.

\subsection{Organization of the paper}

As we want to state our result in its proper context, we will review the contents of different sections of the paper in order. In Section \ref{BK:intro}, we review broadly the Braverman-Kazhdan proposal following the presentation of \cite{ngo2016hankel}. In subsection \ref{subsec:toruscase}, we rigorously build the local part of the Braverman-Kazhdan program for tori, particularly the Fourier kernel in this case. These abelian Fourier kernels will be later used to construct the nonabelian Fourier kernel. In subsection \ref{subsec:excepex}, we supply explicit formulae for the nonabelian Fourier kernels in some low symmetric power representations $\rho$. The case of the symmetric square representation follows the work of Jiang-Luo-Zhang (\cite{JLZ}), which treats the case of symplectic group and the standard representation of its dual group. In subsection \ref{subsec:aformulaFK}, we present explicit formulae for the nonabelian Fourier kernels when $G=\BG_m\times \SL_2$ and $\GL_2$.

In Section \ref{sec:orbital-cocenter}, we recall the normalization of orbital and stable orbital integrals of a reductive group $G$ over a local field $F$ following the work of Frenkel-Langlands-Ngô (\cite{FLN10}). They are, on one hand, interpreted both as functions on the space of conjugacy classes or the stable conjugacy classes, aka the Steinberg-Hitchin base, which are equipped with the canonical measure. On the other hand, they can be interpreted as elements of the cocenter $\CC^{\rm fin}=\CH/[\CH,\CH]$ of the Hecke algebra $\CH$ of $G(F)$. We introduce the Kazhdan cocenter $\CC$, which is a completion of $\CC^{\rm fin}$, in a similar way as the Bernstein center $\CZ$ is a completion of the subalgebra $\CZ^{\rm fin}$ of regular functions on the Bernstein variety $\Omega(G(F))$ supported by finitely many components. We introduce the stable cocenter $\CC^{\rm fin,st}$ and formulate Langlands' concept of stable transfer in terms of the stable cocenter that is a canonical linear map $$\CT_\varphi:\CC^{\rm fin,st}(G(F)) \to \CC^{\rm fin,st}(H(F))$$ in the presence of a homomorphism of $L$-groups $\varphi:\,\!\! ^L H \to\,\!\! ^L G$. Because elements of $\CC^{\rm fin,st}(G(F))$ can be interpreted as functions of the Steinberg-Hitchin space $\Fc_G(F)$, we expect that there is a nice integral giving rise to the stable transfer factor. In the case, $G=\SL_2$ and $H$ is a one-dimensional torus associated with a quadratic extension $E/F$, this integral transform 
 \begin{equation} \label{GG-transfer}
 	\CT_E(f)=\lambda_{E/F} \pi_E^*(\CF_\psi( \eta_E \CF_{\psi^{-1}}(f) ))
 \end{equation}
 where $\eta_E$ is the quadratic character of $F^\times$ associated with the quadratic extension $E/F$, $\pi_E:E^1\to F$ is the trace map, and $\lambda_{E/F}$ is the Weil constant appearing in \cite{jlgl2}. This integral formula for the stable transfer is equivalent to the Gelfand-Graev character formula to be recalled in the following section. We also review the stable Bernstein center in the case of $\SL_2$, and show that it is a subalgebra of the usual Bernstein center which is known to experts.
 
The Gelfand-Graev character formula first appears in \cite{Gelfand-Graev} but also in the famous book of Gelfand-Graev-Piatetsky-Shapiro (\cite{ggps}). In modern languages, it expresses the character of the dihedral representation of $\GL_2(F)$ or the corresponding stable character of $\SL_2(F)$ as an additive convolution in the trace variable. One of the side outcomes of this work is that from staring at different formulas, the additive structure in the trace variable plays a major role in the representation theory of $\SL_2(F)$ and $\GL_2(F)$, a fact which is not easy to explain from the theoretical standpoint. In Section \ref{sec:ggtransform}, we recall the construction of the dihedral representations from the Weil representation, and derive the Gelfand-Graev character formula, at least formally. We recall the Weil representation and the dihedral representations which will be later used in the proof of our descent (to tori) formula. In subsection \ref{subsec:ggpsstable:gl2}, we connect the Gelfand-Graev character formula with Langlands' stable transfer factor. 
 
In Section \ref{sec:Bernstein-center}, we construct the Lafforgue transform $\Laf:\CC \to \CZ$ from the Kazhdan cocenter to the Bernstein center. For every element $c\in \CC$ we can define $\tr(c,\pi)$ the trace of $c$ on every irreducible representation of $\pi$. Similarly, each element $z\in \CZ$ acts on every irreducible representation $\pi$ by a scalar $\gamma(z,\pi)$. The Lafforgue transform ${\rm Laf}:\CC \to \CZ$ is uniquely defined by the equation 
$$\gamma(\Laf(c),[\pi])=\tr(c,\pi)$$
for every point  $[\pi]$ in the Bernstein variety $\Omega(G)$ such the inertial equivalence class $[\pi]$ contains a unique irreducible representation $\pi$. In general, since $\Laf$ is not injective, there are different choices of sections for $\Laf$. When $G=\SL_2$, we introduce the following section $\LafSec:\CZ\to \CC$ which factors through the subspace $\CC^{\perp,\text{triv}}$ of $\CC$ that are orthogonal to the trivial character, i.e. $\tr(c,\triv) = 0$. Notice that after localization, $\Laf$ is an isomorphism of $\CZ$-module away from the Steinberg component, hence $\LafSec$ is uniquely determined. 
The Lafforgue transform $\Laf$ induces map from the stable Kazhdan center to the stable Bernstein center
$$
\Laf:\CC^\st\to \CZ^\st.
$$
The main claim in this section is that, when $G=\SL_2$ and restricted to the finite stable part $\CZ^{\st,\fin}$, $\LafSec$ is given by the integral transform which induces an isomorphism of $\CZ^\st$-modules $\CZ^{\st,\fin}\simeq \CC^{\perp,\triv,\fin}$,
\begin{equation} \label{integral-Laf}
 	\LafSec(z) = \CF_\psi\left(|.|^{-1}\CF_{\psi^{-1}}(z)\right)
\end{equation}
where both $z$ and $\LafSec(z)\in \CC^{\st,\fin}$ are regarded as functions of the trace variable and $\CF_\psi$ is the Fourier transform with respect to additive character $\psi$. We expect that a similar formula exists for all reductive groups. In this paper we prove it only for $\SL_2$ and $\GL_2$. Even in those cases, we don't know how to prove it directly by deriving it from what we call the descent formula. Although we prove it as a consequence of the descent formula, we stress that the above formula for the Lafforgue transform supplies a proper theoretical framework for the descent formula. It turns out that over a real field, the Lafforgue transform can be traced back to a simple relation between Chebyshev polynomials of first and second kind: Up to conjugation by the square-root of the Weyl discriminant, the derivative of Chebyshev polynomials of first kind is Chebyshev polynomials of second kind. Section \ref{sec:Bernstein-center} also contains the more or less known result that the stable Bernstein center for $\SL_2$ is a subalgebra. We also recall some explicit calculations of Moy-Tadic (\cite{Moy-Tadic-Bernstein}) to justify the convergence of the integral representation of the Lafforgue transform over the stable locus. 

Section \ref{sec:descent} contains the statement and the proof of the descent formula. Given an element $z\in \CZ^{\rm st}$ represented as a function of the trace variable, we want to calculate its action on a dihedral representation. Because $z$ lies in the stable Bernstein center, its action on the dihedral representation associated with a character $\chi$ of $E^1$ where $E/F$ is a quadratic extension is a scalar. For $G=\SL_2(F)$ that scalar can be computed by using first  the sections to the Lafforgue transform $\LafSec:\CZ^{\rm st}\to \CC^{\rm st}$ composed with the stable transfer $\CT_E: \CC^{\rm st} \to \CC^{\rm st}(E^1)$, and finally paired with $\chi$:
$$\int_{E^1} \chi(e) \big(\CT_E\circ \LafSec(z)\big)(e) \ud e.$$
The descent formula from $\SL_2(F)$ to the torus $E^1$ is equivalent to an explicit formula for $\CT_E\circ \LafSec(z)$
\begin{equation} \label{descent}
	\CT_E\circ \LafSec(z)= \lambda_{E/F} \pi_E^*\left (\CF_\psi\left( |.|^{-1} \eta_E  \CF_{\psi^{-1}}(z)\right)\right) .
\end{equation}
where $\pi_E:E^1\to F$ is the trace map, and $\eta_E$ is the quadratic character associated to the quadratic extension $E/F$. This formula can be derived from the combination of the Gelfand-Graev formula for the stable transfer \eqref{GG-transfer} and the integral formula for the Lafforgue transform \eqref{integral-Laf}. Still, we prove it directly using the Weil representation. In fact, we derive the integral formula \eqref{integral-Laf} from the descent formula \eqref{descent}. 

Section \ref{sec:summands} focuses on a new feature of the cocenter and stable transfers. We show that if $E/F$ is a nonsplit quadratic extension the stable transfer map $\CT_E: \CC(\SL_2(F)) \to \CC(E^1)^{\FS_2}$ admits a canonical right inverse $$\CE_E:\CC(E^1)^{\FS_2} \to \CC(\SL_2(F))$$
where $\FS_2$ is generated by the unique involution attached to $E/F$. 
The construction of this right inverse is based on the elliptic orthogonality of character and stable characters, which in particular demonstrates the existence of another unitary structure on the affine line. It gives rise to the direct summand of $\CC(\SL_2(F))$ corresponding to the quadratic extension $E^1$.  If $E'$ is another nonsplit quadratic extension of $F$, then $\CT_{E'} \circ \CE_E\neq 0$ but the image is generated by the trivial character and the nontrivial quadratic character of ${E'}^1$. If $E_0$ is the split quadratic extension, we have $\CT_{E_0} \circ \CE_E=0$. From these remarks, we is a unique right inverse 
$$\CE_{E_0}:\CC(E_0^1)^{\FS_2} \to \CC(\SL_2(F))$$
of $\CT_{E_0}: \CC(\SL_2(F)) \to \CC(E_0^1)^{\FS_2}$ such that for all nonsplit quadratic extensions $E$ of $F$ we have $\CT_E \circ \CE_{E_0}=0$. We also have an explicit albeit complicated integral formula for $\CE_{E_0}$. We thus obtain an ``almost" direct decomposition of $\CC(\SL_2(F))$ with summands corresponding to the four quadratic extensions of $F$. This is only an almost direct decomposition because the three summands corresponding to three nonsplit quadratic extension intersect in a two-dimensional vector space generated by the normalized elliptic character of the Steinberg representation and the normalized elliptic stable character of  the unique $L$-packet with four elements discovered by Labesse and Langlands in \cite{labesse-langlands} corresponding to the nontrivial quadratic character of $E^1$ for each nonsplit quadratic extension $E$ of $F$. 

Armed with the operators $\CT_E$ and $\CE_E$ and the almost direct decomposition of the cocenter, we can write down explicit formulae for the kernel of the nonstandard Fourier transform, the Hankel transform of stable orbital integrals, and also stable orbital integrals for all spherical functions, including the basic function. These are discussed in Section \ref{sec:inv}, Section \ref{sec:stableorbit} and Section \ref{sec:HKT}.

\subsection{Explicit formulae}\label{subsec:mainresult}
We now present explicit formulae that can be derived from the structures presented above. In fact, we sometimes need to prove the explicit formulae first and derive the structures from those explicit formulae.

\subsubsection{Descent formula}

The Bernstein center $\CZ$ of $G(F)$, defined in \cite{BD84}, consists of invariant distributions on $G(F)$ which are essentially of compact support. Every element $z\in \CZ$ acts on each irreducible representation $\pi$ of $G(F)$ by a scalar that we denote by $\gamma(z,\pi)$. For every $z\in \CZ$, the function $\pi\mapsto \gamma(z,\pi)$ is an algebraic function on the Bernstein variety $\Omega(G(F))$ classifying irreducible representations up to the inertial equivalence. We denote by $\CZ^{\rm fin}$ the nonunital subalgebra of $\CZ$ consisting of elements $z$ such that the function $\pi\mapsto \gamma(z,\pi)$ is supported by finitely many components of the Bernstein variety. 

Let $z$ be a stably invariant distribution on $G(F) = \SL_2(F)$ belonging to the nonunital subalgebra $\CZ^{\rm fin}$.
Using some results of Moy and Tadic in  \cite{Moy-Tadic-Bernstein}, we can prove that there exists a locally integrable function $z_\Fc$ on the Steinberg-Hitchin $\Fc(F)$ base such that
$$z = (z_\Fc\circ \c)\ud g$$ where $\c = \tr$ is the trace map. It turns out that based on the explicit calculations in \cite{Moy-Tadic-Bernstein}, the function $z_\Fc$ enjoys the following characterization:
\begin{num}
\item\label{num:intro:FTJ} $\CF_{\psi^{-1}}(z_\Fc)$ is globally bounded, smooth and vanishes at zero. Moreover, $\CF_{\psi^{-1}}(z_\Fc)/|\cdot|$ is absolutely integrable.
\end{num}
Based on the above characterization, we establish the following descent formula.

\begin{thm}\label{thm:mainresult1}
Let $E$ be an étale quadratic $F$-algebra with $\CW_E$ the Weil representation on the space $\CS(E)$ of Schwartz-Bruhat functions on $E$. For every character $\chi$ of $E^1 = \Ker (\Nr:E^\times\to F^\times)$ and for any $\phi\in \CS(\chi)$, the following identity holds
$$
\gam\big(z,\CS(\chi)\big)=
\int^\reg_{G(F)}
z(g)\big(
\CW_E(g)\phi
\big)
=\lam_{E/F}
\bigg(
\int_{E^1}
\chi(e)\big(\CT_E\circ \LafSec(z)\big)(\tr(e))\ud_{E,1}e
\bigg)\phi.
$$
Here $\int^\reg_{G(F)}$ is the regularized integral introduced in Section \ref{sec:descent}, $\lam_{E/F} = \lam_{E/F,\psi}
$ is the Weil constant, and 
$$
\LafSec(z) = \CF_{\psi}
\big(
|\cdot|^{-1}
\CF_{\psi^{-1}}(z_\Fc)
\big).
$$
\end{thm}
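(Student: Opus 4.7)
The plan is to compute $\gam(z,\CS(\chi))$ by directly integrating $z$ against the Weil representation on $\CS(E)$, descending the resulting integral to the Steinberg--Hitchin base $\Fc(F)$ via the Weyl integration formula, and then applying the Gelfand--Graev character formula of Section~\ref{sec:ggtransform} to convert it into an integral over $E^1$. Concretely, I would first fix $\phi\in\CS(\chi)$ and justify that the regularized integral
\[
\int^\reg_{G(F)} z(g)\,\bigl(\CW_E(g)\phi\bigr)\ud g
\]
converges in the sense of Section~\ref{sec:descent}. The characterization~\eqref{num:intro:FTJ} of $z_\Fc$, namely that $\CF_{\psi^{-1}}(z_\Fc)$ is bounded, smooth, vanishes at zero, and has an absolutely integrable quotient by $|\cdot|$, is exactly what enables this regularization. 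Since $z\in\CZ^\fin$ acts on the irreducible representation $\CS(\chi)$ by a scalar, the regularized integral equals $\gam(z,\CS(\chi))\phi$.

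Next, because $z=(z_\Fc\circ\c)\ud g$ is stably invariant, the Weyl integration formula on $\SL_2(F)$ collapses the integral over $G(F)$ onto $\Fc(F)$ and yields
\[
\gam(z,\CS(\chi))\,\phi \;=\; \left(\int^\reg_{\Fc(F)} z_\Fc(c)\,\Theta^\st_\chi(c)\ud c\right)\phi,
\]
where $\Theta^\st_\chi$ is the stable character of $\CS(\chi)$ viewed as a function on $\Fc(F)$ with respect to the canonical measure of~\cite{FLN10}. The Jacobian from Weyl integration, governed by the Weyl discriminant, is absorbed into this normalization and is the ultimate source of the factor $|\cdot|^{-1}$ appearing inside $\LafSec(z)$.

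The third step is to substitute the Gelfand--Graev formula for $\Theta^\st_\chi$: as derived in Section~\ref{sec:ggtransform} and recast in the stable transfer form~\eqref{GG-transfer}, the stable character of the dihedral representation $\CS(\chi)$ is an explicit convolution in the trace variable involving $\chi$, the quadratic character $\eta_E$, the Weil constant $\lam_{E/F}$, and the trace map $\pi_E$. Swapping orders of integration (justified by the integrability in~\eqref{num:intro:FTJ}) and recognizing the resulting double Fourier transform of $z_\Fc$ as $\CT_E\circ\LafSec(z)$---using~\eqref{GG-transfer} for $\CT_E$ together with the definition $\LafSec(z)=\CF_\psi(|\cdot|^{-1}\CF_{\psi^{-1}}(z_\Fc))$ and Fourier inversion---yields the right-hand side of the theorem.

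The main obstacle is the regularization in the first step together with the Fubini interchange in the third. The integrand on $G(F)$ is not absolutely integrable, and $\Theta^\st_\chi$ has a singularity at the origin of $\Fc(F)$ of the same order as the Jacobian $|D|^{-1}$ from Weyl integration. Property~\eqref{num:intro:FTJ}, in particular the vanishing of $\CF_{\psi^{-1}}(z_\Fc)$ at zero together with the absolute integrability of $\CF_{\psi^{-1}}(z_\Fc)/|\cdot|$, is calibrated precisely to offset this singularity and to validate the double Fourier transform manipulation. Justifying the regularization and the Fubini step will rely on the fine structure of the stable Bernstein center of $\SL_2(F)$ due to Moy--Tadic and recalled in Section~\ref{sec:Bernstein-center}.
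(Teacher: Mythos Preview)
Your second step contains a genuine gap. You claim that the Weyl integration formula collapses the operator-valued integral $\int^\reg_{G(F)} z(g)\,\CW_E(g)\phi\,\ud g$ to $\bigl(\int_{\Fc(F)} z_\Fc(c)\,\Theta^\st_\chi(c)\,\ud c\bigr)\phi$. But Weyl integration rewrites scalar integrals $\int_G f(g)\,\ud g$ in terms of orbital integrals; it does not convert the vector $\pi(g)\phi$ into $\Theta^\st_\chi(g)\phi$. The character only appears after taking a trace, which is infinite here. The correct identity is $\gamma(z,\CS(\chi))=\langle \LafSec(z),\Theta^\st_\chi\rangle$, where $\LafSec(z)$ is the element of the stable cocenter attached to $z$ by the representation-theoretic definition of subsection~\ref{subsec:Laf-Reptheoretic-def}; this is not $z_\Fc$ itself. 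To see concretely that your formula fails, take $z=\epsilon_\pi=d_\pi\Theta_\pi\,\ud g$ a supercuspidal idempotent (Theorem~\ref{thm:imageLafsupercuspidal}): then $\int_{\Fc(F)} z_\Fc\,\Theta^\st_\chi\,\ome_\Fc$ is $d_\pi$ times an integral with the \emph{wrong} measure for elliptic orthogonality, whereas $\LafSec(\epsilon_\pi)=\v\Del\mathbbm{1}_\el\Theta_\pi$ gives the correct pairing. The factor $|\cdot|^{-1}$ in the integral formula for $\LafSec$ does not come from a Weyl discriminant Jacobian.

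Your third step then becomes circular: you would need the integral formula $\LafSec(z)=\CF_\psi(|\cdot|^{-1}\CF_{\psi^{-1}}(z_\Fc))$ (Theorem~\ref{thm:stableLaf}) as input, but in the paper's logic that formula is \emph{derived from} the descent theorem, not proved independently. The paper explicitly remarks that the descent formula could be read off from the combination of \eqref{GG-transfer} and \eqref{integral-Laf}, but that it does not know a direct proof of \eqref{integral-Laf} and instead establishes the descent formula first. The paper's actual proof is entirely different from your outline: for nontrivial $\chi$ it writes out $\CW_E(g)\phi$ explicitly in Bruhat coordinates via Lemma~\ref{lem:ell:sl2:3}, introduces a complex-analytic regularization (Definition~\ref{defin:descent:Bruhatreg}), performs a chain of six changes of variables on the resulting quadruple integral, and extracts the limit in Theorem~\ref{thm:descent:Bruhatreg}. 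The $|\cdot|^{-1}$ emerges from the Jacobians of these variable changes (in particular $v\mapsto v^{-1}$), not from Weyl integration. For principal series ($E$ split) this Bruhat regularization is not known to agree with the Bernstein-center action, so the paper handles that case separately via the universal principal series model (subsection~\ref{subsec:descent:PS}).
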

As a corollary, by the Gelfand-Graev character identity, we deduce the following fact:
\begin{cor}\label{cor:mainresult:1}
With the above notation the following identity holds
$$
\gam
\big(
z,\CS(\chi)
\big) = \tr\big(\LafSec(z),\CS(\chi)\big).
$$
\end{cor}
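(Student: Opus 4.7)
The plan is to deduce the corollary directly from Theorem \ref{thm:mainresult1} by recognizing the integral on the right-hand side of that theorem as precisely the Gelfand-Graev expansion of the trace of $\LafSec(z)$ on the dihedral representation $\CS(\chi)$.

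First, I would verify that $\LafSec(z)$ defines an element of the stable cocenter on which $\CS(\chi)$ acts with a well-defined trace. By definition
$$\LafSec(z) = \CF_\psi\bigl(|\cdot|^{-1}\CF_{\psi^{-1}}(z_\Fc)\bigr),$$
and the regularity properties (\ref{num:intro:FTJ}) of $\CF_{\psi^{-1}}(z_\Fc)$ imply that $|\cdot|^{-1}\CF_{\psi^{-1}}(z_\Fc)$ is integrable, so $\LafSec(z)$ is a bounded stably invariant function on the trace line $\Fc(F)$. Pulled back to $G(F)$ it represents an element of $\CC^{\st}$ in the sense of Section \ref{sec:Bernstein-center}.

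Next, I would apply the Gelfand-Graev character identity in the form established in Section \ref{sec:ggtransform} and subsection \ref{subsec:ggpsstable:gl2}. The content of this identity, once rewritten via Langlands' stable transfer using \eqref{GG-transfer}, is that for any stable function $f$ of the trace variable (satisfying mild integrability hypotheses) one has
$$\tr\bigl(f,\CS(\chi)\bigr)=\lam_{E/F}\int_{E^1}\chi(e)\,(\CT_E f)(\tr(e))\,\ud_{E,1}e.$$
Specializing to $f=\LafSec(z)$, which is legitimate by the previous paragraph, I obtain
$$\tr\bigl(\LafSec(z),\CS(\chi)\bigr)=\lam_{E/F}\int_{E^1}\chi(e)\,\bigl(\CT_E\circ\LafSec(z)\bigr)(\tr(e))\,\ud_{E,1}e.$$
Theorem \ref{thm:mainresult1} identifies the same integral with $\gam(z,\CS(\chi))$, and comparing the two equalities gives the corollary.

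The main technical point that needs care is the convergence of the Gelfand-Graev pairing against the non-compactly-supported function $\LafSec(z)$: one must check that the stable transfer $\CT_E\circ\LafSec(z)$, and then its pairing against the unitary character $\chi$ on $E^1$, are absolutely convergent. This convergence is already established in the proof of Theorem \ref{thm:mainresult1}, as precisely the same integral appears there, so no additional estimate is required. Conceptually, the corollary is the expected consistency statement: $\LafSec$ is a section of the Lafforgue transform $\Laf\colon\CC\to\CZ$, and since the dihedral supercuspidal $\CS(\chi)$ is the unique irreducible representation in its inertial class, the defining relation $\gam(\Laf(c),[\pi])=\tr(c,\pi)$ applied to $c=\LafSec(z)$ yields the identity $\gam(z,\CS(\chi))=\tr(\LafSec(z),\CS(\chi))$.
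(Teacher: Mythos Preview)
Your proposal is correct and takes essentially the same approach as the paper: deduce the corollary from Theorem~\ref{thm:mainresult1} by recognizing, via the Gelfand--Graev character identity (equivalently, the adjointness defining the stable transfer $\CT_E$), that the integral on the right of that theorem is exactly the trace pairing $\langle\LafSec(z),\theta^{\st}_\chi\rangle$. The paper states this in one line (``by the Gelfand--Graev character identity''), and the later Corollary~\ref{cor:descent:sl2} makes the mechanism explicit as the Gelfand--Graev formula plus self-adjointness of the Fourier transform --- which is precisely your argument.

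One minor bookkeeping point: the adjoint identity for the stable transfer reads $\langle f,\theta^{\st}_\chi\rangle=\langle\CT_E f,\chi\rangle$ with no extra Weil constant, since $\lam_{E/F}$ is already absorbed into the definition of $\CT_E$ in \eqref{eq:GG formula}; your displayed formula for $\tr(f,\CS(\chi))$ carries a spurious $\lam_{E/F}$, matching a corresponding factor in the statement of Theorem~\ref{thm:mainresult1}. This does not affect the logic, as the two factors cancel in your comparison, but it is worth tracking carefully. Also, your closing conceptual remark (that $\LafSec$ is a section and the inertial class of $\CS(\chi)$ is a singleton) only applies verbatim to the supercuspidal case $\chi\neq\mathbbm{1}$; your main argument via Gelfand--Graev covers all cases uniformly.
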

Parallel results hold for $G=\GL_2$. 

\subsubsection{Inversion formulae}

Given a stable distribution $z\in \CZ^\fin$, the descent formula above provides an element in the cocenter $c = \Laf(z)\in \CC^{\perp, \triv,\fin}$ that are connected by the following simple integral formulae
$$
c=\LafSec(z) = \CF_{\psi}
\big(
|\cdot|^{-1}
\CF_{\psi^{-1}}(z)
\big),\quad 
z = \Laf(c) = 
\CF_{\psi}
\big(
|\cdot|
\CF_{\psi^{-1}}(c)
\big).
$$
On the other hand, starting from $c\in \CC^\fin$, for any étale quadratic $F$-algebra $E$, the Langlands' stable transfer $\CT_E$ provides an element in the cocenter of the maximal torus $E^1$ determined by $E$,
$$
z_E=c_E=\CT_E(c)= \lam_{E/F}
\pi^*_E
\big(
\CF_{\psi}
(\eta_E \CF_{\psi^{-1}}(c))
\big)\in \CC(E^1)^\fin\simeq \CZ(E^1)^\fin.
$$
A natural question is to reproduce $z$ and $\LafSec(z) = c$ from the datum $\{z_E=c_E\}_{E/F}$. When the residual characteristic of $F$ is not equal to two, any irreducible representation of $\SL_2(F)$ is a subquotient of a dihedral representation. Hence the datum $\{c_E\}_{E/F}$ determines $z$ and $c=\LafSec(z)\in \CC^{\perp,\triv,\fin}$ uniquely. Moreover, $z=\Laf(c)$. Hence we are reduced to reproduce $c$ from $\{c_E = \CT_E(c)\}_{E/F}$. 

To state the inversion formulae, we introduce the following notation. Let $\CI$ be the set parametrizing stable conjugacy classes of maximal tori $\{E^1_\alp\}_{\alp\in \CI}$ in $\SL_2(F)$, 
$$
\CI = 
\Bigg\{
\begin{matrix}
\{0\} & F=\BC
\\
\{0,1\} & F=\BR
\\
\{0,1,\pm1/2\} & F \text{ non. archi.}
\end{matrix}
$$
Here $E_0 = F\times F$ corresponds to the split quadratic algebra, $T_1$ corresponds  to the unramified quadratic extension (resp. complex extension) when $F$ is non-archimedean (resp. real), and $T_{\pm 1/2}$ correspond to the two ramified extension of $F$. For every $\tau_\alp$-invariant function $c_\alp$ on $E^1_\alp$, where $\tau_\alp$ is the unique nontrivial involution attached to $E_\alp/F$, $c_\alp$ descends to a unique function $\nu_\alp(c_\alp)$ on $\tr(E^1_\alp)$ with the property that 
$$
c_\alp = \pi^*_\alp\big(
\nu_\alp(c_\alp)
\big)
$$
where $\pi_\alp = \pi_{E_\alp}$. 

\begin{thm}\label{thm:mainresult2}
With the above notation, for $c\in \CC^{\st,\fin}$ and $c_\alp = \CT_\alp(c)\in \CC(E^1_\alp)^{\FS_2}$, the following statements hold.
\begin{enumerate}
\item When $F=\BC$, $c=\nu_0(c_0)$;

\item When $F=\BR$, $c=\sum_{\alp\in \CI}\CE_\alp(c_\alp)$;

\item When $F$ is non-archimedean of odd residual characteristic, 
$$
c-c^\el_\St\cdot \v\Del \Theta^\el_\St = \sum_{\alp\in \CI}
\CE_\alp(c_\alp).
$$
\end{enumerate}
Here:
\begin{itemize}
\item For $\alp\in \CI\bs \{0\}$, 
\begin{align*}
\CE_\alp: \CC^\infty_c(E_1^\alp)^{\tau_\alp} &\to \CC^{\st,\fin}
\\
c_\alp &\mapsto 
\CE_\alp(c_\alp) = 
\mathbbm{1}_\el
\cdot 
\v\Del
\cdot \CG\CG_\alp
\bigg(
\frac{\nu_\alp(c_\alp)}{\v\Del}
\bigg)
\end{align*}
where $\CG\CG_\alp(\cdot) = \CF_{\psi}
\big(
\eta_\alp \CF_{\psi^{-1}}(\cdot)
\big),
$
$\Del$ is the square-root of Weyl discriminant, $\mathbbm{1}_\el$ is the characteristic function of elliptic locus, and $\v:\Fc^\el(F)\to \BC$ is the function supported on elliptic regular semisimple locus such that $\v(\xi) = \vol(E_\alp^1)^{-1}$ whenever $\xi\in E^1_\alp$;

\item $\CE_0$ is the map from $\CC^\infty_c(E_0^1)^{\FS_2}$ to functions on $\Fc(F)$ that is given by the following formulas
$$
c_0\mapsto 
\Bigg\{
\begin{matrix}
\nu_0(c_0) & F=\BC
\\
\nu_0(c_0)-\CE_1(\CT_1(\nu_0(J_0))) & F=\BR
\\
\nu_0(c_0)-\CE_1(\CT_1(\nu_0(c_0)))
-\sum_{\alp=\pm 1/2}\CE^{++}_{\alp}(\CT_\alp(\nu_0(c_0))^{++}) & 
F \text{ non-archi.}
\end{matrix}
$$
where $\CE^{++}_\alp$ and $\CT_\alp(\nu_0(c_0))^{++}$ are the restriction of $\CE_\alp$ and $\CT_\alp(\nu_0(c_0))$ to the subspace $\CC^\infty_c(E^1_\alp)^{\FS_2}$ consisting of functions whose Mellin transform evaluated at the trivial character and the unique quadratic character vanishes;

\item $\Theta^\el_\St$ is the elliptic part of the Steinberg character, and 
$$
c^\el_\St = \langle c,\Theta^\el_\St\rangle =
\int_{\Fc(F)}
c(\xi)
\Theta^\el_\St(\xi)
\ome_\Fc.
$$
\end{itemize}
\end{thm}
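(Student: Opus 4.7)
The plan is to prove the inversion formula by verifying that both sides agree after applying $\CT_\alpha$ for every $\alpha\in\CI$ and then invoking a uniqueness argument based on dihedral representations. The fundamental principle is that in the stated residual characteristic, every irreducible representation of $\SL_2(F)$ other than the Steinberg is a subquotient of a dihedral representation, so the data $\{c_\alpha=\CT_\alpha(c)\}_{\alpha\in\CI}$ determines the element $c\in\CC^{\st,\fin}$ up to a one-dimensional ambiguity aligned with the elliptic Steinberg character $\Theta^\el_\St$. The case $F=\BC$ is essentially a tautology because no nonsplit torus exists and $\CT_0$ restricted to $\FS_2$-invariants is the trace pushforward. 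The real case and the non-archimedean case of odd residual characteristic are the substantive parts.

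First I would prove that each $\CE_\alpha$ with $\alpha\neq 0$ is a right inverse of $\CT_\alpha$. Unpacking the definitions, $\CT_\alpha$ is (up to the Weil constant and the trace pullback) Fourier conjugation by the quadratic character $\eta_\alpha$, as recorded in \eqref{GG-transfer}. Combined with $\CG\CG_\alpha=\CF_\psi(\eta_\alpha\CF_{\psi^{-1}}(\cdot))$ and the involutivity $\eta_\alpha^2=1$ plus Fourier inversion, a direct computation gives $\CG\CG_\alpha\circ\CG\CG_\alpha=\mathrm{id}$. The auxiliary factors $\mathbbm{1}_\el$, $\v\Del$, and $\nu_\alpha$ are chosen precisely to intertwine Haar measure on the torus with the canonical measure on the Steinberg–Hitchin base (via the Weyl integration formula), so that $\CT_\alpha\circ\CE_\alpha$ reduces to $\CG\CG_\alpha\circ\CG\CG_\alpha=\mathrm{id}$ on the image. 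This step is essentially a careful bookkeeping of normalizations, clean but requires attention.

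Next I would compute the cross terms $\CT_{\alpha'}\circ\CE_\alpha$ for distinct $\alpha,\alpha'\in\CI$. For $\alpha,\alpha'$ both nonsplit, the image lies in the two-dimensional subspace of $\CC(E^1_{\alpha'})^{\FS_2}$ spanned by the trivial and the nontrivial quadratic character; this is the orthogonality statement from Section~\ref{sec:summands} reflecting the four-element Labesse–Langlands $L$-packet together with the Steinberg. For $\alpha'=0$ and $\alpha$ nonsplit, $\CT_0\circ\CE_\alpha=0$ (since elliptic stable characters integrate trivially against hyperbolic orbital integrals). This is what forces the subtraction of $\CE_1(\CT_1(\nu_0(c_0)))$ and the $(\cdot)^{++}$-projections in the definition of $\CE_0$: these corrections kill exactly the two-dimensional overlaps so that $\sum_\alpha\CE_\alpha(c_\alpha)$ has the correct image under every $\CT_{\alpha'}$.

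Finally, having shown that $c$ and $\sum_\alpha\CE_\alpha(c_\alpha)$ have identical stable transfer under every $\CT_\alpha$, it remains to identify the kernel of $\bigoplus_\alpha\CT_\alpha$ on $\CC^{\st,\fin}$. Using the dihedral density principle together with the elliptic orthogonality relations between the Steinberg character and dihedral characters, this kernel is one-dimensional and spanned by $\v\Del\,\Theta^\el_\St$ in the non-archimedean case (and is trivial when $F=\BR$, accounting for the absence of a Steinberg correction there). The coefficient is determined by pairing both sides with $\Theta^\el_\St$ with respect to the elliptic inner product, yielding the scalar $c^\el_\St=\langle c,\Theta^\el_\St\rangle$. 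The main obstacle will be keeping the four tori, the two-dimensional overlaps, and the three measure conventions (Haar on $E^1_\alpha$, canonical on $\Fc(F)$, and the elliptic pairing) consistent simultaneously, and in particular justifying that the iterated $(\cdot)^{++}$ projections in $\CE_0$ interact correctly with the nonsplit corrections so that no extra Steinberg-type term reappears.
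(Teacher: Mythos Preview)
Your overall architecture matches the paper's: verify agreement under every $\CT_\alpha$, use density of dihedral characters, and then identify the Steinberg kernel. The split-summand and kernel steps are essentially as in the paper. However, your step 2 contains a genuine gap.

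You claim that the factors $\mathbbm{1}_\el$, $\v\Del$, $\nu_\alpha$ are measure bookkeeping so that ``$\CT_\alpha\circ\CE_\alpha$ reduces to $\CG\CG_\alpha\circ\CG\CG_\alpha=\mathrm{id}$.'' This is not correct. Writing out the composition,
\[
\CT_\alpha\circ\CE_\alpha(c_\alpha)
=\lambda_\alpha\,\pi_\alpha^*\Big(\CG\CG_\alpha\big(\mathbbm{1}_\el\cdot\v\Del\cdot\CG\CG_\alpha\big(\tfrac{\nu_\alpha(c_\alpha)}{\v\Del}\big)\big)\Big),
\]
the elliptic cutoff $\mathbbm{1}_\el$ sits \emph{between} the two copies of $\CG\CG_\alpha$, so the algebraic involutivity $\CG\CG_\alpha^2=\mathrm{id}$ on $L^2(F,\ud x)$ does not apply. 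The function $\CG\CG_\alpha(\nu_\alpha(c_\alpha)/\v\Del)$ has nonzero hyperbolic tail; truncating it and then applying $\CG\CG_\alpha$ again does not return the original function by any measure-normalization argument.

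What the paper actually does (subsection~\ref{subsec:command-nonsplit}) is test against characters and convert the pairing into the \emph{elliptic} inner product:
\[
\langle\CT_\alpha\circ\CE_\alpha^+(J_\alpha^+),\chi_\alpha\rangle
=\langle\CE_\alpha^+(J_\alpha^+),\theta^\st_{\chi_\alpha}\rangle
=\Big(\CG\CG_\alpha\big(\tfrac{\nu_\alpha(J_\alpha^+)}{\v\Del}\big),\CG\CG_\alpha(\phi_{\chi_\alpha})\Big)_{\wb{\Fc}^\el},
\]
and then invokes Theorem~\ref{thm:anotherunitary:sl2}: $\CG\CG_\alpha$ is unitary with respect to the elliptic measure $\mathbbm{1}_\el(a)\Del(a)\v(a)\,\ud a$, provided one input lies in the subspace orthogonal to $\phi_{\mathbbm{1}}$. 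This elliptic unitarity is \emph{not} the trivial $L^2(F,\ud x)$ statement; it is a reformulation of the orthogonality of elliptic tempered characters (subsection~\ref{subsec:scalarproductdiscrete}), and it is the same mechanism that handles both the diagonal $\alpha=\beta$ and the cross terms $\alpha\neq\beta$ in one stroke. You already invoke elliptic orthogonality in step 3 for the cross terms; the fix is to recognize that the diagonal case needs it too, and that this is why the restriction to the $(+)$-subspace (vanishing constant Fourier coefficient) is required for $\CE_\alpha$ to be a genuine section.
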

When $F$ is archimedean, it is also convenient to replace the space of smooth and compactly supported functions by Schwartz functions.

Theorem \ref{thm:mainresult2} reproduces $c\in \CC^{\st,\fin}$ in turns of the descent datum $\{c_\alp = \CT_\alp(c)\}_{\alp\in \CI}$, except when $F$ is non-archimedean (of odd residual characteristic), the descent datum is not sufficient due to the existence of the Steinberg representation. 

In the following, we introduce the following two extended sections from $\CC^\infty_c(E^1_0)^{\FS_2}$ to functions on $\Fc(F)$:

\begin{thm}\label{thm:mainresult3}
With the notation from Theorem \ref{thm:mainresult2}, when $F$ is non-archimedean of odd residual characteristic, consider the following map from $c_0\in \CC^\infty_c(E^1_0)^{\FS_2}$ to functions on $\Fc(F)$:
\begin{align*}
\CE^{\perp \St}_0(c_0) &=
\CE_0(c_0)-\v\Del \mathbbm{1}_\el 
\cdot 
\langle \nu_0(c_0),1-\phi_{\del^{1/2}_B}\rangle
\\
\CE^{\perp \triv}_0(c_0) &=
\CE_0(c_0)
-\v\Del \mathbbm{1}_\el
\cdot 
\langle \nu_0(c_0),1\rangle 
\end{align*}
where $1$ is the constant function on $\Fc(F)$ and $\phi_{\del_B^{1/2}}=\nu_0\big(\frac{\del_B^{1/2}+\del_B^{-1/2}}{\Del}\big)$ with $\del_B$ the modular character for the Borel subgroup of $\SL_2(F)$. Then 
\begin{enumerate}
\item For $c\in \CC^{\st,\fin,\perp \St}$,
$$
c= \CE^{\perp \St}_0(c_0)+\sum_{\alp\neq 0}
\CE_\alp(c_\alp);
$$

\item For $c\in \CC^{\st,\fin,\perp \triv}$,
$$
c=\CE^{\perp \triv}_0(c_0)+\sum_{\alp\neq 0}
\CE_\alp(c_\alp).
$$
\end{enumerate}
where $\CC^{\st,\fin,\perp\St}$ and $\CC^{\st,\fin,\perp \triv}$ are the subspaces that are perpendicular to the Steinberg character and the trivial character, respectively.
\end{thm}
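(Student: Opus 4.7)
The plan is to derive both statements by imposing the respective orthogonality condition on the universal decomposition provided by Theorem~\ref{thm:mainresult2}(3) and solving for the elliptic Steinberg coefficient $c^\el_\St$ in terms of the hyperbolic data carried by $\nu_0(c_0)$. The starting identity, valid for every $c\in\CC^{\st,\fin}$, reads
$$c \;=\; \CE_0(c_0) + \sum_{\alp\neq 0}\CE_\alp(c_\alp) \;+\; c^\el_\St\cdot \v\Del\,\Theta^\el_\St.$$
Comparing with the desired formulae, the task reduces to showing that, under the condition $\tr(c,\St)=0$ (resp.\ $\tr(c,\triv)=0$), the correction term $c^\el_\St\cdot\v\Del\,\Theta^\el_\St$ equals $-\v\Del\,\mathbbm{1}_\el\cdot\langle\nu_0(c_0),1-\phi_{\del^{1/2}_B}\rangle$ (resp.\ $-\v\Del\,\mathbbm{1}_\el\cdot\langle\nu_0(c_0),1\rangle$). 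Since the Steinberg character of $\SL_2(F)$ takes the value $-1$ on elliptic regular semisimple elements, the identification $\Theta^\el_\St=-\mathbbm{1}_\el$ already accounts for the sign; the remaining content is the computation of the scalar $c^\el_\St$.

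To evaluate $c^\el_\St$, I would split the pairing $\tr(c,\pi)=\langle c,\Theta_\pi\rangle$ into elliptic and hyperbolic contributions. On the hyperbolic locus, $c$ agrees with $\nu_0(c_0)$ via the stable transfer $\CT_0$ to the split torus, which is simply restriction in the normalization fixed in Section~\ref{sec:orbital-cocenter}. Using the virtual decomposition $\St = I(\del^{1/2}_B)-\triv$ and the Moy--Tadic formulae recalled in Section~\ref{sec:Bernstein-center}, the stable character of the Steinberg representation, viewed as a function on $\Fc(F)$, is $-\mathbbm{1}_\el$ on $\Fc^\el(F)$ and $\phi_{\del^{1/2}_B}-1$ on $\Fc^\hyp(F)$. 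Consequently
$$\tr(c,\St)\;=\;c^\el_\St + \langle\nu_0(c_0),\phi_{\del^{1/2}_B}-1\rangle,$$
and imposing $\tr(c,\St)=0$ yields $c^\el_\St=\langle\nu_0(c_0),1-\phi_{\del^{1/2}_B}\rangle$, which proves assertion~(1) upon substitution.

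For assertion~(2), I would exploit that $\triv=I(\del^{1/2}_B)-\St$ as a virtual representation, and that the elliptic character of the reducible principal series $I(\del^{1/2}_B)$ vanishes (the elliptic characters of its composition factors $\triv$ and $\St$ cancel). Hence $\tr(c,I(\del^{1/2}_B))=\langle\nu_0(c_0),\phi_{\del^{1/2}_B}\rangle$ involves only the hyperbolic contribution, and combining with the previous formula for $\tr(c,\St)$ the condition $\tr(c,\triv)=0$ becomes
$$\langle\nu_0(c_0),\phi_{\del^{1/2}_B}\rangle \;=\; c^\el_\St + \langle\nu_0(c_0),\phi_{\del^{1/2}_B}-1\rangle,$$
so that $c^\el_\St=\langle\nu_0(c_0),1\rangle$. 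Substituting into the identity of Theorem~\ref{thm:mainresult2}(3) produces the expression involving $\CE^{\perp\triv}_0(c_0)$.

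The main obstacle I anticipate is the bookkeeping of normalizations: one must verify that, under the chosen measures on $\Fc(F)$ and $E^1_0$ from Section~\ref{sec:orbital-cocenter}, the restriction of $c$ to the hyperbolic locus is precisely $\nu_0(c_0)$ (no extra Jacobian), and that the scalar relating $\Theta^\el_\St$ to $\mathbbm{1}_\el$ is exactly $-1$ with no hidden constant. Both are standard but must be tracked carefully so that the constants in $\CE^{\perp\St}_0$ and $\CE^{\perp\triv}_0$ match the statement. The remainder of the argument is a purely formal manipulation of the Theorem~\ref{thm:mainresult2}(3) decomposition.
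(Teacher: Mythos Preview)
Your proposal is correct and follows essentially the same route as the paper. In Remark~\ref{rmk:two-distinguished-sections} the paper uses the explicit Steinberg character formula $\Theta_\St=\phi_{\del_B^{1/2}}-1$ on the split locus and $\Theta_\St=-1$ on the elliptic locus, together with $\Theta^\el_\St=-\mathbbm{1}_\el$, to rewrite $c^\el_\St$ as the linear form $\langle\nu_0(c_0),1-\phi_{\del_B^{1/2}}\rangle$ (resp.\ $\langle\nu_0(c_0),1\rangle$) under the orthogonality hypothesis, and then substitutes into Part~(2)(c) of Theorem~\ref{thm:explicitsection}---exactly your argument. The only minor difference is that for case~(2) the paper would compute $\tr(c,\triv)$ directly from the constant function $1$ rather than via the virtual identity $\triv=I(\del_B^{1/2})-\St$, but the two computations are trivially equivalent.
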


As a corollary, for distributions $z\in \CZ^{\st,\fin}$, by \eqref{num:intro:FTJ}, as functions on the Steinberg-Hitchin base, they are orthogonal to the trivial character. Hence the following theorem holds.

\begin{thm}\label{thm:mainresult:2}
With the above notation, for $z\in \CZ^{\st,\fin}$ with $\CT_\alp\circ \LafSec(z) = z_\alp\in\CC(E^1_\alp)^{\FS_2}$, the following identities hold.
\begin{enumerate}
\item When $F=\BC$, 
$$
z = \Laf
\big(
\nu_0(z_0)
\big);
$$

\item When $F=\BR$,
$$
z=\Laf
\bigg(
\sum_{\alp\in \CI}
\CE_\alp
\big(
\nu_\alp(z_\alp)
\big)
\bigg);
$$

\item When $F$ is non-archimedean of odd residual characteristic, 
$$
z = 
\Laf
\bigg(
\CE^{\perp, \triv}_0
\big(
\nu_0(z_0)
\big)
+
\sum_{\alp\in \CI\bs \{0\}}
\CE_\alp
\big(
\nu_\alp(z_\alp)
\big)
\bigg).
$$
\end{enumerate}
\end{thm}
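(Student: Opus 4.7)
The plan is to reduce Theorem \ref{thm:mainresult:2} to the reconstruction of the cocenter element $c := \LafSec(z)$ from its stable transfers, as established in Theorems \ref{thm:mainresult2} and \ref{thm:mainresult3}, and then to recover $z$ by applying $\Laf$. The key observation is that by the construction of $\LafSec$ in Section \ref{sec:Bernstein-center}, the image $\LafSec(z)$ automatically lies in the orthogonal complement $\CC^{\perp,\triv}$ of the trivial character. Combined with the fact that $z \in \CZ^{\st,\fin}$ is supported on finitely many components of the Bernstein variety, this puts $c$ in $\CC^{\st,\fin,\perp,\triv}$.

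First, compute the transfers: by functoriality, $\CT_\alpha(c) = \CT_\alpha(\LafSec(z)) = z_\alpha$, so $\nu_\alpha(c_\alpha) = \nu_\alpha(z_\alpha)$ for each $\alpha \in \CI$. Next, in the non-archimedean case of odd residual characteristic, apply Theorem \ref{thm:mainresult3}(2), which is available precisely because $c \in \CC^{\st,\fin,\perp,\triv}$, to obtain
$$c = \CE_0^{\perp,\triv}\bigl(\nu_0(z_0)\bigr) + \sum_{\alpha \in \CI \bs \{0\}} \CE_\alpha\bigl(\nu_\alpha(z_\alpha)\bigr).$$
Finally, apply $\Laf$ to both sides. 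The relation that $\Laf \circ \LafSec$ is the identity on $\CC^{\perp,\triv,\fin}$, which is the content of the mutual inversion formulae \eqref{integral-Laf} recorded in Section \ref{sec:Bernstein-center}, yields $z = \Laf(c)$, giving assertion (3). The archimedean assertions (1) and (2) follow by the same argument with Theorem \ref{thm:mainresult2}(1) (resp.\ (2)) in place of Theorem \ref{thm:mainresult3}(2); the $\CC^{\perp,\triv}$ refinement is unnecessary there because the Steinberg representation is specific to the non-archimedean setting.

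The substantive mathematical content is thus already packaged in Theorem \ref{thm:mainresult3} and in the integral formulae for $\Laf$ and $\LafSec$. The one point requiring attention is the check that the finite-support hypothesis $z \in \CZ^{\st,\fin}$ propagates through $\LafSec$ to place $c$ in $\CC^{\st,\fin,\perp,\triv}$ so that Theorem \ref{thm:mainresult3}(2) applies. This rests on property \eqref{num:intro:FTJ}, which guarantees that
$$c = \CF_\psi\bigl(|\cdot|^{-1}\CF_{\psi^{-1}}(z_\Fc)\bigr)$$
is well-defined as a locally integrable function on the Steinberg--Hitchin base; the orthogonality to the trivial character is then visible from the vanishing of $\CF_{\psi^{-1}}(z_\Fc)$ at zero stipulated by \eqref{num:intro:FTJ}, which kills the potential singular contribution at the origin and places $c$ in the subspace where the $\CE_0^{\perp,\triv}$-reconstruction, rather than merely the one of Theorem \ref{thm:mainresult2}(3) with a residual Steinberg term, is valid.
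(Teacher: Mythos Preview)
Your proof is correct and follows the same route as the paper: the statement is derived as a corollary of Theorem~\ref{thm:mainresult3} (and Theorem~\ref{thm:mainresult2} in the archimedean cases) after observing, via property~\eqref{num:intro:FTJ}, that $c=\LafSec(z)$ lies in $\CC^{\st,\fin,\perp\triv}$, and then applying $\Laf$. One small slip: $\Laf\circ\LafSec$ is the identity on $\CZ^{\st,\fin}$ (since $\LafSec$ is a section of $\Laf$), not on $\CC^{\perp,\triv,\fin}$ as you wrote; the conclusion $z=\Laf(c)$ is of course unaffected.
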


\subsubsection{Nonabelian Fourier kernels}

Up to multiplying a smooth and compactly supported function along the determinant factor, one can assume that the conjectural nonabelian Fourier kernels $\RJ^\rho_\psi$ land in $\CZ$. Moreover, through comparing the (nonstandard) Langlands local gamma factors for diehdral representations and abelian Langlands local gamma factors on maximal tori, one can show that up to constant multiples of the Weil constant depending only on $\rho$, $\CT_\alp(\RJ^\rho_\psi) = \RJ^{\rho}_{\psi,\alp}$ for any $\alp\in \CI$, where $\RJ^\rho_{\psi,\alp}$ (up to constant multiples of the Weil constant) are smooth functions on the corresponding maximal tori constructed in subsection \ref{subsec:toruscase}. Hence Theorem \ref{thm:mainresult3} implies the following corollary.

\begin{cor}\label{cor:mainresult:3}
The following identities hold.
\begin{enumerate}
\item When $F=\BC$, 
$$
\RJ^\rho_\psi = \Laf
\big(
\nu_0(\RJ^\rho_{\psi,0})
\big);
$$

\item When $F=\BR$,
$$
\RJ^\rho_\psi=\Laf
\bigg(
\sum_{\alp\in \CI}
\CE_\alp
\big(
\nu_\alp(\RJ^\rho_{\psi,\alp})
\big)
\bigg);
$$

\item When $F$ is non-archimedean of odd residual characteristic, 
$$
\RJ^\rho_{\psi} = 
\Laf
\bigg(
\CE^{\perp, \triv}_0
\big(
\nu_0(\RJ^\rho_{\psi,0})
\big)
+
\sum_{\alp\in \CI\bs \{0\}}
\CE_\alp
\big(
\nu_\alp(\RJ^\rho_{\psi,\alp})
\big)
\bigg).
$$
\end{enumerate}
\end{cor}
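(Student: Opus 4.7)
The plan is to derive Corollary~\ref{cor:mainresult:3} as an immediate specialization of Theorem~\ref{thm:mainresult:2}, applied with $z=\RJ^\rho_\psi$. Two preliminary steps are required: placing $\RJ^\rho_\psi$ inside $\CZ^{\st,\fin}$ after a harmless modification, and identifying the descent data $z_\alp$ produced by Theorem~\ref{thm:mainresult:2} with the abelian kernels $\RJ^\rho_{\psi,\alp}$ from subsection~\ref{subsec:toruscase}. Once both are established, the three formulae of the corollary are read off verbatim from the three cases of Theorem~\ref{thm:mainresult:2}.

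For the first step, I multiply $\RJ^\rho_\psi$ along the determinant direction (in the $\GL_2$ case) by a smooth compactly supported function, so that the resulting distribution lies in the Bernstein center and acts on each irreducible admissible $\pi$ by the normalized Langlands gamma factor $\gam(\cdot,\pi,\rho,\psi)$. Stability and membership in $\CZ^{\st,\fin}$ are then clear from the definition, while the characterization \eqref{num:intro:FTJ} can be checked using the explicit formulae of subsections~\ref{subsec:excepex}--\ref{subsec:aformulaFK} together with the Moy--Tadi\'c analysis recalled in Section~\ref{sec:Bernstein-center}. It is enough to prove the corollary after this modification, since the reconstruction procedure is linear in the determinant variable and the extra factor can be reinstated at the end.

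For the second step, I compare the (nonstandard) Langlands gamma factor $\gam(\cdot,\pi_\chi,\rho,\psi)$ of the dihedral representation $\pi_\chi=\CS(\chi)$ attached to a character $\chi$ of $E_\alp^1$ with the abelian gamma factor on the torus. Because the $L$-parameter of $\pi_\chi$ factors through the $L$-embedding $\varphi_\alp:{}^L(E_\alp^1)\to{}^L\SL_2$, standard functoriality of $\varepsilon$- and $L$-factors yields
\[
\gam(\cdot,\pi_\chi,\rho,\psi) \;=\; c_{\rho,\psi,\alp}\cdot \gam(\cdot,\chi,\rho\circ \varphi_\alp,\psi),
\]
where $c_{\rho,\psi,\alp}$ is an explicit product of Weil constants depending only on $\rho$ and $E_\alp$. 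Combined with the descent formula of Theorem~\ref{thm:mainresult1} and the definition of $\RJ^\rho_{\psi,\alp}$ in subsection~\ref{subsec:toruscase}, this identifies $\CT_\alp\circ\LafSec(\RJ^\rho_\psi)$ with $\nu_\alp(\RJ^\rho_{\psi,\alp})$ up to the expected Weil constants, which are absorbed into the normalization of $\RJ^\rho_{\psi,\alp}$.

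With these two inputs the corollary is formal: in each of the three cases one substitutes $z=\RJ^\rho_\psi$ and $z_\alp=\RJ^\rho_{\psi,\alp}$ into the corresponding formula of Theorem~\ref{thm:mainresult:2}. The main obstacle will be the careful bookkeeping of all Weil and $\varepsilon$-factor constants through the stable transfer $\CT_\alp$ and the Lafforgue section $\LafSec$ simultaneously, so that they match exactly the normalization fixed in subsection~\ref{subsec:toruscase} for $\RJ^\rho_{\psi,\alp}$. Because both sides are rigid up to a single scalar, pinning this scalar down on a single well-chosen dihedral character suffices, which reduces the discrepancy to a one-line gamma factor computation.
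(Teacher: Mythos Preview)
Your proposal is correct and follows essentially the same approach as the paper: reduce to Theorem~\ref{thm:mainresult:2} (equivalently Theorem~\ref{thm:mainresult3}) by first cutting down $\RJ^\rho_\psi$ to $\CZ^{\st,\fin}$ via a compactly supported function along the determinant, and then identifying the descent data $\CT_\alp\circ\LafSec(\RJ^\rho_\psi)$ with the abelian kernels $\RJ^\rho_{\psi,\alp}$ through the gamma factor comparison $\gam(\cdot,\CW(\chi)^\vee,\rho,\psi)=\kappa^\rho_\alp\cdot\gam(\cdot,\chi^{-1},\rho|_{{}^LT_\alp},\psi)$. The paper carries out exactly these two steps (see the paragraph preceding the corollary and the elaboration in subsection~\ref{subsec:aformulaFK}, where the Weil constant $\kappa^\rho_\alp$ is made explicit as $\lam_\alp^n$ or $\lam_\alp^{n+1}$), so your bookkeeping concern about the scalar is resolved there by the normalized kernel $\wt{\RJ}^\rho_{\psi,\alp}=\RJ^\rho_{\psi,\alp}\cdot\kappa^\rho_\alp$.
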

Notice that Theorem \ref{thm:mainresult3} works for distributions supported only on finitely many Bernstein components. Although nonabelian Fourier kernels are supported on the whole Bernstein variety, for the purpose of local $L$-factors, it suffices to examine the action of $\RJ^\rho_\psi$ on $K$-finite vectors for smooth  and compactly supported (or Schwartz if $F$ is archimedean) functions. Hence we are reduced to the situation where the distribution lands in $\CZ^{\st,\fin}$. For details see subsection \ref{subsec:aformulaFK}.

\subsubsection{Stable orbital integral of spherical functions}

Let $h\in \CC^\infty_c(\SL_2(F))$ be a spherical function with stable orbital integral $\RS\RO(h)\in \CC^{\st,\fin}$ with the normalization introduced by Frenkel-Langlands-Ngô (\cite{FLN10}) revisited in subsection \ref{subsec:normalization-SOI-OI}. It is straightforward to see that $\RS\RO(h)$ is orthogonal to the Steinberg character, i.e. $\RS\RO(h)\in \CC^{\st,\fin,\perp \St}$. It turns out that the following theorem holds.

\begin{thm}\label{thm:mainresultSBC}
With the above notation, the following statements hold:
\begin{enumerate}
\item Let $h_\alp = \CT_\alp(\RS\RO(h))$. Then 
\begin{enumerate}
\item $h_0=\Sat(h)$;

\item $h_\alp = 0$ for $\alp\in \CI\bs \{0,1\}$;

\item When $F$ is non-archimedean of odd residual characteristic, $\RS\RO(h)\in \CC^{\st,\fin,\perp \St}$.
\end{enumerate}

\item 
$$
\RS\RO(h) = 
\Bigg\{
\begin{matrix}
h_0=\Sat(h) & F=\BC
\\
\CE_0(h_0) & F=\BR
\\
\CE^{\perp,\St}_0(h_0) & F\text{ non-archi.}
\end{matrix}
$$
\end{enumerate}
\end{thm}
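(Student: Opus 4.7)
The plan is to deduce the theorem from the inversion formulas of Theorem \ref{thm:mainresult2} (archimedean) and Theorem \ref{thm:mainresult3} (non-archimedean) by first pinning down the spectral profile of $\RS\RO(h)$ via part (1), and then observing that after (1)(b) the only elliptic-torus correction which might survive is the one associated with the unramified quadratic extension.

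I would prove part (1) by a uniform character-theoretic argument. For (a), I identify $\CT_0$ with the classical Satake transform on the spherical Hecke algebra: both produce the unique Weyl-invariant function on $\tr(E_0^1)=F$ whose Mellin transform at an unramified character $\chi$ of $F^\times$ recovers $\tr\pi_\chi(h)$ for the associated unramified principal series. For (b), I invoke the Gelfand-Graev descent of Theorem \ref{thm:mainresult1},
\[
\lambda_{E_\alp/F}\int_{E_\alp^1}\chi(e)\,\nu_\alp(h_\alp)(\tr e)\,\ud_{E,1}e = \tr\CS(\chi)(h);
\]
for a ramified quadratic $E_\alp$ with $\alp=\pm 1/2$, every character $\chi$ of the compact torus $E_\alp^1$ produces a dihedral representation $\CS(\chi)$ of positive depth with no $K$-fixed vectors, so the right-hand side vanishes for every spherical $h$ and every $\chi$, and completeness of characters forces $h_\alp=0$. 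For (c), the Steinberg representation is non-spherical, hence $\tr\St(h)=0$, which is exactly the orthogonality $\RS\RO(h)\in \CC^{\st,\fin,\perp\St}$.

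With part (1) in hand, the inversion formula collapses: the complex case is trivial since $\CI=\{0\}$; Theorem \ref{thm:mainresult2}(2) gives $\RS\RO(h)=\CE_0(h_0)+\CE_1(h_1)$ for $F=\BR$; and Theorem \ref{thm:mainresult3}(1) combined with (1)(b)--(c) gives $\RS\RO(h)=\CE^{\perp\St}_0(h_0)+\CE_1(h_1)$ for $F$ non-archimedean of odd residual characteristic. The theorem therefore reduces in both remaining cases to the single identity $\CE_1(h_1)=0$ for $h_1=\CT_1(\RS\RO(h))$ on the unramified elliptic torus $E_1^1$, which is the main obstacle. I would handle it spectrally: for every dihedral $L$-packet $\CS(\chi)$ coming from a character $\chi$ of $E_1^1$ with $\chi^2\neq 1$ the constituents are supercuspidal, hence non-spherical, so $\tr\pi(h)=0$ and the pairing of $\CE_1(h_1)$ against $\Theta_\pi$ vanishes by comparison with the split-torus side. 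The remaining reducible modes $\chi\in\{1,\eta_{E_1}\}$ are controlled via the stable fundamental lemma $h_1=\CT_1(\nu_0(\Sat(h)))$ for $\SL_2$, which makes $\CE_1(h_1)$ exactly cancel the $-\CE_1(\CT_1(\nu_0(h_0)))$ correction already built into the definition of $\CE_0$; verifying this cancellation is the technical heart of the argument and reduces to a quadratic Gauss-sum identity for the unramified quadratic character $\eta_1$, where the odd-residual-characteristic hypothesis enters essentially.
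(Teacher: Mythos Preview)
Your outline for part~(1) and the reduction of part~(2) to the single vanishing $\CE_1(h_1)=0$ both agree with the paper's proof. The difference, and the gap, is in how you argue $\CE_1(h_1)=0$.

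The paper's argument is short and direct: by Part~(1)(b) of Theorem~\ref{thm:explicitsection} one has $\CE_1(h_1)=\CE_1^+(h_1^+)$, so it suffices to show $h_1^+=0$, i.e.\ that every \emph{nontrivial} Fourier coefficient of $h_1$ vanishes. For any nontrivial character $\chi_1$ of $E_1^1$ one has $\langle h_1,\chi_1\rangle=\langle\RS\RO(h),\theta^{\st}_{\chi_1}\rangle=\tr\CW(\chi_1)(h)$, and $\CW(\chi_1)$ is a supercuspidal (discrete series when $F=\BR$) representation, hence has no $K$-fixed vectors. Thus $h_1^+=0$ and $\CE_1(h_1)=0$. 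There is no separate ``reducible mode'' to worry about.

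Your proposed split into $\chi^2\neq1$ versus $\chi\in\{1,\chi^{\qd}\}$ is misleading: the nontrivial quadratic character $\chi^{\qd}$ of $E_1^1$ does \emph{not} give a reducible principal series but rather the unique supercuspidal $L$-packet of size four (Proposition~\ref{pro:dihedral:Cas72:1} and Theorem~\ref{thm:L-packet}), so it is disposed of by exactly the same non-spherical argument as the $\chi^2\neq1$ case. Once that is corrected, only $\chi_1=1$ remains, and that contribution is killed by the projection $h_1\mapsto h_1^+$. Your proposed cancellation route via a ``stable fundamental lemma'' identity $h_1=\CT_1(\nu_0(\Sat(h)))$ is therefore unnecessary; moreover that identity is not established anywhere and is not obviously true, since $\CT_1$ is an integral transform sensitive to the elliptic values of $\RS\RO(h)$, not just to its split restriction $\nu_0(h_0)$. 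Finally, note that if your cancellation $\CE_1(h_1)=\CE_1(\CT_1(\nu_0(h_0)))$ were the mechanism, the sum $\CE_0(h_0)+\CE_1(h_1)$ would collapse to $\nu_0(h_0)$ minus the ramified corrections rather than to $\CE_0(h_0)$ itself, contradicting the statement you are trying to prove.
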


\begin{rmk}
The fact that $\RS\RO(h) = \CE_0^{\perp, \St}(h_0)$ and $h_0=\RS\RO(h)|_{\c(E^1_0)}$ reflects the heuristic that, at least over the local function fields, there should exist a perverse $t$-structure on the stacky quotient $[G/\Ad(G)]$, such that $\RS\RO(h)$ can be realized as the trace of Frobenius of a perverse sheaf on $[G/\Ad(G)]$ which is the intermediate extension of its restriction to the split locus (\cite{BKVPevInf}).
\end{rmk}

\subsubsection{Orbital Hankel transform}

For a reductive group $G$ with $\rho$ an $L$-homomorphism of the Langlands dual group of $G$, let $\CS^\rho(G(F))$ be the conjectural $\rho$-Schwartz space (see subsection \ref{subsec:proposal} for a description) intertwined by the conjectural $\rho$-Fourier transform $\CF^\rho_\psi$. In \cite{ngo2016hankel}, the second author asks if one plugs the global $\rho$-Schwartz functions into the Arthur-Selberg trace formula, can one establish the functional equation of the global automorphic $L$-functions attached to the pair $(G,\rho)$ from the geometric side of the Arthur-Selberg trace formula. 
To understand the geometric side of the Arthur-Selberg trace formula for these $\rho$-Schwartz functions, one first need to understand the following commutative diagram
\begin{align}\label{eq:mainresult:HKT:1}
\xymatrix{
\CS^\rho(G(F))\ar[r]^{\CF^\rho_\psi} \ar[d]^{\mathrm{st. orb.}} & \CS^\rho(G(F))\ar[d]^{\mathrm{st. orb.}}
\\
\CS\CO^\rho(G(F)) \ar[r]^{\CH^{\rho}_\psi} & \CS\CO^\rho(G(F))
}
\end{align}
Here the vertical arrows are given by taking the stable orbital integrals, and $\CS\CO^\rho(G(F))$ is defined to be the space of stable orbital integrals of the conjectural $\rho$-Schwartz functions. The lower horizontal arrow is given by the induced transform $\CH^\rho_\psi$, which we refer to the \emph{$\rho$-orbital Hankel transform}. 
A natural question is if one can derive an explicit formula for the operator $\CH^\rho_\psi$. Motivated by Theorem \ref{thm:mainresult2}, we are going to study the descent of $\CS\CO^\rho(G(F))$ to the maximal tori when $G=\SL_2$ or $\GL_2$. Precisely, we show that the following diagram is commutative and the operator $\CH_z$ is well-defined for any $z\in \CZ$
\begin{align*}
\xymatrix{
\CS(G(F)) \ar[r]^z \ar[d]^{\text{st. orb.}} & \CS(G(F))\ar[d]^{\text{st. orb.}}
\\
\CS\CO(G(F)) \ar[r]^{\CH_z} \ar[d]^{\CT_\alp} & \CS\CO(G(F)) \ar[d]^{\CT_\alp}\\
\CS(T_\alp(F)) \ar[r]^{z_\alp} & \CS(T_\alp(F))
}
\end{align*}
Here $\CS(G(F))$ is the space of smooth and compactly supported functions on $G(F)$ when $F$ is non-archimedean, and is the space of Schwartz algebra consisting functions of rapidly decreasing when $F$ is archimedean. The upper (resp. lower) horizontal operator is given by 
$$
h\in \CS(G(F))\mapsto 
z*h^\vee\quad 
(\text{resp. $h_\alp\in \CS(T_\alp(F))\mapsto z_\alp*h^\vee_\alp$})
$$
where $z_\alp = \CT_\alp\circ \LafSec(z)$. Specializing to the $\rho$-orbital Hankel transform, it turns out that we have the following commutative diagram 
\begin{equation}\label{eq:mainresult:HKT:2}
\xymatrix{
\CS\CO^\rho(G(F)) \ar[d]^{\CT_\alp} \ar[r]^{\CH^\rho_\psi} & \CS\CO^\rho(G(F)) \ar[d]^{\CT_\alp}
\\
\CS^\rho(T_\alp(F)) \ar[r]^{\CF^\rho_{\psi,\alp}} & \CS^\rho(T_\alp(F))
}
\end{equation}
where the pair $(\CS^\rho(T_\alp(F)),\CF^\rho_{\psi,\alp})$ is introduced in subsection \ref{subsec:toruscase} that is of abelian nature.

Finally, based on Theorem \ref{thm:mainresult2}, we derive the following explicit formula for $\CH_z$ in turns of the descent datum. We present the formula for $G=\SL_2$.

\begin{thm}\label{thm:mainresult:5}
With the above notation, the following identities hold.
\begin{enumerate}
\item When $F=\BC$, 
$$
\CH_z(f) = \nu_0\big(z_0*\CT_0(f)^\vee\big);
$$

\item When $F=\BR$, 
$$
\CH_z(f) = 
\sum_{\alp\in \CI}
\CE_\alp
\big(z_\alp*
(
\CT_\alp(f)
)^\vee
\big);
$$

\item When $F$ is non-archimedean of odd residual characteristic,
$$
\CH_z(f) = 
\CE_0^{\rm HK}
(z_0,f)
+
\sum_{\alp\in \CI\bs\{0\}}
\CE_\alp
\big(z_\alp*
(
\CT_\alp(f)
)^\vee
\big)
$$
where 
$$
\CE^{\rm HK}_0(z_0,f) = 
\CE_0
\big(
z_0*(\CT_0(f))^\vee
\big)
-
\big(\CH_z(f)\big)^\el_\St
\cdot 
\v\Del \mathbbm{1}_\el.
$$
$\big(\CH_z(f)\big)^\el_\St$ is the constant given by the following formula
$$
=
\gam(z_0,\del_B^{1/2})
\langle f,\Theta_\St\rangle 
-
\langle 
\nu_0
\big(
z_0*\CT_0(f)^\vee
\big),
\phi_{\del^{1/2}_B}-1\rangle.
$$
\end{enumerate}
\end{thm}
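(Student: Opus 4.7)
The plan is to apply the reconstruction formulae from Theorem \ref{thm:mainresult2} and Theorem \ref{thm:mainresult3} to $\CH_z(f) \in \CC^{\st,\fin}$, once we identify its descent to each maximal torus via $\CT_\alp$. By the commutative diagram preceding the theorem statement, which governs the compatibility between the action of $z \in \CZ$ on the cocenter and the stable transfer, the descent of $\CH_z(f)$ along $\CT_\alp$ equals $z_\alp * (\CT_\alp(f))^\vee$, where $z_\alp = \CT_\alp \circ \LafSec(z)$. This yields the complex case directly from Theorem \ref{thm:mainresult2}(1), and the real case from Theorem \ref{thm:mainresult2}(2) by summing over $\alp \in \CI$ the contributions $\CE_\alp(z_\alp * (\CT_\alp(f))^\vee)$.

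For $F$ non-archimedean of odd residual characteristic, the extra subtlety is that $\CH_z(f)$ lies in $\CC^{\st,\fin,\perp\triv}$, because $z \in \CZ^{\st,\fin}$ satisfies \eqref{num:intro:FTJ} and convolution by such a distribution propagates orthogonality against the trivial character; but $\CH_z(f)$ need not be orthogonal to the Steinberg character. Accordingly I would apply Theorem \ref{thm:mainresult3}(2) with the extended section $\CE_0^{\perp\triv}$, and then repackage the answer into the form $\CE_0^{\rm HK}(z_0, f) + \sum_{\alp \neq 0} \CE_\alp(z_\alp * (\CT_\alp(f))^\vee)$. The multiple of $\v \Del \mathbbm{1}_\el$ that distinguishes $\CE_0^{\rm HK}$ from $\CE_0$ absorbs both the discrepancy between the two extended sections of Theorem \ref{thm:mainresult3} and the now-needed elliptic Steinberg component of $\CH_z(f)$.

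The main computational step is to identify the constant $(\CH_z(f))^\el_\St = \langle \CH_z(f), \Theta^\el_\St\rangle$. I would decompose the Steinberg character as $\Theta_\St = \Theta_\St^\el + \Theta_\St^{\mathrm{hyp}}$, pair with $\CH_z(f)$, and use that $z$ acts on the Steinberg representation by the same scalar as on the trivial representation to get
\begin{equation*}
\langle \CH_z(f), \Theta_\St\rangle = \gam(z, \St) \langle f, \Theta_\St\rangle = \gam(z_0, \del_B^{1/2}) \langle f, \Theta_\St\rangle,
\end{equation*}
the second equality because $\St$ and the trivial representation share the infinitesimal character represented on the split torus by $\del_B^{1/2}$. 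Subtracting the hyperbolic part, which under $\nu_0$ restricts on $\tr(E_0^1)$ to $\phi_{\del_B^{1/2}} - 1$ by the standard formula for the principal series character, and then substituting the split descent $\CT_0(\CH_z(f)) = z_0 * (\CT_0(f))^\vee$, recovers exactly $\gam(z_0, \del_B^{1/2}) \langle f, \Theta_\St\rangle - \langle \nu_0(z_0 * (\CT_0(f))^\vee), \phi_{\del_B^{1/2}} - 1\rangle$.

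The chief obstacle is the bookkeeping of normalizations: one must verify that the pairings $\langle \cdot, \Theta^\el_\St\rangle$ and $\langle \cdot, \Theta^{\mathrm{hyp}}_\St\rangle$ on $\CC^{\st,\fin}$ really translate, under $\nu_0$ and the normalizations of $\v$, $\Del$, and the canonical measure $\ome_\Fc$ on the Steinberg-Hitchin base, into the stated pairings with $\phi_{\del_B^{1/2}} - 1$ on the split side and with $\v \Del \mathbbm{1}_\el$ on the elliptic side. The relevant Plancherel-type identities are precisely those recalled in Section \ref{sec:Bernstein-center} using Moy-Tadic, together with the explicit elliptic formula for the Steinberg character, so no further harmonic-analytic input beyond the earlier sections should be necessary.
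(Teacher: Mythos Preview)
Your computation of $(\CH_z(f))^{\el}_{\St}$ in the non-archimedean case is correct and matches the paper's proof exactly: decompose $\Theta_{\St}$ into elliptic and hyperbolic parts, evaluate $\langle \CH_z(f),\Theta_{\St}\rangle = \gam(z,\St)\langle f,\Theta_{\St}\rangle = \gam(z_0,\del_B^{1/2})\langle f,\Theta_{\St}\rangle$, then subtract the hyperbolic contribution, which on the split locus is $\langle \nu_0(z_0*\CT_0(f)^\vee),\phi_{\del_B^{1/2}}-1\rangle$. The archimedean cases are also handled correctly.

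There is, however, a genuine gap in your non-archimedean framework. You claim $\CH_z(f)\in\CC^{\st,\fin,\perp\triv}$ on the grounds that $z$ satisfies \eqref{num:intro:FTJ} and that ``convolution by such a distribution propagates orthogonality against the trivial character.'' This is false: \eqref{num:intro:FTJ} says $\int_F z_\Fc = 0$, which means that $\LafSec(z)\in\CC^{\st,\perp\triv}$, but it does \emph{not} force $\gam(z,\triv)=0$. Indeed $\gam(z,\triv)=\gam(z_0,\del_B^{1/2})$ since the trivial and Steinberg representations share their infinitesimal character, and this scalar is generically nonzero. Hence $\langle\CH_z(f),\triv\rangle=\gam(z,\triv)\langle f,\triv\rangle$ need not vanish, and Theorem~\ref{thm:mainresult3}(2) does not apply.

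The paper avoids this by invoking Theorem~\ref{thm:mainresult2}(3) directly (equivalently Theorem~\ref{thm:explicitsection}, Part~(2)(c)), which requires no orthogonality hypothesis: for any $c\in\CC^{\st,\fin}$ one has $c = c^{\el}_{\St}\cdot\v\Del\Theta^{\el}_{\St} + \sum_{\alp\in\CI}\CE_\alp(c_\alp)$. Since $\Theta^{\el}_{\St}=-\mathbbm{1}_{\el}$, applying this to $c=\CH_z(f)$ and grouping the $\alp=0$ term with the Steinberg correction immediately yields the definition of $\CE_0^{\rm HK}(z_0,f)$. So the fix is simply to replace your appeal to Theorem~\ref{thm:mainresult3}(2) by Theorem~\ref{thm:mainresult2}(3); the rest of your argument then goes through verbatim.
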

It would be desirable if the formula above can be used to establish a global Poisson summation formula for $\CH^\rho_{\psi}$ on the Steinberg-Hitchin base.

\subsection{Notation and conventions}\label{subsec:notation}
Throughout the paper, fix a local field of residual characteristic not equal to two. In the archimedean case, $F$ is either $\BR$ or $\BC$. When $F$ is nonarchimedean, let $\Fo_F$ be the ring of integers with maximal ideal $\Fp_F$ generated by a fixed uniformizer $\vpi_F$, and $\Fo_F/\Fp_F = \kappa_F \simeq \BF_{q_F}$ is a finite field with $q_F$ elements. 

We normalize the additive invariant measure $\ud x$ on $F$ as follows: In the archimedean case $F=\BR$ we choose the usual Lebesgue measure. For $F=\BC$, we choose the measure $\ud x \ud y$ where $x$ and $y$ are real and imaginary parts. When $F$ is nonarchimedean, we normalize $\ud x$ so that the ring of integers $\Fo_F$ of $F$ has volume one.  

We fix a nontrivial additive character $\psi$ of $F$. There is then a unique additive Haae measure $\ud x$ on $F$, which is autodual with respect to $\psi$ in the following sense. 
Let $\CS(F)$ be the space of Schwartz-Bruhat functions on $F$. We have the Fourier transform 
$$\CF_\psi: \CS(F)\to \CS(F)$$
given by 
$$\CF_\psi(f)(y)=\int_F f(x)\psi(xy) \ud x.$$
The additive measure $\ud x$ is said to be auto-dual if we have $\CF_{\psi}\circ \CF_{\psi^{-1}}={\rm id}_{\CS(F)}$. 
In the case $F$ is nonarchimedean, we assume that $\psi$ is trivial on $\Fo_F$ but nontrivial on $\Fp^{-1}_F = \vpi^{-1}_F\Fo_F$ then the auto-dual measure $\ud x$ assigns $\Fo_F$ the measure one. If $F$ is Archimedean, we assume that
$$
\psi(x) = 
\begin{cases}
	\exp(2\pi ix), & F=\BR;\\
\exp(2\pi i(x+\wb{x})), &F=\BC.
\end{cases}
$$
Then the auto-dual measure will be the usual Lebesgue measure on $\BR$ and $\BC$. 

The action of $F^\times$ on $F$ induces an action of $F^\times$ on additive invariant measures given rise to the norm $x\mapsto |x| \in \BR_+$. When $F$ is archimedean, we have
$$|x| = 
\begin{cases}
	\text{absolute value of }x, &F=\BR\\
x\wb{x}, &F=\BC
\end{cases}
.$$
For nonarchimedean local fields $F$, we have $|x| = q_{F}^{-\ord_F(x)}$ where $\ord_F:F\to \BZ\cup \{\infty\}$ is the valuation on $F$ such that $\ord_F(\vpi_F) = 1$. In both archimedean and nonarchimedean cases, we denote $$\ud^\times x =|x|^{-1} {\ud x},$$ which is a Haar measure on $F^\times$. When $F$ is non-archimedean, and if $\vol(\Fo_F,\ud x)=1$, then we have $\vol(\Fo^\times_F,\ud^\times x) = 1-q^{-1}_F$. 

Let $E$ be a quadratic étale $F$-algebra. Let $\Tr = \Tr_{E/F} = \tr$ and $\Nr = \Nr_{E/F} = \det$ be the trace and norm map, respectively. Let $\eta =\eta_E= \eta_{E/F}:F^\times \to \{\pm 1\}$ be the corresponding quadratic character attached to $E/F$ via class field theory. Let $\lam_{E} = \lam_{E/F} = \lam_{E/F,\psi}$ be the Weil constant defined in \cite[Thm.~2]{weil64unitary} and \cite[Lem.~1.1]{jlgl2}. Let $\iota =\iota_{E} = \iota_{E/F}$ be the nontrivial involution on $E$ fixing $F$. Let $\psi_E = \psi\circ \Tr_{E/F}$ and let $\ud_E x$ be the additive Haar measure on $E$ that is self-dual with respect to $\psi_E$. In particular when $F$ is non-archimedean, $\vol(\Fo_E,\ud_E x) = |\FD_{E/F}|^{1/2}$ where $\FD_{E/F}$ is the discriminant for $E/F$. Let $E^1 = \ker(\Nr:E^\times \to F^\times)$ which fits into a short exact sequence $$1\to E^1\to E^\times \to \Nr(E^\times)\to 1.$$ Equip $\Nr(E^\times)$ with the restriction of the multiplicative Haar measure $\ud^\times x$ on $F^\times \supset \Nr(E^\times)$. Combining with the multiplicative Haar measure $\ud^\times _E x = {|\Nr(x)|}^{-1}{\ud_E x}$ on $E^\times$, the induced Haar measure $\ud_{E,1}e$ on $E^1$ is equal to $\ud^\times x$ if $E/F$ is split, and has the property that 
\begin{equation}\label{eq:notation:volumeE1}
\vol(E^1,\ud_{E,1}e)= \frac{\vol(\Fo^\times_E,\ud_E)[\Fo^\times_F:\Nr(\Fo^\times_E)]}{\vol(\Fo^\times_F,\ud x)}=\frac{(1-q_E^{-1})|\FD_{E/F}|^{\frac{1}{2}}e(E\mid F)}{(1-q_F^{-1})}
\end{equation}
when $E/F$ is a field extension and $F$ is non-archimedean. Here $e(E\mid F)$ is the ramification index of $E/F$.

Let $G=\GL_2$ (resp. $\SL_2$) be the group scheme of $2\times 2$ invertible matrices (resp. matrices with determinant one). Let $\c:G\to \Fc_G = G\sslash \Ad(G)\simeq \BG_a\times \BG_m$ (resp. $\BG_a$) be the Steinberg map \cite[p87]{Steinberg-conjugacy} sending $g\in G$ to $(c,a)=(\tr(g),\det(g))$ (resp. $c=\tr(g)$).  Let $\RD^G$ be the norm of the Weyl discriminant of $G$ and $\Del$ be its square root which can be considered as functions on $\mathfrak c_G(F)$
$$\Delta(c,a)=|c^2-4a|^{1/2} \mbox{ resp } \Delta(c)=|c^2-4|^{1/2}.$$

Haar measures on $\SL_2(F)$ can be expressed by coordinates given by the triangular decomposition.
For a generic element $g\in \SL_2(F)$ lying in the open dense Bruhat cell, we write
\begin{equation}\label{eq:notation:sl2mes:1}
g=  
\begin{pmatrix}
1 & \\
v & 1
\end{pmatrix}
\begin{pmatrix}
t & \\
  &t^{-1}
\end{pmatrix}
\begin{pmatrix}
1 & u\\
 &1
\end{pmatrix}\quad u,v\in F,t\in F^\times.
\end{equation}
Fix the following measure $\ud g$ on $\SL_2(F)$ inherited from the decomposition \eqref{eq:notation:sl2mes:1}
\begin{equation}\label{eq:notation:sl2mes:2}
\ud g = |t|^2\ud^\times t\ud u\ud v.
\end{equation}
By direct calculation, we see that this formula provides a Haar measure on $\SL_2(F)$. About this Haar measure, 
the following volume calculation will be needed later.

\begin{lem}\label{lem:notation:sl2:vol}
Let $F$ be non-archimedean. Then 
$$
\vol(\SL_2(\Fo_F),\ud^\times g) = 1-q_F^{-2}
$$
\end{lem}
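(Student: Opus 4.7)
The plan is to integrate directly against the coordinates $(v,t,u)$ supplied by the Bruhat decomposition, after a change of variables that trivialises the Jacobian. Setting $u'=tu$, $v'=tv$ at each fixed $t$, we have $\ud u\,\ud v=|t|^{-2}\ud u'\,\ud v'$, so that
$$\ud g=|t|^{2}\,\ud^\times t\,\ud u\,\ud v=\ud^\times t\,\ud u'\,\ud v'.$$
Multiplying out the decomposition \eqref{eq:notation:sl2mes:1} shows that in these new coordinates the generic element of $\SL_2(F)$ reads
$$g=\begin{pmatrix}t&u'\\ v'&(u'v'+1)/t\end{pmatrix},$$
so $g\in \SL_2(\Fo_F)$ lying in the open Bruhat cell is equivalent to the three entries $t,u',v'\in\Fo_F$ (with $t\neq 0$) together with the divisibility condition $u'v'+1\in t\Fo_F$. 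The complement of the open cell cuts out the locus $t=0$, which has measure zero, so it can be ignored.

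Next I would stratify the $t$-integral by $n=\ord_F(t)\geq 0$. For $n=0$ the condition $u'v'+1\in\Fo_F$ is automatic, and the stratum contributes $\vol(\Fo_F^\times,\ud^\times t)\cdot\vol(\Fo_F)^{2}=1-q_F^{-1}$. For $n\geq 1$ the divisibility becomes $u'v'\equiv -1\pmod{\vpi_F^{n}}$, which forces $u'\in\Fo_F^\times$ and then pins down $v'$ modulo $\vpi_F^{n}$, so the $(u',v')$-volume is $(1-q_F^{-1})q_F^{-n}$; together with the $t$-volume $1-q_F^{-1}$ this contributes $(1-q_F^{-1})^{2}q_F^{-n}$ per $n$.

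Summing the geometric series,
$$(1-q_F^{-1})+(1-q_F^{-1})^{2}\sum_{n\geq 1}q_F^{-n}=(1-q_F^{-1})(1+q_F^{-1})=1-q_F^{-2},$$
which is the claimed value. There is no real obstacle here; the only point requiring care is the bookkeeping for the divisibility $u'v'+1\in t\Fo_F$ and the verification that the measure on the removed big-cell complement indeed vanishes. As a sanity check, the same computation applied to the first congruence subgroup gives $\vol(K_1)=q_F^{-3}$, so $\vol(\SL_2(\Fo_F))=|\SL_2(\kappa_F)|\cdot q_F^{-3}=(q_F^{3}-q_F)q_F^{-3}=1-q_F^{-2}$, confirming the normalisation of $\ud g$ agrees with the standard one.
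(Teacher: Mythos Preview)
Your proof is correct and follows essentially the same approach as the paper: both stratify the Bruhat cell by $\ord_F(t)$ and sum the resulting geometric series, obtaining identical contributions at each level. Your change of variables $u'=tu$, $v'=tv$ is a mild cosmetic simplification (it absorbs the Jacobian $|t|^2$ and makes the integrality conditions read off directly as matrix entries), but otherwise the computation and the bookkeeping match the paper's proof line by line.
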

\begin{proof}
Write 
$$
g = \begin{pmatrix}
1 & \\
v &1 
\end{pmatrix} 
\begin{pmatrix}
t & \\
  &t^{-1}
\end{pmatrix}
\begin{pmatrix}
1 & u\\
  &1
\end{pmatrix}
=
\begin{pmatrix}
t & tu \\
tv & t^{-1}+tuv
\end{pmatrix}.
$$
Since $\det g=1$, $g\in \SL_2(\Fo_F)$ if and only if all of its coordinates lie in $\Fo_F$. Equivalently 
$$
t\in \Fo_F,\quad tu,tv\in \Fo_F,\quad 1+t^2uv\in t \Fo_F.
$$
When $t\in \Fo^\times_F$, $u,v\in \Fo_F$, and hence under the measure \eqref{eq:notation:sl2mes:2}, it provides a total mass $\vol(\Fo^\times_F,\ud^\times t) = 1-q^{-1}_F$. When $t\in \Fp^k_F\bs \Fp^{k+1}_F= \vpi^k_F \Fo^\times_F$ for some $k\geq 1$,
$$
u,v\in \vpi^{-k}_F \Fo^\times_F,\quad \vpi^{2k}_F uv\in -1+\Fp^{k}_F
$$
and hence under the measure \eqref{eq:notation:sl2mes:2}, it provides a total mass 
$$
|\vpi_F^{k}|^2 |\vpi^{-k}_F|^2 \vol(\Fo^\times_F,\ud^\times t)\vol(\Fo^\times_F,\ud u) \vol(-1+\Fp^k,\ud v)=(1-q_F^{-1})^2 q_F^{-k}.
$$
In conclusion, we have
$$
\vol(\SL_2(\Fo_F),\ud g) = 
1-q_F^{-1}+
(1-q_F^{-1}) ^2
\sum_{k\geq 1}q_F^{-k} = 1-q_F^{-2},
$$
which is the desired formula.
\end{proof}

Similarly, for $g\in \GL_2(F)$ lying in the open dense Bruhat cell, write 
\begin{equation}\label{eq:notation:gl2:1}
g= 
\begin{pmatrix}
a & \\
  & 1
\end{pmatrix}
\begin{pmatrix}
1 & \\
v & 1
\end{pmatrix}
\begin{pmatrix}
t & \\
  & t^{-1}
\end{pmatrix}
\begin{pmatrix}
1 & u\\
  & 1
\end{pmatrix},\quad u,v\in F,\quad a,t\in F^\times.
\end{equation}
Fix the following measure $\ud g$ on $\GL_2(F)$ inherited from the decomposition \eqref{eq:notation:gl2:1}
\begin{equation}\label{eq:notation:gl2:2}
\ud g = |t|^2\ud^\times a\ud^\times t\ud u\ud v.
\end{equation}
By direct calculation the trivial extension of \eqref{eq:notation:gl2:2} to $\GL_2(F)$ provides a Haar measure on $\GL_2(F)$. Based on Lemma \ref{lem:notation:sl2:vol}, when $F$ is non-archimedean,
\begin{equation}\label{eq:notation:gl2:3}
\vol(\GL_2(\Fo_F), \ud g) = (1-q_F^{-1})(1-q_F^{-2}).
\end{equation}
In particular, $\ud g$ coincides with ${|\det x|^{-2}}{\ud x}$ where $\ud x$ is the restriction of the additive Haar measure on the space of $2\times 2$ matrices $\RM_{2\times 2}(F)$ normalized by $\vol(\RM_{2\times 2}(\Fo_F),\ud g) =1$.

\subsection{Acknowledgement}\label{acknowledgement}
We thank Jayce Getz, Hervé Jacquet, Dihua Jiang, Daniel Johnstone, David Kazhdan,  Robert Kottwitz, Robert Langlands, Gérard Laumon, Yiannis Sakellaridis, Freydoon Shahidi, Zhiwei Yun, and Don Zagier for stimulating conversations related to this work. Both authors were partially supported by the second author's Simon Investigator grant. The second author was also partially supported by the NSF grants 1702380 and 2201314.


\section{Local aspects of the Braverman-Kazhdan program}\label{BK:intro} \label{sec:BK-intro}

In this section, we review the local aspects of the  Braverman-Kazhdan program. In subsection \ref{subsec:toruscase}, we establish the proposal for the torus case.

\subsection{An abstract of the local proposal}\label{subsec:proposal}

Let $G$ be a reductive group over a local field $F$. Following \cite[\S 4.2]{ngo2016hankel}, we will assume that $G$ fits into the following short exact sequence
\begin{equation}\label{eq:proposal:ses}
1\to G^\p\to G\to \BG_m\to 1
\end{equation}
with $\nu: G\to \BG_m$ a nontrivial homomorphism playing the role of the determinant. Dually there is a homomorphism $\nu^\vee:\BC^\times \to \LG$ where $\LG$ is the Langlands dual group of $G$. Let us restrict to the case when $\rho:\LG \to \GL(V_\rho)$ is irreducible and $\rho\circ \nu^\vee$ is the scalar multiplication of $\BC^\times$ on $V_\rho$ as in \cite[\S4.2]{ngo2016hankel}, which can always be achieved by adding an extra $\BG_m$ factor to $G$. Set 
\begin{equation}\label{eq:proposal:1}
L(\pi,\rho) = L(-\frac{n_\rho-1}{2},\pi,\rho),\quad 
n_\rho = \langle 2\eta_G,\lam_\rho\rangle +1
\end{equation}
where $2\eta_G$ is the sum of positive roots of $G$ and $\lam_\rho$ is the highest weight of $\rho$.
Here $$L(s,\pi,\rho)= L(\pi\otimes |\nu|^{s+\frac{n_\rho-1}{2}},\rho)$$ is the unramified local $L$-factor attached to an unramified representation $\pi$ of $G(F)$, which is defined via $L(s,\pi,\rho) = \det\big(\Id_{V_\rho}-\rho(c(\pi))q^{-s}\big)^{-1}$ with $c(\pi)$ the Satake parameter of $\pi$,
$2\eta_G$ is the sum of positive roots of $G$ and $\lam_\rho$ is the highest weight of $\rho$. 

The local aspect of the conjecture of Braverman-Kazhdan can be stated as follows:

\begin{conjec}\label{conjec:bknproposal}
\begin{enumerate}
\item There exists a Schwartz space $\CS^\rho(G(F))$ which contains the space of smooth and compactly supported functions $\CC^\infty_c(G(F))$ and is contained in the space of smooth functions $\CC^\infty(G(F))$, such that the zeta integral 
\begin{equation}\label{eq:proposal:zeta}
\CZ(s,\phi,\vphi_\pi) = 
\int_{G(F)}
\phi(g)\vphi_\pi(g)|\nu(g)|^{s+\frac{n_\rho-1}{2}}\ud g,\quad 
\phi\in \CS^\rho(G(F)),\vphi_\pi\in \CC(\pi)
\end{equation}
is absolutely convergent for $\Re(s)$ sufficiently large, with a meromorphic continuation to $s\in \BC$. Here $\ud g$ is a fixed Haar measure on $G(F)$ and $\CC(\pi)$ is the set of smooth matrix coefficients of an irreducible admissible representation $\pi$ of $G(F)$. 

The zeta integral $\CZ(s,\phi,\vphi_\pi)$ is a holomorphic multiple of the Langlands local $L$-factor $L(s,\pi,\rho)$. Moreover, when $F$ is non-archimedean, the set
$$
\CI_\pi = 
\{\CZ(s,\phi,\vphi_\pi)\mid \phi\in \CS^\rho(G(F)),\vphi_\pi\in \CC(\pi)\}
$$
is a finitely generated nonzero fractional ideal of $\BC[q^s,q^{-s}]$ with generator $L(s,\pi,\rho)$. When $F$ is archimedean, let $S_{a,b} = \{s\in \BC\mid a<\Re(s)<b\}$ be the vertical strip for any $a<b$. If $P(s)$ is a polynomial in $s$ such that $s\in \BC\mapsto P(s)L(s,\pi,\rho)$ is bounded in the vertical strip $S_{a,b}$, then $s\in \BC\mapsto P(s)\CZ(s,\phi,\vphi_\pi)$ is also bounded in the same vertical strip $S_{a,b}$;

The Schwartz space $\CS^\rho(G(F))$ contains a distinguished vector, the $\rho$-basic function $\BL^\rho\in \CS^\rho(G(F))$, which satisfies the following properties 
\begin{enumerate}
\item $\{\CZ(s,\BL^\rho,\vphi_\pi)\mid \vphi_\pi\in \CC(\pi)\}$ is nonzero only when $\pi$ is unramified;

\item For $\pi$ unramified and $\vphi_\pi^\circ$ the zonal spherical function of $\pi$
$$
\CZ(s,\BL^\rho,\vphi_\pi^\circ) = L(s,\pi,\rho).
$$
\end{enumerate}

\item There exists an invariant distribution $\RJ^{\rho}_{\psi}$ on $G(F)$ which is locally integrable and smooth over generic locus, such that the following properties hold
\begin{enumerate}
    \item
When $F$ is non-archimedean, for any $n\in \BZ$, the distribution $\RJ^{\rho}_{\psi,n} = \RJ^{\rho}_{\psi}\cdot \mathbbm{1}_{G_n}$ lies in the Bernstein center of $G(F)$, where $\mathbbm{1}_{G_n}$ is the characteristic function of the set $G_n = \{g\in G(F)\mid |\nu(g)| = q^{-n}\}$. Let $\gamma_{n}(\pi,\rho,\psi)$ be the regular function on the Bernstein variety of $G(F)$ defined by $\pi(\RJ^{\rho}_{\psi,n}) = \gamma_{n}(\pi,\rho,\psi)\Id_\pi$, then the following Laurent series
$$
s\in \BC\mapsto 
\sum_{n}\gamma_{n}(\pi\otimes |\nu|^s,\rho,\psi)
$$
is convergent for $\Re(s)$ sufficiently large, with a meromorphic continuation to $s\in \BC$, and is equal to the Langlands local gamma factor $\gam(-\frac{n_\rho-1}{2}-s,\pi^\vee,\rho,\psi)$. Here $\pi^\vee$ is the smooth contragredient of $\pi$;

    \item 
When $F$ is archimedean, for any delta sequence $\{\Fc_n\}_{n\geq 1}\subset \CC^\infty_c(G(F))$ tending to the delta mass supported at identity of $G(F)$, the following limit holds 
$$
\lim_{n\mapsto \infty} (\pi\otimes |\nu|^s)(\RJ^{\rho}_{\psi}*\Fc_n^\vee)
 \mapsto \gam(-\frac{n_\rho-1}{2}-s,\pi^\vee,\rho,\psi)\Id_\pi
$$ 
as long as $\Re(s)$ is sufficiently large depending only on $\pi$;
\end{enumerate}

\item The $\rho$-Fourier transform 
$$
\CF^{\rho}_{\psi}(\phi)(g) := 
|\nu(g)|^{-n_\rho}
(\RJ^{\rho}_{\psi}*\phi^\vee)(g),\quad \phi\in \CC^\infty_c(G(F))
$$
extends to an invertible operator sending $\CS^\rho(G(F))$ to itself, and enjoys the following properties 
\begin{enumerate}
    \item
$\CF^{\rho}_{\psi}\circ \CF^{\rho}_{\psi^{-1}} = \Id$;

    \item 
$\CF^{\rho}_{\psi}$ extends to a unitary operator on $L^2(G(F),|\nu(g)|^{n_\rho}\ud g)$;

    \item 
$\CF^{\rho}_{\psi}(\BL^\rho) = \BL^\rho$;
\end{enumerate}
After meromorphic continuation, the following functional equation holds 
\begin{equation}\label{eq:proposal:fe}
\CZ(1-s,\CF^{\rho}_{\psi}(\phi),\vphi_\pi^\vee) = 
\gam(s,\pi,\rho,\psi)\CZ(s,\phi,\vphi_\pi),\quad \phi\in \CS^\rho(G(F)),\vphi_\pi\in \CC(\pi).
\end{equation}
\end{enumerate}
\end{conjec}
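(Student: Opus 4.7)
The plan is to reduce part~(2) of the conjecture---the construction of $\RJ^\rho_\psi$---to abelian Fourier analysis on maximal tori, where the analogous statement is essentially classical (subsection~\ref{subsec:toruscase}), and then to derive parts~(1) and~(3) from~(2) by formal duality. Under the standing assumption that $F$ has odd residual characteristic, every irreducible representation of $\SL_2(F)$ is either a constituent of a principal series or a dihedral representation attached to a character of $E^1$ for some quadratic \'etale $F$-algebra $E$. Consequently the Langlands $\gamma$-factor $\gamma(s,\pi,\rho,\psi)$ can be traced back, via the stable transfer $\CT_E$, to the abelian $\gamma$-factor on the dual torus $E^1$. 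The construction of $\RJ^\rho_\psi$ will therefore proceed by patching the abelian kernels $\{\RJ^\rho_{\psi,\alpha}\}_{\alpha\in\CI}$ across all stable conjugacy classes of maximal tori.

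\textbf{Construction.} First I would establish the abelian case, producing $\RJ^\rho_{\psi,\alpha}$ on each maximal torus by Tate-style Fourier analysis. Next, feeding these into Corollary~\ref{cor:mainresult:3}, I would assemble
\[
\RJ^\rho_\psi \;=\; \Laf\!\left(\CE_0^{\perp,\triv}\bigl(\nu_0(\RJ^\rho_{\psi,0})\bigr) \;+\; \sum_{\alpha\neq 0}\CE_\alpha\bigl(\nu_\alpha(\RJ^\rho_{\psi,\alpha})\bigr)\right),
\]
with the obvious simplifications in the archimedean case. The two central tools are the Lafforgue transform $\Laf:\CC\to\CZ$ together with its section $\LafSec$ given by the integral formula~\eqref{integral-Laf}, and the elliptic right inverses $\CE_\alpha:\CC(E_\alpha^1)^{\FS_2}\to\CC^{\st,\fin}$ to the stable transfer maps $\CT_\alpha$, whose existence rests on the elliptic orthogonality of characters and yields the almost direct decomposition of $\CC(\SL_2(F))$ described in Section~\ref{sec:summands}.

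\textbf{Verification.} The next task is to show that the distribution so constructed acts on every irreducible representation by the correct Langlands $\gamma$-factor. For a dihedral representation $\CS(\chi)$ this is precisely the descent formula (Theorem~\ref{thm:mainresult1}): applying $\LafSec$ to $\RJ^\rho_\psi$ and then $\CT_E$ reproduces, up to the Weil constant $\lambda_{E/F}$, the abelian kernel on $E^1$, whose pairing with $\chi$ is the abelian $\gamma$-factor---which coincides with the dihedral $\gamma$-factor by inductivity of $\gamma$-factors. Principal series are handled by descent to the split torus $T_0=F^\times$, where the formula collapses to the Godement--Jacquet-type kernel. The $\GL_2$ case follows by twisting along the determinant factor using the short exact sequence~\eqref{eq:proposal:ses}. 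Parts~(1) and~(3) of the conjecture---existence of the Schwartz space $\CS^\rho(G(F))$, the basic function $\BL^\rho$, unitarity, and the functional equation~\eqref{eq:proposal:fe}---are then extracted by defining $\CS^\rho(G(F))$ as the space generated by $\CC^\infty_c(G(F))$ and its image under $\CF^\rho_\psi$, the functional equation being automatic from the Bernstein-center interpretation of $\RJ^\rho_\psi$ once it is shown to act on each $\pi$ by $\gamma(-\tfrac{n_\rho-1}{2}-s,\pi^\vee,\rho,\psi)$.

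\textbf{Main obstacle.} The delicate step is ensuring the correct action on supercuspidal representations, where the naive patching recipe of Lafforgue is known to fail (already for $\rho=\Sym^2$ of $\BG_m\times\SL_2$). The conjugation by $|\cdot|^{-1}$ built into $\LafSec$ and the elliptic sections $\CE_\alpha$ are designed precisely to repair this failure, so the argument is only as strong as the tightness of the descent formula on the elliptic locus; establishing it directly from the Weil representation, rather than extracting it from the Lafforgue integral, is the hard technical core. A secondary subtlety is the singular role of the Steinberg representation in the non-archimedean case, forcing the choice between $\CE_0^{\perp,\St}$ and $\CE_0^{\perp,\triv}$ as in Theorem~\ref{thm:mainresult3}; for the distributions $\RJ^\rho_\psi$ one must use $\CE_0^{\perp,\triv}$ because of the orthogonality property~\eqref{num:intro:FTJ}. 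Once these corrections are installed, the integral formula~\eqref{integral-Laf} together with the Gelfand--Graev identity~\eqref{GG-transfer} reduces the remaining verifications to Bessel-type integral identities on tori, which are within reach of direct calculation via the Weil representation.
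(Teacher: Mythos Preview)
The fundamental issue is that the statement you are attempting to prove is a \emph{conjecture}, and the paper does not prove it. The paper states Conjecture~\ref{conjec:bknproposal} as the local Braverman--Kazhdan proposal for a general pair $(G,\rho)$; it then establishes the conjecture only for tori (Theorem~\ref{thm:BK:torus}), and records that the full conjecture is known in a handful of other cases (Godement--Jacquet for $\GL_n$ with $\rho=\std$, and $\BG_m\times\Sp_{2n}$ with $\rho=\std$ via \cite{JLZ}). For $\SL_2$ and $\GL_2$ with arbitrary $\rho$, the paper's main theorem (Theorem~\ref{thm:explicitformula-Jrho}, Corollary~\ref{cor:mainresult:3}) produces an explicit formula for the Fourier kernel $\RJ^\rho_\psi$ satisfying \eqref{eq:explicit-formula:1}, which is partial progress on part~(2); the paper does \emph{not} claim to construct the Schwartz space $\CS^\rho(G(F))$ of part~(1), nor to verify the unitarity, the basic-function fixed-point property, or the functional equation of part~(3).

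Your proposal correctly identifies the paper's machinery for building $\RJ^\rho_\psi$ (the Lafforgue transform, the elliptic sections $\CE_\alpha$, the descent formula), and your verification step for part~(2) is essentially the content of Theorem~\ref{thm:explicitformula-Jrho}. But your treatment of parts~(1) and~(3) is a genuine gap. Defining $\CS^\rho(G(F))$ as ``the space generated by $\CC^\infty_c(G(F))$ and its image under $\CF^\rho_\psi$'' does not deliver the basic function $\BL^\rho$ (which is not compactly supported and is not obviously the $\rho$-Fourier transform of a test function), nor the fractional-ideal property $\CI_\pi = L(s,\pi,\rho)\cdot\BC[q^{\pm s}]$, nor the archimedean vertical-strip bounds. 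The functional equation \eqref{eq:proposal:fe} is likewise not ``automatic from the Bernstein-center interpretation'': knowing that $\RJ^\rho_\psi$ acts on each $\pi$ by the correct $\gamma$-factor gives you the functional equation only for $\phi\in\CC^\infty_c(G(F))$, and extending it to the conjectural $\CS^\rho(G(F))$ requires precisely the analytic control over that space that you have not constructed. These are not minor technicalities---they are the substance of the conjecture that remains open beyond the torus case, and the paper makes no claim to have resolved them.

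A smaller point: you write that Lafforgue's recipe ``is known to fail (already for $\rho=\Sym^2$)''. The paper says the opposite---Lafforgue's correction \emph{works} for $\Sym^2$ (subsection~\ref{subsec:excepex}, case~II) and fails only for higher symmetric powers and for the tensor lift (case~III).
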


\begin{rmk}\label{rmk:proposal}

\begin{enumerate}
\item The Schwartz space is expected to be connected with the  reductive monoid $M^\rho$ attached to the pair $(G,\rho)$. Over an algebraically closed field, the theory of reductive monoid is developed by E. Vinberg, M. Putcha, and L. Renner $($\cite{vinberg1995reductive}\cite{MR2134980}$)$. Over a non-algebraically closed field, an explicit construction of $M^\rho$ is given in \cite{ngo2016hankel}. To make it precise, for every $L$-homomorphism $\rho:\LG\to \GL(V_\rho)$, as shown in \cite[Prop.~5.1]{ngo2016hankel}, one can construct a monoid $M^\rho$ containing $G$ as the open subset of invertible elements. Over a non-archimedean local field of positive characteristic, the basic function $\BL^\rho$ can be interpreted as the trace of Frobenius on the sought-for intersection complex of the arc space of $M^\rho$ as shown in \cite{MR3462881}\cite{MR3619916}. In general, it is expected that a basis of the space of $\rho$-Schwartz functions can be constructed by nearby cycles (\cite{bezrukavnikov2023schwartz}).

\item In the short exact sequence \eqref{eq:proposal:ses}, if one set $G^\p = \SL_2$, then for any dominant cocharacter $\lam_n = (a\in \BG_m\mapsto a^n\in \BG_m)$, Vinberg's theory of universal monoid provides the following explicit description of $M_\rho$ and $G$ fitting into \eqref{eq:proposal:ses}:
\begin{enumerate}
\item 
\begin{align*}
M^{\lam_n} &= \{(a,x)\in \BA\times \RM_{2\times 2}\mid \lam_n(a) = \det g\}
\\
 G^{\lam_n} &= \{(a,g)\in \BG_m\times \GL_2\mid \lam_n(a) = \det g\};
\end{align*}

\item 
$$
G^{\lam_n}
\simeq 
\bigg\{
\begin{matrix}
\BG_m\times \SL_2 & n\text{ odd}\\
\GL_2 & n\text{ even}
\end{matrix}
$$

\item 
When $n$ is odd, ${}^LG^{\lam_n}\simeq \GL_2(\BC)$ and $
\rho_{\lam_n}
:
(\det)^{-\frac{n-1}{2}}\otimes \Sym^n: \GL_2(\BC)\to \GL_{n+1}(\BC) 
$; When $n$ is even, ${}^LG^{\lam_n}\simeq \BC^\times  \times \SO_3(\BC)$ and $\rho_{\lam_n}$ is the tensor product of the scaling action of $\BC^\times$ and the unique irreducible representation of $\SO_3(\BC)$ of dimension $n+1$. The result is known to experts, and explicit calculations can be found in \cite[Ex.~5.1.7]{Zhilinsingapore}.
\end{enumerate}

\item 
For the normalization of the action of $\RJ^{\rho}_{\psi}$ on $\pi$, i.e. the property (3) above, and how is it connected with the functional equation \eqref{eq:proposal:fe}, see \cite{MR3990815} and \cite[(5.2.1)]{Zhilinsingapore} for a precise derivation. 
Up to unramified twist there is no harm to replace $n_\rho$ by any complex number. In section \ref{sec:HKT} and section \ref{sec:stableorbit}, we will take $n_\rho = 0$ in order to present a cleaner formula for the orbital Hankel transforms and stable orbital integral of basic functions for $G=\BG_m\times \SL_2$ and $\GL_2$ and arbitrary $\rho$.

\item 
The conjecture holds for $G=\GL_n$ over an $F$-central simple algebra and $\rho$ the standard representation, which is due to Godement-Jacquet \cite{gjzeta} and \cite{jacquetarchimedean} $($Over an archimedean local fields, \cite{gjzeta} only treated $K$-finite vectors in $\CS^\rho(G(F))$ and $\CC(\pi)$, which was completed in \cite{jacquetarchimedean}$)$. Based on \cite{JLZ}, the conjecture is also known for $G=\BG_m\times \Sp_{2n}$, $\rho=\std$ and $F$ is non-archimedean of charateristic zero. Finally, the conjecture is also known when $G$ is a torus with mild assumptions on $\rho$, which is announced in \cite[\S 5.2]{ngo2016hankel}, and the details will be provided in subsection \ref{subsec:toruscase}.

\item 
When $F$ is archimedean, similar as the non-archimedean case, we expect that for any test function $f\in \CC^\infty_c(F^\times)$, $(f\circ \nu) \cdot \phi$ lies in $\CS(G(F))$, which is the space of Schwartz functions on $G(F)$ in the sense of \cite[7.1.2]{Wallach-RealI} that are rapidly decreasing under the action of any algebraic differential operators on $G(F)$ (and also \cite{AGSchwartznashmd}). Moreover, $\big((f\circ \nu)\cdot \RJ^\rho_{\psi}\big)*\CS(G(F))\subset \CS(G(F))$. 

\item Following Godement-Jacquet (\cite{gjzeta}), when $F$ is archimedean, it suffices to consider $K\times K$-finite functions in $\CS^\rho(G(F))$ (and $\CS(G(F))$), which is enough to produce all the local $L$-factors. Here $K$ is a fixed maximal compact subgroup of $G(F)$.

\item 
There are conjectures on the construction of the general $\rho$-Fourier kernels $\RJ^{\rho}_{\psi}$ proposed in \cite{BK00} and \cite[\S 6.2]{ngo2016hankel}, which will be briefly revisited in subsection \ref{subsec:excepex}.
\end{enumerate}
\end{rmk}

\subsection{Torus case}\label{subsec:toruscase}

When $G$ is a torus, Conjecture \ref{conjec:bknproposal} is known for any $\rho$. Here, we follow the exposition in \cite[\S 5.2,\S 6.1]{ngo2016hankel} and provide more details.

\subsubsection{General setup}
First let us consider the case when $T$ is a split torus over $F$, and let $\rho:T^\vee\to \GL(V_\rho)$ be a finite dimensional representation where $T^\vee$ is the dual torus of $T$ over $\BC$. Suppose that $\rho$ decomposes as a direct sum of characters 
$$
V_\rho = \oplus_{i=1}^r\BC_{\mu_i}
$$
where $\mu_i\in \Lam = \Hom(\BG_m,T)\simeq \Hom(T^\vee,\BG_m)$ are characters of $T^\vee$ which are not necessarily distinct. Following \cite[Prop.~5.1]{ngo2016hankel}, we assume that the cone $\xi(\rho)\in \Lam_\BR$ generated by $\{\mu_i\}_{i=1}^r$ is strictly convex. This assumption is implied by the exact sequence \eqref{eq:proposal:ses} and the assumption on the dual action of $\BC^\times$ on the representation $\rho$. Then there is a toric variety $M^\rho_T$ containing $T$ as a dense open subset, which is characterized by the property that a homomorphism $\lam:\BG_m\to T$ extends to a morphism $\BA^1\to M^\rho_T$ if and only if $\lam\in \Lam\cap \xi(\rho)$. 

The cocharacters $\{\mu_i\}_{i=1}^r$ induce a homomorphism of tori $\rho_T:\BG_m^r\to T$ via 
\begin{equation}\label{eq:toricase:1}
\rho_T(x_1,...,x_r) = \mu_1(x_1)...\mu_r(x_r)
\end{equation}
Each $\mu_i$ extends to a homomorphism of monoids $\mu_i:\BA^1\to M^\rho_T$ and \eqref{eq:toricase:1} extends to a homomorphism of monoids $\rho_{M_T}:\BA^r\to M^\rho_T$ such that the following commutative diagram is Cartesian
$$
\xymatrix{
\BG_m^r \ar[r] \ar[d]_{\rho_T} & \BA^r \ar[d]^{\rho_{M_T}}\\
T\ar[r] & M^\rho_T
}
$$

For a general torus $T$ over $F$, let $\Lam$ be the group of cocharacters of $T$ over an algebraic closure of $F$, then the Galois group $\Gam_F$ of $F$ acts on $\Lam$ through a finite quotient. The Langlands dual group $\LT$ is equal to $= T^\vee\rtimes \Gam_F$ where $T^\vee= \Hom(\Lam,\BC^\times)$. Let $\rho:\LT\to \GL(V_\rho)$ be an $r$-dimensional algebraic representation of $\LT$ satisfying the assumptions in \cite[Prop.~5.1]{ngo2016hankel}. The restriction $\rho|_{T^\vee}$ is a direct sum of characters, possibly with multiplicities
$$
\rho|_{T^\vee}
 = \oplus_{i=1}^m\lam_1^{r_i}
$$
where $\{\lam_i\}_{i=1}^m\in \Lam$ are distinct characters of $T^\vee$ and the multiplicities $\sum_{i=1}^m r_i = r$. The weights $\{\lam_i\}_{i=1}^m$ given with multiplicities $\{r_i\}_{i=1}^m$ determine a finite subset 
$$
R_\rho = \{(\lam_1,1),...,(\lam_1,r_1),...,(\lam_m,1),...,(\lam_m,r_m)\}
$$
of $\Lam\times \BN$. Since the representation $\rho|_{T^\vee}$ extends to $T^\vee\rtimes \Gam_F$, the subset $R_\rho$ is invariant under the action of $\Gam_F$ on $\Lam\times \BN$. Over an algebraic closure $\wb{F}$, there is a homomorphism $\rho_T:\BG_m^{R_\rho}\to T_{\wb{F}}$ by \eqref{eq:toricase:1}. As this homomorphism is equivariant with respect to the action of $\Gam_F$, it can be descended to a homomorphism $
\rho_T:D^\rho\to T
$ between tori over $F$
where $D^\rho$ is the unique torus defined over $F$ satisfying $D^\rho\otimes_F \wb{F} = \BG_m^{R_\rho}$ and such that the induced action of $\Gam_F$ on $\BG_m^{R_\rho}$ coincides with the one derived from the action of $\Gam_F$ on $R_\rho$. Let $\BA^\rho$ denote the affine space over $F$ satisfying $\BA^\rho\otimes_F \wb{F} = \BA^{R_\rho}$ and such that the induced action of $\Gam_F$ on $\BA^{R_\rho}$ coincide with the action of $\Gam_F$ on the set of indices $R_\rho$. The homomorphism $\rho_T$ extends to a morphism of monoids $\rho_{M_T}:\BA^\rho\to M^\rho_T$ such that the following commutative diagram is Cartesian:
\begin{equation}\label{eq:toricase:2}
\xymatrix{
D^\rho \ar[r] \ar[d]_{\rho_T} & \BA^\rho\ar[d]^{\rho_{M_T}}\\
T\ar[r] & M^\rho_T
}
\end{equation}

\subsubsection{$\rho$-Schwartz space}

We explain the construction of $\rho$-Schwartz space for the pair $(T,\rho)$.

Let $U$ be the kernel of $\rho_T$ and consider the quotient stack $\CM^\rho_T = [\BA^\rho/U]$. In general, as explained in \cite[\S 5.2, \S 6.1]{ngo2016hankel}, the geometry of $\CM^\rho_T$ gives rise to the space of Schwartz functions and the nonabelian Fourier transforms attached to $\rho$. For the purpose of this paper, let us consider the $F$-points of the diagram \eqref{eq:toricase:2} and consider the following commutative diagram, which in general constitutes only part of the $F$-point of the stack $\CM^\rho_T$:
\begin{equation}\label{eq:toricase:3}
\xymatrix{
    \CS(\BA^\rho(F)) \ar[d]_{(\rho_{M_T})_{!}} \ar[r]^{\CF_{\BA^\rho,\psi}}  & 
    \CS(\BA^\rho(F)) \ar[d]^{(\rho_{M_T})_{!}} \\
\CS( M^\rho_T(F)) \ar[r]^{\CF^{\rho}_{\psi}}    & 
\CS(M^\rho_T(F))
}
\end{equation}
Here by construction, $\BA^\rho(F)$ is an étale $F$-algebra. $\CS(\BA^\rho(F))$ is the space of Schwartz-Bruhat functions on $\BA^\rho(F)$ and $\CF_{\BA^\rho,\psi}$ is the standard Fourier transform intertwining the space $\CS(\BA^\rho(F))$. The morphism $(\rho_{M_T})_{!}$ is the push-forward map and $\CF^{\rho}_{\psi}$ is the induced nonabelian Fourier kernel. In particular, $\CS(M^\rho_T(F))$ can be viewed as $\CS( \BA^\rho(F))_{U(F)}$, the $U(F)$-coinvariants of $\CS( \BA^\rho(F))$. Not surprisingly, we define $\CS^\rho(T(F)):=\CS( M^\rho_T(F))$.

Alternatively, for any $t\in T(F)\subset M^\rho_T(F)$, by the Cartesian diagram \eqref{eq:toricase:2}, $(\rho_{M_T})^{-1}(t) = (\rho_T)^{-1}(t)$ is a $U(F)$-torsor, which is a closed subset of $\BA^\rho(F)$. The restriction of a Schwartz-Bruhat function to a closed subset is still a Schwartz function (\cite{BZ76}\cite{AGSchwartznashmd}). Hence
$$
t\in T(F)\mapsto (\rho_{M_T})_{!}(\phi)(t) =  \int_{x\in (\rho_T)^{-1}(t)}
\phi(x)\ud_tx,\quad \phi\in \CS(\BA^\rho(F))
$$
is absolutely convergent and defines a smooth function on $T(F)$, where $\ud_tx$ is the measure on the fiber $(\rho^{-1}_T)(t)$ which is inherited from the $U(F)$-torsor structure. Then $\CS^\rho(T(F)):=\big\{(\rho_{M_T})_{!}(\phi)\mid \phi\in \CS\big(\BA^\rho(F) \big)\big\}$. Through pulling back the zeta integrals and $\rho$-Schwartz functions to $D^\rho(F)\subset \BA^\rho(F)$, it is immediate to see that $\CS^\rho(T(F))$ satisfies Part (1) of Conjecture \ref{conjec:bknproposal}. 

\begin{rmk}\label{rmk:rhoschwartz:easyrmk}
We have the following remarks about $\CS^\rho(T(F))=\CS(M^\rho_T(F))$:
\begin{enumerate}
    \item
In general, functions in $\CS(M^\rho_T(F))$ are only defined on $T(F)$. They may tend to infinity near the boundary $M^\rho_T(F)\bs T(F)$;

    \item
As the group homomorphism $\rho_T$ might not be surjective on $F$-points, when restricted to $T(F)$, the functions in $\CS(M^\rho_T(F))$ are only supported on $\rho_T(D^\rho(F))$. Comparing with Part (1) of Conjecture \ref{conjec:bknproposal}, we may artificially add all the space of smooth and compactly supported functions $\CC^\infty_c(T(F))$ into $\CS^\rho(T(F))$. But for the purpose of the local theory of zeta integrals for $(T,\rho)$, this is actually not necessary;

    \item
In the following, when discussing test functions on $T(F)$, we restrict ourselves to those functions contained in $\CC^\infty_c(\rho_T(D^\rho(F)))$.

\end{enumerate}
\end{rmk}

\subsubsection{$\rho$-Fourier transform}
We explain the construction of $\rho$-Fourier transform for the pair $(T,\rho)$.

For any $\Phi\in \CS^\rho(T(F))$, by definition, there exists $\phi\in \CS(\BA^\rho(F))$ such that $\Phi = (\rho_{M_T})_!(\phi)$. Let us define $\CF^{\rho}_{\psi}(\Phi):=(\rho_{M_T})_!(\CF_{\BA^\rho,\psi}(\phi))\in \CS^\rho(T(F))$. 

First let us show that $\CF^{\rho}_{\psi}(\Phi)$ is independent of the choice of the representative $\phi$. Suppose that there are $\phi_i\in \CS(\BA^\rho(F))$, $i=1,2$, such that $\Phi = (\rho_{M_T})_!(\phi_i)$. For any character $\chi:T(F)\to \BC^\times$, based on the assumption made in \eqref{eq:proposal:ses}, the following identity holds
\begin{equation}\label{eq:toruscase:pullbackzeta}
\CZ(s,\Phi,\chi)  =
\int_{T(F)}
\Phi(t)\chi(t)|\nu(t)|^s\ud^*t = 
\int_{D^\rho(F)}
\phi_i(x)\big(\chi\circ \rho_T\big)(x)
|\det(x)|^s\ud^*x
\end{equation}
where $\ud^*t$ and $\ud^*x$ are fixed Haar measures on $T(F)$ and $D^\rho(F)$, respectively, and $n_\rho$ is taken to be $1$. In particular if the character $\chi$ is taken to be unitary, then by Tate's thesis (\cite{tatethesis}), the above integral is absolutely convergent for $\Re(s)>1$ with a meromorphic continuation to $s\in \BC$, and the poles are captured by the local $L$-factor $L(s,\chi\circ \rho_T)$. After meromorphic continuation, the following functional equation holds, 
\begin{equation}\label{eq:nonarchikernel:tori:fe}
\gam(s,\chi\circ \rho_T,\psi)
\CZ(s,\Phi,\chi) = 
\CZ(1-s,(\rho_{M_T})_!\big(\CF_{\BA^\rho,\psi}(\phi_i) \big),\chi^{-1}),\quad i=1,2.
\end{equation}
Taking the subtraction of the above identities for $i=1,2$, we get, after meromorphic continuation and changing variables
$$
\CZ(s,(\rho_{M_T})_!\big(\CF_{\BA^\rho,\psi}(\phi_1-\phi_2) \big),\chi)=0
$$
for any unitary character $\chi$ of $T(F)$. Hence the Mellin transform of the smooth function $(\rho_{M_T})_!\big(\CF_{\BA^\rho,\psi}(\phi_1-\phi_2) \big)|\nu(\cdot)|^s$ is identically zero for $\Re(s)$ large independent of the unitary character $\chi$. By Pointryagin duality and smoothness, $(\rho_{M_T})_!\big(\CF_{\BA^\rho,\psi}(\phi_1-\phi_2) \big)=0$ identically. Therefore the definition of $\CF^{\rho}_{\psi}(\Phi)$ is independent of the choice of $\phi_i$ and hence is well-defined. In particular, properties (a), (b), (c) and \eqref{eq:proposal:fe} in Part (3) of Conjecture \ref{conjec:bknproposal} follows immediately after pulling back the zeta integrals to $D^\rho(F)$. 

\begin{rmk}\label{rmk:rhofourier:easy}
Following Part (2) and (3) of Remark \ref{rmk:rhoschwartz:easyrmk}, we may artificially extend $\CF^\rho_\psi$ to $\CC^\infty_c(T(F))$ via zero extension, or using the $\rho$-Fourier kernel constructed below via convolution. For convenience we still restrict ourselves to test functions supported on $\rho_T(D^\rho(F))\subset T(F)$.
\end{rmk}

\subsubsection{$\rho$-Fourier kernel}

It remains to show the existence of the invariant distribution $\RJ^{\rho}_{\psi}$ representing $\CF^{\rho}_{\psi}$ as conjectured in Part (3) of Conjecture \ref{conjec:bknproposal} and establish the properties listed in Part (2) of Conjecture \ref{conjec:bknproposal}.  We follow the techniques developed in \cite{MR4474366}.

\subsubsection{$\rho$-Fourier kernel: Non-archimedean case}

First let us consider the case when $F$ is non-archimedean. Let us write $\BA^\rho(F) = \prod_{i=1}^m E_i$ with $E_i$ an $F$-étale algebra. Let $\Fo_{E_i}$ be the ring of integers of $E_i$ with the maximal ideal $\Fp_i$ and a fixed uniformizer $\vpi_i$. Let $\Fp^{\rho} = \prod_{i=1}^m \Fp_i$ which is an ideal of $\Fo^\rho = \prod_{i=1}^m \Fo_i$. Fix an additive character $\psi^\rho$ of $\BA^\rho(F)$ that is trivial on $\Fo^\rho$ but nontrivial on $(\Fp^\rho)^{-1} = \prod_{i=1}^m \Fp_i^{-1}$. In particular $\psi^\rho = \prod_{i=1}^m\psi_i$ where $\psi_i$ is an additive character of $E_i$ that is trivial on $\Fo_{i}$ but nontrivial on $\Fp_i^{-1}$. Fix the Haar measure $\ud x$ on $\BA^\rho(F)$ that is self-dual with respect to $\psi^\rho$. Consider the following sequence $\{\Fc_n\}_{n\geq 1}\subset \CC^\infty_c(\BA^\rho(F))$:
\begin{equation}\label{eq:nonarchikernel:tori:1}
\Fc_n (x) = \frac{1}{\vol(K_n,\ud x)}
\mathbbm{1}_{K_n}(x),\quad K_n = \Id_{D^\rho(F)}+(\Fp^\rho)^n.
\end{equation}
Then $\lim_{n\mapsto\infty}\Fc_n = \del_{\Id_{D^\rho(F)}}$ which is the delta mass supported at the identity of $D^\rho(F)$. By direct calculation, 
$$
\CF_{\BA^\rho,\psi}(\Fc_n)(x) = 
\psi^\rho(x)\mathbbm{1}_{(\Fp^\rho)^{-n}}(x).
$$
Since $\CF_{\BA^\rho,\psi}(\Fc_n)\in \CC^\infty_c(\BA^\rho(F))$, the function 
$$
(\rho_{M_T})_!\big(\CF_{\BA^\rho,\psi}(\Fc_n) \big)
$$
lies in $\CC^\infty(T(F))$. Following \cite[\S 3]{MR4474366}, we consider the following limit:
$$
\lim_{n\mapsto \infty}
(\rho_{M_T})_!\big(\CF_{\BA^\rho,\psi}(\Fc_n) \big).
$$
The following lemma is analogous to \cite[Lem.~3.3.]{MR4474366}.
\begin{lem}\label{lem:nonarchikernel:tori}
For any $t\in T(F)$, the limit 
$$
\lim_{n\mapsto \infty}
(\rho_{M_T})_!\big(\CF_{\BA^\rho,\psi}(\Fc_n) \big)(t)
$$
is stably convergent.
\end{lem}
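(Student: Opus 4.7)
The plan is to parametrize the fiber $\rho_T^{-1}(t)$ by the kernel $U=\ker(\rho_T)$ and isolate the oscillation of $\psi^\rho$, in the spirit of \cite[Lem.~3.3]{MR4474366}. If $t\notin \rho_T(D^\rho(F))$, the fiber is empty and $I_n(t):=(\rho_{M_T})_{!}\bigl(\CF_{\BA^\rho,\psi}(\Fc_n)\bigr)(t)=0$ for every $n$, so the limit is trivially convergent. Otherwise, fix $x_0\in \rho_T^{-1}(t)\subset D^\rho(F)$; the fiber equals the coset $x_0 U(F)$, and
$$
I_n(t)=\int_{U(F)} \psi^\rho(x_0 u)\,\mathbbm{1}_{(\Fp^\rho)^{-n}}(x_0 u)\,du,
$$
with $du$ the Haar measure on $U(F)$ inherited from the torsor structure on the fiber.

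Next, I would choose a compact open subgroup $U(F)_\ell\subset U(F)\cap(1+(\Fp^\rho)^\ell)$ for $\ell$ large enough that the exponential identifies a compact open subspace $\Fu_\ell$ of $\Fu(F):=\Lie(U)(F)\subset \BA^\rho(F)$ with $U(F)_\ell$, and decompose $U(F)=\bigsqcup_\lambda u_\lambda\, U(F)_\ell$ with $\lambda$ ranging over the finitely generated abelian group $U(F)/U(F)_\ell$. Writing $u=u_\lambda(1+\epsilon)$ with $\epsilon\in\Fu_\ell$ and exploiting the distributive law in the $F$-algebra $\BA^\rho$, one has $x_0 u=x_0 u_\lambda+(x_0 u_\lambda)\epsilon$, so $\psi^\rho(x_0 u)=\psi^\rho(x_0 u_\lambda)\,\psi^\rho\bigl((x_0 u_\lambda)\epsilon\bigr)$. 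For $n$ large enough that $x_0 u_\lambda\, U(F)_\ell\subset (\Fp^\rho)^{-n}$, the contribution of this coset to $I_n(t)$ equals $\vol(U(F)_\ell)\,\psi^\rho(x_0 u_\lambda)$ times the character integral
$$
\int_{\Fu_\ell}\psi^\rho\bigl((x_0 u_\lambda)\epsilon\bigr)\,d\epsilon,
$$
which vanishes unless the character $\epsilon\mapsto \psi^\rho\bigl((x_0 u_\lambda)\epsilon\bigr)$ is trivial on $\Fu_\ell$.

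The crux, and the main obstacle, is to show that this triviality condition holds only for finitely many cosets $\lambda$. The idea is that since $U$ is a closed subtorus of $D^\rho=\prod_i \Res_{E_i/F}\BG_m$ whose cocharacter lattice pairs nontrivially with the coordinate characters of $D^\rho$, any unbounded sequence in $U(F)/U(F)_\ell$ forces at least one coordinate $|u_{\lambda,i}|_{E_i}$ to tend to infinity. Combined with the fact that $\Fu(F)=\ker(d\rho_T)$ has nonzero projection to those growing coordinates (a consequence of the strict convexity assumption on $\{\mu_i\}$, which prevents degenerate factors), the product $(x_0 u_\lambda)\cdot\Fu_\ell$ must eventually acquire components of arbitrarily negative $E_i$-valuation, pushing the character off the conductor $\Fo^\rho$ of $\psi^\rho$. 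Consequently, only finitely many cosets contribute to $I_n(t)$, and once $n$ is large enough that all of them lie in $(\Fp^\rho)^{-n}$, the value of $I_n(t)$ is independent of $n$, yielding stable convergence.
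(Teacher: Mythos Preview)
Your argument is essentially the paper's: both kill the contribution from the part of the fiber outside a large compact set by averaging over a fixed compact open subgroup of $U(F)$ (your $U(F)_\ell$, the paper's $U_1=U(F)\cap K_1$) and detecting a nontrivial additive character there. The paper phrases the conclusion as vanishing of $I_{n_1}(t)-I_{n_2}(t)$ for $n_1\geq n_2$ large, you phrase it as ``only finitely many cosets contribute''; these are the same statement.

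One point in your crux paragraph needs correction. The claim that $\Fu(F)$ has nonzero projection to each coordinate $i$ in which $|u_{\lambda,i}|_{E_i}$ is unbounded is true, but strict convexity of the cone generated by the $\mu_i$ is not the reason---that hypothesis only guarantees the toric variety $M^\rho_T$ is well-defined and is irrelevant here. The correct justification is that unboundedness of $|u_{\lambda,i}|_{E_i}$ over $U(F)$ forces the restriction $\pr_i|_U:U\to\Res_{E_i/F}\BG_m$ to be a nonconstant morphism of tori, whence its differential $\Fu(F)\to E_i$ is nonzero. Once you have some $\epsilon\in\Fu_\ell$ with $\epsilon_i$ of bounded valuation, the character $\epsilon\mapsto\psi_i\bigl((x_0u_\lambda)_i\,\epsilon_i\bigr)$ is nontrivial as soon as $|(x_0u_\lambda)_i|_{E_i}$ exceeds a fixed threshold, and the rest of your argument goes through. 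With this fix your proof is complete, and in fact renders the paper's own somewhat elliptical appeal to ``$U(F)$ noncompact and $n_2$ arbitrarily large'' more transparent.
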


\begin{proof}
For any $n_1\geq n_2$ sufficiently large, consider the subtraction 
\begin{equation}\label{eq:nonarchikernel:tori:2}
(\rho_{M_T})_!\big(\CF_{\BA^\rho,\psi}(\Fc_{n_1}-\Fc_{n_2}) \big)(t).
\end{equation}
In order to establish the lemma, it suffices to show that for fixed $t\in T(F)$, \eqref{eq:nonarchikernel:tori:2} vanishes as long as $n_1\geq n_2$ is sufficiently large. 

Following \eqref{eq:nonarchikernel:tori:1}, \eqref{eq:nonarchikernel:tori:2} is equal to the following absolutely convergent integral
\begin{equation}\label{eq:nonarchikernel:tori:3}
\int_{
\substack{
x\in (\rho_T)^{-1}(t)
\\
x\in (\Fp^\rho)^{-n_1}\bs (\Fp^\rho)^{-n_2}
}
}
\psi^\rho(x)
\ud_tx.
\end{equation}
If $t\in T(F)$ does not lie in the image of $\rho_T$, or the kernel $U(F)$ is compact, the lemma holds automatically. Hence in the following let us assume that $t\in T(F)$ is in the image of $\rho_T$ and $U(F)$ is non-compact. 

Set $U_1 = K_1\cap U(F)$, which is an open compact subgroup of $U(F)$. For any $u\in U_1$, since $u\in K_1$ (resp. $\in U(F)$), its left multiplication fixes $(\Fp^\rho)^{-n}$ (resp. $(\rho_T)^{-1}(t)$). It follows that for any $u\in U_1$, 
\begin{equation}\label{eq:nonarchikernel:tori:4}
\int_{
\substack{
x\in (\rho_T)^{-1}(t)
\\
x\in (\Fp^\rho)^{-n_1}\bs (\Fp^\rho)^{-n_2}
}
}
\psi^\rho(x)
\ud_tx
=\int_{
\substack{
x\in (\rho_T)^{-1}(t)
\\
x\in (\Fp^\rho)^{-n_1}\bs (\Fp^\rho)^{-n_2}
}
}
\psi^\rho(ux)
\ud_tx.
\end{equation}
Integrating both sides of \eqref{eq:nonarchikernel:tori:4} over $U_1$ with respect to the Haar measure $\ud u$ on $U(F)$, we get
\begin{equation}\label{eq:nonarchikernel:tori:5}
\vol(U_1,\ud u)
\int_{
\substack{
x\in (\rho_T)^{-1}(t)
\\
x\in (\Fp^\rho)^{-n_1}\bs (\Fp^\rho)^{-n_2}
}
}
\psi^\rho(x)
\ud_tx
=
\int_{u\in U_1}
\ud u
\int_{
\substack{
x\in (\rho_T)^{-1}(t)
\\
x\in (\Fp^\rho)^{-n_1}\bs (\Fp^\rho)^{-n_2}
}
}
\psi^\rho(ux)
\ud_tx.
\end{equation}
Notice that one can switch the integration order on the right hand side above due to the compactness of $U_1$:
\begin{equation}\label{eq:nonarchikernel:tori:6}
\int_{u\in U_1}
\ud u
\int_{
\substack{
x\in (\rho_T)^{-1}(t)
\\
x\in (\Fp^\rho)^{-n_1}\bs (\Fp^\rho)^{-n_2}
}
}
\psi^\rho(ux)
\ud_tx=
\int_{
\substack{
x\in (\rho_T)^{-1}(t)
\\
x\in (\Fp^\rho)^{-n_1}\bs (\Fp^\rho)^{-n_2}
}
}
\ud_tx
\int_{u\in U_1}
\psi^\rho(ux)
\ud u.
\end{equation}
Therefore, to show that \eqref{eq:nonarchikernel:tori:3} (equivalently, the left hand side of \eqref{eq:nonarchikernel:tori:5}) vanishes for $n_1\geq n_2$ sufficiently large, it suffices to establish the following statement:

\begin{num}\label{num:nonarchikernel:tori:1}
\item For any $x\in (\rho_T)^{-1}(t)\cap \big((\Fp^\rho)^{-n_1}\bs (\Fp^\rho)^{-n_2} \big)$, the integral 
$$
\int_{u\in U_1}\psi^\rho(ux)\ud u
$$
vanishes.
\end{num}
By \cite[Prop.~1.8]{gl2llc}, as long as $n_1\geq n_2$ is large, for $x\in \big( (\Fp^\rho)^{-n_1}\bs (\Fp^\rho)^{-n_2}\big)$,
\begin{equation}\label{eq:torusFK:nonarchi}
u\in K_1\mapsto \psi^\rho\big(x(u-1)\big)
\end{equation}
is a nontrivial character whose kernel is contained in $K_{n_1}$. Since $\{K_n\}_{n\geq 1}$ is a family of open compact neighborhoods of $\Id_{D^\rho(F)}$ tending to $\Id_{D^\rho(F)}$, $\{K_n\cap U(F)\}_{n\geq 1}$ is also a family of open compact neighborhoods of $\Id_{D^\rho(F)} = \Id_{U(F)}$ contained in $U(F)$ tending to identity. Therefore as long as $n_2$ is sufficiently large, the restriction of the character \eqref{eq:torusFK:nonarchi} to the closed subgroup $U_1 =U(F)\cap K_1$ is still a nontrivial character since $U(F)$ is assumed to be noncompact and $n_2$ can be arbitrarily large. It follows that for $n_2\geq n_1$ sufficiently large and $x\in \big((\Fp^\rho)^{-n_1}\bs (\Fp^{\rho})^{-n_2} \big)$, 
$$
\int_{u\in U_1}
\psi^\rho\big(x(u-1) \big)\ud u=0\quad \text{and hence}\quad 
\int_{u\in U_1}\psi^\rho(xu)\ud u=0.
$$
It follows that the statement \eqref{num:nonarchikernel:tori:1} holds and we finish the proof of the lemma.
\end{proof}

As a corollary, the stably convergent limit 
$
\lim_{n\mapsto \infty}
(\rho_{M_T})_!\big(\CF_{\BA^\rho,\psi}(\Fc_n) \big)
$
 defines a smooth function on $T(F)$. 
\begin{defin}\label{defin:nonarchikernel:tori}
With the above notation, define
$$
\RJ^{\rho}_{\psi}(t) = \lim_{n\mapsto \infty}
(\rho_{M_T})_!\big(\CF_{\BA^\rho,\psi}(\Fc_n) \big)(t),\quad t\in T(F).
$$
\end{defin}
It remains to show that on $\CC^\infty_c(T(F))\cap \CS^\rho(T(F)) = \CC^\infty_c(\rho_T(D^\rho(F)))$, $\CF^{\rho}_{\psi}$ is represented by the smooth function $\RJ^{\rho}_{\psi}$ as mentioned in Part (3) of Conjecture \ref{conjec:bknproposal}, and enjoys the properties listed in Part (2) of Conjecture \ref{conjec:bknproposal}. 

To show that $\CF^{\rho}_{\psi}$ is represented by $\RJ^{\rho}_{\psi}$, we make the following observation: For any $\Phi\in \CC^\infty_c(T(F))\cap \CS^\rho(T(F))$, we can assume that $\Phi=(\rho_{M_T})_!(\phi)$, $\phi\in \CC^\infty_c(D^\rho(F))$. By definition, 
$$
\CF^{\rho}_{\psi}(\Phi)(t) = 
(\rho_{M_T})_!\big(
\CF_{\BA^\rho,\psi}(\phi)
\big)(t),\quad 
\CF_{\BA^\rho,\psi}(\phi)(x) = |\det(x)|^{-1}(\psi^\rho*\phi^\vee)(x).
$$
Since $\phi\in \CC^\infty_c(D^\rho(F))$, it is fixed under the translation action by open compact subgroups $\{K_n\}_{n\geq n_1}$ for $n_1$ large. Therefore 
$$
\CF_{\BA^\rho,\psi}(\phi)(x) = 
|\det(x)|^{-1}
\big(\psi^\rho*(\Fc_{n_1}*\phi^\vee)\big)(x) = 
|\det(x)|^{-1}\big(
(\psi^\rho*\Fc_{n_1})*\phi^\vee
\big)(x).
$$
Since $\rho_T:D^\rho(F)\to T(F)$ is a group homomorphism, by direct calculation, the push-forward map $(\rho_{M_T})_!$ respects the convolution of functions and the involution $\phi\mapsto \phi^\vee$. Therefore 
\begin{align*}
(\rho_{M_T})_{!}(\CF_{\BA^\rho,\psi}(\phi))(t) =& 
|\nu(t)|^{-1}
\big(
(\rho_{M_T})_{!}
(
(\psi^\rho*\Fc_{n_1})
)
*(\rho_{M_T})_{!}(\phi^\vee)\big)(t)
\\
=&
|\nu(t)|^{-1}
\big(
(\rho_{M_T})_{!}
(
(\psi^\rho*\Fc_{n_1})
)
*\Phi^\vee\big)(t)
\end{align*}
Now let us take the limit $n_1\mapsto \infty$. Since $\Phi\in \CC^\infty_c(T(F))$, we derive the desired identity 
\begin{equation}\label{eq:nonarchikernel:tori:7}
\CF^{\rho}_{\psi}(\Phi)(t) = 
|\nu(t)|^{-1}(\RJ^{\rho}_{\psi}*\Phi^\vee)(t),\quad t\in T(F).
\end{equation}
It follows that the operator $\CF^{\rho}_{\psi}$ is represented by $\RJ^{\rho}_{\psi}$ on $\CC^\infty_c(T(F))\cap \CS^\rho(T(F))$. 

\begin{rmk}\label{rmk:nonarchikernel}
For a general function $\Phi\in \CS^\rho(T(F))$, the operator $\Phi\mapsto \CF^{\rho}_{\psi}(\Phi)$ can still be represented by the smooth function $\RJ^{\rho}_{\psi}$. But the convolution on the right hand side of \eqref{eq:nonarchikernel:tori:7} need regularization. See \cite{JLZ} and \cite{MR4474366} for particular cases treated.
\end{rmk}
Next let us establish the following lemma.

\begin{lem}\label{lem:nonarchikernel:tori:BCenter}
Let $\RJ^{\rho}_{\psi,n} = \RJ^{\rho}_{\psi}\cdot\mathbbm{1}_{T_n}$, where $\mathbbm{1}_{T_n} =\{t\in T(F)\mid |\nu(t)| = q^{-n}\}$. Then $\RJ^{\rho}_{\psi,n}$ lies in the Bernstein center of $T(F)$.
\end{lem}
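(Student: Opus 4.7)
The strategy is to verify directly that the action $\chi \mapsto \gamma(\RJ^{\rho}_{\psi,n}, \chi)$ on characters $\chi$ of $T(F)$ defines a regular function on the Bernstein variety $\Omega(T(F))$ supported on finitely many components. Since $T(F)$ is abelian, $\Omega(T(F))$ decomposes as a disjoint union indexed by smooth characters $\chi_c$ of the maximal compact subgroup of $T(F)$, each component being the affine variety of unramified twists. Membership in $\CZ(T(F))$ is equivalent to (i) nonvanishing of $\gamma(\RJ^{\rho}_{\psi,n}, \chi)$ only for finitely many such $\chi_c$, and (ii) regular dependence of the scalar on the unramified-twist parameter within each component.

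The plan for computing $\gamma(\RJ^{\rho}_{\psi,n}, \chi)$ is to pull back via the Cartesian diagram \eqref{eq:toricase:2}. By the construction of $\RJ^\rho_\psi$ in Definition \ref{defin:nonarchikernel:tori} together with the identity \eqref{eq:toruscase:pullbackzeta}, we expect an equality
\[
\gamma(\RJ^{\rho}_{\psi,n}, \chi) \;=\; \int_{\{x \in D^\rho(F)\,:\,|\det(x)|=q^{-n}\}} \psi^\rho(x)\,(\chi\circ\rho_T)(x)\,\ud^*x,
\]
where the right-hand side decomposes, under $D^\rho(F) = \prod_i E_i^\times$ and $\psi^\rho = \prod_i \psi_i$, into a finite sum (over tuples $(n_i)$ with appropriate linear relation giving the total valuation $n$) of products of local Tate--Gauss sums $\tau_i(\chi_i, n_i) = \int_{v_i(x_i)=n_i}\psi_i(x_i)\chi_i(x_i)\,\ud^*_{E_i}x_i$, where $\chi_i$ is the character of $E_i^\times$ obtained by pulling back $\chi$ along the $i$-th weight $\mu_i$. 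Standard Tate theory (\cite{tatethesis}) gives the vanishing of $\tau_i(\chi_i, n_i)$ unless $n_i$ is controlled by the conductor of $\chi_i$ and the level of $\psi_i$; this forces the conductor of $\chi|_{T^c}$ to be bounded in terms of $n$, establishing finiteness of the support on $\Omega(T(F))$. On a fixed component, writing $\chi = \chi_c\,\chi_{\unr}$ and varying $\chi_{\unr}$, each surviving $\tau_i$ becomes a Laurent monomial in the coordinates of $\chi_{\unr}$ (for ramified $\chi_i$, literally a constant times $\chi_i(\varpi_i)^{n_i}$), so $\gamma(\RJ^{\rho}_{\psi,n}, \chi)$ is a Laurent polynomial, hence a regular function on the component.

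The main obstacle is justifying the displayed identity in the previous paragraph, because $\RJ^{\rho}_{\psi}$ is defined only as a stable limit and $T_n$ is in general noncompact, so the pairing with a character is not a priori absolutely convergent. The resolution will mirror the approximation argument of Lemma \ref{lem:nonarchikernel:tori}: once we fix $\chi$ of bounded conductor, the contribution from the approximants $(\rho_{M_T})_!\big(\CF_{\BA^\rho,\psi}(\Fc_k)\big)$, which are compactly supported on $T(F)$, stabilizes for all sufficiently large $k$ because the Gauss-sum vanishing on each factor forces the integrand over $D^\rho(F)$ to be supported inside the compact set $K_k^{-1}$ (i.e. $(\Fp^\rho)^{-k}$) for some fixed $k$ depending on the conductor of $\chi$ and of $\psi^\rho$. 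This stabilization simultaneously identifies the pairing with the finite sum of Gauss-sum products described above and shows that the pairing only picks up contributions from finitely many inertial classes, completing the verification of the two Bernstein-center conditions.
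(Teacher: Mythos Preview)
Your spectral approach is genuinely different from the paper's. The paper verifies the ``essentially of compact support'' criterion directly: for $\Phi \in \CC^\infty_c(T(F))$ supported in the maximal compact subgroup (so that $\mathbbm{1}_{T_n}(yt)=\mathbbm{1}_{T_n}(t)$ on $\mathrm{supp}(\Phi)$), it uses the already established identity \eqref{eq:nonarchikernel:tori:7} to write $\RJ^{\rho}_{\psi,n} * \Phi^\vee = \mathbbm{1}_{T_n}\cdot |\nu| \cdot (\rho_{M_T})_!(\CF_{\BA^\rho,\psi}(\phi))$, which is the pushforward of the compactly supported function $\mathbbm{1}_{D^\rho_n}\,|\det|\,\CF_{\BA^\rho,\psi}(\phi)$ on $\BA^\rho(F)$ and therefore lies in $\CC^\infty_c(T(F))$. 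This avoids any Gauss-sum computation; your route is longer but yields the explicit spectral values $\gamma_n(\chi)$ along the way.

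Your argument has gaps, however. The stated characterization of $\CZ(T(F))$ is really that of $\CZ^{\fin}(T(F))$: condition~(i) is not required for membership in the Bernstein center (though proving it does no harm). More substantively, the approximants $(\rho_{M_T})_!(\CF_{\BA^\rho,\psi}(\Fc_k))$ are \emph{not} compactly supported on $T(F)$ in general---the image of $(\Fp^\rho)^{-k}\cap D^\rho(F)$ under $\rho_T$ need not be bounded in $T(F)$---though their restrictions to $T_n$ are, since $|\det x|=q^{-n}$ together with $x \in (\Fp^\rho)^{-k}$ bounds each coordinate above and below; this is what you actually need. Finally, and most seriously, even after you show that the stabilized pairing $\chi \mapsto \gamma_n(\chi)$ is regular on $\Omega(T(F))$, you must still identify the element of $\CZ(T(F))$ so determined with the distribution $\RJ^{\rho}_{\psi,n}$; computing formal traces against characters does not by itself establish that the smooth function $\RJ^{\rho}_{\psi,n}$ lies in the Bernstein center. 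The missing step is to pair with genuine test functions: writing $\RJ^{(m)}_n := (\rho_{M_T})_!(\CF_{\BA^\rho,\psi}(\Fc_m)) \cdot \mathbbm{1}_{T_n} \in \CC^\infty_c(T(F))$, the pointwise stable convergence $\RJ^{(m)}_n \to \RJ^\rho_{\psi,n}$ on the compact support of $\Phi$ gives $\langle \RJ^\rho_{\psi,n},\Phi\rangle=\lim_m\langle \RJ^{(m)}_n,\Phi\rangle$, and by your Gauss-sum bound the right-hand side stabilizes once $m$ exceeds the level of $\Phi$. This completes the identification, but it should be said explicitly.
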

\begin{proof}
For any $\Phi\in \CC^\infty_c(T(F))$, since $\RJ^\rho_\psi$ is supported on $\rho_T(D^\rho(F))$, there is no harm to assume that $\Phi$ is also supported on $\rho_T(D^\rho(F))$, and hence $\Phi=(\rho_{M_T})_!(\phi)$ with $\phi\in \CC^\infty_c(D^\rho(F))$. It suffices to show that 
$$
|\nu(t)|^{-1}(\RJ^{\rho}_{\psi,n}*\Phi^\vee)(t)\in \CC^\infty_c(T(F)). 
$$
Up to translation and a finite linear combination, there is no harm to assume that $\Phi$ is supported in a maximal open compact subgroup of $T(F)$ with $\mathrm{supp}(\Phi)\subset \{t\in T(F)\mid |\nu(t)| = 1\}$. By definition, 
$$
(\RJ^{\rho}_{\psi,n}*\Phi^\vee)(t) = 
\int_{T(F)}
\RJ^{\rho}_{\psi}(yt)
\mathbbm{1}_{T_n}(yt)\Phi(y)\ud^*y
$$
By the support condition of $\Phi$, $\mathbbm{1}_{T_n}(yt) = \mathbbm{1}_{T_n}(t)$. Therefore 
$$
(\RJ^{\rho}_{\psi,n}*\Phi^\vee)(t)
=
\mathbbm{1}_{T_n}(t)
\int_{T(F)}
\RJ^{\rho}_{\psi}(yt)\Phi(y)\ud^*y
=\mathbbm{1}_{T_n}(t)
(\RJ^{\rho}_{\psi}*\Phi^\vee)(t).
$$
By \eqref{eq:nonarchikernel:tori:7}, it is equal to 
$$
\mathbbm{1}_{T_n}(t)
|\nu(t)|
\CF^{\rho}_{\psi}(\Phi)(t) = 
\mathbbm{1}_{T_n}(t)
|\nu(t)|
(\rho_{M_T})_!(\CF_{\BA^\rho,\psi}(\phi))(t)
$$
which is equal to 
$$
(\rho_{M_T})_!(\mathbbm{1}_{D^\rho_n}|\det|\CF_{\BA^\rho,\psi}(\phi))(t)
$$
where $\mathbbm{1}_{D^\rho_n} = \{x\in D^\rho(F)\mid |\det x| = q^{-n}\}$. Since $\CF_{\BA^\rho,\psi}(\phi)\in \CS(\BA^\rho(F))$, 
$$
\mathbbm{1}_{D^\rho_n}|\det|\CF_{\BA^\rho,\psi}(\phi)
\in \CC^\infty_c(\BA^\rho(F))
$$
and hence 
$$
(\rho_{M_T})_!(\mathbbm{1}_{D^\rho_n}|\det|\CF_{\BA^\rho,\psi}(\phi))\in \CC^\infty_c(T(F)).
$$
It follows that $\RJ^{\rho}_{\psi,n}$ lies in the Bernstein center of $T(F)$. 
\end{proof}
It follows that there exists a regular function $\gam_n$ on the admissible dual of $T(F)$, such that for any character $\chi$ of $T(F)$, 
$$
\chi(\RJ^{\rho}_{\psi,n}) = \gam_n(\chi,\rho,\psi).
$$
It remains to show that 
$$
s\in \BC\mapsto 
\sum_{n}\gam_n(\chi\otimes |\nu|^s,\rho,\psi)
$$
is convergent for $\Re(s)$ large, with a meromorphic continuation to $\BC$, and is equal to $\gam(-s,\chi^{-1},\rho,\psi)$. But it follows directly from the functional equation \eqref{eq:nonarchikernel:tori:fe} and the same calculation as \cite[Thm.~3.6]{MR4474366}. To make it precise, choose a family of open compact subgroups $K_{T,k}$ of $T(F)$ tending to identity and let $\Fc_{T,k}$ be the normalized characteristic function of $K_{T,k}$. By the functional equation \eqref{eq:nonarchikernel:tori:fe}, 
$$
\gam(s,\chi\circ \rho_T,\psi)\CZ(s,\Fc_{T,k},\chi) = 
\CZ(1-s,\CF^{\rho}_{\psi}(\Fc_{T,k}),\chi^{-1}).
$$
Notice that since $\Fc_{T,n}$ is smooth and compactly supported, and $\chi$ is fixed, 
$
\CZ(s,\Fc_{T,k},\chi) = 1
$
for any $s\in \BC$ as long as $k$ is bigger than the level/depth of $\chi$. Therefore for $\Re(s)$ sufficiently small, the zeta integral defining $\CZ(1-s,\CF^{\rho}_{\psi}(\Fc_{T,k}),\chi^{-1})$ is absolutely convergent and is equal to $\gam(s,\chi\circ \rho_T,\psi)$. Equivalently, for $\Re(s)$ sufficiently large, the zeta integral defining $\CZ(s,\RJ^{\rho}_{\psi}*\Fc^\vee_{T,k},\chi)$ is absolutely convergent and
$$
\CZ(s,\RJ^{\rho}_{\psi}*\Fc^\vee_{T,k},\chi) = 
\gam(-s,\chi^{-1},\rho,\psi).
$$
Therefore 
$$
\CZ(s,\RJ^{\rho}_{\psi}*\Fc^\vee_{T,k},\chi) = 
\sum_{n}
\CZ(s,\RJ^{\rho}_{\psi,n}*\Fc^\vee_{T,k},\chi) = 
\sum_n \gam_n(\chi\otimes |\nu|^s,\rho,\psi)
$$
is convergent for $\Re(s)$ large and is equal to $\gam(-s,\chi^{-1},\rho,\psi)$.

It follows that we have established all the desired properties listed in Conjecture \ref{conjec:bknproposal} for $T$ a torus over a non-archimedean local field.

\subsubsection{$\rho$-Fourier kernel: Archimedean case}

Now let us assume that $F$ is archimedean. Following the same idea as the non-archimedean case, let us fix a delta sequence $\{\Fc_n\}_{n\geq 1}\subset \CC^\infty_c(D^\rho(F))$ tending to the delta mass supported at the identity element of $D^\rho(F)$. Consider the limit 
$$
\lim_{n\mapsto \infty}
(\rho_{M_T})_!\big(\CF_{\BA^\rho,\psi}(\Fc_n)\big)(t),\quad t\in T(F).
$$
Following the argument of \cite[Prop.~4.3]{MR4474366}, we are going to show that the limit (and all of its derivatives in $t$) is uniformly convergent for $t$ lying in a compact subset, from which we deduce that the limit converges to a smooth function on $T(F)$. 

Take the Casimir element $\Del_{\mathrm{cas}}$ of $D^\rho(F)$. By \cite[Lem.~3.7]{bkglobalization}, for any integer $m$ with $2m>\dim D^\rho(F)$, there exists a function $f_1\in \CC^{2m-\dim D^\rho(F)-1}_c(D^\rho(F))$ and $f_2\in \CC^\infty_c(D^\rho(F))$, such that 
\begin{equation}\label{eq:archi:kernel:tori:limit}
\Del_{\mathrm{cas}}^m*f_1+f_2 = \del_{\Id_{D^\rho(F)}}.
\end{equation}
Let $L$ be the translation action of $T(F)$ on a space of functions on $T(F)$. Then 
$$
L(\Del_{\mathrm{cas}}^m*f_1+f_2) \text{ is equal to the identity operator}.
$$
Therefore 
\begin{equation}\label{eq:archi:kernel:tori:1}
(\rho_{M_T})_!(\CF_{\BA^\rho,\psi}(\Fc_n))(t) = 
\int_{x\in (\rho_T)^{-1}(t)}
\CF_{\BA^\rho,\psi}(\Fc_n)(x)
L(\Del_{\mathrm{cas}}^m*f_1+f_2)(\mathbbm{1}_{D^\rho})(x)
\ud_tx
\end{equation}
where $\mathbbm{1}_{D^\rho}$ is the identity function (which can also be viewed as the trivial character) on $D^\rho(F)$. Now the Carsimir element act on $\mathbbm{1}_{D^\rho}$ via a scalar. Hence up to constant, \eqref{eq:archi:kernel:tori:1} is equal to the sum of the following two integrals 
$$
\int_{x\in (\rho_T)^{-1}(t)}
\CF_{\BA^\rho,\psi}(\Fc_n)(x)f_i(x)\ud_tx,\quad i=1,2.
$$
Now by absolutely convergence and the compactly supportedness of $f_i$, 
\begin{align}\label{eq:archi:kernel:tori:2}
\int_{x\in (\rho_T)^{-1}(t)}
\CF_{\BA^\rho,\psi}(\Fc_n)(x)f_i(x)\ud_tx
=&
\int_{x\in (\rho_T)^{-1}(t)}
f_i(x)
\ud_tx
\int_{y\in \BA^\rho(F)}
\psi(xy)\Fc_n(y)\ud y   \nonumber
\\
=&
\int_{y\in \BA^\rho(F)}
\Fc_n(y)
\ud y
\int_{x\in (\rho_T)^{-1}(t)}
f_i(x)\psi(xy)\ud_tx
\end{align}
which, as $n\mapsto \infty$, tends to the absolutely convergent integral
$$
\int_{x\in (\rho_T)^{-1}(t)}
f_i(x)\psi(xy)\ud_tx.
$$
As $f_1\in \CC^{2m-\dim D^\rho(F)-1}_c(D^\rho(F))$ and $f_2\in \CC^\infty_c(D^\rho(F))$, and $m$ can be arbitrary large, we deduce that the limit \eqref{eq:archi:kernel:tori:2} is uniformly convergent on compact subset and converges to a smooth function on $T(F)$, which is still denoted as $\RJ^{\rho}_{\psi}$. Finally for Part (2)(b) of Conjecture \ref{conjec:bknproposal}, it follows immediately from the functional equation after plugging one side a family of delta sequence. The proof is the same as \cite[Thm.~4.6]{MR4474366} and we omit.

It follows that we have established all the desired properties listed in Conjecture \ref{conjec:bknproposal} for $T$ a torus over an archimedean local field.

\begin{thm}\label{thm:BK:torus}
Conjecture \ref{conjec:bknproposal} holds for $G=T$ a torus over any local fields.
\end{thm}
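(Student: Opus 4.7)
The plan is to assemble the proof from the constructions and lemmas already established in subsection \ref{subsec:toruscase}, organized around the three parts of Conjecture \ref{conjec:bknproposal}. First I would define $\CS^\rho(T(F)) := \CS(M^\rho_T(F))$ as the image of $\CS(\BA^\rho(F))$ under the pushforward $(\rho_{M_T})_!$, relying on the Cartesian diagram \eqref{eq:toricase:2} to ensure that for each $t \in T(F)$ the fiber $(\rho_T)^{-1}(t)$ is a $U(F)$-torsor; absolute convergence of the integral defining $(\rho_{M_T})_!(\phi)(t)$ and smoothness on $T(F)$ then follow from restriction of Schwartz--Bruhat functions to closed subsets. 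Pulling back the zeta integral \eqref{eq:toruscase:pullbackzeta} to $D^\rho(F)$ reduces Part (1) of Conjecture \ref{conjec:bknproposal} to the classical Tate thesis for the étale $F$-algebra $\BA^\rho(F)$, yielding meromorphic continuation, the $L$-factor ideal property, and the basic function.

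Next I would define $\CF^\rho_\psi$ on $\CS^\rho(T(F))$ by the commutative square \eqref{eq:toricase:3}. The key subtlety is independence of the choice of representative $\phi \in \CS(\BA^\rho(F))$: applying the abelian functional equation \eqref{eq:nonarchikernel:tori:fe} for $(\rho_{M_T})_!(\phi_1 - \phi_2)$, whose zeta integral against any unitary character vanishes identically in $s$, Pontryagin duality and smoothness force $(\rho_{M_T})_!(\CF_{\BA^\rho,\psi}(\phi_1-\phi_2)) = 0$. The involutivity, $L^2$-unitarity, preservation of the basic function, and the functional equation \eqref{eq:proposal:fe} then all descend from the corresponding abelian statements on $\BA^\rho(F)$.

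For Part (2) I would construct $\RJ^\rho_\psi$ as the limit
\[
\RJ^\rho_\psi(t) = \lim_{n \to \infty} (\rho_{M_T})_!\bigl(\CF_{\BA^\rho,\psi}(\Fc_n)\bigr)(t)
\]
following Definition \ref{defin:nonarchikernel:tori}. In the non-archimedean case, stable convergence is exactly Lemma \ref{lem:nonarchikernel:tori}; I would then check via \eqref{eq:nonarchikernel:tori:7} that $\RJ^\rho_\psi$ represents $\CF^\rho_\psi$ on $\CC^\infty_c(\rho_T(D^\rho(F)))$ by convolution, and that the truncations $\RJ^\rho_{\psi,n}$ lie in the Bernstein center (Lemma \ref{lem:nonarchikernel:tori:BCenter}). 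The identification of the Laurent series with $\gamma(-s,\chi^{-1},\rho,\psi)$ is obtained by testing against normalized characteristic functions $\Fc_{T,k}$ of deep congruence subgroups, so that $\CZ(s,\Fc_{T,k},\chi)=1$ for $k$ exceeding the depth of $\chi$ and the functional equation \eqref{eq:nonarchikernel:tori:fe} directly evaluates the sum. In the archimedean case, I would use the parametrix identity \eqref{eq:archi:kernel:tori:limit} of \cite{bkglobalization} to replace $\delta_{\Id}$ by $\Delta_{\mathrm{cas}}^m * f_1 + f_2$ with $f_1, f_2$ of prescribed regularity, swap the order of integration in \eqref{eq:archi:kernel:tori:2} using compact support and absolute convergence, and pass to the limit; letting $m \to \infty$ gives uniform convergence on compacta of the limit and all its derivatives.

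The main obstacle is the convergence of the defining limit for $\RJ^\rho_\psi$, since a priori $(\rho_{M_T})_!(\CF_{\BA^\rho,\psi}(\Fc_n))$ is only a smooth function and need not stabilize. The non-archimedean resolution hinges on character orthogonality of $\psi^\rho(x(u-1))$ on the noncompact kernel $U(F) \cap K_1$, which forces the tail integrals \eqref{eq:nonarchikernel:tori:3} to vanish for $n$ large; the archimedean resolution hinges on the Casimir-parametrix trick to gain regularity in the fiber integration. Everything else — the functional equation, the Bernstein-center property, and the gamma-factor identification — then reduces mechanically to the Tate thesis on $\BA^\rho(F)$ via the push-forward, so the proof concludes by citing the two convergence lemmas and invoking the abelian theory.
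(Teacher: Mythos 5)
Your proposal reproduces the paper's own argument faithfully: the three parts of Conjecture \ref{conjec:bknproposal} are handled via the push-forward construction along $(\rho_{M_T})_!$, the Pontryagin-duality argument for well-definedness of $\CF^\rho_\psi$, and the limit construction of $\RJ^\rho_\psi$ whose convergence rests on Lemma \ref{lem:nonarchikernel:tori} (character orthogonality on $U(F)\cap K_1$) in the non-archimedean case and the Casimir-parametrix identity \eqref{eq:archi:kernel:tori:limit} in the archimedean case, with the Bernstein-center property and $\gamma$-factor identification following as in Lemma \ref{lem:nonarchikernel:tori:BCenter} and the subsequent computation with $\Fc_{T,k}$. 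This is the same route the paper takes, correctly identified.
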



\subsection{Low symmetric power examples}\label{subsec:excepex}

In subsection \ref{subsec:toruscase}, the Conjecture \ref{conjec:bknproposal} is fully established when $G$ is a torus. The next question is to construct $\RJ^{\rho}_{\psi}$ for $G$ a general reductive group over a local field $F$ with a fixed $L$-homomorphism $\rho:\LG\to \GL(V_\rho)$. 

Let $\c_G:G\to G\sslash \Ad(G)$ be the Chevalley quotient map. For convenience let us assume that $G$ is split, semisimple  and simply connected. We fix a maximal split torus $T$ and let $W = W(G,T)$ be the Weyl group attached to $T$. Following \cite{Steinberg-conjugacy}, $\Fc_G=G\sslash \Ad(G)\simeq T\sslash W$ is an affine space. By restricting $\LG$ to the diagonal torus $\LT$, we obtain a homomorphism $\rho_T:\BG_m^{\dim V_\rho}\to T$. The idea of Braverman and Kazhdan (\cite{BK00}) is that one should be able to define a (possibly birational) $W$-action on $\BG_m^{\dim V_\rho}$ such that the $W$-action on $\BG_m^{\dim V_\rho}$ preserves the trace function $\tr:\BG_m^{\dim V_\rho}\to \BA^1$, acts via sign character $\sgn:W\to \{\pm 1\}$ on the standard volume form on $\BA_n^{\dim V_\rho}$, and makes $\rho_T$ a $W$-equivariant morphism. If such a $W$-action is ensured, there is an induced morphism $\rho_{T\sslash W}:\BG_m^{\dim V_\rho}\sslash W\to T\sslash W$. Through fiber product there is a morphism $\BG_m^{\dim V_\rho}\times_{T\sslash W}G\to G$ and one can push-forward the datum on $(\BG_m^{\dim V_\rho}\times_{T\sslash W}G)(F)$ to $G(F)$ to get the desired distribution. We are not able to find the $W$-action that Braverman and Kazhdan stipulated in general but only in some particular cases.

Instead, we will pursue an idea speculated in \cite[\S 6]{ngo2016hankel}. It is expected that the distribution $\RJ^\rho_\psi$ should be glued from the family of smooth functions $\{\RJ^\rho_{\psi,T}\}_{T}$, where $T\subset G$ runs over the stable conjugacy classes of maximal tori of $G$ and $\RJ^\rho_{\psi,T}$ is the Fourier kernel constructed in subsection \ref{subsec:toruscase} for the pair $(T,\rho)$. The first naive idea is to patch the functions $\{\RJ^\rho_{\psi,T}\}_{T}$ directly to obtain the desired function $\RJ^\rho_\psi$ on $\Fc(F)$, as this works in the Godement-Jacquet case for $G=\GL_n$ and $\rho$ the standard representation of $\GL_n$ (\cite{gjzeta}). 

In the following, for convenience, let us consider $G=\SL_2$ over a non-archimedean local field of odd residual characteristic. Actually based on Part (2) of Remark \ref{rmk:proposal} we shall take $G=\BG_m\times \SL_2$, but for introductory purpose let us stick to $G=\SL_2$. There are four stable conjugacy classes of one-dimensional tori $T_\alp$ with $T_\alp(F) = E_\alp^1$ for $\alp\in \{0,1,\pm 1/2\}$: $E_\alp$ are the étale quadratic $F$-algebras with $E_0 = F\times F$ the split extension, $E_1$ the unramified extension, $E_{\pm 1/2}$ the two nonisomorphic ramified quadratic extensions of $F$, and $E^1_\alp$ is the subgroup of elements of norm one. When $\alp\neq 0$, $T_\alp(F)$ are compact commutative groups. For each $\alp$, we denote by $\pi_\alp:T_\alp(F)\to \Fc(F)$ given by the trace map, and $\tr(E_\alp^1)$ the image of $\pi_\alp$. We have an almost partition of $\Fc(F) = F$ as 
$$
\Fc(F)=F = \bigcup_{\alp\in \{0,1,\pm 1/2\}}
\tr(E^1_\alp)
$$
with $\tr(E^1_\alp)\cap \tr(E^1_\bet) = \{\pm 2\}$ if $\alp\neq \bet$. We denote by $\tau_\alp$ the involution of $T_\alp$ corresponding to the Galois involution of $E_\alp/F$. Every $\tau_\alp$-invariant smooth function $J_\alpha$ on $T_\alpha(F)$ descends to a unique function $\nu_\alpha(J_\alpha)$ on $\tr(E_\alpha^1)$:
\begin{equation}\label{eq:pialp-nualp}
J_\alpha = \pi_\alpha^*(\nu_\alpha(J_\alpha)).
\end{equation}
The naive guest would be $\RJ^\rho_{\psi,\rm Ngo}=\sum_{\in \{0,1,\pm 1/2\}} \nu_\alpha(\RJ^\rho_{\psi,\alpha})$ where $\RJ^\rho_{\psi,\alp} = \RJ^\rho_{\psi,T_\alp}$ is constructed in subsection \ref{subsec:toruscase}. It turns out that $\RJ^\rho_{\psi,\rm Ngo}$ conincides with the Godement-Jacquet kernel based on the fact that $\RJ^{\rho=\std}_{\psi} = \psi(\tr(\cdot))$ and $\RJ^\rho_{\psi,\alp} = \psi(\tr_{E_\alp/F}(\cdot))$.

But this naive approach does not work in other cases, as was shown by Lafforgue in \cite{lafforguegl2}. He showed that in general, the naive patching construction fails to yield the correct gamma factor, even for principal series representations. He further gave an integral transform that, when applied to the naive patching function $\RJ^\rho_{\psi,\rm Ngo}$, gives rise to the correct Langlands local gamma factors for principal series representations. His transform
$$\RJ^\rho_{\psi,\mathrm{Laf}}:= \mathrm{Laf}(\RJ^\rho_{\psi,\mathrm{Ngo}})$$ 
is given by the following identity
$$
\RJ^\rho_{\psi,\mathrm{Laf}}=\CF_{\psi}(|.| \CF_{\psi^{-1}}(\RJ^\rho_{\psi,\mathrm{Ngo}}))
$$
where $\CF_\psi$ is the Fourier transform in the trace variable, and $|.|$ is the absolute value of the dual variable. In \cite{lafforguegl2}, Lafforgue claimed without proof that, disregarding potential convergence issue in its definition, $\RJ^\rho_{\psi,\mathrm{Laf}}$ would act on principal series representations with the correct Langlands local gamma factors. The next question is whether the Lafforgue recipe applied to the naive patching function gives rise to the correct $\gamma$ factor for supercuspidal representations. The answer is still negative in general, but as we will see, the Lafforgue integral transform is still an essential device for formulating the correct construction. 

In the following, we discuss three explicit examples related to the conjectural construction of the second-named author and the correction provided by  Lafforgue.

\subsubsection{$\RI$} 
The first example we discuss is $G=\GL_2$ and $\rho$ the standard representation. We have seen from the above discussion that $\RJ^\rho_\psi = \RJ^\rho_{\psi,\rm Ngo} = \psi(\tr(\cdot))$. On the other hand, since $\CF_{\psi^{-1}}(\psi(\cdot)) = \del_{1}$, the delta mass supported at $x=1$. It follows that in this case, $\RJ^{\rho}_{\psi,\rm Laf} = \RJ^\rho_{\psi,\rm Ngo} = \RJ^\rho_\psi$.

\subsubsection{$\RI\RI$}
The second example we discuss is $G=\BG_m\times \SL_2$ and $\rho = \Sym^2$, i.e. $\LG = \BC^\times \times \SO_3$ and $\rho$ is the standard embedding of $\SO_3$ into $\GL_3$ together with the central scaling action of $\BC^\times$. This is the case treated in \cite{JLZ} (based on \cite{doubling}). 

Based on \cite[(1.9)]{JLZ}, up to unramified twist on the determinant factor, 
$$
\RJ^\rho_{\psi}(a,\tr(g)) = \eta_{\psi}(a(2+\tr(g)))
$$
where $\eta_\psi$ is a smooth function on $F^\times$ with the property that for any character $\chi$ of $F^\times$, the following regularized Mellin transform holds as meromorphic function of $s\in \BC$
$$
\int^\pv_{F^\times}
\chi^{-1}(x)|x|^{-s}\eta_\psi(x)\ud^\times x = 
\gam(s+1,\chi,\psi)\gam(s+2,\chi^2,\psi)
$$
where $\gam(s,\chi,\psi)$ is the local Tate gamma factor (\cite{tatethesis}). For the definition of the regularization, see \cite[\S 4.3]{JLZ} for details. Equivalently, $\eta_\psi$ is the (multiplicative) convolution of the following two distributions on $F^\times$:
\begin{equation}\label{eq:lowSym:II:etaconvolution}
\eta_\psi = \Phi_{1,\psi}*\Phi_{2,\psi}
\end{equation}
where $\Phi_{1,\psi} = \psi(\cdot)$ and $\Phi_{2,\psi} = \lam_{!}(\psi(\cdot)|\cdot|^{-1})(\cdot)$ where $\lam:F^\times \to F^\times$ is the map $x\mapsto x^2$ and $\lam_!$ is the push-forward of $\psi(\cdot)|\cdot|^{-1}$ along $\lam$. Formally, we can also write $\RJ^\rho_\psi$ by the following integral formula 
\begin{equation}\label{eq:lowSym:II:correct}
\RJ^\rho_{\psi}(a,\tr(g)) = 
\int^\pv_{F^\times}
\psi
\bigg(
z+\frac{a(2+\tr(g))}{z^2}
\bigg)
|z|^{-2}\ud^\times z.
\end{equation}
In the following, let us compute $\RJ^\rho_{\psi,\rm Ngo}$, $\RJ^\rho_{\psi,\rm Laf}$ and compare them with $\RJ^\rho_\psi$.

We first compute $\RJ^\rho_{\psi, \rm Ngo}$. We follow the presentation in subsection \ref{subsec:toruscase}. The restriction of $\rho$ to the diagonal torus of $\LG$ is given by 
$$
\rho|_{T^\vee}:(a,
\left(
\begin{smallmatrix}
 t& & \\
  & 1&\\
  && t^{-1}
\end{smallmatrix}
\right)
)\mapsto 
\left(
\begin{smallmatrix}
 at& & \\
  & a&\\
  && at^{-1}
\end{smallmatrix}\right).
$$
Passing to the dual side, we obtain the following homomorphisms
\begin{num}
    \item 
For any étale quadratic $F$-algebra $E$, let $\BG_{m,E} = \Res_{E/F}\BG_m$ and $\BG_{m,E}^1 = \Ker(\Nr:\BG_{m,E}\to \BG_m)$. Let $T_E = \BG_m\times \BG_{m,E}^1$ be the maximal torus in $G=\BG_m\times \SL_2$ determined by $E$, then the homomorphism dual to $\rho|_{T^\vee}$ attached to $E$ is given by 
\begin{align*}
\rho_E:\BG_m\times \BG_{m,E}&\to \BG_m\times \BG_{m,E}^1
\\
(d,e)
&\mapsto 
(d\Nr(e),e/\tau_E(e)).
\end{align*}
\end{num}
It is worth pointing out that in this situation, $\ker \rho_E$ is always isomorphic to $\BG_m$. 

Passing to $F$-points, for $(a,g)\in F\times E^1 = T_E(F)$, we are going to push-forward the standard Fourier kernel $\psi(d+\tr_{E/F}(e))$ on $F\times E$ along the fiber $\rho^{-1}_E(a,g)$:
$$
\RJ^\rho_{\psi,E}(a,g)= 
\int^\pv_{
\substack{
d\Nr(e) = a
\\
e/\tau_E(e) = g
\\
d\in F^\times, e\in E^\times
}
}
\psi(d+\tr_{E/F}(e))
(\ud^\times d \ud^\times e)_{(a,g)}.
$$
The integral need regularization. Before discussing the regularization let us simplify the algebraic equations for the fiber $\rho^{-1}_E(a,g)$ given by the equations
\begin{equation}\label{eq:lowsymex:II:1}
d\Nr(e) =a,e/\tau_E(e) = g,\quad d\in F^\times, e\in E^\times.
\end{equation}
Choose $e_g\in E^\times$ such that $e_x/\tau_E(e_g) = g$. Then all other solutions to the equation $e/\tau_E(e) = x$ are given by 
$e=ze_g$ for $z\in F^\times$. It follows that \eqref{eq:lowsymex:II:1} can be reformulated as 
$$
dz^2\Nr(e_g)= a,\quad z\in F^\times,d\in F^\times
$$
and we are pushing forward $\psi(d+\tr_{E/F}(e)) = \psi(\frac{a}{z^2\Nr(e_g)}+z\tr_{E/F}(e_g))$ along $z\in F^\times$. The measure on $z\in F^\times$ used is the multiplicative Haar measure. Changing variable $z\mapsto z/\tr(e_g)$, and using the identity $\tr(e_g)^2/\Nr(e_g) = 2+\tr(g)$, we deduce the following fact:
\begin{num}
\item\label{num:lowSym:II:ngokernel} 
The following identity holds for any étale quadratic $F$-algebra $E$ and $(a,g)\in F^\times \times E^1$,
$$
\RJ^\rho_{\psi,E}(a,g) = 
\int^\pv_{z\in F^\times}
\psi
\bigg(
z+
\frac{a(2+\tr(g))}{z^2}
\bigg)\ud^\times z.
$$
The regularization is understood as follows: 
$$
\RJ^\rho_{\psi,E}(a,g) = \wt{\eta}_\psi(a(2+\tr(g)))
$$
where $\wt{\eta}_\psi$ is the smooth function on $F^\times$ whose regularized Mellin transform enjoys the following identity in the sense of \cite[\S 4.3]{JLZ}
$$
\int^\pv_{F^\times}
\chi^{-1}(x)
|x|^{-s}
\wt{\eta}_\psi(x)\ud^\times x = \gam(s+1,\chi,\psi)
\gam(s+1,\chi^2,\psi).
$$
Equivalently, $\wt{\eta}_\psi = \wt{\Phi}_{1,\psi}*\wt{\Phi}_{2,\psi}$ where $\wt{\Phi}_{1,\psi} = \psi(\cdot )$ and $\wt{\Phi}_{2,\psi} = \lam_!(\psi(\cdot))$ where $\lam:F^\times \to F^\times$ is given by $x\mapsto x^2$. As a result 
$$
\RJ^\rho_{\psi,\rm Ngo}(a,g) = 
\int^\pv_{F^\times}
\psi
\bigg(
z+
\frac{a(2+\tr(g))}{z^2}
\bigg)
\ud^\times z,\quad (a,g)\in G(F).
$$
\end{num}
Comparing with \eqref{eq:lowSym:II:correct}, we see that $\RJ^\rho_\psi\neq \RJ^\rho_{\psi,\rm Ngo}$.

Next, we are going to compute $\RJ^\rho_{\psi,\rm Laf}$ based on the above explicit formula for $\RJ^\rho_{\psi,\rm Ngo}$. Following Part (2) of Remark \ref{rmk:proposal}, instead of performing the Lafforgue transform on the trace variable of $G$, we would like to embed $G$ into the reductive monoid $M^{\lam_2}$ and perform the Lafforgue transform on the trace variable of $M^{\lam_2}$, which is suggested by Lafforgue in \cite{lafforguegl2}. Precisely, $M^{\lam_2} =\{(a,x)\in \BA\times\RM_{2\times 2}\mid a^2 = \det x\}$ and $G$ embeds into $M^{\lam_2}$ via 
$$
(a,g)\in G\mapsto (a,ag).
$$
The analogue of the Chevalley quotient map is given by 
\begin{align*}
\c: M^{\lam_2} &\to \BA^2\\
(a,x) &\mapsto (a,\tr(x))
\end{align*}
As a result, we have the following fact:
\begin{num}
\item With the above notation, for $(a,x)\in M^{\lam_2}(F)$, 
$$
\RJ^\rho_{\psi}(a,x) = 
\eta_\psi(2a+\tr(x)),\quad 
\RJ^{\rho}_{\psi,\rm Ngo}(a,x) = 
\wt{\eta}_\psi(2a+\tr(x)).
$$
After written the above kernel functions in $(a,t=\tr(x))$-variable,
$$
\RJ^\rho_{\psi}(a,t) = 
\eta_\psi(2a+t),\quad 
\RJ^{\rho}_{\psi,\rm Ngo}(a,t) = 
\wt{\eta}_\psi(2a+t).
$$
\end{num}
Now we calculate the Fourier inversion of $\RJ^\rho_{\psi}$ and $\RJ^\rho_{\psi,\rm Ngo}$ in trace variable, as distributions on the affine line. By definition,
\begin{align*}
\CF_{\psi^{-1}}(\RJ^\rho_{\psi,\rm Ngo})(a,\xi)=
\psi(2a\xi)
\CF_{\psi^{-1}}(\wt{\eta}_\psi)(\xi),\quad 
\CF_{\psi^{-1}}(\RJ^\rho_{\psi})(\xi)(a,\xi) = 
\psi(2a\xi)
\CF_{\psi^{-1}}(\eta_\psi)(\xi).
\end{align*}
Hence to show that $\RJ^\rho_{\psi,\rm Laf} = \RJ^\rho_\psi$, it suffices to establish the following identity:
\begin{equation}\label{eq:lowSym:II:identityLaftrue}
|\xi|\CF_{\psi^{-1}}(\eta_\psi)(\xi) = \CF_{\psi^{-1}}(\wt{\eta}_\psi)(\xi).
\end{equation}
We first calculate $\CF_{\psi^{-1}}(\wt{\eta}_\psi)$. Following \eqref{num:lowSym:II:ngokernel}, write $\wt{\eta}_\psi$ as the multiplicative convolution $\wt{\eta}_\psi = \wt{\Phi}_{1,\psi}*\wt{\Phi}_{2,\psi}$ with $\wt{\Phi}_{1,\psi} = \psi(\cdot)$ and $\wt{\Phi}_{2,\psi} = \lam_!(\psi(\cdot))$. 
In general, for a Schwartz-Bruhat function $f\in \CS(F)$, its Fourier transform can be written as the following multiplicative convolution
$$
\CF_{\psi}(f)(t) = \int_{y\in F}\psi(ty)f(y)\ud y = \psi*(|\cdot |f)^\vee(t).
$$
It follows that as distributions, 
$\wt{\eta}_\psi=\wt{\Phi}_{1,\psi}*\wt{\Phi}_{2,\psi} = \CF_{\psi}(|\cdot|^{-1}\wt{\Phi}_{2,\psi}^\vee)$. Hence 
\begin{align*}
\CF_{\psi^{-1}}(\wt{\eta}_\psi)(f) = 
\int_{t\in F}\wt{\eta}_\psi(t)\CF_{\psi^{-1}}(f)(t)
\ud t
=&
\int_{t\in F}
\CF_{\psi}(|\cdot|^{-1}\wt{\Phi}_{2,\psi}^\vee)(t)
\CF_{\psi^{-1}}(f)(t)\ud t,
\\
=&
\int_{t\in F}
\big(|\cdot|^{-1}\wt{\Phi}_{2,\psi}^\vee\big)(t)
f(t)\ud t.
\end{align*}
In other words, $\CF_{\psi^{-1}}(\wt{\eta}_\psi) = |\cdot|^{-1}\wt{\Phi}^\vee_{2,\psi}$. Similarly, $\CF_{\psi^{-1}}(\eta_\psi) = |\cdot|^{-1}\Phi^\vee_{2,\psi}$. Hence, to derive \eqref{eq:lowSym:II:identityLaftrue}, it is equivalent to show the following identity 
$$
|\xi|
\Phi^\vee_{2,\psi}(\xi) = \wt{\Phi}^\vee_{2,\psi}(\xi)
$$
which follows directly from the definition of $\Phi_{2,\psi}$ and $\wt{\Phi}_{2,\psi}$.

\begin{rmk}\label{rmk:exceptionaleg}
Unfortunately the remedy of Lafforgue still fails in general, which will be shown in the next example. Before preceding, let us explain why the remedy of Lafforgue works for $(G=\GL_2,\rho=\std)$ and $(G=\BG_m\times \SL_2,\rho=\Sym^2)$ intuitively. 

Precisely, for the above two situations, the distribution $\RJ^\rho_\psi$ enjoys the following two characterizations:
\begin{num}
\item\label{num:lowSymII:twopro1} The distributions $\CF_{\psi^{-1}}(\RJ^\rho_{\psi})$ and $\CF_{\psi^{-1}}(\RJ^\rho_{\psi,\rm Ngo})$ are always supported on $(F^\times)^2\subset F$;

\item\label{num:lowSymII:twopro2} From \cite[Thm.~4.7]{jlgl2}, for any étale quadratic $F$-algebra $E$, the following identity for the standard local Langlands gamma factors hold
$$
\gam(s,\Ind^{W_F}_{W_E}\chi,\psi) = \lam_{E}\cdot \gam(s,\chi,\psi_E)
$$
for any character $\chi$ of $T_E(F)$. Here $\Ind^{W_E}_{W_F}\chi$ is the automorphic induction from $F$ to $E$ of the character $\chi$, and $\lam_{E} = \varepsilon(\frac{1}{2},\eta_{E},\psi)$ is the Weil constant attached to $E/F$.

\end{num}
It turns out that based on Theorem \ref{thm:descent:sl2} that we are going to establish later, for any character $\chi$ of $T_E(F)$ with automorphic induction $\pi_{\chi}$ as a representation of $G(F)$, up to unramified twist and potential convergence issue the following identity holds 
$$
\gam(\pi_\chi,\rho,\psi) = 
\lam_{E}
\cdot 
\int_{T_E(F)}
\CF_{\psi}
\bigg(
\CF_{\psi^{-1}}(\RJ^\rho_\psi)
\frac{\eta_E(\cdot)}{|\cdot|}
\bigg)
\chi(e)\ud e
$$
When $\CF_{\psi^{-1}}(\RJ^\rho_\psi)$ is supported on $(F^\times)^2\subset F$, $\eta_E$ is equal to $1$ identically. Hence the right hand side above can be reduced to 
$$
\lam_{E}
\cdot 
\int_{T_E(F)}
\CF_{\psi}
\bigg(
\frac{\CF_{\psi^{-1}}(\RJ^\rho_\psi)}{|\cdot|}
\bigg)
\chi(e)\ud e.
$$
Furthermore, by the difference between the Langlands local gamma factors attached to $\pi_\chi$ and $\chi$, the above identity reduces to 
$$
\gam(\chi,\rho,\psi) = 
\int_{T_E(F)}
\CF_{\psi}
\bigg(
\frac{\CF_{\psi^{-1}}(\RJ^\rho_\psi)}{|\cdot|}
\bigg)
\chi(e)\ud e.
$$
which indicates that 
$$
\CF_{\psi}
\bigg(
\frac{\CF_{\psi^{-1}}(\RJ^\rho_\psi)}{|\cdot|}
\bigg) = \RJ^\rho_{\psi,\rm Ngo}\Longleftrightarrow 
\RJ^\rho_\psi = 
\CF_{\psi}(|\cdot|\CF_{\psi^{-1}}(\RJ^\rho_{\psi,\rm Ngo})).
$$

In conclusion, the fact that the remedy of Lafforgue works for the above two cases crucially relies on \eqref{num:lowSymII:twopro1} and \eqref{num:lowSymII:twopro2}. However, in general as shown in the next example, there do exist nonabelian Fourier kernels whose Fourier inversions in trace variable are not supported on $(F^\times)^2\subset F$, and the discrepancy between the Langlands local gamma factors on $T_E(F)$ and their automorphic induction on $G(F)$ differs more than just the constant $\lam_{E}$. Hence a more sophisticated inversion formula is needed. 
\end{rmk}

\subsubsection{$\RI\RI\RI$}

The third example we are going to discuss is $G=\GL_2\times_{\det} \GL_2 = \{(g_1,g_2)\mid g_1,g_2\in \GL_2,\det g_1=\det g_2\}$ and $\rho = \otimes$ is the tensor lifting. The integral representation for the corresponding $L$-functions can be obtained via a slight variant of the doubling method (\cite{doubling}). Following the same argument as \cite{JLZ}, one can show that the non-abelian Fourier kernel in this situation is given by 
$$
\RJ^\rho_{\psi}(g_1,g_2) = 
\wb{\eta}_\psi(\tr(g_1)+\tr(g_2))
$$
with $\wb{\eta}_\psi$ the smooth function on $F^\times$ whose regularized Mellin transform is given by 
$$
\int^\pv_{F^\times}
\chi^{-1}(x)|x|^{-s}
\wb{\eta}_\psi(x)\ud^*x = 
\gam(s+1,\chi,\psi)\gam(s+3,\chi,\psi).
$$
Formally, we can also write $\RJ^\rho_\psi$ as follows
$$
\RJ^\rho_{\psi}(g_1,g_2) = 
\int^\pv_{F^\times}\psi\bigg(z+\frac{\tr(g_1)+\tr(g_2)}{z}\bigg)
|z|^{-2}\ud^\times z.
$$
The analytical discussions for this example is the same as $\rho=\Sym^2$ case and hence we only briefly explain the ideas below without diving into the details. Following the same argument for $\rho=\Sym^2$, the Fourier inversion of $\RJ^\rho_\psi$ in $\tr(g_1)$ and $\tr(g_2)$ two variables is given by 
$$
\CF_{\psi^{-1}}
(\RJ^\rho_{\psi})(\xi_1,\xi_2) = 
\int_{\xi_1z=\xi_2z = 1}
\psi(z)
|z|^{-2}\ud^\times z
$$
which is \textbf{not} supported on the square class. As a result, $\RJ^\rho_\psi \neq \RJ^\rho_{\psi,\rm Laf}$ and hence Lafforgue's modification breaks down for this example. Precisely, suppose that Lafforgue's modification works for this case, then we would have 
$$
\mathrm{Laf}^\p(\RJ^\rho_{\psi,\rm Ngo}) = \RJ^\rho_{\psi}
$$
and hence 
$$
\RJ^\rho_{\psi,\rm Ngo}(g_1,g_2) = 
\int^\pv_{F^\times}
\psi\bigg(z+\frac{\tr(g_1)+\tr(g_2)}{z}\bigg)
\ud^\times z.
$$
However, by Theorem \ref{thm:descent:sl2} and \eqref{num:lowSymII:twopro2}, for $E_i$ $(i=1,2)$ two étale quadratic $F$-algebras, 
$$
\RJ^\rho_{\psi,E_1\times E_2}(e_1,e_2)=
\int^\pv_{F^\times}
\psi\bigg(z+\frac{\tr(e_1)+\tr(e_2)}{z}\bigg)\eta_{1}(z)\eta_{2}(z)
\ud^\times z
$$
where $\eta_i = \eta_{E_i}$, which is clearly not the same as the above formula.

\begin{rmk}\label{rmk:exceptionaleg:2}
Based on the above example, we see that neither the naive construction $\RJ^\rho_{\psi,\rm Ngo}$ nor its Lafforgue transform $\RJ^\rho_{\psi,\rm Laf}$ provides the correct nonabelian Fourier kernel in general. In this paper, we are going to construct the correct nonabelian Fourier kernel for $G=\SL_2$ or $\GL_2$ from the abelian pieces from tori $\nu_\alpha(\RJ^\rho_{\psi,\alpha})$ using the Langlands' stable transfer factor and the Lafforgue transform. \end{rmk}

\subsection{An explicit formula}\label{subsec:aformulaFK}

Let $F$ be a local field of residual characteristic not equal to two. In this subsection, following Part (2) of Remark \ref{rmk:proposal}, we are going to take $(G,\rho)$ to be the following cases
\begin{itemize}
\item $G=\BG_m\times \SL_2$, and $\rho = \rho_{2n}$ the tensor product of the scaling action of $\BC^\times$ and the unique irreducible representation of $\SO_3(\BC)$ of dimension $2n+1$;

\item $G=\GL_2$, and $\rho = \rho_{2n+1} = (\det)^{-n}\otimes \Sym^{2n+1}$. 
\end{itemize}
Our goal is to provide an explicit integral formula for $\RJ^\rho_{\psi}$ on $\Fc(F)$ such that 
\begin{equation}\label{eq:explicit-formula:1}
\pi(\RJ^\rho_\psi\circ \c) = \gam(1/2,\pi^\vee,\rho,\psi)\Id_\pi,\quad \pi\in \Irr(G(F)).
\end{equation}
In particular we take $n_\rho = 0$ in Conjecture \ref{conjec:bknproposal}.

Let $\CI$ be the following set parametrizing stable conjugacy classes of maximal tori $\{T_\alp\}_{\alp\in \CI}$ in $G(F)$ 
\begin{align}\label{eq:indexset-I}
\CI = 
\Bigg\{
\begin{matrix}
\{0\} & F=\BC \\
\{0,1\} & F=\BR \\
\{0,1,\pm 1/2\} & F \text{ non. archi.}
\end{matrix}
\end{align}
Here $T_0$ corresponds to the split quadratic $F$-algebra $E_0 = F\times F$, $T_1$ corresponds to the unramified quadratic extension (resp. complex extension) when $F$ is non-archimedean (resp. real), and $T_{\pm 1/2}$ correspond to the two ramified extensions of $F$. 

Now we are ready to state our explicit formula for $\RJ^\rho_\psi$.

\begin{thm}\label{thm:explicitformula-Jrho}
With the above notation, the following identities hold.
\begin{enumerate}
\item When $F=\BC$,
$$
\RJ^\rho_\psi = 
\Laf\big(\nu_0(\wt{\RJ}^{\rho}_{\psi,0})\big);
$$

\item When $F=\BR$, 
$$
\RJ^\rho_{\psi} = 
\Laf
\bigg(
\sum_{\alp\in \CI}
\CE_\alp
\big(
\nu_\alp(\wt{\RJ}^{\rho}_{\psi,\alp})
\big)
\bigg);
$$

\item
When $F$ is non-archimedean of odd residual characteristic, 
$$
\RJ^\rho_{\psi} = 
\Laf
\bigg(
\CE^{\perp,\triv}_0(\nu_0(\wt{\RJ}^\rho_{\psi,0}))
+
\sum_{\alp\in \CI\bs \{0\}}
\CE_\alp
\big(
\nu_\alp(\wt{\RJ}^{\rho}_{\psi,\alp})
\big)
\bigg).
$$
\end{enumerate}
\end{thm}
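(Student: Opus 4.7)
The plan is to reduce Theorem~\ref{thm:explicitformula-Jrho} to Theorem~\ref{thm:mainresult:2} by identifying the descent data of $\RJ^\rho_\psi$ under $\CT_\alp \circ \LafSec$ with $\nu_\alp(\wt{\RJ}^\rho_{\psi,\alp})$. Since $\RJ^\rho_\psi$ is supported on the whole Bernstein variety whereas Theorem~\ref{thm:mainresult:2} is formulated for $\CZ^{\st,\fin}$, I would first multiply $\RJ^\rho_\psi$ by a smooth compactly supported function $f\circ\nu$ pulled back along the determinant/similitude character $\nu:G\to\BG_m$. The operators $\LafSec$, $\CT_\alp$, $\CE_\alp$, and $\Laf$ all respect this $\BG_m$-grading in a compatible way, so the identity assembled for each cutoff $z=(f\circ\nu)\cdot\RJ^\rho_\psi \in \CZ^{\st,\fin}$ patches together as $f$ varies to yield the stated formula on all of $G(F)$.

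The crux is the identification
$$\CT_\alp\circ \LafSec(\RJ^\rho_\psi) = \lam^\rho_{E_\alp/F}\,\nu_\alp(\wt{\RJ}^\rho_{\psi,\alp}),$$
up to a Weil constant depending only on $\rho$ and the étale quadratic algebra $E_\alp/F$, which I would absorb into the normalization of $\wt{\RJ}^\rho_{\psi,\alp}$. By Theorem~\ref{thm:mainresult1} and Corollary~\ref{cor:mainresult:1}, for every character $\chi$ of $E_\alp^1$ the pairing of $\CT_\alp\circ \LafSec(\RJ^\rho_\psi)$ with $\chi$ equals $\lam_{E_\alp/F}^{-1}\gam(\RJ^\rho_\psi,\CS(\chi),\psi)$, which by the defining property of the nonabelian Fourier kernel equals $\lam_{E_\alp/F}^{-1}\gam(\CS(\chi),\rho,\psi)$. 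On the other hand, the abelian construction of subsection~\ref{subsec:toruscase} produces a distribution $\wt{\RJ}^\rho_{\psi,\alp}$ on $T_\alp(F)$ whose pairing with $\chi$ is precisely the twisted torus gamma factor $\gam(\chi,\rho|_{{}^{L}T_\alp},\psi_{E_\alp})$. Inductivity of Langlands gamma factors under automorphic induction, generalizing the classical identity recalled in \eqref{num:lowSymII:twopro2} to the representation $\rho$, relates these two pairings by a constant depending only on $\rho$ and $E_\alp/F$; by Pontryagin duality on $E_\alp^1$ this equality of Mellin transforms forces equality of the two $\tau_\alp$-invariant functions.

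With the descent data identified, the three cases fall out of the corresponding parts of Theorem~\ref{thm:mainresult:2} applied to $\LafSec(\RJ^\rho_\psi)$, followed by applying $\Laf$ to recover $\RJ^\rho_\psi$. One subtlety arises in the non-archimedean case: the appropriate section at $\alp=0$ is $\CE^{\perp,\triv}_0$ rather than $\CE_0$, because by property~\eqref{num:intro:FTJ} the Fourier transform $\CF_{\psi^{-1}}(\RJ^\rho_\psi)$ vanishes at zero on each Bernstein component, which translates into $\LafSec(\RJ^\rho_\psi)$ being orthogonal to the trivial character and hence lying in $\CC^{\st,\fin,\perp\triv}$; this is exactly the hypothesis required by Theorem~\ref{thm:mainresult3}(2). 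The main technical obstacle is therefore not the recovery step, which is automatic from the earlier theorems, but rather the a priori convergence of the integral transform defining $\LafSec(\RJ^\rho_\psi)$ on the unbounded distribution $\RJ^\rho_\psi$; this convergence is controlled by the moderate growth of Langlands gamma factors in vertical strips together with the bounds encoded in~\eqref{num:intro:FTJ}.
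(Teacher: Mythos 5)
Your proposal is correct and follows essentially the same route as the paper: reduce to $\CZ^{\st,\fin}$ by a compactly supported cutoff along the determinant, identify $\CT_\alp\circ\LafSec(\RJ^\rho_\psi)$ with $\nu_\alp(\wt{\RJ}^\rho_{\psi,\alp})$ by comparing gamma factors via the descent formula and the inductivity of gamma factors under dihedral lifting (with the Weil constants absorbed into $\kappa^\rho_\alp$), and then apply the inversion formulas of Theorems \ref{thm:mainresult2} and \ref{thm:mainresult3}, using property \eqref{num:intro:FTJ} to place $\LafSec(\RJ^\rho_\psi)$ in $\CC^{\st,\fin,\perp\triv}$ and thereby justify the section $\CE^{\perp,\triv}_0$ in the non-archimedean case. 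This matches the paper's six-point explanation following the theorem statement.
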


We are going to explain the theorem below.

(1) The transform $\Laf$, as discussed in subsection \ref{subsec:excepex}, is defined as follows: For a function $z$ on $\Fc(F)$, 
$$
\Laf(z) = \CF_{\psi}
\big(
|\cdot|\CF_{\psi^{-1}}(z)
\big)
$$
where the Fourier transform and absolute value are taken in trace variable. From the discussion in subsection \ref{subsec:Laf-Reptheoretic-def}, $\Laf$ realizes the natural map from (a subspace of) the stable cocenter of $G(F)$ to its stable Bernstein center;

(2)
$$
\wt{\RJ}^\rho_{\psi,\alp} = 
\RJ^\rho_{\psi,\alp}\cdot \kappa^\rho_\alp,
,\quad \kappa^\rho_\alp = 
\bigg\{
\begin{matrix}
\lam_{\alp}^{n}  & G=\BG_m\times \SL_2,\rho=\rho_{2n}\\
\lam_\alp^{n+1} & G=\GL_2,\rho=\rho_{2n+1}
\end{matrix}
$$
where $\lam_\alp = \lam_{E_\alp/F}$ is the Weil constant attached to $E_\alp/F$ (\cite{jlgl2}), and $\RJ^\rho_{\psi,\alp}$ is the smooth function on $T_\alp(F)$ studied in subsection \ref{subsec:toruscase} with the property that for any character $\chi_\alp$ of $T_\alp(F)$,
$$
\chi_\alp(\RJ^\rho_{\psi,\alp}) = 
\gam(1/2,\chi^{-1}_\alp,\rho|_{{}^L T_\alp},\psi).
$$

(3) In particular, 
$$
\chi_\alp(\wt{\RJ}^\rho_{\psi,\alp}) = 
\gam(1/2,\CW(\chi^{-1}_\alp),\rho,\psi)
$$
where $\CW(\chi_\alp)$ is the dihedral lifting of $\chi_\alp$ that is going to be reviewed in subsection \ref{subsec:dihedral rep}. 

To be precise, since the residual characteristic of $F$ is not equal to two, every irreducible admissible representation of $G(F)$ is a subquotient of some dihedral representation of $G(F)$ (\cite{jlgl2}\cite{Casselma-Quadratic}). For any character $\chi$ of a maximal torus $T(F)$ attached to a quadratic $F$-algebra $E$, let $\CW(\chi)$ be the dihedral lifting of $\chi$ to $G(F)$ that is going to be reviewed in subsection \ref{subsec:dihedral rep}. From the local Langlands correspondence for $\GL_2$ (\cite{gl2llc}) $\CW(\chi)$ has local $L$-parameter given by $\Ind^{W_F}_{W_{E}}\chi$. Moreover, $(\Ind^{W_F}_{W_{E}}\chi)^\vee\simeq \Ind^{W_F}_{W_{E}}\chi^{-1}$. 

Without loss of generality, let us for convenience assume that $G=\GL_2$ and $\rho=(\det)^{-n}\otimes \Sym^{2n+1}$. Since $\LG = \GL_2(\BC)$, up to direct sum and twist by determinant, by \cite[Thm.~1.0.2]{DanielZhilin}, $\rho\circ \Ind^{W_F}_{W_{E}}\chi^{-1}$ is a direct sum of one dimensional character or two dimensional dihedral representations attached to $E$ and the induced character depends only on $\chi$. The description is uniform over local fields of residual characteristic not equal to two. In particular, from \cite[Thm.~4.7]{jlgl2}, 
$$
\gam(s,\Ind^{W_F}_{W_E}\chi,\psi_F) = 
\gam(s,\chi,\psi_E)\cdot \lam_{E/F}
$$
where $\lam_{E/F} = \varepsilon(1/2,\eta_E,\psi)$ is the Weil constant. It follows from the decomposition of local $L$-parameters as in \cite[Thm.~1.0.2]{DanielZhilin} that there exists a constant $\kappa^{\rho}_{E/F}= \lam_{E/F}^{n}$ depending only on the quadratic extension $E/F$, $\rho$ and $\psi$ only, such that 
\begin{equation}\label{eq:nonabfourier:gl2:1}
\gam(1/2,\CW(\chi^{-1}),\rho,\psi) = 
\gam(1/2,\chi^{-1},\rho,\psi_E)\cdot 
\kappa^\rho_{E/F}.
\end{equation}

(4)
The map $\nu_\alp$ is introduced in \eqref{eq:pialp-nualp}, which is the descent of the $\tau_\alp$-invariant function $\wt{\RJ}^\rho_{\psi,\alp}$ to $\c(T_\alp(F))\subset \Fc(F)$. Indeed, $\RJ^\rho_{\psi,\alp}\circ \tau_\alp = \RJ^\rho_{\psi,\alp}$ which follows from the fact that $\tr_{\alp}\circ \tau_\alp = \tr_\alp$, and the fact that $\RJ^\rho_{\psi,\alp}$ is the push-forward of the standard Fourier kernel along $\rho^\vee_\alp$:
\begin{itemize}
\item If $G=\BG_m\times \SL_2$, 
\begin{align*}
\rho^\vee_\alp:
(E^\times_\alp)^{n}\times F^\times
&\to 
F^\times\times E^1_\alp
\\
\big((e_i)_{i=1}^n,a\big)&\mapsto 
\bigg(a\prod_{i=1}^{n}\Nr_\alp(e_i),
\prod_{i=1}^{n}\frac{e_i}{\tau_\alp(e_i)}
\bigg)
\end{align*}

\item If $G=\GL_2$, 
\begin{align*}
\rho^\vee_\alp:
(E_\alp^\times)^{n+1}
&\to E_\alp^\times
\\
\big(
(e_i)_{i=1}^{n+1}
\big)
&\mapsto 
\prod_{i=1}^{n+1}
e_i
\bigg(\frac{e_i}{\tau_\alp(e_i)}\bigg)^{i-1}
\end{align*}
\end{itemize}

(5) For $\alp\in \CI$, $\CE_\alp$ and $\CE_0^{\perp,\triv}$ are sections from functions on $\c(T_\alp(F))$ to the stable cocenter of $G(F)$ that are realized as the (elliptic part of the) Langlands stable transfer map between $G(F)$ and its maximal tori that is reviewed in subsection \ref{subsec:STandDescent}. In particular they can be realized as Gelfand-Graev transforms on $\Fc(F)$ as shown in Theorem \ref{thm:explicitsection} and Theorem \ref{thm:explicitsection:distinguishedtwo}, which are Fourier type transforms on $\Fc(F)$ that is going to be reviewed in subsection \ref{subsec:ggpsstable:gl2}.

(6) The convergence issue is addressed as follows: Up to multiplying a smooth and compactly supported function along the determinant, we may assume that the distribution $\RJ^\rho_\psi\circ \c$ that we want to construct actually lies in the stable Bernstein center and its archimedean variant, which is going to be reviewed in subsection \ref{subsec:mainresult:SBC}. On the other hand, for the purpose of local $L$-factors, it suffices to examine the action of $\RJ^\rho_{\psi}\circ \c$ on $K$-finite vectors for smooth and compactly supported (or Schwartz if $F$ is archimedean) functions on $G(F)$. In particular, we may reduce the construction of $\RJ^\rho_{\psi}$ which lie in the stable Bernstein center of $G(F)$, to distributions in the stable Bernstein center that are actually supported only on finitely many Bernstein components. Hence we are reduced to assume that $\{\RJ^{\rho}_{\psi,\alp}\}_{\alp\in \CI}$ are all supported on finitely many components of the admissible dual of $\{T_\alp(F)\}_{\alp\in \CI}$ where they now become test (Schwartz if $F$ is archimedean) functions on $T_\alp(F)$. Now for this situation the convergence are properly addressed in section \ref{sec:descent}, Section \ref{sec:summands} and Section \ref{sec:inv}.


\section{Bernsten center, cocenter and orbital integrals} \label{sec:orbital-cocenter}

We recall the normalization of \cite{FLN10} of stable orbital integrals as a function over the Steinberg-Hitchin base. We will connect this construction to Kazhdan's theory of the cocenter and its stable version.

\subsection{Normalization of measures} \label{subsec:measure}

In this subsection, over a local field $F$, we define a measure on the space of conjugacy classes of $G(F)$ of $F$-points of a semisimple simply connected group $G$, essentially following \cite{FLN10}. We define first a measure on the space of stable conjugacy classes using a canonical volume form of the Steinberg-Hitchin base then derive a measure on the space of conjugacy classes. The discussion can be generalized to general reductive groups following the treatment of \cite[\S 3.3]{FLN10}.

Following Steinberg (\cite{Steinberg-conjugacy}), if $G$ is a split semisimple simply connected group, the invariant theoretic quotient of $G$ acting on itself by conjugation is the $r$-dimensional affine space
\begin{equation} \label{eq:Steinberg-base}
	\Fc=\mathfrak{c}_G = G\sslash G= \Spec (F[G]^G)=\mathbb{A}^r
\end{equation}
 with coordinates $c_1,\ldots,c_r$ given by $c_i(g)=\tr(\rho_{\omega_i}(g))$ where $\rho_{\omega_1},\ldots,\rho_{\omega_r}$ are the fundamental representations of $G$. Following \cite{FLN10}, we call the invariant theoretic quotient $\Fc=\Fc_G$ the Steinberg-Hitchin base because in \emph{loc. cit.} the normalization of stable orbital integrals was inspired by the Hitchin fibration.  For  $G=\SL_2$, the invariant theoretic quotient $\mathfrak{c}=\mathbb{A}^1$ is the affine line with the coordinate function $\c(g)=\tr(g)$, the trace of the $2\times 2$-matrix $g$ with determinant one. 

By the Chevalley restriction theorem, we have an isomorphism 
$$
\Fc = T \sslash W=\Spec (F[T]^W)
$$ 
where $T$ is a maximal split torus and $W$ is the Weyl group. We define the Weyl discriminant $D\in F[\Fc]$ as the $W$-invariant function on $T$ given by 
\begin{equation}
	D(t)=\prod_{\alpha\in \Phi} (1-\alpha(t))
\end{equation}
where $\Phi = \Phi(G,T)$ is the set of roots. In the case $G=\SL_2$, $T$ is torus of diagonal matrices of entries $(t,t^{-1})$ with $t\in \mathbb{G}_m$. The Weyl discriminant is then 
$$
D(t)=(1-t^2) (1-t^{-2}).
$$ 
The absolute value $|D(t)|$ of the Weyl discriminant $D(t)$ admits a square root constructed as follows. Choose a $\BZ$-basis $\chi_1,\ldots,\chi_r$ of the group $\mathbb{X}^*(T)$ of rational characters of $T$. We then have an invariant $r$-form on $T$ given by 
\begin{equation} \label{top-form-torus}
	\frac{\ud \chi_1}{\chi_1} \wedge \cdots \wedge \frac{\ud \chi_r}{\chi_r}
\end{equation}
which, up to a sign, is independent of the choice of the $\BZ$-basis $\chi_1,\ldots,\chi_r$ of  $\mathbb{X}^*(T)$. By pulling back the $r$-form $\ud c_1 \wedge \cdots \wedge \ud c_r$ along the morphism  $\pi_T:T\to \mathfrak{c}=T \sslash W$, we have the formula
\begin{equation}\label{def:Delta}
	\pi_T^* \ud c_1 \wedge \cdots \wedge \ud c_r = \Delta_T(t) \frac{\ud \chi_1}{\chi_1} \wedge \cdots \wedge \frac{\ud \chi_r}{\chi_r}
\end{equation}
where $\Delta_T(t)$ is an algebraic function on $T$ well defined up to a sign. Its absolute value 
$$\Delta(t)=|\Delta_T(t)|\in \BR_+$$
is then a real-valued function invariant with respect to the action of the Weyl group $W$. 
 By \cite[Prop.~3.29]{FLN10} we have the formula
\begin{equation} \label{square-Delta}
	|D(t)|=\Delta(t)^2.
\end{equation}
For $G=\SL_2$, we have $c=t+t^{-1}$, and hence
$$\ud c= (t-t^{-1}) \frac{\ud t}{t}.$$
We have $\Delta(t)=|t-t^{-1}|$ and $\Delta(t)^2=|D(t)|$.

The $r$-form $\ud c_1 \wedge \cdots \wedge \ud c_r$ on the $r$-dimensional affine space $\mathfrak{c}$ determines a measure 
\begin{equation}\label{def:omega-c}
	\omega_\Fc=|\ud c_1 \wedge \cdots \wedge \ud c_r|
\end{equation}
on $\mathfrak{c}(F)=F^r$. It follows from the formula \eqref{def:Delta} that $\Delta^{-1} \pi_T^* \omega_\Fc$ is an invariant measure on $T(F)$ which coincides with the measure associated with the invariant $r$-form \eqref{top-form-torus}. If $T$ is now a maximal torus of $G$ but not necessarily split, we still have the finite map $\pi_T:T \to \mathfrak{c}$ and the formula $\Delta^{-1} \pi_T^* \omega_\Fc$ still defines an invariant measure on $T(F)$. We will denote the corresponding measure by
\begin{equation}\label{invariant-measure-torus}
	\omega_T= \Delta^{-1} \pi_T^* \omega_\Fc.
\end{equation} 
Following \eqref{eq:indexset-I}, let 
\begin{align}\label{indexset:parametrizequadratic}
\CI = 
\Bigg\{
\begin{matrix}
\{0\} & F=\BC \\
\{0,1\} & F=\BR \\
\{0,1,\pm1/2\} & F\text{ non-archi. of odd residual characteristic}
\end{matrix}
\end{align}
parametrizing quadratic $F$-algebras and the stable conjugacy of maximal tori in $G(F)=\SL_2(F)$. We have 
$$
T_\alp(F)=E_\alpha^{1}= \ker(\Nr_\alp:E_\alpha^\times \to F^\times)
$$
which is compact for $\alp\neq 0$. For $\alp\neq 0$ indexing non-split torus of $\SL_2$ we have the constants
\begin{equation}\label{eq:volume:def:sl2}
	\mathbf{v}_\alpha=\vol(E_\alpha^1, \omega_{T_\alpha})^{-1}
\end{equation}
where $\ud t_\alpha=\omega_{T_\alpha}$ is the invariant measure on $T_\alpha(F)$ defined by the formula \eqref{invariant-measure-torus}. These constants will play an important role in our discussion of the Weyl integration formula.

We denote by $\Fc'$ the complement of the divisor defined by the Weyl discriminant function $D(t)$. Its inverse image $G'$ in $G$ consists of regular semisimple elements of $G$. Following Kottwitz (\cite{kott2}), the map 
\begin{equation} \label{eq:Steinberg-map}
	\c:G'(F)\to \Fc'(F)
\end{equation}
is surjective, and its fibers are stable conjugacy classes so that one can identify $\Fc'(F)$ with the set  of stable regular semisimple conjugacy classes. As a result, the set of stable regular semisimple conjugacy classes can thus be equipped with a canonical structure of $F$-analytic variety induced from $\Fc(F)$, and the measure induced from $\omega_\Fc$. 

Let $C'(F)$ denote the set of regular semisimple conjugacy classes in $G'(F)$. The fibers of the quotient map 
\begin{equation} \label{def:d}
	\mathbf{d}:G'(F) \to C'(F)
\end{equation}
are regular semisimple conjugacy classes. We consider the map 
$$\mathbf{e}:C'(F)\to \Fc'(F)$$
from the set of regular semisimple conjugacy classes of $G'(F)$ into the set of stable regular semisimple conjugacy classes. This is a finite-to-one map since in every stable regular semisimple conjugacy class, there are only finitely many conjugacy classes. This is also a covering in the sense that for every $x\in C'(F)$, there exists an open neighborhood $U$ of $x$, and an open neighborhood $V$ of $\mathbf{e}(x)$ such that $\mathbf{e}$ induces a homeomorphism $U\to V$. The canonical measure on $\omega_{\Fc}$ on $\Fc'(F)$ gives rise to a canonical measure $\omega_C$ on $C'(F)$. We also note that the map \eqref{def:d} is an $F$-analytic submersion. Since $\Fc'(F)$ is equipped with a structure of $F$-analytic manifold, its covering $C'(F)$ is also an $F$-analytic manifold. The measure $\omega_\Fc$ on $\Fc(F)$ pulls back to a measure on $C'(F)$. This extends to a measure on the set  $C(F)$ of conjugacy classes of $G(F)$ by declaring that the complement of $C^{\p}(F)$ in $C(F)$ has measure zero.

Finally the above results can be generalized to general reductive groups over $F$ without hard based on the discussion in \cite[\S 3.3]{FLN10}. We omit the details and refer the readers to \emph{loc. cit.} for a detailed discussion.

\subsection{Normalization of stable orbital integrals and orbital integrals}\label{subsec:normalization-SOI-OI}

Let $\mathcal{H}$ denote the algebra of smooth and compactly supported functions on $G(F)$, where $G$ is a reductive group over $F$. When $F$ is archimedean, it is also convenient to work with the Schwartz algebra of rapidly decreasing functions $\CS$ on $G(F)$ (\cite[p.230]{wallachredgpI}). In the following, we will focus on functions in $\CH$, and the discussion applies to functions in $\CS$ when $F$ is archimedean. 

For every $h\in \mathcal{H}$, we consider the pushforward measure $\c_! (h \ud g)$ defined so that for every continuous function $f:\Fc(F)\to \BC$ we have the following identity, here $\ud g$ is a Haar measure on $G(F)$
$$\langle \c_! (h\ud g), f \rangle = \langle h\ud g, \c^* f \rangle .$$
Here $\langle \cdot ,\cdot \rangle$ is the standard pairing between compactly supported measures and continuous functions and $\c^*f = f\circ \c$ is the pull-back of $f$ along $\c$. Based on the work of Harish-Chandra, we know that there exists a function ${\rm SO}(h)$ locally integrable with respect to $\omega_\Fc$ such that
$$
\c_! (h\ud g)= {\rm SO}(h) \omega_\Fc.
$$ 

When restricted to the (strongly) regular semisimple locus, 
$$
{\rm SO}(h):\Fc'(F) \to \BC
$$
is a smooth function that we call the stable orbital integral of $h$. This is the normalization of stable orbital integrals adopted in \cite[(3.31)]{FLN10}.

We can apply the same construction for the map $G'(F)\to C'(F)$ and thus define a function $${\rm O}(h): C'(F) \to \BC,$$ 
such that 
$${\mathbf d}_!(h\ud g)={\rm O}(h) \omega_C,$$
and we call it the orbital integral of $h$. This is equivalent to requiring that for every continuous function $f:C(F)\to \BC$ we have
$$\langle \mathbf{d}_! (h\ud g), f \rangle = \langle h\ud g, \mathbf{d}^* f \rangle .$$

We now compare the above normalization of orbital and stable orbital integrals with the more usual definition. In the customary setting, we define the orbital integral for an element $\gamma\in G'(F)$ with the formula
\begin{equation}\label{O gamma h}	
{\rm O}_\gamma(h)=\int_{G(F)/G_\gamma(F)} h(g\gamma g^{-1}) \frac{\ud g}{\ud t} 
\end{equation}
where $\ud t$ is a Haar measure of the of the group of $F$-points $T(F)$ of the torus $T=G_\gamma$. In general, this definition requires the assumption that the centralizer of $\gamma$ is a maximal torus $T=G_\gamma$, that is $\gamma$ is strongly regular, but under the running assumption that $G^{\rm der}$ is simply connected, all regular semisimple elements are strongly regular. We will denote $G^{\rm sr}$ the Zariski open subset of $G$ consisting of strongly regular elements and $T^{\rm rs}=G^{\rm rs}\cap T$.

This definition is more suitable if we assume that $\gamma$ varies in a fixed maximal torus $T(F)$. Fix a maximal torus $T$ and an invariant measure $\ud t$ on $T(F)$, we then have a function 
\begin{equation}\label{O T h}
	{\rm O}_T(h): T^{\rm sr}(F)\to \BC
\end{equation}
such that for every $\gamma\in T^{\rm sr}(F)$, the value ${\rm O}_T(h)(\gamma)={\rm O}_\gamma(h)$ is supplied by the formula \eqref{O gamma h}.

As the maximal torus $T$ of $G$ varies, we must consistently choose the Haar measure $\ud t$ on $T(F)$. One way to choose the Haar measure $\ud t$ on $T(F)$ is to use the formula \eqref{invariant-measure-torus}. With the Haar measures on maximal tori being chosen as in \eqref{invariant-measure-torus}, we can compare the FLN-normalization of orbital integrals with the traditional normalization with the aid of the Weyl integration formula. 

For every maximal $F$-torus $T$ of $G$, we consider the $F$-analytic map 
$$G(F)/T(F) \times T(F) \to G(F)$$
given by $(gT(F),t)\mapsto gtg^{-1}$. Its image is the subset of $G(F)$ consisting of elements conjugate to an element of $T(F)$. If $T'(F)$ is the intersection $T(F)\cap G^{\rm sr}(F)$ then the image of 
$$q_T:G(F)/T(F) \times T^{\rm sr}(F) \to G^{\rm sr}(F)$$ 
is an open subset of $G'(F)$. The latter map is locally an isomorphism on the source. It induces a cover of its image with $|{N}_T(F)/T(F)|$ sheets where ${N}_T$ is the normalizer of $T$ in $G$. 

Let $\mathcal{T}$ denote the set of $G(F)$-conjugacy classes of maximal tori of $G(F)$.
We then have a map
\begin{equation} \label{q alpha}
	q=\bigsqcup_{\alpha \in \mathcal{T}} q_\alpha: \bigsqcup_{\alpha \in \mathcal{T}} G(F)/T_\alpha(F) \times T_\alpha(F) \to G(F)
\end{equation}
whose image is the subset of semisimple elements in $G(F)$. If two maximal tori $T_1$ and $T_2$ which are not $G(F)$-conjugate, the images of $G(F)/T_1(F) \times T_1'(F)$ and $G(F)/T_2(F) \times T_2'(F)$ in $G'(F)$ are disjoint open subsets. It follows that the map 
$$ q:\bigsqcup_{\alpha \in \mathcal{T}} G(F)/T(F) \times T_\alpha^{\rm sr}(F) \to G^{\rm sr}(F).$$
defines a partition into open and closed subsets 
\begin{equation} \label{image q alpha}
	G^{\rm sr}(F)=\bigsqcup_{\alpha\in \mathcal{T}}G_\alpha^{\rm sr}(F)
\end{equation}
and the map 
$$G(F)/T_\alpha(F) \times T_\alpha^{\rm sr}(F) \to G^{\rm sr}_\alpha (F)$$
is a covering with $N_{T_\alpha}(F)/T_\alpha(F)$ sheets. 

\begin{thm} For every $h\in\mathcal{H}$ the following identity holds
	\begin{equation} \label{Weyl integration}
		\int_{G(F)} h(g)\ud g = \sum_{\alpha\in \mathcal{T}} \frac{1}{|N_\alpha(F)/T_\alpha(F)|} \int_{t\in T_\alpha(F)}  |D(t)| {\rm O}_{T_\alpha}(h)(t) \ud t 
	\end{equation}
\end{thm}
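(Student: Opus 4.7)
The plan is to reduce the identity to a change-of-variables computation on the regular semisimple locus, using the partition \eqref{image q alpha} and the maps $q_\alpha$ introduced just above the theorem. Since the complement of $G^{\rm sr}(F)$ in $G(F)$ is cut out by the vanishing of the Weyl discriminant $D$, it has measure zero for $\ud g$, so
\[
\int_{G(F)} h(g) \ud g = \sum_{\alpha \in \mathcal{T}} \int_{G^{\rm sr}_\alpha(F)} h(g) \ud g,
\]
and it suffices to treat each stratum $G^{\rm sr}_\alpha(F)$ separately. Since $q_\alpha : G(F)/T_\alpha(F) \times T_\alpha^{\rm sr}(F) \to G^{\rm sr}_\alpha(F)$ is a covering of $|N_\alpha(F)/T_\alpha(F)|$ sheets (the deck transformations coming from the Weyl group of $T_\alpha$ acting on $T_\alpha^{\rm sr}(F)$), integrating against $q_\alpha^*(h \ud g)$ overcounts by exactly this factor.

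The main computation is the Jacobian of $q_\alpha$. Working in the $F$-analytic category, I would compute the differential of $(g T_\alpha(F), t) \mapsto g t g^{-1}$ at a point $(e T_\alpha(F), t)$: identifying $T_{(eT_\alpha, t)}\bigl(G/T_\alpha \times T_\alpha\bigr)$ with $(\mathfrak{g}/\mathfrak{t}_\alpha) \oplus \mathfrak{t}_\alpha$ and translating the image back to $\mathfrak{g}$, one gets the operator
\[
(X \bmod \mathfrak{t}_\alpha,\, Y) \longmapsto (\mathrm{Ad}(t) - 1)X + Y,
\]
whose determinant (taken with respect to the chosen measures on $\mathfrak{g}/\mathfrak{t}_\alpha$ and $\mathfrak{t}_\alpha$) is precisely $\prod_{\beta \in \Phi(G,T_\alpha)}(1 - \beta(t)) = D(t)$, a Weyl-invariant regular function on $T_\alpha$. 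Taking absolute values gives the $F$-analytic Jacobian $|D(t)|$, so that
\[
q_\alpha^*\bigl(\ud g\bigr) = |D(t)|\; \ud\bar g \, \ud t
\]
where $\ud \bar g$ denotes the quotient measure on $G(F)/T_\alpha(F)$ determined by $\ud g$ and $\ud t$.

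With this Jacobian in hand, Fubini's theorem and the definition \eqref{O gamma h} yield
\[
\int_{G^{\rm sr}_\alpha(F)} h(g) \ud g = \frac{1}{|N_\alpha(F)/T_\alpha(F)|} \int_{T_\alpha^{\rm sr}(F)} |D(t)|\,\mathrm{O}_{T_\alpha}(h)(t)\, \ud t,
\]
and since $T_\alpha(F) \setminus T_\alpha^{\rm sr}(F)$ is contained in the zero locus of $D$ and hence of measure zero for $\ud t$, the integrand extends by zero to all of $T_\alpha(F)$. Summing over $\alpha \in \mathcal{T}$ produces the stated formula. The only genuine points requiring care are, first, the compatibility between the quotient measure $\ud g/\ud t$ used in \eqref{O gamma h} and the measure $\ud\bar g$ appearing in the pullback $q_\alpha^*(\ud g)$ — these agree by construction of the quotient measure — and second, verifying that the Jacobian formula obtained by computing at $(e T_\alpha, t)$ is the correct one at a general point $(g T_\alpha, t)$, which follows from the $G(F)$-equivariance of $q_\alpha$ under left translation on the source and conjugation on the target, together with the $\mathrm{Ad}(G(F))$-invariance of $\ud g$. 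The rest is routine.
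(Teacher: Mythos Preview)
Your proof is correct and follows essentially the same approach as the paper: both reduce to computing the Jacobian of $q_\alpha$ at $(eT_\alpha,t)$ via $G$-equivariance and identify the differential as $(X,Y)\mapsto (\mathrm{Ad}(t)-1)X+Y$ with determinant $D(t)$. Your write-up is in fact more careful than the paper's, spelling out the measure-zero reduction, the covering overcount, and the measure compatibility that the paper leaves implicit.
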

When $F$ is archimedean the theorem also holds for $h\in \CS$.
\begin{proof} Following \eqref{q alpha} and \eqref{image q alpha}, it is enough to prove that the Jacobian of the morphism $q_\alpha:G(F)/T_\alpha(F) \times T_\alpha(F) \to G(F)$ at a point $(g,t)\in G(F)/T_\alpha(F) \times T_\alpha^{\rm sr}(F)$ is equal to $D(t)$. Because $q_\alpha$ is $G$-equivariant, we can assume that $g$ is the identity element of $G$. The induced map on the tangent space at $(1,t)\in G/T \times T$ is now 
$$\ud q_\alpha: \mathfrak{g}/\mathfrak{t} \times \mathfrak{t} \to \mathfrak{g}$$
given by $$(x,y)\mapsto ({\rm ad}(t)-1)(x)+y.$$ The Jacobian determinant can now be identified with the Weyl discriminant $|D(t)|$. 	
\end{proof}

For every $\alpha\in \mathcal{T}$, the map $p_\alpha:T_\alpha^{\rm sr}(F) \to C^{\rm sr}(F)$ is a covering on its image $p_\alpha(T_\alpha^{\rm sr}(F))$ with the deck group $N_{T_\alpha}(F)/T_\alpha(F)$. We also remark that the function ${\rm O}_{T_\alpha}(h):T_\alpha^{\rm sr}(F) \to \BC$ descends to a function ${\rm O}^\alpha(h):p_\alpha(T_\alpha^{\rm sr}(F))\to \BC$ so that for every $t\in T_\alpha(F)$ we have
$$ {\rm O}_{T_\alpha}(h) (t)= {\rm O}^{\alpha}(h) (p_\alpha(t)).$$
Because the sets $p_\alpha(T_\alpha^{\rm sr}(F))$ form a partition of $C^{\rm sr}(F)$, we have a unique function 
$${\rm O}^{\mathcal{T}}(h): C^{\rm sr}(F)\to \C$$
by gluing the functions ${\rm O}^{\alpha}(h)$ so that we have the identity
$$ {\rm O}_{T_\alpha}(h) (t)= {\rm O}^{\mathcal T}(h) (p_\alpha(t))$$
for all $\alpha\in\mathcal T$ and $t\in T_\alpha^{\rm sr}(F)$. 
Because of the Jacobian relation $\Delta(t) \ud t= \ud a$ with $|D(t)|=\Delta(t)^2$, we have 
\begin{equation} \label{descent to C}
	\sum_{\alpha\in \mathcal{T}} \frac{1}{|N_{T_\alpha}(F)/T_\alpha(F)|} \int_{t\in T_\alpha(F)} |D(t)| {\rm O}_{T_\alpha}(h)(t) \ud t = \int_{C^{\rm sr}(F)} \Delta(a) {\rm O}^{\mathcal T}(h)(a) \ud a.
\end{equation}

\begin{corollary}\label{cor:normalizationoforb}
We have the identities ${\rm O}(h)(a)= \Delta(a) {\rm O}^{\mathcal T}(h)(a)$.
\end{corollary}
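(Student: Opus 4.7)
The plan is to pair both sides against an arbitrary continuous compactly supported test function $f:C(F)\to\BC$ and reduce the claim to an equality of integrals over $C(F)$, from which the pointwise identity will follow by uniqueness of densities. By the defining property of $\RO(h)$ as the density of $\mathbf{d}_!(h\,\ud g)$ with respect to $\omega_C$,
$$\int_{C(F)} f(a)\,\RO(h)(a)\,\omega_C = \int_{G(F)} f(\mathbf{d}(g))\, h(g)\, \ud g.$$
First I would apply the Weyl integration formula \eqref{Weyl integration} to the right-hand side with integrand $h\cdot(f\circ \mathbf{d})$. Since $\mathbf{d}(gtg^{-1})=p_\alpha(t)$ for $t\in T_\alpha(F)$, the customary orbital integral factors as $\RO_{T_\alpha}(h\cdot(f\circ \mathbf{d}))(t)= f(p_\alpha(t))\,\RO_{T_\alpha}(h)(t)$, so the right-hand side becomes a sum over $\alpha\in\CT$ of the weighted torus integrals appearing on the left side of \eqref{descent to C}, with $h$ replaced by $h\cdot(f\circ\mathbf{d})$.

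Next I would invoke \eqref{descent to C} itself applied to $h\cdot(f\circ\mathbf{d})$. This identity already packages the two ingredients that drive the comparison: the Jacobian relation $\Delta(t)\,\ud t = p_\alpha^* \omega_C$, coming from $\omega_{T_\alpha}=\Delta^{-1}\pi_{T_\alpha}^*\omega_\Fc$ together with the factorization $\pi_{T_\alpha}=\mathbf{e}\circ p_\alpha$; and the cancellation of $|N_{T_\alpha}(F)/T_\alpha(F)|^{-1}$ against the degree of the covering $p_\alpha:T_\alpha^{\rm sr}(F)\to p_\alpha(T_\alpha^{\rm sr}(F))\subset C^{\rm sr}(F)$, whose deck group acts freely on the strongly regular locus. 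Since $\RO^{\CT}(h\cdot(f\circ \mathbf{d}))(a)=f(a)\,\RO^{\CT}(h)(a)$ by the construction of $\RO^{\CT}$ by gluing, the right side of \eqref{descent to C} produces $\int_{C^{\rm sr}(F)}\Delta(a)\,f(a)\,\RO^{\CT}(h)(a)\,\omega_C$.

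Finally, as $C(F)\setminus C^{\rm sr}(F)$ has $\omega_C$-measure zero by construction of the measure on $C(F)$, I may extend integration from $C^{\rm sr}(F)$ to all of $C(F)$ and combine the two displays into
$$\int_{C(F)} f(a)\bigl[\RO(h)(a)-\Delta(a)\,\RO^{\CT}(h)(a)\bigr]\,\omega_C = 0,$$
valid for every test function $f$. This forces the pointwise identity, as an equality of locally integrable functions and pointwise on the open dense $C^{\rm sr}(F)$ by continuity of both sides there. There is no genuine obstacle: the entire content of the corollary is the recognition that the Jacobian factor $\Delta$, arising as the discrepancy between the customary Haar measure $\omega_{T_\alpha}=\Delta^{-1}\pi_{T_\alpha}^*\omega_\Fc$ on the torus and the FLN normalization pulled back from $\omega_\Fc$, is exactly what converts the customary orbital integral $\RO^{\CT}(h)$ into the measure-theoretic density $\RO(h)$.
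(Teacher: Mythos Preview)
Your proposal is correct and follows essentially the same approach as the paper: pair against an arbitrary continuous test function $f$ on $C(F)$, apply the Weyl integration formula \eqref{Weyl integration} and the descent identity \eqref{descent to C} to $h\cdot(f\circ\mathbf{d})$, and conclude the pointwise equality from the resulting integral identity. The paper's proof is a two-line sketch citing precisely these two ingredients; your write-up simply unpacks the mechanism (the Jacobian $\Delta$ and the deck-group cancellation) that the paper leaves implicit.
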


\begin{proof}		
It is enough to prove that for every continuous function $f:C(F) \to \C$ we have the equality
$$\int_{C(F)}  {\rm O}(h) (a) f(a) \ud a= \int_{C(F)}  |\Delta(a)| {\rm O}^{\mathcal T}(h)(a) f(a) \ud a.$$
But this equality can be derived from the Weyl integration formula \eqref{Weyl integration} and \eqref{descent to C}.
\end{proof}

\subsection{Cocenter and orbital integrals}\label{subsec:Cocenter-OI}

Following Harish-Chandra (\cite{HC70}\cite{varrealHC}), the function 
$$
{\rm O}(h):C(F)^{\rm sr}\to \BC
$$
is bounded for every $h\in \CH$. We thus have a map $\CH \to L^\infty (C(F))$ given by $h\mapsto {\rm O}(h)$ from the algebra of smooth and compactly supported functions $\CH$ to the space of bounded functions on $C(F)$ defined almost everywhere. When $F$ is non-archimedean, by Kazhdan (\cite{kazhdan_representations_1986}), the kernel of this map is the commutator $[\CH,\CH]$ of $\CH$. This induces an isomorphism between the cocenter $\CH/[\CH,\CH]$ with the image ${\rm O}(\CH)$ of $\CH$ in $L^\infty (C(F))$. In general we let $\RO(\CH)$ be the image of $\CH$. 
Similarly, the map $h\mapsto {\rm SO}(h)$ defines a map $\CH\to L^\infty(\Fc(F))$ from the Hecke algebra to the space of defined almost everywhere bounded functions on the Steinberg-Hitchin base whose image will be denoted ${\rm SO}(\CH)$. We will call ${\rm SO}(\CH)$ the stable cocenter of $\CH$. The above discussion applies to $F$ archimedean and functions $h\in \CS$. In particular, for $F$ archimedean, one can also introduce $\RO(\CS)$ and $\RS\RO(\CS)$ for functions in $\CS$. 

For $\pi$ a smooth irreducible representation of $G(F)$, by definition, its distribution character $\Theta_\pi$ is the generalized function on $G(F)$ such that
$$
\tr(\pi(h))=\int_{G(F)} h(g) \Theta_\pi(g),\quad h\in \CH.
$$
The trace distribution extends to $\CS$ when $F$ is archimedean (\cite[\S 8.1]{wallachredgpI}).

From the work of Harish-Chandra, it is known that $\Theta_\pi$ is an invariant locally integrable function, which is smooth on the regular semisimple locus. We abuse the notation and write $\Theta_\pi$ the corresponding invariant function on $G(F)$.

We denote $\Theta_\pi^C:C^{\rm sr}(F)\to \BC$ the induced function on the space $C^{\rm sr}(F)$ of strongly regular conjugacy classes of $G(F)$. The trace of $h\in \CH$ can now written as an integral over $C(F)$:
\begin{equation}
	\tr_\pi(h)=\int_{G(F)} h(g)\Theta_\pi(g) \ud g = \int_{C(F)} {\rm O}(h)(a) \Theta_\pi^C(a) \ome_C= \langle {\rm O}(h), \Theta_\pi^C \rangle .
\end{equation}
where $\ome_C$ is the measure on $C(F)$ introduced in last subsection, and $\langle \cdot,\cdot \rangle$ is the standard pairing between functions on $C(F)$ with respect to the measure $\ome_C$.

Similarly, for every $L$-packet of representations $[\pi]$ of $G(F)$ with attached stable distribution character $\Theta^\st_{[\pi]}$ (\cite{Langlands-StableConjugacy}\cite{ar13}), $\Theta^{\st}_{[\pi]}$ descends to a function $\Theta^{\Fc}_{[\pi]}$ on $\Fc(F)$ and we have 
\begin{equation}
	\tr^{\rm st}_{[\pi]}(h)=\int_{G(F)} h(g)\Theta_{[\pi]}^{\rm st}(g) \ud g= \int_{\Fc(F)} {\rm SO}(h)(c) \Theta_{[\pi]}^{\Fc}(c) \ome_\Fc=\langle {\rm SO}(h), \Theta_{[\pi]}^{\Fc} \rangle.
\end{equation}

\subsection{Bernstein center and Kazhdan cocenter}\label{subsec:BCandKC}

Let $G$ be a reductive group over a non-archimedean local field $F$. Following \cite{BD84}, the Bernstein center $\CZ=\CZ_G=\CZ(G(F))$ has four equivalent definitions:
\begin{itemize}
	\item[-] It is the ring of endomorphism of the identity functor of the category of smooth representations of $G(F)$;
	\item[-] It is the projective limit of the centers of Hecke algebras of finite levels $\CH_K=e_K\CH e_K$ where $e_K$ is the characteristic measures of the compact open subgroups $K$, as $K$ get smaller and smaller;
	\item[-] It is the ring of invariant distributions $J$ on $G(F)$ which are essentially of compact support, that is for every compact open subgroup $K$, we have $e_KJ=Je_K\in \CH$;
	\item[-] Finally, it is the ring of regular functions on the algebraic variety $\Omega=\Ome_G=\Omega(G(F))$, the Bernstein variety whose points are irreducible representations of $G(F)$ up to inertial equivalence.
\end{itemize} 

The Bernstein variety has infinitely many connected components. We denote $\CZ^{\rm fin}$ the nonunital 
subalgebra of $\CZ$ whose elements are regular functions on the Bernstein variety which are supported on the union of finitely many connected components. For every finite subset $I$ of the set $\pi_0(\Omega)$ of connected components of $\Omega$, we will denote $\epsilon_I\in \CZ$ the regular function on $\Omega$ supported on $\bigcup_{i\in I}\Omega_i$ and taking the value one at those components. We abuse the notation and still write $\eps_I$ for the corresponding distribution on $G(F)$. For every $I \subset \pi_0(\CZ)$, $\epsilon_I$ is a finite idempotent of $\CZ$, i.e. supported on a finite number of components of $\Omega$. We denote $\epsilon^I=1-\epsilon_I$, which is a cofinite idempotent. If $I\subset J$ are two finite subsets of $\pi_0(\Omega)$, we have $\epsilon_I=\epsilon_I \epsilon_J$ and therefore a homomorphism of unital algebras $\CZ \epsilon_J \to \CZ \epsilon_I$ given by $f\mapsto f\epsilon_I$ and a homomorphism of nonunital algebras $\CZ \epsilon_I \to \CZ \epsilon_J$. In other words, $\CZ \epsilon_I$ and $\CZ \epsilon_J$ are both unital algebras but only the surjection  $\CZ \epsilon_J \to \CZ \epsilon_I$  which preserves units, while the injection $\CZ \epsilon_I \to \CZ \epsilon_J$ does not preserve units. The Bernstein center $\CZ$ can be presented as the projective limit of $\CZ \epsilon_I$, which is then an unital commutative algebra, and its finite part $\CZ^{\rm fin}$ can be presented as the injective limit of $\CZ \epsilon_I$, which is then a nonunital algebra. For every compact open subgroup $K$ of $G(F)$, the quotient $\CZ_K$ can be decomposed as a product 
$$\CZ_K=\CZ_K \epsilon_I \times \CZ_K \epsilon ^I.$$
For a fixed compact open subgroup $K$ we have $\CZ_K=\CZ_K \epsilon_I$ for all finite subsets $I$ of $\pi_0(\Omega)$ large enough. In this case, we say that $I$ dominates $K$ and we have $e_K \epsilon_I=e_K$. On the other hand, for every finite subset $I$ of $\pi_0(\Omega)$, for all compact open subgroup $K$ small enough, we have $e_K \epsilon_I=\epsilon_I$, and as a result $\CZ_K \epsilon_I=\CZ_I$. In this case, we say that the open compact subgroup $K$ dominates the index set $I$. 

By construction, the Bernstein center $\CZ$ acts on the Hecke algebra $\CH$. Because the action of $\CZ$ preserves the commutator $[\CH,\CH]$, it acts on the cocenter $\RO(\CH) = \CH/[\CH,\CH]$. For every compact open subgroup $K$, $\CZ$ acts on $\CH_K=e_K \CH e_K$ through the quotient $\CZ e_K$ and the commutator $[\CH_K,\CH_K]$ is a $\CZ_K$-submodule of $\CH_K$. As a result, the quotient $\CH_K/[\CH_K,\CH_K]$ is a $\CZ_K$-module. If $K'$ is a compact open subgroup contained in $K$, then we have inclusions $\CH_K \subset \CH_{K'}$ and $[\CH_K,\CH_K]\subset [\CH_{K'},\CH_{K'}]$ but the inclusion 
$$[\CH_K,\CH_K] \subset [\CH_{K'},\CH_{K'}] \cap \CH_K$$
may be strict. In other words, the induced maps on the quotients 
$$\CH_K/  [\CH_K,\CH_K] \to \CH_{K'}/[\CH_{K'},\CH_{K'}] $$
may or may not be injective. Nevertheless, the inductive limit of $\CH_K/  [\CH_K,\CH_K]$ as the compact open subgroup $K$ gets smaller is $\CH/[\CH,\CH]$ and it is equipped with a canonical structure of $\CZ$-module. 

We note that we don't have a natural map $\CH_{K'}\to \CH_{K}$ let alone a natural map between the quotients $\CH_K/  [\CH_K,\CH_K]$
and $\CH_{K'}/[\CH_{K'},\CH_{K'}]$. However, instead of taking a projective limit on $K$, we can take a projective limit on the set of finite subsets $I$ of $\pi_0(\Omega)$. For every such a finite subset $I$, we denote $\CH_I=\CH \epsilon_I=\epsilon_I \CH$, which is a $\CZ_I$-module.  The commutator $[\CH_I,\CH_I]$ is a $\CZ_I$-submodule, and as a result, the quotient $\CH_I/[\CH_I,\CH_I]$ is a $\CZ_I$-module. If $I'$ is a finite subset of $\pi_0(\Omega)$ containing $I$ then $\epsilon_{I'} \epsilon_I=\epsilon_I$, and as a result, we have the inclusion $\CH_{I}\to \CH_{I'}$ as well as a surjective $\CZ$-linear map $\CH_{I'} \to \CH_I$ given by $f\mapsto \eps_I f$. With respect to the inclusion, we have $[\CH_{I'},\CH_{I'}] \cap \CH_I=[\CH_I,\CH_I]$ (\cite[Lem.~3.2]{kazhdan_representations_1986}), and the surjection $f\mapsto \eps_I f$ sends $[\CH_{I'},\CH_{I'}]$ onto $[\CH_I,\CH_I]$. We derive an injective map $\CH_I/[\CH_I,\CH_I] \to \CH_{I'}/[\CH_{I'},\CH_{I'}]$ from the inclusion, and the surjective map $\CH_{I'}/[\CH_{I'},\CH_{I'}]\to \CH_I/[\CH_I,\CH_I]$ from the surjection. The inductive limit of $\CH_I/[\CH_I,\CH_I]$ as the finite $I$ gets bigger is again the cocenter $\CH/[\CH,\CH]$. The projective limit of $\CH_I/[\CH_I,\CH_I]$ as the finite $I$ gets bigger is what we will call the {\em Kazhdan cocenter} and denote by $\CC=\CC_G=\CC(G(F))$, which is a completion of the cocenter $\CH/[\CH,\CH]$. Indeed, we have
$$\CH/[\CH,\CH] = \CC^{\rm fin}$$
where $\CC^{\rm fin}$ are the $\CZ$-submodule consisting of elements $f\in \CC$ such that $\epsilon_I f=f$ for some finite set $I$ of $\pi_0(\Omega)$. 

It is often enlightening to think of the cocenter $\CH/[\CH,\CH] = \CC^{\rm fin}$ and its completion $\CC$ in terms of the Lafforgue variety, constructed in \cite{psaromiligkos_lafforgue_2023}. In \cite{psaromiligkos_lafforgue_2023}, the set ${\rm Irr}(G(F))$ of irreducible representations of $G(F)$ is given a canonical structure of algebraic variety which has a finite-to-one morphism to the Bernstein variety $\Ome$. Following the trace Paley-Wiener theorem of Bernstein-Deligne-Kazhdan \cite{Bernstein-Deligne-Kazhdan}, a linear form $\ell$ on the vector space $R(G(F))$ with basis ${\rm Irr}(G(F))$ is of the form 
$$\ell(\pi)=\langle {\rm O}(h),\Theta_\pi^C \rangle$$
for some $h\in \CH$ if and only if the function $\pi \mapsto \ell(\pi)$ is an algebraic function on the Lafforgue variety and it is supported on finitely many components of the Lafforgue variety. From this angle, we can reinterpret the Kazhdan cocenter $\CC$ as the space of linear form $\ell:R(G(F))\to \C$ such that the function $\pi\mapsto \ell(\pi)$ defines an algebraic function on the Lafforgue variety, which may or may not be supported on finitely many components of the Lafforgue variety. 

\subsubsection{Archimedean variant}

Next we briefly discuss the archimedean analogue of the Bernstein center. Let $F$ be archimedean. We will work with the category of Casselman-Wallach representations of $G(F)$, which are admissible smooth Fréchet representations of moderate growth. By \cite[Prop.~2.20]{bkglobalization}, the category of smooth Fréchet representations of moderate growth is equivalent to the category of continuous non-degenerate $\CS$-modules, where we recall that $\CS$ is the Schwartz algebra of rapidly decreasing functions on $G(F)$. 

A natural candidate for the archimedean analogue of the Bernstein center is the algebra of multipliers of $\CS$, i.e. the ring of $\CS\times \CS$-equivariant continuous endomorphisms of $\CS$ which is defined as $\End_{\CS\times \CS}(\CS)$. Similarly, the Kazhdan cocenter can also be defined as the projective limit over finite subsets of the set of $K$-types of $G(F)$. Precisely, let $\hat{K}$ be the set of equivalence classes of irreducible finite dimensional representations of a fixed maximal compact subgroup of $G(F)$. For any $\sig\in \hat{K}$, let $e_\sig$ be the idempotent attached to $\sig$. For any finite subset $I$ of $\hat{K}$. Let $\eps_I = \sum_{\sig\in I}e_\sig$. Then following the same idea as the non-archimedean case, we may define the Kazhdan cocenter to be the projective limit over the collection of finite subsets of $\hat{K}$ of the level $I$ cocenter $\CS_I/[\CS_I,\CS_I]$, where $\CS_I = \eps_I\CS\eps_I$ and both the quotient $\CS_I/[\CS_I,\CS_I]$ and the projective limit are taken in the category of Fréchet spaces. We will treat the details of the above more algebraic construction in a separate paper. For the purpose of this paper, we would like to follow the spectral characterization in the non-archimedean case and introduce the following more restricted version of the center and cocenter, which are sufficient for the purpose of this paper. 

A spectral characterization of this ring relies on the matrix Paley-Wiener theorem for $\CS$, which seems not known to us for general reductive groups. However, for $G=\SL_2$, we do have the matrix Paley-Wiener theorem for $\CS$. When $F=\BR$ it follows from \cite[p.100]{Barker}, and when $F=\BC$, it follows from \cite[Chap.~IV,\S~5.6]{GGV}. The results can be generalized to $G=\GL_2$ without difficulties. 

Precisely, let $B=TN$ be the standard upper triangular Borel of $G=\SL_2$ with Cartan decomposition $G(F)=B(F)K$. For any character $\chi:T(F)\to \BC^\times$, the unitary dual $\wh{T}$ of $T(F)$ has a real manifold structure given by 
$$
\wh{T} = 
\bigg\{
\begin{matrix}
\{\pm 1\}\times i\BR, & F=\BR\\
\BZ\times i\BR, & F=\BC
\end{matrix}
$$
When $F=\BR$, for $(\pm,r)\in \wh{T}$, the associated unitary character is given by $\chi_{(\pm,r)}(z) = \sgn(z)|z|^{r}$; When $F=\BC$, for $(n,r)\in \wh{T}$, the associated unitary character is given by $\chi_{n,r}(z) = (\frac{z}{|z|})^n|z|^{r}$. Consider the normalized induced representation $\pi_\chi = \Ind_B^G\chi\simeq \Ind^{K}_{K\cap B}\chi$. 

Notice that $\pi_{\chi_{(\pm,r)}}\simeq \pi_{\chi_{(\pm,-r)}}$ when $F=\BR$, and $\pi_{\chi_{(n,r)}}\simeq \pi_{\chi_{(-n,-r)}}$ when $F=\BC$. The matrix Paley-Wiener theorem from \cite[p.100]{Barker} and \cite[Chap.~IV,\S~5.6]{GGV} implies that the functions $h\in \CS$ are characterized by the family of operators $\chi\in \wh{T}\mapsto \pi_\chi(h):\Ind^K_{K\cap B}\chi\to \Ind^K_{K\cap B}\chi$ extending to entire functions in $\wh{T}\otimes_\BR \BC$ that are of rapidly decay in bounded vertical region and invariant under the Weyl group permutation, together with the compatibility with the intertwining operators between principal series. For details, see the statement of theorems in \emph{loc. cit.}

\begin{rmk}\label{rmk:mainresult:complexMPW}
For $F=\BC$, the result from \cite[Chap.~IV,\S~5.6]{GGV} reads as follows. Following the notation from \emph{loc. cit.} the operators $\pi_\chi$ are represented by the kernels $K(z_1,z_2;n_1,n_2)$, with $n_1,n_2$ characters of $F^\times$ satisfying $n_1-n_2\in \BZ$. In particular, comparing with our notation, $n_1-n_2 = n$ and $n_1+n_2 = r$ (\cite[Chap.~III,\S~2.2]{GGV}). Hence the analytical properties of the operators $\chi\in \wh{T}\mapsto \pi_\chi(h)$ are captured by $(n_1,n_2)\mapsto K(z_1,z_2;n_1,n_2)$. Following the proof of the main theorem in \cite[Chap.~IV,\S~5.6]{GGV}, $K(z_1,z_2;n_1,n_2)$ is the Mellin transform of $\vphi(z_1,z_2;\lam)$ via
$$
K(z_1,z_2;n_1,n_2) = 
\frac{i}{2}
\int_{\lam\in \BC}\vphi(z_1,z_2;\lam)
\lam^{n_1-1}\bar{\lam}^{n_2-1}\ud \lam\ud \bar{\lam}
$$
and $\vphi(z_1,z_2;\lam) = |\lam|^2\Phi(z_1,1;\lam z_2,\lam)$, which, by the main theorem in \cite[Chap.~IV,\S~5.6]{GGV}, is of rapidly decay in $\lam\in \BC^\times$ with extra symmetries capturing its compatibility with the intertwining operators. Essentially, the theorem says that besides the compatibility with intertwining operators, the analytical behavior of the kernel functions for the operators $\chi\mapsto \pi_\chi$ are exactly given by the Mellin transform of Schwartz functions on $\BC^\times$ whose image are invariant under the Weyl group action. For a precise characterization of the Mellin transform of Schwartz functions on $\BC^\times$ (i.e. smooth functions on $\BC^\times$ that are both rapidly decay near infinity and zero), see \cite[Thm.~4.3]{igusa_higher} or a quick sumary in \cite[Defin.~2.2]{jiang2021certain}.
\end{rmk}

For the purpose of this paper, we introduce the following algebra of multipliers $\CZ=\CZ_G$ on $\CS$:
\begin{defin}\label{defin:multiplierarchimedean}

Let $G=\SL_2$.
\begin{itemize}
    \item
Let $F=\BR$. Let $\CZ$ be the space of holomorphic functions $\gam$ on $\BC\times \BZ_2$ that are even on $\BC$-factor, of rapidly decay in bounded vertical strips, and are of polynomial growth on the subset $\BZ\subset \BC$;

    \item
Let $F=\BC$. Let $\CZ$ be the space of functions $\gam$ on $\BZ\times \BC$ that are holomorphic on $\BC$-factor with symmetry $\gam(n,z) = \gam(-n,-z)$, and are of rapidly decay on bounded vertical strips that is uniform in $\BZ$-factor.
\end{itemize}

Let $G=\GL_2$.
\begin{itemize}
    \item 
Let $F=\BR$. Let $\CZ$ be the space of holomorphic functions on $(\BC\times \BZ_2)^2$ that are invariant under switching the two factors, of rapidly decay in bounded vertical strips, and are of moderate growth on the subset $\BZ\subset \BC$;

    \item
Let $F=\BC$. Let $\CZ$ be the space of functions on $(\BZ\times \BC)^2$ that are holomorphic on $\BC$-factor, invariant under switching the two factors,and are of rapidly decay on bounded vertical strips that is uniform in $\BZ$-factor.
\end{itemize}
Let $\CZ_G^\fin$ be the subalgebra of $\CZ_G$ that are supported on finitely many $K$-types.
\end{defin}
Notice that the archimedean Langlands local gamma factors are of rapidly decay with neighborhoods of possible poles removed via Stirling's formula (see \cite[Lem.~3.4.6]{MR3990815} for instance). Hence following Part (5) of Remark \ref{rmk:proposal}, after multiplying a smooth and compactly supported function on the determinant factor for the $\rho$-Fourier kernels (which, after regularization, are the Plancherel inversion of archimedean Langlands gamma factors), the associated holomorphic functions on the admissible dual belong to $\CZ$. In particular $\CZ$ is enough for our purpose. When $F=\BR$ the extra moderate growth condition on $\BZ\subset \BC$ is needed due to the existence of discrete series representations of $G(\BR)$. It is clear that $\CZ$ is an algebra under multiplication, and any $\gam\in \CZ$ gives rise to a multiplier from $\CS$ to itself. Moreover, the matrix Paley-Wiener theorem for smooth and compactly supported functions (\cite{Delorme-Paley-Wiener}) ensures that $\gam$ is represented by an invariant distribution $J=J_\gam$ on $G(F)$.

For the purpose of next subsection, let $\CZ^\st$ be the subset of $\CZ$ given by stable distributions in the sense of \cite{Langlands-StableConjugacy}, and let $\CZ^{\st,\fin} = \CZ^\st\cap \CZ^{\fin}$.

\subsection{Stable Bernstein center}\label{subsec:mainresult:SBC}

Let $G$ be a reductive group over a non-archimedean local field $F$. Let $\CZ_G$ be the Bernstein center of $G(F)$. Following Bezrukavnikov, Kazhdan and Varshavsky (\cite{bezrukavnikov2013categorical}), let $\CZ^\st_G$ be the stable Bernstein center of $G(F)$ which is the subset of $\CZ_G$ consisting of stable distributions on $G(F)$ in the sense of \cite{Langlands-StableConjugacy}. We recall that an invariant distribution $J$ on $G(F)$ is called stable if for any test function $h\in \CH$, we have $J(h) = 0$ whenever the stable orbital integral of $h$ vanishes identically. Let $\CZ^{\st,\fin}_G = \CZ^\st_G\cap \CZ^\fin_G$. The stable center conjecture of \cite{bezrukavnikov2013categorical} stipulates that $\CZ^\st_G$ is a subalgebra of $\CZ_G$.

Let $\Ome^\st_G$ be the variety of infinitesimal characters of $G$ (\cite[\S 5]{Haines-stable-Bernstein}\cite[\S 6]{scholze-shin}). For $G=\SL_2$, since the local Langlands conjecture is known (\cite{labesse-langlands}), there is a finite to one map $p:\Ome_G\to \Ome^\st_G$ sending the cuspidal datum in $\Ome_G$ to the corresponding $\wh{G}$-conjugacy classes of semi-simple local $L$-parameters. For $J\in \CZ^\st_G$ and a connected component $\Ome_0^\st\subset \Ome_G^\st$, we call the projection of $J$ to $\Ome^\st_0$ to be the distribution in $\CZ_G$ that is the projection of $J$ to $p^{-1}(\Ome_0^\st)$. When $G=\GL_2$, there is no stability issue and the stable Bernstein center is the same as the Bernstein center.

In this section, we establish the following theorem. Notice that when $F$ is archimedean, we have introduced $\CZ,\CZ^\st$ and $\CZ^{\st,\fin}$ in subsection \ref{subsec:BCandKC} when $G=\SL_2$ and $\GL_2$.

\begin{thm}\label{thm:main:SBC}
Let $G=\SL_2$ and the residual characteristic of $F$ is other than two. Then the following statements hold.

\begin{enumerate}
\item The stable tempered characters are dense in the space of stable distributions;

\item $\CZ_G^\st$ is a subalgebra of $\CZ_G$;

\item For any $J_G\in \CZ^\st_G$, its projection to any connected components of $\Ome^\st_G$ lands in $\CZ^{\st,\fin}_G$;
\end{enumerate}
\end{thm}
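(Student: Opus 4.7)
\textbf{Proof plan for Theorem \ref{thm:main:SBC}.} The overall strategy is to exploit the spectral description of $\CZ_G$ as regular functions on the Bernstein variety $\Ome_G$, together with the finite-to-one map $p\colon \Ome_G \to \Ome^\st_G$ provided by the Labesse--Langlands classification \cite{labesse-langlands} of $L$-packets and $L$-parameters for $\SL_2(F)$. Under this identification, a regular function on $\Ome_G$ represents a stable distribution precisely when it is constant on the fibers of $p$, equivalently when it is the pullback of a function on $\Ome^\st_G$. The product of two such functions is again such a function, so Part (2) follows directly from this spectral reformulation. I will organize the proof around setting up this identification rigorously, then handling (1) and (3).

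For Part (1), I first invoke the density theorem of Kazhdan (\cite{kazhdan_representations_1986}) that irreducible tempered characters span a dense subspace of invariant distributions on $G(F)$, together with the fact, available under the running assumption that the residual characteristic is not two, that every irreducible admissible representation of $\SL_2(F)$ is a subquotient of a dihedral representation. Given any $J\in \CZ^\st_G$ approximated by a finite linear combination $\sum_i c_i \Theta_{\pi_i}$ of tempered characters, I would use that pairing with a Hecke element $h$ only sees the stable orbital integral $\RS\RO(h)$, so in the linear combination each $\Theta_{\pi_i}$ may be replaced by the stable tempered character $\Theta^{\st}_{[\pi_i]}$ attached to its $L$-packet after adjusting coefficients according to the Labesse--Langlands endoscopic character identities. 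Since $L$-packets for $\SL_2(F)$ have size at most four, this averaging is finite and produces a stable tempered linear combination approximating $J$ in the same topology.

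For Part (3), the content is that if $J\in \CZ^\st_G$ and $\Ome^{\st}_0 \subset \Ome^{\st}_G$ is a connected component, then the regular function on $\Ome_G$ representing the projection of $J$ is supported on finitely many Bernstein components. I will argue that for $\SL_2(F)$ the fibers of $p$ on components are finite: the preimage $p^{-1}(\Ome^\st_0)$ is a finite disjoint union of Bernstein components indexed by the refinement of the inertial class of a semisimple $L$-parameter by the endoscopic data recorded in \cite{labesse-langlands}. Once this is established, the idempotent $\epsilon_I$ cutting out $p^{-1}(\Ome^\st_0)$ is an element of $\CZ^\fin_G$, it is a pullback along $p$ hence lies in $\CZ^\st_G$, and multiplication by it is the projection, whose product with $J$ lies in $\CZ^{\st,\fin}_G$ by Part (2).

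The main obstacle, in my estimation, is Part (1): making the coefficient adjustment in the density argument rigorous requires a controlled version of stabilization compatible with the weak topology in which density is being asserted. The non-supercuspidal part is straightforward since principal series are already stable, and the elliptic supercuspidal $L$-packets of size one or two are standard; the delicate case is the four-element $L$-packet discovered by Labesse--Langlands, where the character identities involve four nontrivial signs and I will need to verify that the averaged stable tempered character still lies in the closure of the stable distributions in the chosen topology. Parts (2) and (3) are comparatively formal once the spectral picture is set up, relying only on the finiteness of $L$-packets, which is the key feature distinguishing $\SL_2$ from higher rank groups.
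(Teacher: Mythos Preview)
Your architecture is sound and your route to Part (3) is in fact cleaner than the paper's appeal to explicit Moy--Tadic formulae, but the heart of your proposal --- the spectral biconditional ``$J\in\CZ^\st_G$ if and only if $\gamma(J,\cdot)$ is constant on the fibers of $p$'' --- is asserted without proof, and this is precisely the content of Part (2). Labesse--Langlands supplies the map $p$ and the $L$-packet structure, not this equivalence. The easy direction (constant on fibers $\Rightarrow$ stable) follows from the Plancherel inversion together with constancy of the Plancherel measure on packets; the hard direction is where the work lies. The paper handles it by case analysis on supercuspidal packets: for two-element packets one checks from the Sally--Shalika tables that a single supercuspidal character is not stable, and for the four-element packet one uses Casselman's $\Pi^\pm$ decomposition and the $\GL_2$-permutation of the two halves. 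A shorter argument that fits your framework: if $J$ is stable then $\RS\RO(h^g)=\RS\RO(h)$ for $g\in\GL_2(F)$, so $J$ is $\GL_2(F)$-conjugation invariant as a distribution, whence $\gamma(J,\pi)=\gamma(J,\pi^g)$; and by Clifford theory the $L$-packets of $\SL_2(F)$ are exactly the $\GL_2(F)$-conjugation orbits. One of these arguments must be supplied; without it your deductions of (2) and (3) are circular.

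Your Part (1) sketch is also imprecise at the key step. Replacing each $\Theta_{\pi_i}$ by $\Theta^\st_{[\pi_i]}$ \emph{changes} the value of the approximating sum on a general $h$, and the endoscopic character identities alone do not repair this. What works is to dualize: given $h$ with all stable tempered traces zero, average $h$ over $F^\times/(F^\times)^2$ acting by $\GL_2$-conjugation to obtain $h^\st$ with $\tr_\pi(h^\st)=|[\pi]|^{-1}\tr^\st_{[\pi]}(h)=0$ for every tempered $\pi$; Kazhdan's density theorem for $\SL_2$ then yields $\RO(h^\st)=0$, and since $\RS\RO(h^\st)=\RS\RO(h)$ you conclude $\RS\RO(h)=0$. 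This is the $\SL_2$-side mirror of the paper's argument, which instead extends $h$ to a test function $\tilde h$ on $\GL_2(F)$, identifies stable tempered characters of $\SL_2$ with restrictions of tempered characters of $\GL_2$, and applies Kazhdan's density for $\GL_2$ followed by a Mellin-type disintegration along the center to recover the stable orbital integral of $h$.
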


The above theorem also holds for $G=\GL_2$ over any local field $F$. Part (2) and Part (3) are manifest, while Part (1) is based on the density theorem of Kazhdan (\cite[Thm.~0]{KazCus}) when $F$ is non-archimedean, and Shelstad when $F$ is archimedean (\cite[Lem.~5.3]{shelsteadinner}).

\begin{proof}
(1) When $F$ is archimedean, the density of stable tempered characters follows from \cite[Lem.~5.3]{shelsteadinner}. It remains to focus on the case when $F$ is non-archimedean.

To show that the stable tempered characters are dense in the space of stable distributions for $G$, we only need to show that for a fixed test function $h\in \CH$, if the stable tempered traces of $h$ vanish, then all the stable orbital integrals of $h$ vanish. 

Notice that the stable tempered characters for $G=\SL_2$ are given by the restriction of tempered characters of $\GL_2$ (\cite{labesse-langlands}). For $\GL_2$, there is no stability issue, and the density theorem of Kazhdan (\cite{KazCus}) shows that for a test function $\wt{h}$ on $\GL_2$, its tempered trace vanishes if and only if its orbital integral vanishes. Now for the fixed test function $h\in \CC^\infty_c(G(F))$, since $G(F)$ is closed in $\wt{G}(F) = \{g\in \GL_2(F)\mid \det g\in F^{\times 2}\}$, and $\wt{G}(F)$ is open in $\GL_2(F)$, we can lift $h$ to a test function $\wt{h}\in \CC^\infty_c(\GL_2(F))$ such that $\wt{h}|_{G(F)} = h$. Applying the density theorem to $\wt{h}$, we get
\begin{num}
\item $\Theta_\Pi(\wt{h}) =0$ for any tempered representation $\Pi$ of $\GL_2(F)$ if and only if $\RO(\wt{h}) =0$. Here $\Theta_\Pi$ is the trace distribution of $\Pi$, and $\RO(\wt{h})$ is the orbital integral of $\wt{h}$.
\end{num}
By Weyl integration formula and the support condition of $\wt{h}$, we can rewrite $\Theta_\Pi(\wt{h})$ in terms of (trace, determinant) variables
$$
\Theta_\Pi(\wt{h}) = 
\int_{(t,d^2)\in F\times F^\times}
\Theta_\Pi(t,d^2)\RO(\wt{h})(t,d^2)\ud t\ud^\times d
$$
Twisting $\Pi$ by a unitary character $\chi$ of $F^\times$, and using the Mellin inversion in $\chi$, we get 
\begin{num}
\item $\Theta_\Pi(\wt{h}) = 0$ for any tempered representation $\Pi$ of $\GL_2(F)$ if and only if the following function vanishes identically (on regular semi-simple locus)
$$
(t,d)\in F\times F^{\times 2}
\mapsto 
\int_{t\in F}
\Theta_\Pi(t/d,1)\RO(\wt{h})(t,d^2)\ud t =0 = 
\int_{t\in F}
\Theta_\Pi(t,1)\RO(\wt{h})(td,d^2)\ud t,
$$
if and only if the orbital integral of $\wt{h}$ vanishes.
\end{num}
But $\RO(\wt{h})(td,d^2)$ is the orbital integral of $\wt{h}(d\cdot)$ evaluated at any regular semi-simple element with trace $t$. Since $\wt{h}$ is a test function, $d\in F^\times \mapsto \wt{h}(dg)$ is uniformly locally constant in $d$ for any $g$. Hence we may multiply $\wt{h}$ by a characteristic function along the center that is supported in a small neighborhood $\CN_{\wt{h}}$ of identity, such that $\RO(\wt{h})(td,d^2)$ is non-vanishing only for $d\in \CN_{\wt{h}}$, and is identically equal to $\RO(\wt{h})(t,1)$ whenever $d\in \CN_{\wt{h}}$. It follows that we deduce the following fact for the above choice of $\wt{h}$:
\begin{num} 
\item 
$$
\int_{t\in F}\Theta_\Pi(t,1)\RO(\wt{h})(t,1)\ud t=0
$$
for any tempered representation $\Pi$ of $\GL_2(F)$ if and only if $\RO(\wt{h})$ vanishes. 
\end{num}
But $\RO(\wt{h})(t,1)$ is exactly equal to the stable orbital integral of $h$ evaluated at any regular semi-simple element of trace $t$, hence 
$$
\int_{t\in F}\Theta_\Pi(t,1)
\RO(\wt{h})(t,1)\ud t = \tr(\Pi|_{G})(h)= 0
$$
if and only if $\RS\RO(h) = 0$. This completes the proof of the density of stable tempered characters in the space of stable distributions.

(2) To show that $\CZ_G^\st$ is a subalgebra of $\CZ_G$, we only need to show that for any $J\in \CZ_G^\st$, whenever the stable orbital integral of $h\in \CH$ vanishes, the stable orbital integral of $J*h$ also vanishes. By Part (1) above, we only need to show that the stable tempered trace of $J*h$ vanishes. Using the Plancherel inversion formula for distributions in the Bernstein center, and the fact that the Plancherel measure for $G=\SL_2$ is constant on tempered local $L$-packets (\cite[Chap.2,\S 6]{ggps}), we only need to show that the function on the Bernstein variety determined by $J\in \CZ_G^\st$ is constant on tempered local $L$-packets. This follows from the following statement:
\begin{num}
\item For a fixed tempered local $L$-packet of $G=\SL_2(F)$, the only stable distributions that are given by the linear combination of distribution characters inside the $L$-packet are given by the constant multiples of the stable tempered characters.
\end{num}
The proof for archimedean situation is the same as non-archimedean case. In the following we outline the proof for non-archimedean situation. We only need to treat the supercuspidal local $L$-packets. When the supercuspidal local $L$-packet has two elements, the above fact is manifest since a single supercuspidal character is not stable, which can be checked by hand using the character table from \cite{SallyShalika}. For the unique supercuspidal local $L$-packet with four elements, following \cite[Thm.~1.9]{Casselma-Quadratic}, there are two (reducible) representations $\Pi^\pm$ with $\Pi^\pm = \pi_1^\pm\oplus \pi_2^\pm$, such that $\{\pi_i^\pm\}_{i=1}^2$ forms the packet, and conjugation by $\GL_2(F)$ permutes $\Pi^+$ and $\Pi^-$. Hence the result follows from the linear independence of the distribution characters.

(3) Following the discussion in Part (1) and Part (2), it suffices to show that $J_{\Ome_0^\st}$ is a stable distribution, where $\Ome^\st_0\subset \Ome_G^\st$ is a fixed connected component, and $J_{\Ome_0^\st}\in \CZ_G^\fin$ is the distribution corresponding to the characteristic function of $p^{-1}(\Ome_0^\st)\subset \Ome_G$. But this follows from the explicit calculation in subsection \ref{subsec:analytic-properties-Jc}, which shows that $J_{\Ome_0^\st}$ is locally integrable and hence is given by a stable function in the sense that $J_{\Ome_0^\st}(g)= J_{\Ome_0^\st}(g^\p)$ whenever $g$ and $g^\p$ are regular semi-simple and stably conjugate to each other.
\end{proof}

\begin{rmk}\label{rmk:anotherdef:SBC}

Let $G$ be a reductive group over a non-archimedean local field $F$. Let $\CH$ be its Hecke algebra. Consider the map $p_\st:\CH\to \RS\RO(\CH)$ sending a test function to its stable orbital integral. Let 
$
\FI_\st = \{f\in \CH\mid \RS\RO(f) = 0\} = p^{-1}_\st(\{0\}),
$
which is the subset of $\CH$ whose stable orbital integral vanishes identically. Following \cite{scholze-shin}, one define $\CZ^{\st,\p}$ to be the subalgebra of the usual Bernstein center $\CZ$ consisting of distributions that stabilizes $\FI_\st$, i.e. 
$$
\CZ^{\st,\p} = \{z\in \CZ\mid z*\FI_\st\subset \FI_\st\}.
$$
Comparing with the definition for $\CZ^\st$, the definition for $\CZ^{\st,\p}$ has the benefit that it is immediately a subalgebra of $\CZ$ by definition. However, in general it is not clear about the relation between $\CZ^{\st,\p}$ and $\CZ^\st$. In the following, we establish the following fact:

\begin{pro}\label{pro:SBCprime=SBC}
The following statements hold:
\begin{enumerate}
\item In general, $\CZ^{\st,\p}\subset \CZ^\st$;

\item When $G=\SL_2$ and $F$ is non-archimedean of odd residual characteristic, $\CZ^{\st,\p}\supset \CZ^\st$, and hence $\CZ^\st = \CZ^{\st,\p}$.
\end{enumerate}
\end{pro}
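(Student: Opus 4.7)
\textbf{Proof plan for Proposition \ref{pro:SBCprime=SBC}.}

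For Part (1), the plan is to exploit the identity $z(h) = (z * h^\vee)(e)$ together with the spectral decomposition of evaluation at the identity. Given $z \in \CZ^{\st,\p}$ and $h \in \FI_\st$, I would first check that the involution $h \mapsto h^\vee$ preserves $\FI_\st$, since $g \mapsto g^{-1}$ permutes stable regular semisimple conjugacy classes; hence $h^\vee \in \FI_\st$ and by the defining property of $\CZ^{\st,\p}$ we get $z * h^\vee \in \FI_\st$. It remains to show that $(z * h^\vee)(e) = 0$. For this I would invoke the Plancherel formula $f(e) = \int_{\widehat{G}^{\mathrm{temp}}} \tr(\pi(f)) \ud\mu_{\mathrm{Pl}}(\pi)$ for $f \in \CH$, and use that for $G = \SL_2$ the Plancherel measure is constant on tempered L-packets (\cite[Chap.~2,\S 6]{ggps}) so that the integral regroups as $f(e) = \int \tr^\st_{[\pi]}(f) \ud\mu^\st_{\mathrm{Pl}}([\pi])$ in terms of stable tempered characters. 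Applied to $f = z * h^\vee \in \FI_\st$, the density of stable tempered characters in the space of stable distributions (Theorem \ref{thm:main:SBC}(1)) forces $\tr^\st_{[\pi]}(z*h^\vee) = 0$ for every tempered $[\pi]$, whence $z(h) = 0$ and $z \in \CZ^\st$.

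For Part (2), the strategy is to run the stable-trace computation spectrally. Given $z \in \CZ^\st$ and $h \in \FI_\st$, Theorem \ref{thm:main:SBC}(1) reduces the desired assertion $\RS\RO(z*h) = 0$ to the vanishing of $\tr^\st_{[\pi]}(z*h)$ for every tempered L-packet $[\pi]$. The crucial input, which is exactly what is established in the proof of Theorem \ref{thm:main:SBC}(2), is that the regular function $\gamma_z$ on the Bernstein variety attached to $z \in \CZ^\st$ is constant on each tempered L-packet. Writing $\Theta^\st_{[\pi]} = \sum_{\pi' \in [\pi]} c_{\pi'} \Theta_{\pi'}$ with the signs $c_{\pi'}$ furnished by Labesse--Langlands (\cite{labesse-langlands}), one then computes
$$\tr^\st_{[\pi]}(z*h) = \sum_{\pi' \in [\pi]} c_{\pi'}\, \gamma_z(\pi')\, \tr(\pi'(h)) = \gamma_z([\pi])\, \tr^\st_{[\pi]}(h) = 0,$$
the last equality because $h \in \FI_\st$. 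Hence $z*h \in \FI_\st$ and $z \in \CZ^{\st,\p}$.

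The main analytic obstacle lies in Part (1), specifically in the step expressing $f(e)$ as an integral of stable tempered characters. This requires both that the Plancherel measure is adapted to L-packets and that the endoscopic signs in $\Theta^\st_{[\pi]}$ are compatible with the formal degrees within a packet. For $G = \SL_2$ over a non-archimedean local field of odd residual characteristic this is workable because tempered L-packets have size at most four and their sign structure is explicit from \cite{labesse-langlands} and \cite{Casselma-Quadratic}, so the regrouping can be verified packet by packet. By contrast, Part (2) is a formal spectral consequence of the L-packet constancy of $\gamma_z$ already extracted from Theorem \ref{thm:main:SBC}(2), and requires no further analytic input beyond the density statement.
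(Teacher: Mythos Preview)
Your argument for Part~(2) is essentially identical to the paper's: reduce to vanishing of $\tr^\st_{[\pi]}(z*h)$ by density of stable tempered characters, then use that $\gamma_z$ is constant on tempered $L$-packets. (One small slip: the stable character is the \emph{unweighted} sum $\Theta^\st_{[\pi]} = \sum_{\pi'\in[\pi]} \Theta_{\pi'}$; the Labesse--Langlands signs enter only for the unstable characters. This does not affect your computation.)

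For Part~(1) your route is correct but more laborious than necessary, and as written it does not achieve the stated generality. You reduce to showing $(z*h^\vee)(e)=0$ for $z*h^\vee\in\FI_\st$, and then invoke the Plancherel formula and regroup by $L$-packets, which you only justify for $\SL_2$. The paper bypasses this entirely: once you know $z*h^\vee\in\FI_\st$, the vanishing $z(h) = \delta_e(z*h^\vee)=0$ follows immediately from the single observation that $\delta_e$ is itself a stable distribution. Your Plancherel regrouping is precisely a proof of that fact in the special case $G=\SL_2$, so what you call ``the main analytic obstacle'' is really just the statement ``$\delta_e$ is stable'', which the paper takes as known and which holds for general $G$. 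Invoking it directly gives a two-line proof valid in the generality claimed.
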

\begin{proof}
(1) For any $z\in \CZ^{\st,\p}$ and $f\in \FI_\st$, by definition, $z*f\in \FI_\st$. Equivalently, $\RS\RO(z*f) = 0$. It is clear that $\FI_\st$ is invariant under the involution $g\mapsto g^{-1}$. Hence $\RS\RO(z*f^\vee) = 0$. Now to show that $z\in \CZ^\st$, we need to show that $z$ is a stable distribution, which, by definition, is equivalent to show that $z(f) = 0$ as long as $\RS\RO(f) = 0$. But by definition, $z(f) = \del_e(z*f^\vee)$. Since $\RS\RO(z*f^\vee) =0$ and $\del_e$ is a stable distribution, we deduce that $\del_e(z*f^\vee) = z(f) = 0$. It follows that $\CZ^{\st,\prime}\subset \CZ^\st$.

(2) For any $f\in \FI_\st$ and $z\in \CZ^\st$, we only need to show that $\RS\RO(z*f)=0$. By the density of stable tempered characters for $G(F)=\SL_2(F)$ that is proved in Theorem \ref{thm:main:SBC}, we only need to show that for any tempered local $L$-packet $[\pi]$ of $G(F)$, $\tr^{\st}_{[\pi]}(z*f) = 0$. Following the proof of Theorem \ref{thm:main:SBC}, the regular function $\gam(z,\cdot)$ on the Bernstein variety attached to $z\in \CZ^{\st}$ is constant on the tempered local $L$-packets of $G(F)$. Hence we are reduced to show that $\tr^\st_{[\pi]}(f) = 0$. But it follows from the fact that $f\in \FI_\st$. 

\end{proof}

As a corollary, we have the following fact.
\begin{cor}\label{cor:SBC=SBCprime}
Let $G=\SL_2$ or $\GL_2$ over a non-archimedean local field $F$ of odd residual characteristic. Then $\CZ^\st$ acts on $\RS\RO(\CH)$.
\end{cor}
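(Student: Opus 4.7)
The plan is to deduce this directly from Proposition \ref{pro:SBCprime=SBC}, which provides the two-way inclusion $\CZ^\st = \CZ^{\st,\p}$ in this setting. The key point is that $\CZ^{\st,\p}$ is defined precisely so that it acts on the quotient $\CH/\FI_\st$, and this quotient is canonically identified with $\RS\RO(\CH)$ via the stable orbital integral map. So the corollary reduces to combining a tautology about $\CZ^{\st,\p}$ with the nontrivial identification $\CZ^\st = \CZ^{\st,\p}$.

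First I would record the identification $\RS\RO(\CH) \simeq \CH/\FI_\st$ induced by the surjection $p_\st:\CH \to \RS\RO(\CH)$ with kernel $\FI_\st = \{f \in \CH \mid \RS\RO(f)=0\}$, as introduced in Remark~\ref{rmk:anotherdef:SBC}. Next, for any $z \in \CZ^{\st,\p}$, the defining property $z * \FI_\st \subset \FI_\st$ shows that the convolution action of $z$ on $\CH$ descends to a well-defined linear endomorphism of $\CH/\FI_\st$; explicitly, for $f \in \CH$ the class $[z * f] \in \CH/\FI_\st$ depends only on the class $[f]$. Transporting this via the isomorphism $\CH/\FI_\st \simeq \RS\RO(\CH)$, we obtain an action of $\CZ^{\st,\p}$ on $\RS\RO(\CH)$ given by
\[
z \cdot \RS\RO(f) := \RS\RO(z * f), \qquad z \in \CZ^{\st,\p},\ f \in \CH.
\]
That this respects the algebra structure of $\CZ^{\st,\p}$ follows from associativity of convolution.

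Finally, I would invoke Proposition~\ref{pro:SBCprime=SBC}(2), which for $G = \SL_2$ (and similarly for $G=\GL_2$, where stability is automatic) yields $\CZ^\st = \CZ^{\st,\p}$. Substituting this equality into the action constructed above gives the desired action of $\CZ^\st$ on $\RS\RO(\CH)$.

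There is no real obstacle here since the nontrivial content has already been discharged in the proof of Proposition~\ref{pro:SBCprime=SBC}: the inclusion $\CZ^{\st,\p} \subset \CZ^\st$ is formal, but $\CZ^\st \subset \CZ^{\st,\p}$ uses the density of stable tempered characters (Theorem~\ref{thm:main:SBC}(1)) together with the fact that elements of $\CZ^\st$ act by scalars that are constant on tempered stable $L$-packets. Thus the present corollary is essentially a clean packaging of those results, and the only thing to check carefully is that the induced map is well-defined on $\RS\RO(\CH)$, which is automatic from $z * \FI_\st \subset \FI_\st$.
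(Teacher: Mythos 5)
Your proposal is correct and matches the paper's intended argument: the paper states this as an immediate consequence of Proposition~\ref{pro:SBCprime=SBC}, and the deduction is exactly the one you give — $\CZ^{\st,\p}$ preserves $\FI_\st$ by definition, hence acts on $\CH/\FI_\st\simeq\RS\RO(\CH)$, and the equality $\CZ^\st=\CZ^{\st,\p}$ transfers this to $\CZ^\st$. Nothing is missing.
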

Similar statement is true for $F$ archimedean, based on the trace Paley-Wiener theorem for $\CS$ (\cite{Delorme-limites}). Since our definition of $\CZ^\st$ is pretty limited in the archimedean case, we will not discuss the details here but leave it to a future work.
\end{rmk}

\subsection{Integral transforms on the stable cocenter}\label{subsec:integrlatransformonSC}

By \cite[Thm.~2.5]{Moy-Tadic-Bernstein}, for any reductive group $G$ over a non-archimedean local field $F$, any distribution $J\in \CZ_G^{\rm fin}$ can be represented by an invariant locally integrable function on $G(F)$ which is smooth on  $G'(F)$. It follows that $J$ is the pullback of a smooth function $J_C$ on the set $C'(F)$ regular semisimple conjugacy classes in $G(F)$. Similarly, any stably invariant distribution $J\in \CZ_G^{\st,\fin}$ is represented by a locally integrable stable function on the Steinberg-Hitchin base, and hence can be represented as $J=({\bf c}^* J_\Fc) \ud g$ where $J_\Fc$ is a function on the Steinberg-Hitchin base $\Fc(F)$ whose necessary analytic properties will be established in subsection \ref{subsec:analytic-properties-Jc} when $G=\SL_2$ and $\GL_2$.

In general, assuming the stable center conjecture, $\CZ^\st$ has a natural algebra structure under convolution. Moreover, assuming the analogue of Corollary \ref{cor:SBC=SBCprime} for general reductive groups, the stable cocenter ${\rm SO}(\CH)= \CC^{\rm st, fin}$ carries a natural structure of $\CZ^{\rm st}$-module. By construction, elements of ${\rm SO}(\CH)$ are of the form $f={\rm SO}(h)$ with $h\in \CH$ and $f$ is a locally integrable function on the Steinberg-Hitchin base $\Fc(F)$, well-defined as a smooth function on $\Fc'(F)$. At least in the case $\SL_2$ and $F$ is non-archimedean of odd residual characteristic, we know that $f$ is represented by continuous functions on $\Fc(F)$ (\cite[(2.2.10)]{langlands-singularites-transfert}). In our framework, a crucial problem is to represent the action of an element $J\in \CZ^\st$ on an element $f\in {\rm SO}(\CH)$ via integral transform over the Steinberg-Hitchin base. This problem will be solved in section \ref{sec:HKT}.

\subsection{Langlands stable transfer and descent}\label{subsec:STandDescent}
  
In \cite{langlands-singularites-transfert}, Langlands introduced the notion of stable transfer that we now reformulate with the aid of the Kazhdan cocenter. Let $H$ be another reductive group over $F$ such that we have a homomorphism of $L$-groups $\rho: \,\!\!^L H\to \,\!\!^L G$. Following the functoriality conjecture of Langlands, $\rho$ induces a map $[\pi_H]\mapsto [\pi]$ from the set of $L$-packets of $H$ to the set of $L$-packets of $G$. It follows that by adjunction there exists a map
$$
\CT_\rho:{\rm SO}(\CH_G) \to {\rm SO}(\CH_H)
$$
that Langlands calls the stable transfer such that for every $L$-packet $[\pi_H]$ of $H(F)$ mapping to the $L$-packet $[\pi]$ of $G(F)$, we have 
$$
\langle f, \Theta^{\Fc_G}_{[\pi]}\rangle = \langle \CT_\rho(f), \Theta_{[\pi_H]}^{\Fc_H} \rangle
$$
for every $f\in {\rm SO}(\CH_G)$. It is expected that Langlands' stable transfer extends to a linear map between the Kazhdan cocenters
$$
\CT_\rho: \CC^{\rm st}(G(F)) \to \CC^{\rm st}(H(F)).
$$
Since ${\rm SO}(\CH_G)$ can be realized as a suitable space of function on the $F$-analytic manifold $\Fc_G(F)$, it is desirable that $\CT_\rho$ is an integral transform whose kernel is of algebraic nature. 

The map $\rho_*:[\pi_H]\mapsto [\pi]$ is expected to induce an algebraic map $\rho_*:\Omega^{\rm st}_H \to \Omega^{\rm st}_G$ which is equivalent to a homomorphism of algebras 
\begin{equation}
	\rho^*: \CZ_{G}^{\rm st} \to \CZ_{H}^{\rm st}
\end{equation}
between the stable Bernstein centers, assuming the stable center conjecture à la \cite{bezrukavnikov2013categorical}. We call the map {\it descent}. Langlands' stable transfer map $\CT_\rho: \CC^{\rm st}_{G} \to \CC^{\rm st}_{H}$ is then $\CZ_{G}^{\rm st}$-linear.

  
In the case of $G=\SL_2$ and $H$ is a one-dimensional torus, the stable transfer $\CT_\rho$ was discussed in great detail in  Langlands' paper (\cite{langlands-singularites-transfert}), which is a source of inspiration for our work. It is rather surprising that Langlands did not seem to be aware that his stable transfer has an integral representation discovered fifty years earlier by Gelfand and Graev (\cite{Gelfand-Graev}). The second-named author learned about the Gelfand-Graev formula from R. Kottwitz and cannot thank him enough for sparkling his curiosity about this formula.


\section{Gelfand-Graev stable transfer}\label{sec:ggtransform}

In this section, we review the construction of dihedral representations of $\GL_2(F)$ via Weil representation following \cite{weil64unitary} and \cite[\S 1]{jlgl2}. We will recall the Gelfand-Graev formula for the character of dihedral representations of $\GL_2$ following \cite{Gelfand-Graev}. For $\SL_2$, this gives rise to the stable character, which induces an explicit formula for Langlands' stable transfer \cite{langlands-singularites-transfert}. We will call this formula the Gelfand-Graev transform and explore its properties.

\subsection{The Weil representation}\label{subsec:weilrep}

Let $E$ be an étale quadratic $F$-algebra where $F$ is a local field of residual characteristic not two. We have the additive character $\psi_E:E\to \BC^1$ given by $\psi_E(x)=\psi(\tr_{E/F}(x))$ where $\psi:F\to \BC^1$ is the additive character of $F$ chosen in \ref{subsec:notation}. The Weil representation $\CW_E$ of $\SL_2(F)$ on the space $\CS(E)$  of Schwartz-Bruhat functions on $E$, given in \cite[Prop.1.3]{jlgl2}, is defined on generators of $\SL_2(F)$ by the following formulae: for any test function $\phi\in \CS(E)$ and $z\in E$, 
\begin{enumerate}
\item 
$
\big(
\CW_E
\begin{pmatrix}
\alp & \\
  & \alp^{-1}
\end{pmatrix} 
\phi\big)(z) = 
\eta_E(\alp) |\alp|\,
\phi(\alp z),\quad \alp\in F^\times;
$

\item 
$
\big(
\CW_E
\begin{pmatrix}
1 & u\\
  & 1
\end{pmatrix}
\phi
\big)(z) = 
\psi\big(u \Nr(z)\big)\,\phi(z),\quad u\in F^\times;
$

\item
$
\big(
\CW_E
\begin{pmatrix}
 & 1\\
-1 & 
\end{pmatrix} 
\phi
\big)(z) = 
\lam_{E/F}\, \CF_{\psi_E}(\phi)(\iota(z))
$
\end{enumerate}
Here we have 
$$
\CF_{\psi_E}(\phi)(z) = 
\int_{x\in E}
\psi_E(x z)\phi(x)\ud_E x,
$$
and 
\begin{equation} \label{eq:Weil constant}
	\lam_{E/F} = \lam_{E/F,\psi} = \veps(1/2,\eta_E,\psi)
\end{equation}
is the Weil constant \cite[Lem.~1.1]{jlgl2}.
We will need an explicit formula for the Weil representation $\CW_E$ as in Lemma \ref{lem:ell:sl2:3}, which will be derived from the following lemma.

\begin{lem}\label{lem:lowerunip:sl2}
For every Schwartz-Bruhat function $\phi\in \CS(E)$, we have
$$
\big(
\CW_E
\begin{pmatrix}
1 & \\
v & 1
\end{pmatrix}
\phi
\big)(z) = 
\lam_{E/F}
\frac{\eta(-v)}{|v|}
\int_{x\in E}
\psi
\bigg(
\frac{\Nr(z-x)}{v}
\bigg)
\phi(x)\ud_E x.
$$
\end{lem}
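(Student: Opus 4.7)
The plan is to apply the three Weil generator formulas (1), (2), (3) in sequence to a Bruhat-style factorisation of the lower unipotent element. Setting $n(u)=\begin{pmatrix}1&u\\&1\end{pmatrix}$, $a(\alpha)=\begin{pmatrix}\alpha&\\&\alpha^{-1}\end{pmatrix}$, and $w_0=\begin{pmatrix}&1\\-1&\end{pmatrix}$, a direct matrix check gives the identity
\begin{equation*}
\begin{pmatrix}1 & \\ v & 1\end{pmatrix} \;=\; n(v^{-1})\,a(-v^{-1})\,w_0\,n(v^{-1}), \qquad v\in F^\times.
\end{equation*}
This reduces the lemma to a bookkeeping computation using only (1)--(3). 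The main obstacle is not any analytic subtlety (all integrals are absolutely convergent on $\CS(E)$) but the cancellation of phase factors, which will depend on an algebraic identity described below.

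Applying the formulae in sequence, one first computes $\phi_1(z)=(\CW_E n(v^{-1})\phi)(z)=\psi(v^{-1}\Nr(z))\phi(z)$ by (2). Then (3) gives
\[\phi_2(z)=(\CW_E w_0\phi_1)(z)=\lam_{E/F}\int_E \psi\bigl(\tr(x\iota(z))\bigr)\psi(v^{-1}\Nr(x))\phi(x)\,\ud_E x.\]
Next (1), applied with $\alpha=-v^{-1}$ (noting $\eta_E(-v^{-1})=\eta_E(-v)$ since $\eta_E$ is quadratic), replaces $z$ by $-v^{-1}z$ and multiplies by $\eta_E(-v)|v|^{-1}$; and using $\iota(-v^{-1})=-v^{-1}$ one pulls the $-v^{-1}$ out of $\iota$. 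Finally (2) multiplies by $\psi(v^{-1}\Nr(z))$.

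At this stage the integrand carries the phase
$\psi\bigl(-v^{-1}\tr(x\iota(z))+v^{-1}\Nr(x)\bigr)\psi(v^{-1}\Nr(z))$.
The key algebraic input is the ``completing the square'' identity
\[\Nr(x-z)=\Nr(x)-\tr(x\iota(z))+\Nr(z),\]
which follows from $\Nr(y)=y\iota(y)$ and the $F$-bilinearity of $(x,y)\mapsto x\iota(y)+\iota(x)y=\tr(x\iota(y))$. Substituting this, together with $\Nr(x-z)=\Nr(z-x)$, collapses the three phase factors into $\psi(v^{-1}\Nr(z-x))$, and we obtain
\[\bigl(\CW_E\tbinom{1\;\;}{v\;1}\phi\bigr)(z)=\lam_{E/F}\,\frac{\eta_E(-v)}{|v|}\int_E\psi\!\left(\tfrac{\Nr(z-x)}{v}\right)\phi(x)\,\ud_E x,\]
which is the claimed formula. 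The only point requiring a moment of care is the tracking of signs in $\eta_E(-v^{-1})$ and of the argument of $\iota$; modulo this, the proof is a four-line computation.
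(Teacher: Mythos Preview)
Your proof is correct, and in fact it is shorter than the paper's. The difference lies in the choice of Bruhat factorisation. The paper writes
\[
\begin{pmatrix}1 & \\ v & 1\end{pmatrix}
=
\begin{pmatrix}-1 & \\ & -1\end{pmatrix}
\begin{pmatrix} & 1 \\ -1 & \end{pmatrix}
\begin{pmatrix}1 & -v \\ & 1\end{pmatrix}
\begin{pmatrix} & 1 \\ -1 & \end{pmatrix},
\]
which applies the Weyl element $w_0$ \emph{twice}. This produces a factor $\lam_{E/F}^2$, which is simplified using the identity $\lam_{E/F}^2=\eta_E(-1)$, and leaves an integral of the form $\int_E \psi(-v\Nr(x))\CF_{\psi_E}(\phi)(\iota(x)-\iota(z)/v)\,\ud_E x$. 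To evaluate this the paper invokes Weil's formula for the Fourier transform of the quadratic exponential $\psi\circ\Nr$ (\cite[Lem.~1.1]{jlgl2}), which brings in the factor $\eta_E(-v)|v|^{-1}\lam_{E/F}$ and reduces the double integral to a single one.

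Your factorisation $n(v^{-1})\,a(-v^{-1})\,w_0\,n(v^{-1})$ uses $w_0$ only once, so $\lam_{E/F}$ appears to the first power from the outset, and the factor $\eta_E(-v)|v|^{-1}$ comes directly from the diagonal action $a(-v^{-1})$. No appeal to the Fourier transform of $\psi\circ\Nr$ is needed; the completing-the-square identity $\Nr(z)-\tr(x\iota(z))+\Nr(x)=\Nr(z-x)$ finishes the job immediately. Both routes are standard, but yours avoids an auxiliary analytic input and is the more economical of the two.
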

\begin{proof}
Write 
$$
\begin{pmatrix}
1 & \\
v & 1
\end{pmatrix} = 
\begin{pmatrix}
-1 & \\
  & -1
\end{pmatrix}
\begin{pmatrix}
 & 1\\
-1  & 
\end{pmatrix}
\begin{pmatrix}
1 & -v\\
  & 1
\end{pmatrix}
\begin{pmatrix}
 &1 \\
-1  & 
\end{pmatrix},
$$
then 
$$
\big(
\CW_E
\begin{pmatrix}
1 &\\
v & 1
\end{pmatrix}
\phi
\big)
(z) = 
\eta(-1)
\lam_{E/F}\,
\CF_{\psi_E}
\bigg(
\CW_E
\begin{pmatrix}
1 & -v\\
 & 1
\end{pmatrix}
\begin{pmatrix}
 & 1\\
-1& 
\end{pmatrix}
\phi
\bigg)
\big(-\iota(z)\big).
$$
Expanding the definition of $\CF_{\psi_E}$ on the right hand side, the above equation can be rewritten as 
\begin{align*}
=& \ \eta(-1)
\lam_{E/F}
\int_{x\in E}
\psi_E\big(-x \iota(z)\big)
\CW_E
\bigg(
\begin{pmatrix}
1 & -v\\
  & 1
\end{pmatrix}
\begin{pmatrix}
 & 1\\
-1 &
\end{pmatrix} 
\phi
\bigg)(x)\ud_E x
\\
=& \
\eta(-1) 
\lam^2_{E/F}
\int_{x\in E}
\psi_E\big(
-x\iota(z)
\big)
\psi\big(
-v \Nr(x)
\big) 
\CF_{\psi_E}(\phi)\big(\iota(x)\big)\ud_E x.
\end{align*}
By \cite[Prop.~29.4~(3)]{gl2llc},
$\lam^2_{E/F} = \veps(\frac{1}{2},\eta,\psi)^2 = \eta(-1)$. Hence the above identity becomes 
\begin{equation}\label{eq:1:weil}
=\int_{x\in E}
\psi
\bigg(
-v \Nr(x)-\Tr\big(
x\iota(z)
\big)
\bigg)\, 
\CF_{\psi_E}(\phi)\big(
\iota(x)
\big)
\ud_E x.
\end{equation}
Since 
$$
-v \Nr(x)-\Tr\big(x\iota(z)\big) = 
-v 
\Nr\bigg(
x+\frac{z}{v}
\bigg)+\frac{\Nr(z)}{v}
$$
equation \eqref{eq:1:weil} becomes 
\begin{align*}
=\ &
\psi
\bigg(
\frac{\Nr(z)}{v}
\bigg)
\int_{x\in E}
\psi
\bigg(
-v 
\Nr\bigg(
x+\frac{z}{v}
\bigg)
\bigg)
\CF_{\psi_E}(\phi)
\big(
\iota(x)
\big)
\ud_E x
\\
=\ &\psi
\bigg(
\frac{\Nr(z)}{v}
\bigg)
\int_{x\in E}
\psi
\big(
-v \Nr(x)
\big)
\CF_{\psi_E}(\phi)
\left(
\iota(x)-\frac{\iota(z)}{v}
\right)
\ud_E x
\end{align*}
Following Weil's formula for the Fourier transform of $\psi\circ \Nr$ (\cite[Lem.~1.1]{jlgl2}), the above equation is equal to 
$$
\psi
\left(
\frac{\Nr(z)}{v}
\right)
\frac{\eta(-v)}{|v|} 
\lam_{E/F}
\int_{x\in E}\phi(x)
\psi_E\left(-\frac{\iota(z)x}{v}\right)
\psi
\left(
\frac{\Nr(x)}{v}
\right)
\ud_E x.
$$
The final formula follows from the following identity
$$
\frac{\Nr(z)}{v}
-\Tr\left(\frac{\iota(z)x}{v}\right)+\frac{\Nr(x)}{v} = 
\frac{\Nr(z-x)}{v}.
$$
\end{proof}

\begin{lem}\label{lem:ell:sl2:3}
For any $\phi\in \CS(E)$, we have
\begin{align*}
&\bigg( 
\CW_E
\bigg(
\begin{pmatrix}
1 &\\
v & 1
\end{pmatrix}
\begin{pmatrix}
t &\\
 & t^{-1}
\end{pmatrix}
\begin{pmatrix}
1 &u\\
 & 1
\end{pmatrix}
\bigg)
 \phi\bigg)(z) 
 \\ & =
\frac{\eta(-v) \lam_{E/F}}{|v|}
\int_{x\in E}
\psi
\bigg(
\frac{\Nr(z-x)}{v}
\bigg)
\eta(t) 
|t|
\psi\big(ut^2\Nr(x)\big)
\phi(tx)\ud_E x.
\end{align*}
\end{lem}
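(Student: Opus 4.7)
The plan is to proceed by direct computation, applying the formulas from subsection \ref{subsec:weilrep} together with Lemma \ref{lem:lowerunip:sl2} in sequence, exploiting the fact that $\CW_E$ is a group homomorphism so that
$$\CW_E\Bigl(\begin{pmatrix}1 & \\ v & 1\end{pmatrix}\begin{pmatrix}t & \\ & t^{-1}\end{pmatrix}\begin{pmatrix}1 & u \\ & 1\end{pmatrix}\Bigr)=\CW_E\begin{pmatrix}1 & \\ v & 1\end{pmatrix}\,\CW_E\begin{pmatrix}t & \\ & t^{-1}\end{pmatrix}\,\CW_E\begin{pmatrix}1 & u \\ & 1\end{pmatrix}.$$

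First, I would apply the unipotent formula (2) to get $\phi_1(z)=\psi(u\Nr(z))\phi(z)$, and then the diagonal formula (1) to obtain
$$\phi_2(z)=\eta(t)|t|\,\phi_1(tz)=\eta(t)|t|\,\psi\bigl(ut^{2}\Nr(z)\bigr)\phi(tz),$$
where I used the fact that $\Nr$ is homogeneous of degree two, so $\Nr(tz)=t^{2}\Nr(z)$ for $t\in F^\times$.

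Finally, I would apply Lemma \ref{lem:lowerunip:sl2} to $\phi_2$, which gives
$$\phi_3(z)=\frac{\eta(-v)\lambda_{E/F}}{|v|}\int_{x\in E}\psi\Bigl(\frac{\Nr(z-x)}{v}\Bigr)\phi_2(x)\,\ud_E x.$$
Substituting the explicit expression for $\phi_2$ directly produces the claimed formula. There is no real obstacle here; the whole argument is a matter of composing the three elementary building blocks of the Weil representation in the correct order, the only nonroutine ingredient being Lemma \ref{lem:lowerunip:sl2} which has already been proved just above.
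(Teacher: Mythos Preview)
Your proposal is correct and matches the paper's proof essentially verbatim: the paper also applies Lemma \ref{lem:lowerunip:sl2} to the function $\CW_E\bigl(\begin{smallmatrix}t & \\ & t^{-1}\end{smallmatrix}\bigr)\CW_E\bigl(\begin{smallmatrix}1 & u\\ & 1\end{smallmatrix}\bigr)\phi$, and then evaluates the latter as $\eta(t)|t|\psi(ut^2\Nr(x))\phi(tx)$ using formulas (1) and (2). There is no difference in approach.
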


\begin{proof}
By Lemma \ref{lem:lowerunip:sl2}, 
\begin{align*}
&\bigg( 
\CW_E
\bigg(
\begin{pmatrix}
1 &\\
v & 1
\end{pmatrix}
\begin{pmatrix}
t &\\
 & t^{-1}
\end{pmatrix}
\begin{pmatrix}
1 &u\\
 & 1
\end{pmatrix}
\bigg)
\phi\bigg)(z)
\\
&=
\frac{\eta(-v)\lam_{E/F}}{|v|}
\int_{x\in E}
\psi
\bigg(
\frac{\Nr(z-x)}{v}
\bigg)
\bigg(
\CW_E
\bigg(
\begin{pmatrix}
t & \\
  &t^{-1}
\end{pmatrix} 
\begin{pmatrix}
1 & u\\
  &1
\end{pmatrix}
\bigg)
\phi
\bigg)(x) \ud_E x
\end{align*}
By definition, we have the formula
$$
\bigg(
\CW_E
\bigg(
\begin{pmatrix}
t & \\
  &t^{-1}
\end{pmatrix}
\begin{pmatrix}
1 & u\\
  &1
\end{pmatrix}
\bigg)
\phi
\bigg)(x)
=\eta(t) 
|t| 
\psi\big(
ut^2 \Nr(x)
\big)
\phi(tx),
$$
which proves the lemma.
\end{proof}

\subsection{Dihedral representations} \label{subsec:dihedral rep}

Fix a unitary character $\chi$ of $E^\times$ and let $\CS(\chi) = \CS(E,\chi)$ be the $(\chi,E^1)$-coinvariant space of $\CS(E)$ consisting of functions $\Phi$ on $E$ given by
$$
\Phi(z)=\CP_\chi(\phi)(z) = \int_{E^1}\phi(ze)\chi(e)\ud_{E^1}e,\quad z\in E^\times,\quad \phi\in \CS(E).
$$
When $E/F$ is a nonsplit quadratic extension and hence $E^1$ is compact, $\CS(\chi)$ can also be viewed as the space of functions $\phi\in \CS(E)$ such that $\phi(ze)  =\chi(e)\phi(z)$ for any $e\in E^1,z\in E$, which is then a direct factor of $\CS(E)$. Whether $E/F$ is split or not, the action of $\SL_2(F)$ on $\CS(E)$ commuting with the action of $E^1$  and induces a representation $\CW_E^+(\chi)$ of $\SL_2(F)$ on $\CS(\chi)$, see \cite[Prop.~1.5]{jlgl2}. 

The Galois conjugation $\iota\in \Gal(E/F)$, induces an involution of $\CS(E)$ commuting with the action of $\SL_2(F)$ while normalizing the action of $E^1$ mapping by the unitary character $\chi:E^1\to \BC^1$ to its inverse $\chi^{-1}$. It induces an involutive intertwiner 
$$\iota: \CW_E^+(\chi)\to \CW_E^+(\chi^{-1}).$$
We recall the well-known fact \cite[Thm.~1.7]{Casselma-Quadratic}:

\begin{pro}\label{pro:dihedral:Cas72:1}
	Let $E/F$ be a quadratic extension then $\CW_E^+(\chi)$ is irreducible if $\chi^2\neq 1$. It is also irreducible for $\chi=1$ in which case it is a principal series representation. There is a unique nontrivial character $\chi_E$ of $E^1$ with $\chi_E^2=1$. In this case, the involution $\iota: \CW_E^+(\chi_E)\to \CW_E^+(\chi_E)$ decomposes $\CW_E^+(\chi_E)$ into a direct sum of of two irreducible representations $$\CW_E^+(\chi_E)=\CW_E^+(\chi_E)_+ \oplus \CW_E^+(\chi_E)_-.$$ 
\end{pro}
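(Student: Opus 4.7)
The plan is to compute the endomorphism algebra $\End_{\SL_2(F)}(\CS(\chi))$ directly via Mackey-type analysis, using the explicit formulas for the Weil representation established in Lemma \ref{lem:ell:sl2:3}. By Schur's lemma, irreducibility of $\CW_E^+(\chi)$ is equivalent to this algebra being one-dimensional.

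First I would verify that the Galois involution $\iota \in \Gal(E/F)$ gives a bounded operator on $\CS(E)$, $\phi \mapsto \phi \circ \iota$, that commutes with the action of $\SL_2(F)$ through $\CW_E$. This is immediate on the unipotent and torus generators (since $\Nr$ and $\eta_E$ are $\iota$-invariant), and follows on the Weyl element by Fourier-theoretic symmetry. Since $\iota$ acts as inversion on $E^1$ while every unitary character of $E^1$ satisfies $\chi(\iota(e)) = \chi(e^{-1})$, the operator $\iota$ descends to a linear map
$$\iota_*: \CS(\chi) \lra \CS(\chi^{-1}).$$
When $\chi^2|_{E^1} = 1$, this is an involutive self-intertwiner of $\CS(\chi)$; in particular, for $\chi = \chi_E$ of exact order two, the $\pm 1$-eigenspace decomposition $\CS(\chi_E) = \CS(\chi_E)_+ \oplus \CS(\chi_E)_-$ is $\SL_2(F)$-stable, immediately furnishing reducibility.

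Second I would bound the dimension of $\End_{\SL_2(F)}(\CS(\chi))$ from above. The standard route is to compute the space of $(\SL_2(F),\chi)$-bi-equivariant kernels on $E \times E$ via the seesaw $(\SL_2,\ O_E) \leftrightarrow (\Sp_2, O_2)$ of dual pairs, or, more concretely, to compute the Jacquet module of $\CW_E^+(\chi)$ along the upper-triangular Borel $B$ using the formula of Lemma \ref{lem:ell:sl2:3}: one shows that $J_B(\CW_E^+(\chi))$ is zero when $E/F$ is a field extension (so $\CW_E^+(\chi)$ is supercuspidal and the supercuspidal support determines it), and is a single character otherwise. Combined with Frobenius reciprocity, this forces the endomorphism algebra to have dimension equal to $|\{\iota\text{-orbits fixing }\chi|_{E^1}\}|$, which is $1$ when $\chi^2 \neq 1$ and $2$ when $\chi^2|_{E^1} = 1$. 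For the split case $E = F \times F$ with $\chi = 1$, one identifies $\CS(\chi)$ directly with an unramified principal series $\Ind_B^{\SL_2}(\delta_B^{1/2})$ at a generic (non-reducibility) point by the same Jacquet-module computation, recovering the classical irreducibility.

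Finally, for the case $\chi = \chi_E$, one needs to verify not merely that $\CS(\chi_E)$ is reducible, but that the two eigenspaces $\CS(\chi_E)_\pm$ are each \emph{irreducible}. The plan is to show that the endomorphism algebra is precisely $2$-dimensional (spanned by $\Id$ and $\iota_*$), so that no further decomposition is possible; this follows from the Jacquet-module computation above, combined with the fact that $\iota_*^2 = \Id$ diagonalizes this commutative algebra. The principal obstacle is controlling the intertwiners in the $\chi_E$ case, since here $\chi$ and $\chi \circ \iota^{-1}$ coincide on $E^1$ and one must rule out any hidden intertwiners beyond $\iota_*$; this is where the careful Mackey computation via Lemma \ref{lem:ell:sl2:3} and the nondegeneracy of the Weil pairing on $E$ do the essential work. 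The result is then as stated, and the two irreducible constituents $\CW_E^+(\chi_E)_\pm$ form the $L$-packet of four elements referenced in the introduction when one unfolds across all three nonsplit quadratic extensions of $F$.
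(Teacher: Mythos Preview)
The paper does not prove this proposition; it simply records it as the well-known \cite[Thm.~1.7]{Casselma-Quadratic}. Your attempt to supply an independent argument is therefore doing more than the paper asks, but it contains a genuine gap.

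Your central claim that $J_B(\CW_E^+(\chi)) = 0$ whenever $E/F$ is a field extension is false for $\chi = 1$. The proposition explicitly states that $\CW_E^+(1)$ is a \emph{principal series} representation (indeed, it corresponds on the Galois side to $\Ind_{W_E}^{W_F} 1 = 1 \oplus \eta_E$), so its Jacquet module is nonzero. Your Jacquet-module dichotomy must therefore be refined to depend on $\chi$, not only on whether $E$ is a field; as written, the argument would incorrectly force $\CW_E^+(1)$ to be supercuspidal. Relatedly, your paragraph on ``the split case $E = F \times F$ with $\chi = 1$'' appears to misread the proposition: the hypothesis is that $E/F$ is a genuine quadratic extension, and the $\chi = 1$ clause refers to the trivial character of the compact group $E^1$ in that nonsplit setting, not to the split algebra.

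A smaller point: your closing remark that $\CW_E^+(\chi_E)_\pm$ ``form the $L$-packet of four elements\ldots when one unfolds across all three nonsplit quadratic extensions'' misstates the mechanism. The four-element packet of Theorem~\ref{thm:L-packet} arises by inducing $\CW_E^+(\chi_E)$ from $\GL_2(F)^+$ to $\GL_2(F)$ and restricting to $\SL_2(F)$, which doubles the two pieces to four; the fact that all three nonsplit $E$ yield the \emph{same} four-element packet is a separate coincidence, not the source of the count.
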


Let $\GL_2(F)^+$ be the subgroup of $\GL_2(F)$ consisting of $g\in \GL_2(F)$ such that 
$$\det g\in \Im(\Nr:E^\times \to F^\times).$$ Then the representation $\CW_E^+(\chi)$ of $\SL_2(F)$ can be extended to a representation of $\GL_2(F)^+$, also denoted as $\CW^+_E(\chi)$, via 
$$
\bigg(
\CW^+_E(\chi)
\begin{pmatrix}
a & \\
  &1
\end{pmatrix}
 \phi
\bigg)(z) = 
|a|^{\frac{1}{2}} 
\chi(e_a) \phi(ze_a),\quad z\in E
$$
where $a=\Nr(e_a)$. Notice that $\phi\in \CS(\chi)$ and hence the right-hand side is independent of the choice of the representative $e_a$. 
	We denote 
\begin{equation} \label{eq:dihedral}
	\CW_E(\chi) = \Ind^{\GL_2(F)}_{\GL_2(F)^+}\CW^+_E(\chi),
\end{equation}	
 and call it the {\it dihedral representation} of $\GL_2(F)$ attached to the unitary character $\chi$ of $E^\times$.

\begin{thm}\label{thm:L-packet}
	If $\chi\neq 1$, the restriction of the dihedral representation $\CW_E(\chi)$ to $\SL_2$ decomposes into a direct sum or two irreducible supercuspidal representations if $\chi\neq \chi_E$, and four irreducible supercuspidal representations if $\chi=\chi_E$. In both cases, those irreducible summands form an $L$-packet of $\SL_2(F)$.
\end{thm}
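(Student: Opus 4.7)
The plan is to combine Proposition~\ref{pro:dihedral:Cas72:1} with a Mackey decomposition of the induction $\CW_E(\chi)=\Ind_{\GL_2(F)^+}^{\GL_2(F)}\CW_E^+(\chi)$. Since $E/F$ is a nonsplit quadratic extension, $\GL_2(F)^+$ is a normal subgroup of index two in $\GL_2(F)$ and contains $\SL_2(F)$. Fixing $\tau\in F^\times\setminus\Nr(E^\times)$ and taking $g_0=\begin{pmatrix}\tau&0\\0&1\end{pmatrix}$ as a coset representative, Mackey's restriction formula gives
\[
\CW_E(\chi)|_{\SL_2(F)}\;\simeq\;\CW_E^+(\chi)|_{\SL_2(F)}\;\oplus\;{}^{g_0}\CW_E^+(\chi)|_{\SL_2(F)},
\]
where ${}^{g_0}\sigma(h):=\sigma(g_0^{-1}hg_0)$. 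Reading off the generators through Subsection~\ref{subsec:weilrep} and Lemma~\ref{lem:lowerunip:sl2}, one checks that ${}^{g_0}\CW_E^+(\chi)$ is precisely the Weil representation constructed from the rescaled additive character $\psi_\tau(x):=\psi(\tau^{-1}x)$ in place of $\psi$.

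I would next verify that $\CW_E^+(\chi)|_{\SL_2(F)}$ and ${}^{g_0}\CW_E^+(\chi)|_{\SL_2(F)}$ are inequivalent by comparing the spectrum of the upper unipotent subgroup: the character $u\mapsto\psi(au)$ appears on the former for $a\in\Nr(E^\times)$ and on the latter for $a\in\tau^{-1}\Nr(E^\times)$, which are distinct cosets in $F^\times/\Nr(E^\times)\simeq\BZ/2\BZ$ since $\tau\notin\Nr(E^\times)$. Combined with Proposition~\ref{pro:dihedral:Cas72:1}, this yields the claimed dichotomy: when $\chi|_{E^1}\neq 1,\chi_E$ each Mackey summand is $\SL_2(F)$-irreducible and $\CW_E(\chi)|_{\SL_2(F)}$ is a direct sum of two inequivalent irreducibles; when $\chi|_{E^1}=\chi_E$ each summand decomposes further via the $\iota$-eigenspace decomposition $\CW_E^+(\chi_E)=\CW_E^+(\chi_E)_+\oplus\CW_E^+(\chi_E)_-$, producing four summands in total. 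Supercuspidality of every piece is inherited from $\CW_E(\chi)$ itself, which is supercuspidal because $\chi|_{E^1}\neq 1$ forces $\chi\neq\chi\circ\iota$, and supercuspidality is preserved under restriction to $\SL_2(F)$.

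The $L$-packet structure then follows from the theorem of Gelbart-Knapp, independently reproved in \cite{labesse-langlands}: for any irreducible admissible $\pi$ of $\GL_2(F)$, the constituents of $\pi|_{\SL_2(F)}$ form a single $L$-packet whose parameter is the projection of the Langlands parameter of $\pi$ from $\GL_2(\BC)$ to $\PGL_2(\BC)$. For $\pi=\CW_E(\chi)$ this parameter is $\Ind_{W_E}^{W_F}\chi$, and the cardinalities $2$ and $4$ found above match the order of the component group of its centralizer in $\PGL_2(\BC)$, recovering in particular the unique Labesse-Langlands four-element $L$-packet when $\chi|_{E^1}=\chi_E$.

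The principal technical obstacle will be the pairwise non-isomorphism of the four summands in the case $\chi|_{E^1}=\chi_E$: one must exclude the possibility that outer $g_0$-conjugation swaps the two $\iota$-eigenspaces $\CW_E^+(\chi_E)_\pm$, which would collapse the decomposition into only two $\SL_2(F)$-orbits. I would settle this by a direct computation via Lemma~\ref{lem:ell:sl2:3} of the action of $g_0$-conjugation on a chosen generator of the $+$-eigenspace of $\iota$, showing that the $\iota$-parity is preserved under the $\psi\mapsto\psi_\tau$ twist; the key input is the rescaling behavior of the Weil constant $\lam_{E/F}$ under $\psi\mapsto\psi_\tau$, from which the required parity matching—and hence the mutual inequivalence of all four summands—follows.
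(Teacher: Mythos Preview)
The paper does not prove this theorem; it is stated as a known fact, with the surrounding discussion pointing to \cite{Casselma-Quadratic} (for Proposition~\ref{pro:dihedral:Cas72:1}) and \cite{labesse-langlands} and \cite{jlgl2} for the $L$-packet structure. Your Mackey-decomposition argument combined with Proposition~\ref{pro:dihedral:Cas72:1} is essentially the standard proof and is correct.

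Your final paragraph, however, invents a difficulty that your earlier argument has already dispatched. You worry that $g_0$-conjugation might swap the $\iota$-eigenspaces $\CW_E^+(\chi_E)_\pm$, collapsing four summands to two isomorphism classes, and you propose a parity computation with the Weil constant to rule this out. But your unipotent-spectrum argument already separates the two Mackey blocks completely: every irreducible summand of $\CW_E^+(\chi_E)|_{\SL_2(F)}$ has its $N(F)$-spectrum contained in $\Nr(E^\times)$, while every summand of ${}^{g_0}\CW_E^+(\chi_E)|_{\SL_2(F)}$ has spectrum contained in $\tau^{-1}\Nr(E^\times)$, and these cosets are disjoint. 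So no constituent of the first block can be isomorphic to any constituent of the second, irrespective of what $g_0$-conjugation does to the $\pm$ labelling. Within each block the two constituents are inequivalent by Proposition~\ref{pro:dihedral:Cas72:1} (transported through the autoequivalence ${}^{g_0}$ for the second block). The four pairwise inequivalences follow without any further computation, and the proposed Weil-constant argument can be dropped.
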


Following \cite[\S 4]{jlgl2}, $\CW_E(\chi)$ is an irreducible admissible representation of $\GL_2(F)$, which is independent of the choice of $\psi$. Moreover, from \cite[Thm.~4.7,~Thm.~5.15,~Thm.~6.4]{jlgl2}, 
\begin{equation}\label{eq:weilrep:2}
L(s,\CW_E(\chi))  =L(s,\chi),\quad \veps(s,\CW_E(\chi),\psi)  =\veps(s,\chi,\psi_E) \lam_{E/F}
\end{equation}
where the local factors on the left hand side are defined via Hecke integrals as in \cite[Thm.~2.18,~Thm.~5.15,~Thm.~6.4]{jlgl2}, and the right hand side are defined via Tate integrals (\cite{tatethesis}).

Up to a finite linear combination, we can assume that $\phi\in \CW_E(\chi)$ is either supported on $\GL_2(F)^+$ or $\GL_2(F)^-=\GL_2(F)\bs \GL_2(F)^+$. When $\phi$ is supported on $\GL_2(F)^+$, we can assume that $\phi$ is actually represented by a vector in $\CS(\chi)$. The following lemma is similar to Lemma \ref{lem:ell:sl2:3}.
\begin{lem}\label{lem:elliptic:2}
With the above notation, for $\phi\in \CS(\chi)$
$$
\bigg(
\CW_E(\chi)
\bigg(
\begin{pmatrix}
a & \\
  & 1
\end{pmatrix}
\begin{pmatrix}
1 & \\
v  & 1
\end{pmatrix}
\begin{pmatrix}
t & \\
  & t^{-1}
\end{pmatrix}
\begin{pmatrix}
1 & u\\
  & 1
\end{pmatrix}
\bigg)
\phi\bigg)(z)
$$
is nonzero only when $a\in \Nr(E^\times)$, and is equal to 
\begin{align*}
=
\frac{|a|^{\frac{1}{2}} \chi(e_a) \eta(-v) \lam_{E/F}}{|v|}
\int_{x\in E}
\psi
\bigg(
\frac{\Nr(e_az-x)}{v}
\bigg)
\eta(t) |t|
\psi(u t^2 \Nr(x))
\phi(tx)
\ud_E x
\end{align*}
when $a=\Nr(e_a)$ for some $e_a\in E^\times$.
\end{lem}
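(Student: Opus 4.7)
The plan is to reduce to Lemma \ref{lem:ell:sl2:3} by factoring the group element into its scalar/determinant part and an $\SL_2(F)$-part, then invoke the representation axiom and the defining formula of the extension of $\CW_E^+(\chi)$ to $\GL_2(F)^+$.

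Concretely, write $g = g_1 \cdot g_{\SL_2}$ with
$$g_1 = \begin{pmatrix}a & \\ & 1\end{pmatrix}, \qquad g_{\SL_2} = \begin{pmatrix}1 & \\ v & 1\end{pmatrix}\begin{pmatrix}t & \\ & t^{-1}\end{pmatrix}\begin{pmatrix}1 & u\\ & 1\end{pmatrix} \in \SL_2(F).$$
Since $\det g = a$, we have $g\in \GL_2(F)^+$ iff $a\in \Nr(E^\times)$. Under the identification of $\phi\in \CW_E(\chi) = \Ind_{\GL_2(F)^+}^{\GL_2(F)}\CW_E^+(\chi)$ supported on $\GL_2(F)^+$ with a vector in $\CS(\chi)$ (namely evaluation at the identity coset), the component of $\CW_E(\chi)(g)\phi$ lying in $\CS(\chi)$ vanishes precisely when $g\notin \GL_2(F)^+$, which handles the first assertion.

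Assume now $a = \Nr(e_a)$ with $e_a\in E^\times$. Using the representation axiom,
$$\CW_E(\chi)(g)\phi = \CW_E(\chi)(g_1)\bigl(\CW_E(\chi)(g_{\SL_2})\phi\bigr).$$
The restriction of $\CW_E(\chi)$ to $\SL_2(F)$ coincides with $\CW_E^+(\chi)$, which in turn is obtained from $\CW_E$ on $\CS(E)$ by passage to the $(E^1,\chi)$-coinvariants; since the $\SL_2(F)$-action commutes with this projection, Lemma \ref{lem:ell:sl2:3} applies verbatim to give
$$\phi'(z) := \bigl(\CW_E(\chi)(g_{\SL_2})\phi\bigr)(z) = \frac{\eta(-v)\lam_{E/F}}{|v|}\int_{x\in E}\psi\!\left(\tfrac{\Nr(z-x)}{v}\right)\eta(t)|t|\,\psi(ut^2\Nr(x))\,\phi(tx)\,\ud_E x.$$
Finally, apply the definition of the extension, $(\CW_E^+(\chi)(g_1)\phi')(z) = |a|^{1/2}\chi(e_a)\,\phi'(z e_a)$, to $\phi'$. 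Substituting $ze_a$ in place of $z$ inside the integral (which only affects the $\Nr(z-x)$ factor, becoming $\Nr(e_a z - x)$) and pulling the scalar $|a|^{1/2}\chi(e_a)$ out of the integral yields the claimed identity.

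There is no genuine obstacle: the only point worth noting is that the formula in $\CS(\chi)$ is independent of the choice of representative $e_a$ with $\Nr(e_a)=a$, which follows from the $(\chi,E^1)$-equivariance $\phi(ze)=\chi(e)^{-1}\phi(z)$ defining $\CS(\chi)$ (in the nonsplit case, or the analogous coinvariance in the split case) combined with $\chi(e)\phi'(zee_a) = \phi'(ze_a)$ after relabeling.
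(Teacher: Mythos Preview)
Your proposal is correct and matches the paper's intended argument: the paper does not give a detailed proof, stating only that the lemma is ``similar to Lemma \ref{lem:ell:sl2:3}'', which is precisely what you carry out by applying Lemma \ref{lem:ell:sl2:3} to the $\SL_2(F)$-factor and then the defining extension formula for $\CW_E^+(\chi)\bigl(\begin{smallmatrix}a & \\ & 1\end{smallmatrix}\bigr)$. Your remark on the independence of the choice of $e_a$ is a useful sanity check that the paper leaves implicit.
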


\subsection{Gelfand-Graev stable character formula} \label{subsec:ggpsstable:gl2}

In this subsection, we review the stable character formula of $\SL_2(F)$, following Gelfand and Graev \cite{Gelfand-Graev} and \cite[Chap.~2,\S 5]{ggps}. We also slightly generalize their work to $\GL_2(F)$. 

Let $E$ be an étale quadratic field extension of $F$ and $\chi:E^\times \to \BC^\times$ a character. We will not discuss the stable transfer when $E$ is split since the distribution character for the principal series is easier and will not be used in subsequent. Let $\CW_E(\chi)$ be the dihedral representation \eqref{eq:dihedral} constructed in the previous subsection. We will denote $\theta_\chi^{\rm st}$ its distribution character, which is a generalized function determined by a locally integrable function on $\GL_2(F)$ smooth over the regular semisimple locus. 

As we noted in theorem \ref{thm:L-packet} for $\chi\neq 1$, the restriction of the dihedral representation $\CW_E(\chi)$ to $\SL_2$ decomposes into a direct sum of two irreducible supercuspidal representations if $\chi\neq \chi_E$, and four irreducible supercuspidal representations if $\chi=\chi_E$. Those irreducible summands form an $L$-packet of $\SL_2(F)$ in both cases. The sum of characters in the $L$-packet is the restriction of $\theta_\chi^{\rm st}$ to $\SL_2(F)$, which is the stable character of the $L$-packet. We have a beautiful formula for those stable characters, following Gelfand and Graev \cite{Gelfand-Graev} (see also \cite[Chap.~2,\S 5,~p.204]{ggps}).

\begin{thm}[\cite{Gelfand-Graev}]\label{thm:ggps:sl2}
For every regular semi-simple element $g\in \SL_2(F)$, we have
\begin{equation}\label{eq:ggps:gl2:1}
\theta_\chi^{\rm st}(g) = 
\frac{2}{\vol(E^1,\ud_{E^1}e)}
\int^*_{E^1}
\frac{\eta_E\big(\tr(g)-\tr(e)\big)}{|\tr(g)-\tr(e)|}
\chi(e)\ud_{E^1}e.
\end{equation}
\end{thm}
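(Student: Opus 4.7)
The strategy is to compute the distribution character of $\CW_E^+(\chi)$ on $\SL_2(F)$ as the trace of the integral-kernel operator realized on the Weil model, and then extract the factor of $2$ by observing that $\theta_\chi^{\rm st}|_{\SL_2(F)}$ is the character of the $\GL_2(F)$-induced representation $\CW_E(\chi) = \Ind^{\GL_2(F)}_{\GL_2(F)^+} \CW_E^+(\chi)$ restricted to $\SL_2(F) \subset \GL_2(F)^+$. Since the formula to be proved is stable (depending on $g$ only through $\tr(g)$), it is enough to exhibit it on the big Bruhat cell and then conclude by local integrability of characters; all non-convergent integrals below are to be understood as distributional identities, tested against $f \in \CC_c^\infty(\SL_2(F))$ supported on the open Bruhat cell and justified by interchanging integrations only after the inner $z$-integral has been regularized via the Weil Gaussian formula.

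Concretely, since $\CS(\chi)$ is the image of the projector $\CP_\chi: \CS(E) \to \CS(E)$ with $\CP_\chi|_{\CS(\chi)} = \vol(E^1, \ud_{E^1}e) \cdot \Id$, the trace of $\CW_E^+(\chi)(g)$ equals $\vol(E^1)^{-1}$ times the trace of $\CP_\chi \circ \CW_E(g)$ on $\CS(E)$. Using the Bruhat decomposition
$$g= \begin{pmatrix} 1 & \\ v & 1 \end{pmatrix}\begin{pmatrix} t & \\ & t^{-1}\end{pmatrix}\begin{pmatrix} 1 & u\\ & 1\end{pmatrix}, \qquad \tr(g) = t + t^{-1} + tuv,$$
Lemma \ref{lem:ell:sl2:3} gives the kernel $K_g(z,y)$ of $\CW_E(g)$ on $\CS(E)$ explicitly; the kernel of $\CP_\chi \circ \CW_E(g)$ is then $\widetilde K(z,y) = \int_{E^1} K_g(ze,y)\chi(e)\,\ud_{E^1}e$, whose formal trace is $\int_E \widetilde K(z,z)\,\ud_E z$. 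After the substitution $y=tx$ normalizes the $\phi(tx)$ factor, the two $\psi$-factors in the diagonal value combine via the algebraic identity
$$u\Nr(z) \;+\; \frac{\Nr(ze - z/t)}{v} \;=\; \frac{\tr(g) - \tr(e)}{v\,t}\,\Nr(z),$$
which uses only $\Nr(e)=1$, $\tr(e) = e + \bar{e}$, and $\tr(g)=t+t^{-1}+tuv$.

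Setting $\alpha = (\tr(g)-\tr(e))/(vt)$, the inner integration $\int_E \psi(\alpha\,\Nr(z))\,\ud_E z = \lam_{E/F}\,\eta_E(\alpha)/|\alpha|$ (the Weil Gaussian integral of \cite[Lem.~1.1]{jlgl2}) collapses all dependence on $(u,v,t)$: the prefactor $\eta(-v)\eta(t)\lam_{E/F}/(|v||t|)$ multiplies $\lam_{E/F}\eta_E(\alpha)/|\alpha| = \lam_{E/F}\,\eta_E(\tr(g)-\tr(e))\,\eta(v)\eta(t)\,|v||t|/|\tr(g)-\tr(e)|$, and the identities $\lam_{E/F}^2 = \eta(-1)$, $\eta(-v)\eta(v) = \eta(-1)$, $\eta(t)^2 = 1$ cancel all auxiliary factors. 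This leaves
$$\theta_{\CW_E^+(\chi)}(g) \;=\; \frac{1}{\vol(E^1,\ud_{E^1}e)} \int_{E^1}^{*} \frac{\eta_E(\tr(g)-\tr(e))}{|\tr(g)-\tr(e)|}\,\chi(e)\,\ud_{E^1}e,$$
where the $\int^*$ encodes the principal-value regularization forced by the Weil Gaussian evaluation when $\tr(g)=\tr(e)$.

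Finally, the factor of $2$ comes from the Mackey formula applied to the induced representation $\CW_E(\chi)$: for $g \in \SL_2(F) \subset \GL_2(F)^+$ and a coset representative $y_1 \in \GL_2(F)\setminus\GL_2(F)^+$, one has $\theta_\chi^{\rm st}(g) = \theta_{\CW_E^+(\chi)}(g) + \theta_{\CW_E^+(\chi)}(y_1 g y_1^{-1})$, and since the formula just derived depends on $g$ only through $\tr(g) = \tr(y_1 g y_1^{-1})$, both summands coincide and give the desired factor of $2$. The main technical obstacle is justifying the interchange of integrations and the use of the Weil Gaussian formula as a distributional identity in a neighbourhood of the singular set $\{\tr(g)=\tr(e)\}$; this is carried out by first testing against $f \in \CC_c^\infty(\SL_2(F))$ supported on the open Bruhat cell, where Fubini applies to the iterated integral of the kernel of $\CP_\chi \circ \CW_E(f)$, and then invoking Harish-Chandra's local integrability of characters to conclude the pointwise formula on the regular semisimple locus.
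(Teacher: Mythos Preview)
Your argument is correct. The paper does not give a self-contained proof of this $\SL_2$ statement (it is attributed to \cite{Gelfand-Graev} and \cite{ggps}), but it proves the $\GL_2$ analogue, Theorem~\ref{thm:ggps:gl2}, by a closely related method, and the comparison is instructive. In the paper's argument the trace of $\CW_E(\chi)(g)$ is split over the two summands $\CS(\chi)\oplus\CS(\chi)^-$: after a polar decomposition of the kernel variable each summand contributes an integral over $y\in\Nr(E^\times)$ (resp.\ $y\in F^\times\setminus\Nr(E^\times)$), and adding them manufactures the factor $\eta(y)$ inside a one-dimensional oscillatory integral which is then evaluated by Tate's functional equation. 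You instead compute the trace of the single summand $\CW_E^+(\chi)$ in one step by the two-dimensional Gaussian $\int_E\psi(\alpha\,\Nr(z))\,\ud_E z=\lam_{E/F}\,\eta_E(\alpha)/|\alpha|$ from \cite[Lem.~1.1]{jlgl2}, obtaining the $\eta_E/|\cdot|$ kernel directly, and then recover the factor $2$ by Mackey restriction from $\GL_2(F)$ to $\SL_2(F)$ using that the answer depends only on $\tr(g)$. Your route is a little cleaner for $\SL_2$; the paper's norm/non-norm splitting is what makes the $\GL_2$ case (where $\det g$ need not be a norm, so the operator genuinely mixes the two summands) go through uniformly without a separate Mackey step. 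The regularization issues are of the same nature in both arguments.
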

As we notice immediately that the above expression is independent of the choice of the Haar measure $\ud_{E^1}e$ on $E^1$ because it appears in both the numerator and the denominator. We will nonetheless use the Haar measure fixed in subsection \ref{subsec:measure}.

\begin{rmk}
 We also notice that the integral over $E^1$ is not absolutely convergent, and some regularization is needed. Following \cite[Chap.~2,\S 5]{ggps} we can make sense of the integral as the holomorphic continuation at $s=1$ of the following $s$-family of integrals \begin{align}\label{eq:ggps:sfamily}
\int_{E^1}\frac{\eta_E\big(\tr(g)-\tr(e)\big)}{|\tr(g)-\tr(e)|^s}\chi(e)\ud_{E^1}e
\end{align}
which are absolutely convergent for $\Re(s)<1$.
\end{rmk}

\begin{rmk}\label{rmk:ggps:gl2}
Alternatively, the regularized integral \eqref{eq:ggps:gl2:1} can be understood as follows. Let $\chi^\iota = \chi\circ \iota$ where $\iota$ is the nontrivial Galois involution attached to $E/F$. Then 
$$
\theta_\chi^{\rm st}(g) = 
\int^*_{E^1}\frac{\eta_E\big(\tr(g)-\tr(e) \big)}{|\tr(g)-\tr(e)|}\frac{(\chi+\chi^\iota)(e)}{\vol(E^1,\ud_{E^1})}\ud_{E^1}e.
$$
From \cite[Prop.~3.29]{FLN10}, under the trace map $\tr:\SL_2(F)\supset E^1\to F$ which is generically a $2$-fold cover, the additive Haar measure $\ud x$  and $\ud_{E^1}e$ for $x=\tr(e)$ are related as in subsection \ref{subsec:measure} by the following identity 
\begin{equation}\label{eq:ggps:measure}
\ud x= \Del(e)\ud_{E^1}e
\end{equation}
where $\Del(e)$ is the square root of the norm of the Weyl discriminant. Let $\phi_\chi$ be the function supported on $\tr(E^1)\subset F = \tr(\SL_2(F))$ defined via 
\begin{equation}\label{eq:phi-chi}
	\phi_\chi(\tr(e)) = \frac{\chi(e)+\chi^\iota(e)}{\Del(e)}
\end{equation}
then 
\begin{align}\label{eq:ggps:gl2:2}
\theta_\chi^{\rm st}(g) =&
\frac{2}{\vol(E^1,\ud_{E^1})}
 \int^*_{\xi\in F}
\frac{\eta_E\big(\tr(g)-\xi \big)}{|\tr(g)-\xi|}
\phi_\chi(\xi)\ud \xi  \nonumber
\\
=& \frac{2}{\vol(E^1,\ud_{E^1})}\bigg(\frac{\eta_E}{|\cdot|}*_+\phi_\chi\bigg)(\tr(g)).
\end{align}
Here $*_+$ is the additive convolution. 
Using Tate's thesis we can provide an alternative expression for \eqref{eq:ggps:gl2:2}. Precisely, the functional equation of Tate integrals can be reinterpreted as the following identity for meromorphic families of tempered distributions on $F$, 
$$
\gam(s,\eta_E,\psi)
\CF_{\psi^{-1}}(\eta_E|\cdot|^{s-1})
=\eta_E|\cdot|^{-s}.
$$
Therefore \eqref{eq:ggps:gl2:2} is equal to 
\begin{align*}
\theta_\chi^{\rm st}(g)	&=
\frac{2\gam(1,\eta_E,\psi)}{\vol(E^1,\ud_{E^1})}
\bigg(
\CF_{\psi^{-1}}(\eta_E)*_+\phi_\chi
\bigg)(\tr(g))
\\&=
\frac{2\gam(1,\eta_E,\psi)}{\vol(E^1,\ud_{E^1})}
\bigg(
\CF_{\psi^{-1}}\big(\eta_E \CF_{\psi}(\phi_\chi)\big)
\bigg)(\tr(g)).
\end{align*}
Here $\CF_{\psi}(\phi_\chi)$ is understood as the Fourier transform of a compactly supported distribution, which is represented by a globally bounded smooth function since $\phi_\chi$ is absolutely integrable. After multiplying by $\eta_E$, it is still a tempered distribution. Then the Fourier transform of the tempered distribution $\eta_E \CF_{\psi}(\phi_\chi)$ is represented by a function which up to constant is exactly the desired stable distribution character. 

Finally, from \cite[p.204]{ggps} and the functional equation of local gamma factors, $$a_E c_E = \frac{2}{\gam(0,\eta_E,\psi)} = 2\eta_E(-1)\gam(1,\eta_E,\psi)$$ with
$$
a_E = \vol(E^1,\ud_{E^1}),\quad c_E= \frac{\gam(1,\mathbbm{1}_E,\psi_E)}{\gam(1,\mathbbm{1}_F,\psi_F)\cdot \gam(1,\eta_E,\psi_F)} = \lam_{E/F}^{-1}
$$
where the formula for $a_E$ comes from \cite[p.136]{ggps} and $c_E$ from \cite[p.155~(3)]{ggps} together with \cite[p.76]{jlgl2}. Moreover from \cite[p.151]{ggps}, $\lambda_{E/F}$ is the Weil constant. 
It follows that $$\frac{2\gam(1,\eta_E,\psi)}{\vol(E^1,\ud_{E^1})} = \lam_{E/F}$$ and therefore
\begin{equation}\label{eq:ggps:gl2:interpret:sl2}
\theta_\chi^{\rm st}(g) = \lam_{E/F}
\bigg(
\CF_{\psi^{-1}}\big(\eta_E\cdot \CF_{\psi}(\phi_\chi)
\big)
\bigg)(\tr(g)).
\end{equation}
Notice that the above formula is also valid for $E$ split over $F$. Precisely, when $E$ is split over $F$, $\lam_{E/F} = 1 = \eta_E$, and hence $\theta^\st_\chi = \phi_\chi$ which recovers the distribution character for principal series representations.
\end{rmk}

In the following, we will adapt Theorem \ref{thm:ggps:sl2} to $\GL_2(F)$.

\begin{thm}\label{thm:ggps:gl2}
The distribution character $\theta_\chi^{\rm st}$ of the dihedral representation $\CW_E(\chi)$ is supported on $g\in \GL_2(F)^+ = \{g\in \GL_2(F)\mid \det g\in \Nr(E^\times)\}$. For any $g\in \GL_2(F)^+$ with $\det g=a$, 
$$
\theta_\chi^{\rm st}(g) = 
\frac{2|\det(g)|^{\frac{1}{2}}}{\vol(E^1,\ud_{E^1})}
\int^*_{E^{a}}
\frac{\eta(\tr(g)-\tr(e))}{|\tr(g)-\tr(e)|}\chi(e)\ud_{E,a}e.
$$
Here $E^a = \{e\in E^\times \mid \Nr(e)  =a\}$ and $\ud_{E^a}e$ is the measure on $E^a$ induced from the $E^1$-torsor structure. The regularized integral is understood as \eqref{eq:ggps:sfamily} and Remark \ref{rmk:ggps:gl2}.
\end{thm}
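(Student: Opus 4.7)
The plan is to reduce the $\GL_2$ statement to the $\SL_2$ formula in Theorem \ref{thm:ggps:sl2} in a coordinate-free way, then perform a change of variables on the torus side.

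First I would establish the support claim. Since $\CW_E(\chi) = \Ind^{\GL_2(F)}_{\GL_2(F)^+}\CW_E^+(\chi)$ and $\GL_2(F)^+$ is normal of index at most two in $\GL_2(F)$, Frobenius' formula for the character of an induced representation gives $\theta_\chi^{\rm st}(g)=0$ whenever $g\notin \GL_2(F)^+$, because no $\GL_2(F)$-conjugate of such a $g$ lies in the inducing subgroup. For $g\in \GL_2(F)^+$, Frobenius gives $\theta_\chi^{\rm st}(g)=\theta_\chi^+(g)+\theta_\chi^+(wgw^{-1})$ for any fixed $w\in\GL_2(F)\setminus\GL_2(F)^+$, and both summands will be computed uniformly by the kernel method below; conjugation by $w$ simply replaces the auxiliary element $e_a$ by its Galois conjugate, which is absorbed into the $E^a$-integral.

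Next, for $g\in\GL_2(F)^+$ with $\det g=a=\Nr(e_a)$, I would decompose $g=\mathrm{diag}(a,1)\cdot g_0$ with $g_0\in\SL_2(F)$ and apply Lemma \ref{lem:elliptic:2} to read off the Schwartz kernel of $\CW_E^+(\chi)(g)$ acting on $\CS(\chi)$. After the change of variable $x\mapsto y=tx$ inside the integral transform, the kernel takes the form
\[
K_g(z,y)=\frac{|a|^{1/2}\chi(e_a)\eta(-v)\lambda_{E/F}\eta(t)}{|v||t|}\,\psi\!\left(\tfrac{\Nr(e_a z-y/t)}{v}\right)\psi(u\Nr(y)).
\]
Tracing this operator on $\CS(\chi)$ amounts to integrating $K_g(z,z\cdot e)$ against $\chi(e)\,d_{E^1}e$ over a fundamental domain for $E^\times/E^1$, exactly as in the proof of Theorem \ref{thm:ggps:sl2}. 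The prefactor $|a|^{1/2}\chi(e_a)$ is exactly the action of $\mathrm{diag}(a,1)$ recorded in the definition of $\CW_E^+(\chi)$, and it factors cleanly out of the computation.

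The remaining trace integral, after collapsing the Fourier kernel in the $z$-variable using the identity for $\CF_{\psi_E}(\psi\circ\Nr)$ used in Lemma \ref{lem:lowerunip:sl2}, reduces to a one-dimensional principal value integral of $\eta_E/|\cdot|$ convolved against a density supported on the image of the trace map, precisely as in Gelfand--Graev's derivation of \eqref{eq:ggps:gl2:1} recalled above. Substituting $e=e_a e'$ with $e'\in E^1$ converts $d_{E^1}e'$ into the torsor measure $d_{E,a}e$ on $E^a$ and absorbs $\chi(e_a)$ into $\chi(e)$, yielding the formula in the statement. The regularization is handled exactly as in Remark \ref{rmk:ggps:gl2}: the $\SL_2$ computation is performed for the meromorphic family \eqref{eq:ggps:sfamily}, which is absolutely convergent for $\Re(s)<1$, and the value at $s=1$ is defined by analytic continuation.

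The main obstacle is the trace computation on the twisted space $\CS(\chi)$: one must verify that the formal manipulation of the kernel along the diagonal modulo the $E^1$-action produces the singular integrand $\eta_E(\tr(g)-\tr(e))/|\tr(g)-\tr(e)|$ and not a spurious contribution from the non-transversal locus $\tr(g)=\tr(e)$. This is exactly the step handled by the $s$-family regularization in \cite{Gelfand-Graev}, and the $\GL_2$ case presents no genuinely new analytic difficulty once the $|a|^{1/2}\chi(e_a)$ factor has been isolated and the measure identification between $d_{E^1}$ and $d_{E,a}$ has been made.
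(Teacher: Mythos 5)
Your basic strategy — read off the Schwartz kernel of the dihedral action from Lemma \ref{lem:elliptic:2}, trace along the diagonal, and regularize via the $s$-family — is the same as the paper's, and the handling of the $|a|^{1/2}\chi(e_a)$ prefactor and the measure identification between $\ud_{E^1}$ and $\ud_{E,a}$ are both correct. The problem is in the step where the $\eta_E$ factor in the final integrand is supposed to appear.

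You invoke Frobenius to write $\theta_\chi^{\rm st}(g)=\theta^+(g)+\theta^+(wgw^{-1})$ and then claim that conjugation by $w$ \emph{simply replaces $e_a$ by its Galois conjugate, which is absorbed into the $E^a$-integral}. That is not what the second summand does, and the reasoning would not produce the factor $\eta_E(\tr(g)-\tr(e))$ at all. If you conjugate the Bruhat coordinates $(v,t,u)$ by $w=\mathrm{diag}(c,1)$ with $c\notin\Nr(E^\times)$, you get $v\mapsto v/c$ and $u\mapsto cu$; the Weil kernel of Lemma \ref{lem:lowerunip:sl2} then acquires $\eta_E(-v/c)=\eta_E(c)\eta_E(-v)=-\eta_E(-v)$, an overall sign, and the substitution $u\mapsto cu$ displaces the norm variable $y$ from the coset $\Nr(E^\times)$ to its complement $F^\times\setminus\Nr(E^\times)$. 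This is precisely how the paper proceeds: the $\CS(\chi)$-trace \eqref{eq:ggps:trace:1} contributes $\int_{y\in\Nr(E^\times)}(\cdots)\ud y$ and the $\CS(\chi)^-$-trace \eqref{eq:ggps:trace:2} contributes $-\int_{y\in F^\times\setminus\Nr(E^\times)}(\cdots)\ud y$, and it is only after adding the two pieces that the quadratic character $\eta(y)$ over all of $F^\times$ reassembles — which is what ultimately yields, after Tate's functional equation in $y$, the kernel $\eta_E(\tr(g)-\tr(e))/|\tr(g)-\tr(e)|$. Under your reading, the two Frobenius terms produce the \emph{same} integral over $E^a$ (since $\iota(e_a)E^1=e_aE^1=E^a$), so you would get twice the $\CS(\chi)$-trace, which carries the indicator $\mathbbm{1}_{\Nr(E^\times)}(y)$ in the $y$-integration, not $\eta(y)$; that is a genuinely different (and wrong) distribution. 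Note also that the overall factor of $2$ in the statement does not come from such a doubling — it comes from the normalization identity $\lambda_{E/F}\vol(E^1,\ud_{E^1})=2\gamma(1,\eta_E,\psi)$ applied in the final step. So the claim needs to be replaced: conjugation by $w$ twists the sign via $\eta_E(\det w)=-1$ and moves the $y$-support to the complementary coset, and these two effects are exactly what assemble into the $\eta_E$ weighting.
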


\begin{proof}
The proof is very similar to \cite{ggps}, so we only sketch it below. 

Since the stable distribution character $\theta_\chi^{\rm st}$ is locally integrable, it suffices to determine its value on the open dense Bruhat cell. Following \cite{ggps}, we are going to write the operator $\CW_E(\chi)(g)$ as an integral operator on the underlying space of $\CW_E(\chi) = \Ind^{\GL_2(F)}_{\GL_2(F)^+}\CS(\chi)$, which decomposes $\CS(\chi)\oplus \CS(\chi)^-$ as a dense subspace. Here $\CS(\chi)$ (resp. $\CS(\chi)^-$) consists of vectors in $\CW_E(\chi)$ supported on $\GL_2(F)^+$, resp. $\GL_2(F)^-$.

For $g\in \GL_2(F)^-$, the operator $\CW_E(\chi)(g)$ permutes $\CS(\chi)$ and $\CS(\chi)^-$, hence $\theta^\st_\chi$ vanishes for $g\in \GL_2(F)^-$. It remains to determine $\theta^\st_\chi(g)$ for $g\in \GL_2(F)^+$.

In the following, let us calculate the contribution to the trace of the operator $\CW_E(\chi)(g)$ on $\CS(\chi)$, with 
\begin{equation}\label{eq:ggps:gbruhatcell}
g=\begin{pmatrix}
a & \\
  & 1
\end{pmatrix}
\begin{pmatrix}
1 & \\
v  & 1
\end{pmatrix}
\begin{pmatrix}
t & \\
  & t^{-1}
\end{pmatrix}
\begin{pmatrix}
1 & u\\
  & 1
\end{pmatrix}.
\end{equation}
For $\phi\in \CS(\chi)$, by Lemma \ref{lem:elliptic:2}, 
$$
\bigg(
\CW_E(\chi)
\bigg(
\begin{pmatrix}
a & \\
  & 1
\end{pmatrix}
\begin{pmatrix}
1 & \\
v  & 1
\end{pmatrix}
\begin{pmatrix}
t & \\
  & t^{-1}
\end{pmatrix}
\begin{pmatrix}
1 & u\\
  & 1
\end{pmatrix}
\bigg)
\phi\bigg)(z)
$$
is nonzero only when $a\in \Nr(E^\times)$, and is equal to 
\begin{align*}
=
\frac{|a|^{\frac{1}{2}} \chi(e_a) \eta(-v) \lam_{E/F}}{|v|}
\int_{x\in E}
\psi
\bigg(
\frac{\Nr(e_az-x)}{v}
\bigg)
\eta(t) |t|
\psi(u t^2 \Nr(x))
\phi(tx)
\ud_E x
\end{align*}
when $a=\Nr(e_a)$ for some $e_a\in E^\times$. Changing variable $x\mapsto \frac{x}{t}$, the above identity is equal to 
\begin{align*}
=
\frac{|a|^{\frac{1}{2}} \chi(e_a) \eta(-vt) \lam_{E/F}}{|vt|}
\int_{x\in E}
\psi
\bigg(
\frac{\Nr(e_az-\frac{x}{t})}{v}
\bigg)
\psi(u \Nr(x))
\phi(x)
\ud_E x.
\end{align*}
Using the fact that $\phi\in \CS(\chi)$, the integral $x\in E$ can be separated according to the short exact sequence $1\to E^1\to E^\times \to \Nr(E^\times)\to 1$ and is equal to 
\begin{align*}
=
\frac{|a|^{\frac{1}{2}} \chi(e_a) \eta(-vt) \lam_{E/F}}{|vt|}
\int_{y\in \Nr(E^\times)}
\ud y
\int_{e\in E^1}
\psi
\bigg(
\frac{\Nr(e_az-\frac{e_y e}{t})}{v}
\bigg)
\psi(u y)
\chi^{-1}(e) 
\phi(e_y)
\ud e
\end{align*}
where $y=\Nr(e_y)$ for some $e_y\in E^\times$ and $\ud y$ is the additive Haar measure on the affine line. In particular, identifying $\phi$ as a function on $\Nr(E^\times)$, the above integral transformation admits an integral kernel 
$$
K_{\CW_E(\chi)(g)}(z,e_y e) = 
\frac{|a|^{\frac{1}{2}} \chi(e_a) \eta(-vt) \lam_{E/F}}{|vt|}
\int_{e\in E^1}
\psi
\bigg(
\frac{\Nr(e_az-\frac{e_y e}{t})}{v}
\bigg)
\psi(uy) 
\chi^{-1}(e)\ud_{E^1} e.
$$
Hence the trace of the operator $\CW_E(\chi)(g)$ on $\CS(\chi)$ can be calculated through integrating the kernel along the diagonal, which provides
\begin{equation}\label{eq:ggps:trace:1}
=\frac{|a|^{\frac{1}{2}} \chi(e_a) \eta(-vt) \lam_{E/F}}{|vt|}
\int_{y\in \Nr(E^\times)}^*
\ud y
\int_{e\in E^1}
\psi
\bigg(
\frac{\Nr(e_a e_y-\frac{e_y e}{t})}{v}
\bigg)
\psi(uy) 
\chi^{-1}(e)\ud_{E^1} e.
\end{equation}
Notice that the integral in $y\in \Nr(E^\times)$ is not absolutely convergent and regularization is needed, which are addressed carefully in \cite[Chap.~2,~\S 5]{ggps}.

Similarly, for vectors supported on $\GL_2(F)^-$ and hence lying in $\CS(\chi)^-$, the operator $\CW_E(\chi)(g)$ with $g$ in the open dense Bruhat cell as in \eqref{eq:ggps:gbruhatcell} has trace
\begin{equation}\label{eq:ggps:trace:2}
=-\frac{|a|^{\frac{1}{2}} \chi(e_a) \eta(-vt) \lam_{E/F}}{|vt|}
\int_{y\in E^\times \bs\Nr(E^\times)}^*
\ud y
\int_{e\in E^1}
\psi
\bigg(
\frac{\Nr(e_a e_y-\frac{e_y e}{t})}{v}
\bigg)
\psi(uy) 
\chi^{-1}(e)\ud_{E^1} e
\end{equation}
on the space $\CS(\chi)^-$.
Combining \eqref{eq:ggps:trace:1} and \eqref{eq:ggps:trace:2} together, the trace of the operator $\CW_E(\chi)(g)$ is given by
\begin{align*}
\theta_\chi^{\rm st}(g)=&\frac{|a|^{\frac{1}{2}} \chi(e_a) \eta(-vt) \lam_{E/F}}{|vt|}
\int_{y\in F}^*
\eta(y)
\ud y
\int_{e\in E^1}
\psi
\bigg(
\frac{y \Nr(e_a-\frac{e}{t})}{v}
\bigg)
\psi(uy) 
\chi^{-1}(e)\ud_{E^1}e
\\
=&
\frac{|a|^{\frac{1}{2}} \chi(e_a) \eta(-vt) \lam_{E/F}}{|vt|}
\int_{y\in F}^*
\eta(y)
\ud y
\int_{e\in E^1}
\psi
\bigg(
y
\bigg(
\frac{\Nr(e_a-\frac{e}{t})}{v}
+u
\bigg)
\bigg)
\chi^{-1}(e)\ud_{E^1}e
\end{align*}
where we use the fact that $\eta|_{\Nr(E^\times)} = 1$ and $\eta_{F^\times \bs \Nr(E^\times)} = -1$. 

Changing variable $y\mapsto (-vt)y$, the above integral becomes 
\begin{align*}
=|a|^{\frac{1}{2}} \chi(e_a)  \lam_{E/F}
\int_{y\in F}^*
\eta(y)
\ud y
\int_{e\in E^1}
\psi
\bigg(
-y
\big(
t\Nr(e_a-\frac{e}{t})
+uvt
\big)
\bigg)
\chi^{-1}(e)\ud_{E^1}e.
\end{align*}
Since 
\begin{align*}
t\Nr(e_a-\frac{e}{t})+uvt=
ta+\frac{1}{t}+tuv-\tr(e_a\iota(e))
=\tr(g)-\tr(e_a\iota(e))
\end{align*}
and $\iota(e) = e^{-1}$ for $\Nr(e) =1$, it follows that 
\begin{align*}
\theta_\chi^{\rm st}(g) =& 
|a|^{\frac{1}{2}} \chi(e_a)  \lam_{E/F}
\int_{y\in F}^*
\eta(y)
\ud y
\int_{e\in E^1}
\psi
\bigg(
-y
\big(
\tr(g)-\tr(e_a e)
\big)
\bigg)
\chi(e)\ud_{E^1}e
\\
=&
|a|^{\frac{1}{2}}\lam_{E/F}
\int_{y\in F}^*
\eta(y)
\psi\big(-y\tr(g)\big)
\ud y
\int_{e\in E^a}
\psi
\big(
y\tr(e)
\big)
\chi(e)\ud_{E,a}e.
\end{align*}
Now following the same idea as Remark \ref{rmk:ggps:gl2}, after descending down to the trace variable, the integration in $e\in E^a$ can be viewed as the Fourier transform of a compactly supported absolutely integrable function, which in particular is smooth and globally bounded in $y$. Hence the regularized integral $\int^*_{y\in F}$ can be understood as the Fourier inversion of a tempered distribution. Following the same argument as Remark \ref{rmk:ggps:gl2}, by Tate's thesis, 
$$
\theta_\chi^{\rm st}(g) = 
\frac{|a|^{\frac{1}{2}}
\lam_{E/F}}{\gam(1,\eta,\psi)}
\int_{E^a}^*
\frac{\eta(\tr(g)-\tr(e))}{|\tr(g)-\tr(e)|}
\chi(e)\ud_{E,a}e.
$$
For details on the convergence issue we refer to \cite[Chap.~2,~\S 5,~p.207]{ggps}. The final equality follows from $$\frac{2\gam(1,\eta,\psi)}{\vol(E^1,\ud_{E^1})} = \lam_{E/F},$$ which can be found from the equation above \eqref{eq:ggps:gl2:interpret:sl2}.
\end{proof}

\begin{rmk}
Returning to the formula \eqref{eq:ggps:gl2:2}, we note that the formula expressing the stable character of $\SL_2(F)$ is naturally in the form of an additive convolution in the trace variable. Because after the Fourier transform, the additive convolution becomes a multiplication operator, we can rewrite the Gelfand-Graev stable character for $\SL_2(F)$ in a convenient form 
\begin{equation}\label{eq:ggps:interpret:1}
\theta_\chi^{\rm st}(g) = 
\lam_{E/F}\bigg(
\CF_{\psi^{-1}}
\big(
\eta_E\cdot \CF_\psi(\phi_{\chi})
\big)
\bigg)\big(\tr(g)\big)
\end{equation}
where $\phi_{\chi} = \frac{\chi+\chi^\iota}{\Del}$ is the function supported in the subset $\tr(E^1)$ of $F$ defined in \eqref{eq:phi-chi}. The formula for the character of the dihedral representations of $\GL_2(F)$ now becomes
\begin{equation}\label{eq:ggps:interpret:2}
\theta_\chi^{\rm st}(g) = 
\lam_{E/F}|\det g|^{\frac{1}{2}}\bigg(
\CF_{\psi^{-1}}
\big(
\eta_E\cdot \CF_\psi(\phi_{\chi})
\big)
\bigg)\big(\tr(g),\det (g)\big)
\end{equation}
where the Fourier transform is taken over the trace variable. It is also worth noticing that the above stable character formula holds automatically when $E$ is split over $F$ since the transforms on the right hand side of \eqref{eq:ggps:interpret:1} becomes the identity map.
\end{rmk}

\begin{rmk}[Gelfand-Graev formula for the stable transfer]\label{rmk:GGformulaforstabletransfer}
Let $G=\SL_2$ (resp. $\GL_2$). For any étale quadratic $F$-algebra $E$, let $T_E\simeq E^1$ (resp. $E^\times$) be a fixed maximal torus determined by $E$. The adjoint of the Gelfand-Graev transfer $\chi\mapsto \theta_\chi$ is then a linear map 
\begin{equation}
 \CT_E: \mathrm{SO}(\CH) \to \CH_{T_E}
\end{equation} 
given by the same kernel but read in the opposite direction. When $G=\SL_2$, it maps every function $f \in {\rm SO}(\SL_2(F))$ to the function
\begin{equation}
  \CT_E(f)(e)=  \lambda_{E/F} \CF_{\psi} \big(
\eta_E \CF_{\psi^{-1}}(f)\big)(\tr_E(e)).
\end{equation}
In other words, we have
\begin{equation} \label{eq:GG formula}
\CT_E(f) = \lambda_{E/F} \tr_E^* \left(\CF_{\psi} \big(
\eta_E\cdot \CF_{\psi^{-1}}(f)\big) \right).
\end{equation}
We note that the latter formula also works for the split torus. In the split case $E=F\times F$ and $\eta_E$ is the trivial character of $F^\times$. As a result, we have the stable transfer is simply given by the restriction to the locus of hyperbolic trace $\tr(E_0^1)$ i.e. elements $c\in F$ of the form $c=t+t^{-1}$ with $t\in F^\times$. 
\end{rmk}


\section{The Lafforgue transform}\label{sec:Bernstein-center}

In this section, we construct natural maps between the Bernstein center and the Kazhdan cocenter based on representation theoretic considerations. We stipulate that when we restrict this construction to the stable part, it can be expressed as an integral transform that we call the Lafforgue transform. In the remaining subsections, we investigate the convergence property of the Lafforgue transform in the case of $\SL_2$ and $\GL_2$. We show that all distributions in the stable Bernstein center of $\SL_2$ and $\GL_2$ supported on finitely many connected components of the Bernstein variety enjoy a vanishing property based on explicit calculations of Moy and Tadic, which implies the convergence of the Lafforgue transform. We also establish the archimedean variant.

\subsection{Representation theoretic definition}\label{subsec:Laf-Reptheoretic-def}

For a reductive group $G$ over a non-archimedean local field $F$, the set ${\rm Irr}(G(F))$ of irreducible representations of $G(F)$ can be equipped with a canonical structure of algebraic variety over $\C$, called the Lafforgue variety. This construction has been realized in Psaromiglikos' thesis \cite{psaromiligkos_lafforgue_2023}. We have an algebraic map ${\Irr}(G(F)) \to \Omega(G(F))$ from the Lafforgue variety ${\Irr}(G(F))$ to the Bernstein variety $\Omega(G(F))$ whose $\C$-points are irreducible representations of $G(F)$ up to inertial equivalence. The Bernstein center $\mathcal{Z}$ is the ring of regular functions on $\Omega(G(F))$ and a $\CZ$-linear surjective map from the Kazhdan cocenter to the ring of regular functions on ${\rm Irr}(G(F))$. 

By construction, there exists an open dense subvariety $\Omega'(G)$ of $\Omega(G)$ consisting of supercuspidal representations of Levi subgroup (up to inertial equivalence) whose parabolic induction to $G(F)$ is irreducible, its projection from the inverse image ${\rm Irr}'(G(F))$ in ${\rm Irr}(G(F))$ is an isomorphism. For every element $f\in \mathcal C$ viewed as a regular function on ${\rm Irr}(G(F))$, we obtain by restriction a function $f'$ on ${\rm Irr}'(G(F))=\Omega'(G(F))$. One can show that the function $f'$ extends uniquely to a regular function on $\Omega(G(F))$ that we denote $\Laf(f)$. We thus have a map
\begin{equation} \label{Laf'}
	\Laf: \mathcal C\to \mathcal Z
\end{equation}
which is $\mathcal Z$-linear that is an isomorphism after restricting to $\Ome^\p(G(F))\simeq \Irr^\p(G(F))$. Explicitly, for any cuspidal datum $(M,\sig)\in \Ome$ with $M$ a Levi subgroup of $G$ and $\sig$ a supercuspidal representation of $M(F)$, 
$$
\Laf(f)(M,\sig) = f\big(\Ind(\sig)\big).
$$
where $\Ind$ is the parabolic induction from $M(F)$ to $G(F)$. In general, $\Laf$ is only surjective but not injective. For instance, when $G=\SL_2$, $\Laf$ has one dimensional kernel generated by the indicator function on the Steinberg representation of $\SL_2(F)$, which, as an element in $\CC$, is given by the normalized elliptic character of the Steinberg representation (= orbital integral of the pseudo coefficient of the Steinberg representation). In particular for $G=\SL_2$, after localization, $\Laf$ is an isomorphism of $\CZ$-module away from the Steinberg representation component.

After restricting to $\CC^\fin$, $\Laf$ induces a $\CZ$-linear morphism
$$
\Laf:\CC^{\fin}\to \CZ^{\fin}.
$$
Moreover, $\Laf$ sends $\CC^{\st,\fin}$ into $\CZ^{\st,\fin}$. To make it precise, elements in $\CC^{\st,\fin}$ are represented by the stable orbital integral of test functions. On the other hand, distributions in $\CZ^\fin$ are represented by invariant locally integrable functions on $G(F)$ (\cite[Thm.~2.5]{Moy-Tadic-Bernstein}). It follows that the elements in $\Laf(\CC^{\st,\fin})$ are represented by invariant locally integrable functions on $G(F)$ that are also stably invariant, which therefore are represented by stably invariant functions on $G(F)$.

In general, since $\Laf$ is not injective, there are different choices of sections for $\Laf$. For the purpose of this paper, when $G=\SL_2$ and $\GL_2$, we consider the following section $\LafSec:\CZ\to \CC$: 
\begin{itemize}
\item Let $\CC^{\perp,\triv}$ be the subspace of $\CC$ that are orthogonal to one-dimensional characters of $G(F)$, i.e. 
$$
\langle f,\chi\rangle =0,\quad \text{ $\chi$ a $1$-dimensional character of $G(F)$}.
$$
Then there exists a unique section $\LafSec: \CZ\to \CC^{\perp,\triv}\subset \CC$ such that 
$$
\LafSec(z)
\big(
\Ind(\sig)
\big) = z(M,\sig)
$$
for any cuspidal datum $(M,\sig)$. Indeed, the induced representations $\Ind(\sig)$ are either irreducible or admit a filtration given by twisted Steinberg representations and $1$-dimensional characters, which furnishes the proof.
\end{itemize}

On the other hand, when restricted to finite and stable locus, elements in $\CC^{\st,\fin}$ and $\CZ^{\st,\fin}$ can be realized as functions on the Steinberg-Hitchin base $\Fc(F)$. One of our main results in this paper is to show that, for $G=\SL_2$ and $\GL_2$, when restricted to $\CZ^{\st,\fin}$ and $\CC^{\st,\fin,\perp,\triv}$, both $\LafSec$ and $\Laf$ can be realized as explicit integral transforms on the Steinberg-Hitchin base $\Fc(F)$. The integral transforms appear first in an unpublished note of Lafforgue for a different purpose (\cite{lafforguegl2}). We use the name of Lafforgue for the linear map $\Laf$, which has deep representation theoretic significance, because in the case of $\SL_2$, its stable part is given by Lafforgue's integral formula, and in fact, attempting to understand conceptually Lafforgue's mysterious formula was a motivation for this work. 

\begin{thm}\label{thm:stableLaf}
    Let $G=\SL_2$ or $\GL_2$ over a local field $F$ of residual characteristic not equal to two. For any distribution $z\in \CZ^{\rm st,fin}$ lying in the stable Bernstein center and supported on finitely many Bernstein components, there exists a stably invariant function $\RJ_G = \RJ_\Fc\circ \c$ such that $z=\RJ_G \ud g$. Then $\LafSec(z)$ is given as a function $f$ on the Steinberg-Hitchin base by the following formula, where the Fourier transform and $|\cdot|$ are in trace variable,
      \begin{equation} \label{eq:Laf formula}
        \LafSec(z)=\CF_\psi\left(\frac{\CF_{\psi^{-1}}(\RJ_\Fc)}{|.|}\right).
      \end{equation}
\end{thm}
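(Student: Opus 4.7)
The plan is to verify that the smooth function $c := \CF_\psi(|\cdot|^{-1}\CF_{\psi^{-1}}(\RJ_\Fc))$ on $\Fc(F)$ satisfies the two properties that uniquely characterize $\LafSec(z)$: it belongs to $\CC^{\perp,\triv}$, and its image under $\Laf$ equals $z$. The second condition, combined with the fact that the kernel of $\Laf|_{\CC^{\st,\fin}}$ for $G=\SL_2$ is spanned by the normalized elliptic Steinberg character (which is not orthogonal to the trivial representation), pins down the unique section.

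First I would address well-definedness. By \cite[Thm.~2.5]{Moy-Tadic-Bernstein} together with the analytic input \eqref{num:intro:FTJ} (to be established in Subsection \ref{subsec:analytic-properties-Jc}), the Fourier transform $\CF_{\psi^{-1}}(\RJ_\Fc)$ is globally bounded, smooth, and vanishes at the origin, and $|\cdot|^{-1}\CF_{\psi^{-1}}(\RJ_\Fc)$ is absolutely integrable on $\Fc(F)$. Hence the candidate $c$ is a well-defined bounded continuous function. The vanishing $\CF_{\psi^{-1}}(\RJ_\Fc)(0)=0$ translates via Plancherel into orthogonality of $c$ to the constant function on $\Fc(F)$, and a short argument shows this matches orthogonality to the trivial character in $\CC$, so $c\in \CC^{\perp,\triv}$.

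For the identity $\Laf(c)=z$, I would invoke the density of stable tempered characters (Theorem \ref{thm:main:SBC}) together with the fact that, under the running assumption that the residual characteristic is not two, every irreducible admissible representation of $G(F)$ is a subquotient of some dihedral representation $\CW_E(\chi)$ (\cite{jlgl2}, \cite{Casselma-Quadratic}). It therefore suffices to establish
$$\tr(c,\CW_E(\chi)) = \gam(z,\CW_E(\chi))$$
for generic $\chi$. Both sides are realized as pairings on $\Fc(F)$ against the stable character $\theta_\chi^{\st,\Fc}$, which by the Gelfand-Graev formula \eqref{eq:ggps:interpret:1} equals $\lambda_{E/F}\CF_{\psi^{-1}}(\eta_E\CF_\psi(\phi_\chi))$. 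Using Plancherel on the self-dual $F$-analytic manifold $\Fc(F)$, both pairings reduce to integrals on the dual Fourier variable against $\eta_E\CF_\psi(\phi_\chi)$, and the insertion of $|\cdot|^{-1}$ in the definition of $c$ accounts precisely for the Plancherel-type normalization relating the scalar $\gam(z,\pi)$ to the character trace $\tr(c,\pi)$. In the language of stable transfers, this amounts to the identity
$$\CT_E(c) = \lambda_{E/F}\,\tr_E^*\bigl(\CF_\psi\bigl(\eta_E\,|\cdot|^{-1}\CF_{\psi^{-1}}(\RJ_\Fc)\bigr)\bigr),$$
obtained by substituting the definition of $c$ into the Gelfand-Graev formula for the stable transfer \eqref{GG-transfer} and applying Fourier inversion. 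This identity is exactly the descent formula of Theorem \ref{thm:mainresult1}, proved directly from the Weil representation in Section \ref{sec:descent}.

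The principal obstacle is analytic rather than algebraic: $\RJ_\Fc$ is typically unbounded near the nonregular locus and is not compactly supported, so neither $\CF_{\psi^{-1}}(\RJ_\Fc)$ nor the insertion of $|\cdot|^{-1}$ can be handled naively. The Moy--Tadic characterization \eqref{num:intro:FTJ} is the crucial input that allows the two Fourier inversions to be rigorously interchanged. A secondary subtlety is the non-generic dihedral case $\chi=\chi_E$, whose $L$-packet has four elements, but this is absorbed by working with the stable character alone and by the orthogonality condition, which isolates the correct section.
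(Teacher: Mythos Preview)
Your proposal is correct and follows essentially the same route as the paper. The paper also derives Theorem~\ref{thm:stableLaf} from the descent formula (Theorem~\ref{thm:descent:sl2}): once that formula is established directly via the Weil representation (Bruhat regularization for supercuspidals, the universal principal series model, and the Chebyshev identity for real discrete series), Corollary~\ref{cor:descent:sl2} is obtained by inserting the Gelfand--Graev character formula and using self-adjointness of the Fourier transform, exactly the manipulation you sketch; the orthogonality to the trivial character is handled in Theorem~\ref{thm:image Laf} by the same vanishing $\CF_{\psi^{-1}}(\RJ_\Fc)(0)=0$ you invoke.
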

It is not at all obvious that the expression on the right-hand side makes sense. This is what we will prove in the next subsection based on some explicit formulas in the case $\SL_2$ due to Moy and Tadic. The proof of this theorem will be achieved in subsection \ref{subsec:proofsofdescent}.

For general reductive group $G$, we expect that there always exists a distinguished section $\LafSec:\CZ\to \CC$ of the Lafforgue transform $\Laf:\CC\to \CZ$, such that when restricted to $\CZ^{\st,\fin}$, both transforms can be realized by explicit integral transforms on the Steinberg-Hitchin base of $G$. In a forthcoming work, we propose a conjectural formula for $G=\GL_n$ generalizing Theorem \ref{thm:stableLaf}.

\subsection{Supercuspidal component of the Lafforgue transform}

In this subsection, we study the image under $\LafSec$ of the supercuspidal idempotent for $G=\SL_2$. A supercuspidal representation of $G(F)$ is represented by an isolated point in the Bernstein variety $\Omega$. The associated idempotent $\epsilon_\pi\in \CZ$ is defined as the function on $\Omega$ taking the value one at $\pi$ and $0$ on other components. We can express $\epsilon_\pi$ and its Lafforgue transform using the distributional character of the representation $\pi$.  

\begin{theorem}\label{thm:imageLafsupercuspidal}
\begin{enumerate}
	\item The idempotent $\epsilon_\pi$ is given by the formula $$\epsilon_\pi=  \Theta_\pi d_\pi \ud g$$
where the distributional character $\Theta_\pi$ of $\pi$ is a generalized function, $d_\pi$ is the formal degree which depends on the choice of the Haar measure $\ud g$ but the product $d_\pi \ud g$ is independent of that choice. 
	\item As function on the set of regular semisimple conjugacy classes $C'(F)$ we have 
$$\LafSec(\epsilon_\pi)= \Delta {\bf v} {\mathbbm 1}_{\rm ell}  \Theta_\pi$$
where $\Delta$ is the square root of the absolute value of the Weyl discriminant, ${\mathbbm 1}_{\rm ell}$ is the characteristic function of the elliptic regular semi-simple locus, and $\v$ is the inverse of the volume function defined on the elliptic regular semi-simple locus determined by \eqref{eq:volume:def:sl2}, i.e. following \eqref{eq:volume:def:sl2}, for a quadratic field extension $E_\alp$ of $F$ and $e\in E_\alp^1$, $\v(e) = \vol(E^1_\alp,\ome_{T_\alp})^{-1}$. We will call the right-hand side the normalized elliptic character of $\pi$. 

\end{enumerate}
\end{theorem}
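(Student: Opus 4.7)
\smallskip

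\textbf{Proof proposal.} For Part (1), the formula $\epsilon_\pi = d_\pi \Theta_\pi\, \ud g$ is the standard expression of the central idempotent attached to a supercuspidal representation. I would verify it by showing that for any irreducible admissible representation $\pi'$ of $G(F)$ the distribution $d_\pi \Theta_\pi\, \ud g$ acts on $\pi'$ by the scalar $\delta_{\pi \simeq \pi'}$. The essential ingredients are Harish-Chandra's result that the character $\Theta_\pi$ of a supercuspidal representation is locally integrable and compactly supported modulo the center, together with Schur orthogonality for matrix coefficients, which gives the identity $d_\pi \int_{Z(F) \backslash G(F)} \Theta_\pi(g)\, \overline{\Theta_{\pi'}(g)}\, \ud g = \delta_{\pi \simeq \pi'}$. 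The independence of the product $d_\pi\, \ud g$ of the choice of $\ud g$ is immediate from the definition of the formal degree.

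For Part (2), the $\CZ$-linear section $\LafSec$ constructed in \S\ref{subsec:Laf-Reptheoretic-def} uniquely characterizes $\LafSec(\epsilon_\pi) \in \CC^{\perp,\triv}$ by three conditions: (a) $\tr(\LafSec(\epsilon_\pi), \pi') = \delta_{\pi \simeq \pi'}$ for every supercuspidal $\pi'$ of $G(F)$; (b) $\tr(\LafSec(\epsilon_\pi), \Ind_B^G(\chi)) = 0$ for every character $\chi$ of the split maximal torus, since $\epsilon_\pi$ vanishes on the torus cuspidal datum $(T, \chi)$; and (c) the pairing with every one-dimensional character of $G(F)$ vanishes, by membership in $\CC^{\perp,\triv}$. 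It therefore suffices to verify that the proposed function $c := \Del\, \v\, \mathbbm{1}_\el\, \Theta_\pi$ on $C'(F)$, viewed as an element of $\CC$, satisfies (a)--(c).

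The main computation would pair $c$ with an arbitrary character $\Theta_{\pi'}$ using the Weyl integration formula. Combining the measure relation $q_\alpha^*\, \ome_C = \Del\, \ome_{T_\alpha}$ derived from \eqref{invariant-measure-torus} and Corollary \ref{cor:normalizationoforb} with the definition $\v(t) = \vol(T_\alpha(F), \ome_{T_\alpha})^{-1}$ for $t \in T_\alpha(F)$, one obtains
\[
\tr(c, \pi') \;=\; \sum_{\alpha \neq 0} \frac{1}{|N_{T_\alpha}(F)/T_\alpha(F)|\, \vol(T_\alpha(F), \ome_{T_\alpha})} \int_{T_\alpha(F)^{\rm reg}} |D(t)|\, \Theta_\pi(t)\, \Theta_{\pi'}(t)\, \ome_{T_\alpha}.
\]
This is exactly the Kazhdan elliptic inner product $\langle \Theta_\pi, \Theta_{\pi'}\rangle_\el$ in the normalization dictated by the canonical measures of \cite{FLN10}. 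By Kazhdan's elliptic orthogonality theorem, this pairing equals $\delta_{\pi \simeq \pi'}$ for supercuspidal $\pi'$, and vanishes when $\pi'$ is a principal series or a one-dimensional character because such representations are parabolically induced from the split torus and their characters, restricted to the elliptic set, are orthogonal to all discrete series characters. This establishes (a)--(c) and hence the claimed formula.

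The principal technical obstacle is the careful bookkeeping of normalizations: the inverse formal degree produced by Kazhdan's elliptic orthogonality must cancel precisely against the factors $\v$ and $|N_{T_\alpha}/T_\alpha|^{-1}$ coming from the Weyl integration formula, so that the formal degree $d_\pi$ appearing in Part (1) disappears from the expression for $\LafSec(\epsilon_\pi)$ in Part (2). This reconciliation between the Plancherel-theoretic normalization that fixes $d_\pi$ and the canonical geometric normalization of $\ome_C$ and $\ome_{T_\alpha}$ is the key point that has to be checked explicitly.
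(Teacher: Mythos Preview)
Your proposal is correct and follows essentially the same strategy as the paper: both characterize $\LafSec(\epsilon_\pi)$ by its traces on irreducible representations and then verify the formula via elliptic orthogonality of discrete series characters. The only cosmetic difference is that the paper inserts an intermediate identification of $\LafSec(\epsilon_\pi)$ with the (normalized) orbital integral of a matrix coefficient of $\pi$ and then cites Arthur's formula \cite[Thm.~5.1]{MR1237898} (specialized to $M=G$) for that orbital integral, whereas you compute the trace pairing $\langle c,\Theta_{\pi'}\rangle$ directly via the Weyl integration formula; since Arthur's theorem is itself the elliptic orthogonality relation, the two routes coincide.
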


\begin{proof}
(1) This is a result of Moy and Tadic \cite[3.1]{Moy-Tadic-Bernstein}.

(2) By definition, $\mathrm{LafSec}(\eps_\pi)$ is the unique element in the cocenter of $G(F)$ that vanishes on all subrepresentations of principal series representations and supercuspidal representations, except takes value one on the supercuspidal representation $\pi$. As a result, $\mathrm{Laf}(\eps_\pi)$ is equal to the orbital integral of normalized matrix coefficient attached to $\pi$. The final identity follows from \cite[Thm.~5.1]{MR1237898} specialized to $M=G$. Notice that $\Del$ and $\v$ show up due to the normalization of the measures introduced in subsection \ref{subsec:measure}.
\end{proof}

\subsection{Discrete series representations and Chebyshev polynomials}\label{subsec:descent:chebyshev}

In this section, we would like to study the analogue of Theorem \ref{thm:imageLafsupercuspidal} over the real field. In particular we would like to highlight the connection between Theorem \ref{thm:descent:sl2} for discrete series representations of $\SL_2(\BR)$ and the Chebyshev polynomials. Let $F=\BR$ and $E=\BC$. For any integer $\ell\geq 1$, let $\Theta_\ell = \theta^\st_\ell$ be the stable discrete series character of weight $\ell$, and let $d_{\ell}$ be its formal degree which is equal to $\ell$. 

By the orthogonality of elliptic discrete series characters, it is immediate to see that Corollary \ref{cor:descent:sl2} is equivalent to the following fact:
\begin{thm}\label{thm:descent:DS:sl2}
The Lafforgue transform of $d_\ell \Theta_\ell$ equals $\v\Del\Theta_\ell^\el$. Equivalently, 
    $$
    \CF_{\psi}
    \bigg(
    \frac{\CF_{\psi^{-1}}(d_\ell \Theta_\ell)}{|\cdot|}
    \bigg) = \v\Del\Theta_\ell^\el.
    $$
Here $\v=2\pi$, and $\Del$ is the square-root of the norm of Weyl discriminant.
\end{thm}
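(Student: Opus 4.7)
The plan is to derive the theorem directly from Corollary~\ref{cor:descent:sl2} by checking that both sides have identical pairings against every dihedral representation $\CS(\chi_m)$ of $\SL_2(\BR)$, exactly as the statement's phrase ``by the orthogonality of elliptic discrete series characters'' suggests. The Lafforgue transform $\CF_\psi(\CF_{\psi^{-1}}(\cdot)/|\cdot|)$ is precisely the integral formula for $\LafSec$ recalled in Theorem~\ref{thm:stableLaf}, so the task is to identify $\LafSec(d_\ell \Theta_\ell)$ with $\v\Del\Theta_\ell^\el$.

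The first step is to locate $d_\ell \Theta_\ell\,\ud g$ inside the stable Bernstein center $\CZ^{\st,\fin}$. By Harish-Chandra's Schur orthogonality for square-integrable characters, this distribution is the idempotent projecting onto the discrete series $L$-packet of weight $\ell$, so that $\gam(d_\ell\Theta_\ell, \pi)$ equals $1$ if $\pi$ lies in that $L$-packet and $0$ otherwise. For the unique non-split quadratic extension $E = \BC$ of $\BR$, the characters of $E^1 \simeq S^1$ are $\chi_m(e^{i\theta}) = e^{im\theta}$ with $m \in \BZ$, and the dihedral representation $\CS(\chi_m)$ restricts to $\SL_2(\BR)$ as the discrete series $L$-packet of weight $|m|$ when $m \neq 0$ and as a principal series when $m = 0$. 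Applying Corollary~\ref{cor:descent:sl2} to $z = d_\ell \Theta_\ell$ therefore yields
\[
\tr(\LafSec(d_\ell\Theta_\ell), \CS(\chi_m)) = \gam(d_\ell\Theta_\ell, \CS(\chi_m)) = \delta_{|m|,\ell}.
\]

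The second step is to verify that $\v\Del\Theta_\ell^\el$ realizes the same pairings against $\CS(\chi_m)$ for every $m \in \BZ$. Since $\Theta_\ell^\el$ is supported on the elliptic locus $\c(E^1)\subset \Fc(\BR)$, the pairing collapses, via Weyl integration and the normalization $\ome_{T_1} = \Del^{-1}\pi_{T_1}^*\ome_\Fc$ from \eqref{invariant-measure-torus}, to an integral on $E^1 \simeq S^1$. On this circle the restriction of $\Theta_\ell$ is the Chebyshev polynomial of the second kind $U_{\ell-1}(\cos\theta) = \sin(\ell\theta)/\sin\theta$, and the stable character of $\CS(\chi_m)$ restricts by the Mackey formula to $(\chi_m + \chi_m^\iota)(e^{i\theta}) = 2\cos(m\theta)$. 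Orthogonality of the Fourier system $\{e^{im\theta}\}_{m\in\BZ}$ on $S^1$ then gives exactly $\delta_{|m|,\ell}$, with the factor $\v = 2\pi$ supplied by the period of the circle.

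Having matched the pairings for every $m\in\BZ$, one concludes that $\LafSec(d_\ell\Theta_\ell) = \v\Del\Theta_\ell^\el$ as elements of $\CC^{\st,\fin,\perp,\triv}$, because the dihedral representations $\{\CS(\chi_m)\}_{m\in\BZ}$ separate stable cocenter elements supported on the elliptic locus — every irreducible representation of $\SL_2(\BR)$ appears as a subquotient of some $\CS(\chi_m)$. The main obstacle is the careful bookkeeping of normalization constants: the formal degree $d_\ell = \ell$, the Weil constant $\lam_{\BC/\BR}$ implicit in the Gelfand-Graev pairing, and the volume $2\pi$ of $S^1$ must conspire correctly. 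The Chebyshev identity $T_\ell' = \ell\, U_{\ell-1}$ flagged in the introduction serves as the conceptual consistency check: the first-kind polynomial $T_\ell$ appears on the Bernstein-center side and the second-kind polynomial $U_{\ell-1}$ on the cocenter side, with the derivative factor $\ell$ precisely accounting for the formal degree that multiplies $\Theta_\ell$ in the statement.
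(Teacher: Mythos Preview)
Your argument is logically sound but takes a different route from the paper. You deduce the theorem from Corollary~\ref{cor:descent:sl2} together with elliptic orthogonality; this is precisely the direction of the equivalence that the paper already grants in the sentence immediately preceding the theorem (``By the orthogonality of elliptic discrete series characters, it is immediate to see that Corollary~\ref{cor:descent:sl2} is equivalent to the following fact''). The paper, by contrast, supplies a \emph{direct} and self-contained computation: it uses the Gelfand-Graev formula to rewrite $\Theta_\ell$ in terms of $\phi_\ell$, converts the Fourier-transform identity into a differential identity (since multiplication by $2\pi i x$ becomes differentiation), and then verifies by hand the Chebyshev relation
\[
\ell\,\phi_\ell(x)=\frac{\ud\big(\Del\,\Theta_\ell^\el\big)}{\ud x},
\]
which reduces to the elementary fact linking $\RT_\ell$ and $\RU_{\ell-1}$.

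The distinction matters for the paper's architecture. Your approach presupposes Corollary~\ref{cor:descent:sl2} for discrete series, which in the paper's logical flow is established \emph{later} (Section~\ref{sec:descent}); indeed, the Chebyshev computation is listed there as one of the three independent methods for proving the descent formula itself. So while your deduction is not circular (the discrete series case of Corollary~\ref{cor:descent:sl2} also follows from the principal series case, as the paper notes), it inverts the intended direction: the paper's point in this subsection is to exhibit an \emph{independent} verification that can serve as an alternative proof of the discrete series descent, whereas your argument consumes that descent as input. Your last paragraph correctly identifies the Chebyshev identity $\RT_\ell' = \ell\,\RU_{\ell-1}$ as the conceptual core, but in the paper this identity is the actual computational engine of the proof, not a post hoc consistency check.
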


\begin{proof}
Let $\phi_\ell$ be the function on the Steinberg-Hitchin base supported on the trace of elliptic maximal torus of $\SL_2(\BR)$ such that 
$$
\phi_\ell(x) = \frac{e^{i\ell \theta}+e^{-i\ell\theta}}{|e^{i\theta}-e^{-i\theta}|},\quad 
x=e^{i\theta}+e^{-i\theta}.
$$
The Gelfand-Graev character identity \eqref{eq:ggps:gl2:interpret:sl2} shows that (switching $\psi$ to $\psi^{-1}$)
\begin{equation}\label{eq:descent:DS:1}
\Theta_\ell = \wb{\lam}_{\BC/\BR}
\bigg(
\CF_{\psi}
\big(
\eta_\BC\cdot 
\CF_{\psi^{-1}}(\phi_\ell)
\big)
\bigg).
\end{equation}
Plugging \eqref{eq:descent:DS:1} into the statement of Theorem \ref{thm:descent:DS:sl2}, and using the fact that $\eta_\BC(x)|x| = \sgn(x)|x| = x$, Theorem \ref{thm:descent:DS:sl2} can be reformulated as follows:
$$
\CF_\psi\bigg(
\frac{\CF_{\psi^{-1}}(\phi_\ell)}{(\cdot)}
\bigg)= \lam_{\BC/\BR}d_\ell^{-1}\v\Del\Theta^\el_\ell.
$$
Equivalently, 
$$
\phi_\ell = 
\CF_{\psi^{-1}}
\bigg(
(\cdot)
\CF_\psi
\big(
\lam_{\BC/\BR}d_\ell^{-1}\v\Del\Theta_\ell^\el
\big)
\bigg).
$$
Since the Fourier transform sends multiplication by $2\pi ix$ to derivative, the above identity is equivalent to the following 
$$
(2\pi i)d_\ell\cdot \phi_\ell = 
\frac{\ud
\big(
\lam_{\BC/\BR}\v\Del\Theta_\ell^\el
\big)
}{\ud x}
$$
From \cite[Lem.~1.2]{jlgl2}, $\lam_{\BC/\BR} = i$, therefore we are reduced to show 
$$
d_\ell\cdot \phi_\ell = \ell\cdot \phi_\ell=
\frac{\ud
\big(
\Del\Theta_\ell^\el
\big)
}{\ud x}.
$$
Write $x =e^{i\theta}+e^{-i\theta} = 2\cos \theta$, and $\Del(x) = |e^{i\theta}-e^{-i\theta}| = \sqrt{4-x^2}$, then 
$$
\phi_{\ell}(x) = \frac{e^{i\ell\theta}+e^{-i\ell\theta}}{\Del(x)} = \frac{2\cos \ell\theta}{\sqrt{4-x^2}}.
$$
It turns out that $\phi_\ell(x)$ is related to the well-known Chebyshev polynomials. We recall their definitions and basic properties below.

\begin{enumerate}
    \item 
    Chebyshev polynomials of the first kind $\RT_n$ are defined by 
    $$
    \RT_n(\cos \theta) = \cos(n\theta);
    $$
    Chebyshev polynomials of the second kind $\RU_n$ are defined by 
    $$
    \RU_n(\cos\theta) = \frac{\sin (n+1)\theta}{\sin \theta};
    $$

    \item
    Moreover, the derivative of $\RU_{n}(x)$ enjoys the following identity 
    $$
    \frac{\ud \RU_n(x)}{\ud x} = 
    \frac{(n+1)\RT_{n+1}(x)-x\RU_n(x)}{x^2-1}.
    $$
\end{enumerate}
These facts can be deduced by straightforward computation.

Based on the above facts we have
$$
d_\ell \phi_{\ell}(x) = 
\ell\cdot \frac{2\RT_\ell(x/2)}{\sqrt{4-x^2}},\quad |x|<2.
$$
Similarly, from \cite[(1.3.1)]{langlands-singularites-transfert}, 
$$
\Theta_\ell^\el(x) = -\frac{e^{i\ell\theta}-e^{-i\ell\theta}}{e^{i\theta}-e^{-i\theta}} = 
-\frac{\sin \ell\theta}{\sin \theta} = -\RU_{\ell-1}(x/2),\quad |x|<2,
$$
hence we are reduced to show the following identity
\begin{align*}
    2\ell\cdot \frac{\RT_\ell(x/2)}{\sqrt{4-x^2}} = 
    \frac{
   - \ud 
    \bigg(
\sqrt{4-x^2}
\RU_{\ell-1}(x/2)
    \bigg)
    }{\ud x},\quad |x|<2.
\end{align*}
By direct calculation, the right hand side is equal to 
$$
\frac{x}{\sqrt{4-x^2}}\RU_{\ell-1}(x/2)
-\frac{\sqrt{4-x^2}}{2}\RU_{\ell-1}^\p(x/2).
$$
Applying the derivative for Chebyshev polynomials above, we have
$$
\RU^\p_{\ell-1}(x/2) = \frac{\ell\RT_\ell(x/2)-(x/2)\RU_{\ell-1}(x/2)}{(x/2)^2-1}
$$
Therefore we are reduced to show 
$$
2\ell\cdot 
\frac{\RT_\ell(x/2)}{\sqrt{4-x^2}}
=
\frac{x}{\sqrt{4-x^2}}
\RU_{\ell-1}(x/2)
-\frac{\sqrt{4-x^2}}{2}
\bigg(
\frac{\ell\RT_\ell(x/2)-(x/2)\RU_{\ell-1}(x/2)}{(x/2)^2-1}
\bigg).
$$
After multiplying $\sqrt{4-x^2}$ on both sides, it is equivalent to the following identity 
$$
2\ell\RT_\ell(x/2)
=
x
\RU_{\ell-1}(x/2)
-
\bigg(
x\RU_{\ell-1}(x/2)-2\ell\RT_\ell(x/2)
\bigg)
$$
which holds automatically. We thus complete the proof of the theorem.
\end{proof}

\subsection{The image of the stable Lafforgue transform}

In this subsection, we describe the image of $\LafSec$ for $G=\SL_2$ over a non-archimedean local field of odd residual characteristic.

\begin{thm} \label{thm:image Laf}
	The image of $\LafSec: \CZ^{\rm st} \to \CC^{\rm st}$ is the subspace $\CC^{\rm st, \perp triv}$ of $\CC^{\rm st}$ orthogonal to the character of the trivial representation.
	\end{thm}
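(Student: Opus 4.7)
The plan is to establish the two inclusions separately, exploiting the integral formula \eqref{eq:Laf formula} and its inverse $\Laf(c)=\CF_\psi(|\cdot|\CF_{\psi^{-1}}(c))$, where all operations are on the Steinberg--Hitchin base $\Fc(F)$. Note that since $\SL_2$ is perfect, the unique one-dimensional character of $G(F)$ is the trivial one, so $\CC^{\perp,\triv}$ here is exactly the orthogonal complement of the trivial character, and $c \in \CC^{\st,\fin}$ is orthogonal to the trivial character iff $\CF_{\psi^{-1}}(c)(0)=\int_{\Fc(F)} c\,\omega_\Fc=0$.

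First I would prove the containment $\LafSec(\CZ^\st)\subset \CC^{\st,\perp\triv}$. The factorization through $\CC^{\perp,\triv}$ is immediate from the representation-theoretic definition given in subsection \ref{subsec:Laf-Reptheoretic-def}. For stability, I reduce to $z\in \CZ^{\st,\fin}$ using Theorem \ref{thm:main:SBC}(3), which allows me to project onto a single connected component of $\Omega^\st$. On such components $z$ is represented by a stably invariant locally integrable function which, by \eqref{num:intro:FTJ}, has $\CF_{\psi^{-1}}(z_\Fc)$ smooth, globally bounded, vanishing at $0$, with $\CF_{\psi^{-1}}(z_\Fc)/|\cdot|$ absolutely integrable; thus the formula \eqref{eq:Laf formula} yields a well-defined tempered distribution on $\Fc(F)$. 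Because the whole construction is performed at the level of the Steinberg--Hitchin base, the pull-back $\LafSec(z)\circ\c$ is automatically a stable class function, hence $\LafSec(z)\in \CC^{\st}$.

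Next I would establish surjectivity. Given $c\in\CC^{\st,\perp\triv}$, I reduce to the finite case $c\in \CC^{\st,\perp\triv,\fin}$ and form $z_\Fc:=\CF_\psi(|\cdot|\CF_{\psi^{-1}}(c_\Fc))$; multiplication by $|\cdot|$ is harmless (no division), so $z_\Fc$ is a tempered distribution on $\Fc(F)$. Set $z:=(z_\Fc\circ\c)\,\ud g$. Then by construction $z$ is stably invariant, and applying the formula for $\LafSec$ gives
\[
\LafSec(z)=\CF_\psi\bigl(|\cdot|^{-1}|\cdot|\CF_{\psi^{-1}}(c_\Fc)\bigr)=c_\Fc,
\]
where the cancellation is legitimate as an equality of distributions because $|\cdot|\CF_{\psi^{-1}}(c_\Fc)$ vanishes at $0$ (thanks to the orthogonality hypothesis $\CF_{\psi^{-1}}(c_\Fc)(0)=0$), so the potential ambiguity of $|\cdot|^{-1}\cdot|\cdot|$ at the origin does not contribute.

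The main obstacle is verifying that this $z$ genuinely lies in $\CZ^\st$, i.e.\ that $z$ really defines an element of the (stable) Bernstein center supported on finitely many components of $\Omega^\st$. Concretely, one must show that the invariant distribution $z$ acts on each irreducible admissible representation $\pi$ of $G(F)$ by a scalar $\gamma(z,\pi)$ which is a regular function of $[\pi]\in\Omega(G(F))$ supported on finitely many components. The cleanest strategy is to compute this action via the Gelfand--Graev/descent formula: for every character $\chi$ of every quadratic-torus $T_E(F)$ one pairs $\LafSec(z)$ with $\chi$ using the stable transfer $\CT_E$ from Remark \ref{rmk:GGformulaforstabletransfer}, and then re-packages the result through the local Langlands correspondence to exhibit $\gamma(z,\pi)$ as a regular algebraic function on the relevant Bernstein components. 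This requires the analytic properties of $c_\Fc$ (in particular boundedness, smoothness off the ramification locus, and the growth estimates established in subsection \ref{subsec:analytic-properties-Jc} and following \cite{Moy-Tadic-Bernstein}) to ensure convergence of the Mellin-type integrals and finiteness of the support on $\Omega^\st$. Once this is in hand, $z\in \CZ^{\st,\fin}$ and $\LafSec(z)=c$, completing the proof of surjectivity.
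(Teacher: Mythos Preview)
Your inclusion argument is fine and close in spirit to the paper's. The gap is in your surjectivity argument: you construct $z_\Fc:=\CF_\psi(|\cdot|\CF_{\psi^{-1}}(c_\Fc))$ and then freely invoke machinery that is only available once you already know $z\in\CZ^{\st,\fin}$. The descent formula (Theorem~\ref{thm:descent:sl2}) is proved \emph{for} $z\in\CZ^{\st,\fin}$, so using it to establish that very membership is circular. Likewise, the analytic estimates in subsection~\ref{subsec:analytic-properties-Jc} are proved for functions $\RJ_\Fc$ arising from elements of $\CZ^{\st,\fin}$, not for arbitrary stable cocenter elements $c_\Fc$; you cannot cite them for $c_\Fc$ without independent justification. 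More fundamentally, writing down a stably invariant distribution $(z_\Fc\circ\c)\,\ud g$ does not by itself show that it lies in the Bernstein center: you must check that $z*e_K\in\CH_K$ for every compact open $K$, and nothing in your sketch addresses this.

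The paper's proof avoids this difficulty by working on the representation-theoretic side rather than by analytic inversion. It invokes the trace Paley--Wiener theorem of Bernstein--Deligne--Kazhdan to characterize $\CC^{\st,\fin}$ as those linear functionals on $\Irr(G(F))$ that are regular and constant on tempered $L$-packets, and then splits into the supercuspidal and induced parts of $\Irr(G(F))$. On the supercuspidal part, Theorem~\ref{thm:imageLafsupercuspidal} (and its stable variant) exhibits every normalized stable elliptic supercuspidal character as $\LafSec(\epsilon_\pi)$ for the corresponding idempotent $\epsilon_\pi\in\CZ^{\st,\fin}$. On the induced part there is no stability issue, and a regular function on the Bernstein component attached to a character $\chi_0$ of $T(F)$ is already an element of $\CZ^{\fin}$ whose $\LafSec$ hits the given cocenter element. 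Thus surjectivity is established by exhibiting explicit preimages in $\CZ^{\st,\fin}$, never needing to check after the fact that some analytically constructed distribution belongs to the Bernstein center.

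If you wish to salvage your approach, do not define $z$ via the integral formula; instead set $z:=\Laf(c)$ using the representation-theoretic map from subsection~\ref{subsec:Laf-Reptheoretic-def}, which is already shown there to carry $\CC^{\st,\fin}$ into $\CZ^{\st,\fin}$. Then $\LafSec(z)=c$ follows because $\Laf$ is injective on $\CC^{\perp,\triv}$: its kernel is spanned by the normalized elliptic Steinberg character, which pairs to $-1$ (not $0$) with the trivial representation and hence does not lie in $\CC^{\perp,\triv}$.
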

\begin{proof}
	We only need to establish the theorem for the non-unital subalgebra of finite support $\LafSec: \CZ^{\st,\fin}\to \CC^{\st,\fin}$. 

	First, by Part (2) of Theorem \ref{thm:vanishing:SBC}, for any $z=(\RJ_\Fc\circ \c)\ud^*g\in \CZ^{\st,\fin}$, $\CF_{\psi^{-1}}(\RJ_\Fc)(0)=0$. From here we deduce that $\mathrm{Laf}(z)$ is orthogonal to the trivial character, i.e. the Fourier transform of $\mathrm{Laf}(z)$, as a function on the Steinberg-Hitchin base, vanishes at zero. It follows that the image of $\CZ^{\st,\fin}$ lands in $\CC^{\rm st,fin,\perp triv}$. It remains to show the surjectivity. In order to show the surjectivity, not surprisingly we will use the trace Paley-Wiener theorem (\cite{Bernstein-Deligne-Kazhdan}). By the trace Paley-Wiener theorem, any element in the usual cocenter $\CC^{\fin} = \CH/[\CH,\CH]$ is uniquely characterized by linear functionals $\ell$ on $\Irr(G(F))$ such that for any character $\chi_0$ of $T(F)$, $\chi\in \BX^*(T)\otimes_\BZ \BC\mapsto \ell(\Ind\chi\otimes \chi_0)$ is a regular function. Here $T$ is the diagonal split torus of $G$, $\BX^*(T)$ is the rational character group of $T$, and $\Ind$ is the induced representation functor attached to the characters of $T(F)$. Passing to the stable cocenter, by Theorem \ref{thm:main:SBC}, since stable tempered characters are dense in the space of stable distributions, the stable cocenter can be characterized by linear functionals $\ell$ on $\Irr(G(F))$ that are in the usual cocenter and are constant on tempered local $L$-packets.

	Write $\Irr(G(F)) = \Irr(G(F))_{\scusp}\bigsqcup \Irr(G(F))_{\ind}$, where $\Irr(G(F))_{\scusp}$ are supercuspidal representations of $G(F)$, and $\Irr(G(F))_{\mathrm{ind}}$ are irreducible representations of $G(F)$ whose cuspidal support are not supercuspidal. By linearity, we can divide the discussion to linear functionals on $\Irr(G(F))$ that are supported on $\Irr(G(F))_{\scusp}$ and $\Irr(G(F))_{\ind}$ separately. We first treat the supercuspidal situation.

	By the trace Paley-Wiener theorem and linearity, for regular functions on $\Irr(G(F))$ supported on supercuspidal component only (actually they are indicator functions since $G=\SL_2$), they are given by the linear combination of the orbital integral of matrix coefficients of supercuspial representations. Passing to the stable cocenter, by Theorem \ref{thm:imageLafsupercuspidal} and its stable variant, they are given by the image of the stable Lafforgue transform of stable supercuspidal distribution characters. Hence the image of the stable Lafforgue transform contains all the elements in the stable cocenter supported on supercuspidal component only. 

	It remains to treat $\Irr(G(F))_{\ind}$. The tempered local $L$-packet for irreducible representations are given by (the restriction of) fully induced tempered principal series of $G(F)$, except the Steinberg representation. Hence when restricted to $\Irr(G(F))_{\ind}$, there are no difference between the stable cocenter and the usual cocenter. In particular, up to linear combination, after fixing a ramified (or trivial) character $\chi_0$ of $T(F)$, any linear functional $\ell$ on $\Irr(G(F))_{\ind}$ in the usual (stable) cocenter are determined uniquely by a regular function on $\BX^*(T)$, which in turn is uniquely determined by the same function on the Bernstein component attached to the ramified (or trivial) character $\chi_0$. It follows that the image of the stable Lafforgue transform also contains the (stable) cocenter supported on $\Irr(G(F))_{\ind}$. 

	It follows that we complete the proof of the theorem.
\end{proof}

\begin{rmk}
Similar results hold for $G=\GL_2$, except one need to add the twist by central characters for trivial representations and Steinberg representations.
\end{rmk}

\subsection{Analytic properties of the function $\RJ_\Fc$} \label{subsec:analytic-properties-Jc}

In this subsection, we establish some analytic properties of the function $\RJ_\Fc$ on the Steinberg-Hitchin base, which will be useful in proving the descent formula. These analytic properties are established by explicit calculations. When $F$ is non-archimedean, the calculation is based on explicit formulae of Moy and Tadic.  

\begin{thm}\label{thm:vanishing:SBC}
Let $G=\SL_2$ $($resp. $\GL_2)$ and the residual characteristic of $F$ is other than two, then the following statements hold.

\begin{enumerate}
\item For any $\RJ_G\in \CZ_G^{\st,\fin}$, there is an absolutely integrable function (resp. absolutely integrable function that is rapidly decreasing on the determinant factor) $\RJ_\Fc$ on $\Fc_G(F)\simeq F$ (resp. $\simeq F\times F^\times$), such that $\RJ_G = (\RJ_\Fc\circ \c)\ud^\times g$, where $\c:G\to (G\sslash \Ad(G)) =\Fc_G$ is the Chevalley quotient map given by the trace function (resp. given by the (trace, determinant) function);

\item The Fourier inversion of $\RJ_\Fc$ in trace variable, denoted as $\wh{\RJ}_\Fc = \CF_{\psi^{-1}}(\RJ_\Fc)$, is represented by a globally bounded smooth function vanishing at zero;

\item 
$\frac{\CF_{\psi^{-1}}(\RJ_\Fc)}{|\cdot|}$ is absolutely integrable on $F$. As a result, for any étale quadratic $F$-algebra $K$, $\CF_\psi\big( \frac{\eta_K }{|\cdot|} \CF_{\psi^{-1}}(\RJ_\Fc) \big)$ is globally bounded on $F$.
\end{enumerate}
\end{thm}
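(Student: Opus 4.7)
The plan is to reduce the verification to a single Bernstein component and analyze each component type explicitly. By linearity and the finite support hypothesis, we may assume $\RJ_G$ is the stable idempotent of one component. For $G = \SL_2$ over a non-archimedean field, the components of $\CZ^{\st,\fin}$ fall into three families: stable supercuspidal $L$-packets attached to non-trivial characters $\chi$ of $E^1$ for quadratic algebras $E/F$; stable principal series components; and the Steinberg component with its twists. The $\GL_2$ case reduces to $\SL_2$ by separating the central character, and the required rapid decay in the determinant direction will come from the finite-support assumption combined with the Schwartz structure in the unramified-twist parameter.

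For Part (1), the starting point is \cite[Thm.~2.5]{Moy-Tadic-Bernstein}, which represents any element of $\CZ^\fin$ as a locally integrable invariant function on $G(F)$ smooth on the regular semisimple locus; stability produces the descent to a locally integrable function $\RJ_\Fc$ on $\Fc(F)$. The upgrade from local to absolute integrability will follow from component-wise support properties: stable supercuspidal characters are supported on the compact elliptic locus of $\Fc(F)$; stable principal series idempotents descend to functions with explicit compact-support properties visible from Moy-Tadic's formulas; and the Steinberg component's explicit formula gives compact support on the split locus after stable descent.

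For Parts (2) and (3), the Gelfand-Graev formula \eqref{eq:ggps:interpret:1} expresses $\RJ_\Fc$ on each component (up to scalar) as $\lambda_{E/F}\CF_{\psi^{-1}}(\eta_E \CF_\psi(\phi_\chi))$ with $\phi_\chi = (\chi + \chi^\iota)/\Delta$ supported on $\tr(E^1)$. Applying $\CF_{\psi^{-1}}$ shows $\CF_{\psi^{-1}}(\RJ_\Fc)$ is proportional to $\eta_E \CF_\psi(\phi_\chi)$, which is globally bounded (Fourier transform of an $L^1$ function) and smooth (locally constant in the non-archimedean setting, since $\phi_\chi$ is compactly supported). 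The vanishing at zero is the identity
\[
\CF_\psi(\phi_\chi)(0) = \int_F \phi_\chi(x)\,\ud x = \int_{E^1}(\chi + \chi^\iota)(e)\,\ud_{E^1}e = 0,
\]
valid for non-trivial $\chi$ via the measure relation $\ud x = \Delta\,\ud_{E^1}e$. For $\CF_{\psi^{-1}}(\RJ_\Fc)/|\cdot|$ to be integrable: near zero, smoothness plus vanishing give $O(|\xi|)$; at infinity, the conductor of $\chi$ forces $\phi_\chi$ to be locally constant on a fixed lattice scale, so $\CF_\psi(\phi_\chi)$ acquires compact support, giving integrability. Boundedness of $\CF_\psi(\eta_K/|\cdot|\cdot \CF_{\psi^{-1}}(\RJ_\Fc))$ is then immediate from its Fourier preimage being $L^1$.

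The main obstacles are the Steinberg component, whose idempotent must be handled by direct computation from Moy-Tadic rather than from the Gelfand-Graev template, and the archimedean case, where Moy-Tadic does not apply. For the archimedean case, we will use the explicit description of $\CZ$ in Definition \ref{defin:multiplierarchimedean} together with the Plancherel inversion: principal series contributions are controlled by Schwartz decay in the spectral parameter, and discrete series characters at $F = \BR$ admit the Chebyshev polynomial expansion underlying Theorem \ref{thm:descent:DS:sl2}, from which the required analytic properties become manifest.
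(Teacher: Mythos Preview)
Your overall architecture matches the paper's: reduce by linearity to a single Bernstein component, use Moy--Tadic for the explicit basis, handle the cuspidal case via the Gelfand--Graev formula, and do the remaining components by direct computation. However, there is a genuine error in your integrability argument for Part~(3), and your treatment of the non-cuspidal components underestimates what must actually be checked.

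The error is your claim that ``the conductor of $\chi$ forces $\phi_\chi$ to be locally constant on a fixed lattice scale, so $\CF_\psi(\phi_\chi)$ acquires compact support.'' This is false: $\phi_\chi=(\chi+\chi^\iota)/\Delta$ is compactly supported on $\tr(E^1)$ but is \emph{not} locally constant, because $\Delta(x)=|x^2-4|^{1/2}$ blows up as $x\to\pm 2$. Thus $\phi_\chi\in L^1$ but $\phi_\chi\notin\CC^\infty_c(F)$, and $\CF_\psi(\phi_\chi)$ has no reason to be compactly supported. The paper's substitute is a Hausdorff--Young/H\"older argument: since $1/\Delta\in L^p$ for all $1\le p<2$, one has $\CF_{\psi^{-1}}(\phi_\chi)\in L^q$ for the conjugate $q>2$, and then $\int_{|\xi|\ge\epsilon}|\wh{\RJ}_\Fc(\xi)|/|\xi|\,\ud\xi$ is controlled by H\"older against $|\xi|^{-p}$ with $1<p<2$. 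Combined with the vanishing neighborhood near zero (which you correctly identify), this yields $\wh{\RJ}_\Fc/|\cdot|\in L^1$.

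Secondly, your sketch does not address what turns out to be the most laborious verification: for the unramified principal series component the basis element $b_n$ (which already incorporates the Steinberg character---it is not a separate component in the Moy--Tadic decomposition) must be shown explicitly to satisfy $\int_F\RJ_\Fc=0$. The Gelfand--Graev template does not apply here, and the paper carries out a direct summation over the split-torus support using the explicit Moy--Tadic formulas for $b_n=b_n'+b_n''$ to verify the cancellation. The ramified principal series case is comparatively easy (the vanishing of $\int\RJ_\Fc$ follows from nontriviality of $\lambda_0$), but the unramified case requires real work that your proposal does not anticipate.
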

\begin{rmk}\label{rmk:descent:localintegrable}
Since the orbital integrals $($normalized by the square-root of the norm of Weyl discriminant$)$ are locally bounded $($\cite[\S 1.8]{beuzart2015local}$)$, Part $(1)$ ensures that the distribution $\RJ_G$ is locally integrable on $G(F)$.
\end{rmk}

Following the computation in \cite[\S 3]{Moy-Tadic-Bernstein}, when $G=\GL_2$ and $F$ is non-archimedean, the Mellin transform of a regular function along the center of $\GL_2(F)$ will cut out a compactly supported locus on the determinant factor. Similar statement is true for $F$ archimedean since the entire functions on the admissible dual are still rapidly decay along the vertical strip when restricted to the determinant factor. Hence we only prove the case for $G=\SL_2$ and leave the details for $G=\GL_2$ to the reader.

We first prove the theorem for $F$ non-archimedean.

\begin{proof}
By \cite[Thm.~2.5]{Moy-Tadic-Bernstein}, any distributions supported on finitely many Bernstein components of $G(F)$ are locally integrable. Moreover, \cite[\S 3]{Moy-Tadic-Bernstein} provides explicit bases for $\CZ_G^{\fin}$ which are presented as follows:
\begin{itemize}
\item Cuspidal component (\cite[\S 3.1]{Moy-Tadic-Bernstein}): For any supercuspidal representation $\pi$ of $G(F)$, a basis is given by $e_\pi = \check{\Theta}_\pi$ where $\check{\Theta}_\pi$ is the contragredient of the supercuspidal character of $\pi$;

\item Regular non-cuspidal component (\cite[\S 3.4]{Moy-Tadic-Bernstein}): For a character $\lam_0:\Fo^\times \to \BC^\times$ with $\lam_0^2\neq 1$, and any integer $n\in \BZ$, up to constant, the basis distribution $e_n$ is supported on split elements with
\begin{align*}
e_{n}(\diag(\vpi^ko,\vpi^{-k}o^{-1})) &= 
\Bigg\{
\begin{matrix}
0 & k\neq \pm n\\
\frac{\lam_0(o^{-1})}{|\vpi^{k}o^2-\vpi^{-k}|} & k=n\\
\frac{\lam_0(o)}{|\vpi^ko^2-\vpi^{-k}|} & k=-n
\end{matrix}\quad \text{if $n\neq 0$}
\\
e_0(\diag(\vpi^ko,\vpi^{-k}o^{-1}))
&=\Bigg\{
\begin{matrix}
0 & k\neq 0 \\
\frac{\lam_0(o)+\lam_0(o^{-1})}{|o^2-1|} & k=0
\end{matrix}
\end{align*}
where $o\in \Fo^\times$;

\item Irregular ramified principal series component (\cite[\S 3.5]{Moy-Tadic-Bernstein}): Let $\lam_0$ be a nontrivial character of $\Fo^\times$ of order two. For any positive integer $n\in \BN$, up to constant, the basis distribution $e_n$ is supported on split elements with 
$$
e_n(\diag(\vpi^ko,\vpi^{-k}o^{-1})) = 
\Bigg\{
\begin{matrix}
0 & k\neq \pm n\\
\frac{\lam_0(o)}{|\vpi^ko^2-\vpi^{-k}|} & k=\pm n
\end{matrix}
$$

\item Unramified principal series component (\cite[\S 3.6]{Moy-Tadic-Bernstein} and corrections in \cite{Moy-Tadic-Bernstein-erratum}): For any non-negative integer $n\geq 0$, up to constant, the basis distribution $e_n = e^\p_n+e^{\p\p}_n$ is given by 
\begin{align*}
e^{\p\p}_{n}(g) 
&= (q-1)(q^n+q^{-n})
\Theta_{\mathrm{St}_{\SL_2}}(g)
\\
&=
\Bigg\{
\begin{matrix}
(q-1)(q^{n}+q^{-n})\big(
\frac{q^{k}+q^{-k}}{|\vpi^{k}o^2-\vpi^{-k}|}-1
\big), & g\text{ conjugate to }\diag(\vpi^{k}o,\vpi^{-k}o^{-1})\\
-(q-1)(q^{n}+q^{-n}), & \text{otherwise}
\end{matrix}
\end{align*}
and $e^{\p}_{n}$ supported on split elements given by
\begin{align*}
e^\p_{n}(\diag(\vpi^{k}o,\vpi^{-k}o^{-1}))
=\frac{q(q+1)}{|\vpi^{k}o^2-\vpi^{-k}|}
(I_{n+k}+I_{n-k}),\quad 
I_l = 
\bigg\{
\begin{matrix}
-q^{-|l|-1}\frac{(q-1)}{(q+1)} & l\neq 0 \\
\frac{2}{q(q+1)} & l=0
\end{matrix}
\end{align*}
\end{itemize}
Hence any distributions in $\CZ^{\st,\fin}_G$ are actually represented by stable functions, and we derive the following explicit formulas for a basis of distributions in $\CZ^{\st,\fin}_G$.

\begin{thm}[\cite{Moy-Tadic-Bernstein}\cite{Moy-Tadic-Bernstein-erratum}]\label{thm:explicitbasisstable}
We have the following basis family of locally integrable stable distributions in $\CZ_G^{\st,\fin}$:
\begin{itemize}
\item Cuspidal component: For any supercuspidal representation $\pi$ of $G(F)$, $b_\pi = $ stable distribution character of $\pi$.

\item Ramified principal series component: Fix a nontrivial character $\lam_0:\Fo^\times \to \BC^\times$. If $\lam_0^2$ is nontrivial, then for any nonzero integer $n\in \BZ$, $b_{\lam_0,n}$ is supported on split elements with 
$$
b_{\lam_0,n}(\diag(\vpi^{k}o,\vpi^{-k}o^{-1})) = 
\Bigg\{
\begin{matrix}
0 & k\neq \pm n\\
\frac{\lam_0(o^{-1})}{|\vpi^{k}o^2-\vpi^{-k}|} & k= n\\
\frac{\lam_0(o)}{|\vpi^ko^2-\vpi^{-k}|} & k=-n
\end{matrix}
$$
If $\lam_0^2$ is nontrivial and $n=0$, or $\lam_0^2$ is trivial, 
$$
b_{\lam_0,n}(\diag(\vpi^{k}o,\vpi^{-k}o^{-1})) = 
\Bigg\{
\begin{matrix}
0 & k\neq \pm n\\
\frac{\lam_0(o^{-1})+\lam_0(o)}{|\vpi^{k}o^2-\vpi^{-k}|} & k= \pm n\\
\end{matrix}
$$

\item Unramified principal series component: For any non-negative integer $n\geq 0$, $b_n = b^\p_n+b^{\p\p}_n$ with 
\begin{align*}
b^{\p\p}_{n}(g) 
&= (q-1)(q^n+q^{-n})
\Theta_{\mathrm{St}_{\SL_2}}(g)
\\
&=
\Bigg\{
\begin{matrix}
(q-1)(q^{n}+q^{-n})\big(
\frac{q^{k}+q^{-k}}{|\vpi^{k}o^2-\vpi^{-k}|}-1
\big), & g\text{ conjugate to }\diag(\vpi^{k}o,\vpi^{-k}o^{-1})\\
-(q-1)(q^{n}+q^{-n}), & \text{otherwise}
\end{matrix}
\end{align*}
and $b^{\p}_{n}$ supported on split elements given by
\begin{align*}
b^\p_{n}(\diag(\vpi^{k}o,\vpi^{-k}o^{-1}))
=\frac{q(q+1)}{|\vpi^{k}o^2-\vpi^{-k}|}
(I_{n+k}+I_{n-k}),\quad 
I_l = 
\bigg\{
\begin{matrix}
-q^{-|l|-1}\frac{(q-1)}{(q+1)} & l\neq 0 \\
\frac{2}{q(q+1)} & l=0
\end{matrix}
\end{align*}
\end{itemize}
\end{thm}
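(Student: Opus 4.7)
The plan is to derive the stated basis of $\CZ_G^{\st,\fin}$ from Moy-Tadic's basis of the full finite Bernstein center $\CZ_G^\fin$ listed above, by imposing the stability condition component by component on the Bernstein variety. Since $\CZ_G^{\st,\fin} = \CZ_G^\st \cap \CZ_G^\fin$ and each Moy-Tadic basis element is supported on a single inertial equivalence class, stability on the Lafforgue variety amounts to the distribution being constant on each Labesse--Langlands $L$-packet of $\SL_2(F)$. I would thus match Moy-Tadic's indexing scheme against the $L$-packet classification, component by component, and read off which linear combinations of Moy-Tadic's basis elements are stable.

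For the supercuspidal components, Moy-Tadic's basis is indexed by individual supercuspidal representations, so I would simply sum the relevant $\check{\Theta}_{\pi_i}$ over each $L$-packet to obtain the stable character $b_\pi$. For the regular ramified principal series components indexed by $(\lambda_0, n)$ with $\lambda_0^2 \neq 1$, the $W$-symmetry $(\lambda_0, n) \leftrightarrow (\lambda_0^{-1}, -n)$ coincides with the inertial equivalence for the fully induced (irreducible) principal series, so symmetrizing Moy-Tadic's $e_n$ against the basis element corresponding to $(\lambda_0^{-1}, -n)$ yields the stated formula for $b_{\lambda_0, n}$. When $\lambda_0^2 = 1$ (or in the singular case $n=0$), the symmetrization has fixed points and the symmetric form $\lambda_0(o) + \lambda_0(o^{-1})$ in the formula encodes both the $W$-averaging and, in the irregular case, the participation of supercuspidal constituents of reducible principal series in certain $L$-packets.

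For the unramified principal series component, the Steinberg representation $\St_{\SL_2}$ is a singleton $L$-packet sitting on the same Bernstein component as the unramified principal series and the trivial representation. The decomposition $b_n = b_n' + b_n''$ separates the Steinberg contribution $b_n''$, which is a multiple of the distribution character of $\St_{\SL_2}$ and hence stable on its own, from the $W$-symmetrized principal series part $b_n'$. The explicit coefficients $(q-1)(q^n + q^{-n})$ and the geometric series $I_l$ emerge from the Mellin inversion encoding the level-$n$ cut-off of the unramified character twist, and the formula for $b_n''$ recovers the usual formula for the Steinberg character away from a correction that is constant on the non-split locus.

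The main obstacle will be the careful matching between Moy-Tadic's indexing of Bernstein components and the Labesse--Langlands $L$-packet classification. Two points in particular require care: first, the unique $L$-packet of size four arising from order-two characters of $E_\alp^1$ for the three ramified quadratic extensions, whose members sit partly inside the irregular ramified principal series component as constituents of reducible induced representations; second, the isolation of the Steinberg contribution on the unramified component, which forces the splitting $b_n = b_n' + b_n''$ and requires the explicit character formula for $\Theta_{\St_{\SL_2}}$ recorded in the proof sketch above. Once these matchings are pinned down, the stated formulas follow by direct substitution into Moy-Tadic's expressions and summation within each $L$-packet.
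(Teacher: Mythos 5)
Your overall strategy — start from Moy--Tadic's explicit basis of $\CZ_G^{\fin}$ and impose the stability constraint component by component, summing over $L$-packets where necessary — is essentially what the paper does: the theorem is stated as a corollary of the cited Moy--Tadic formulas, with the sole observation that the listed combinations are stable. However, two of the claims you make about the detailed matching are incorrect, and one of them would prevent the argument from actually going through.

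The more serious error concerns the size-four $L$-packet. You write that its members ``sit partly inside the irregular ramified principal series component as constituents of reducible induced representations.'' This is false for $\SL_2$. The unique size-four $L$-packet consists entirely of supercuspidal representations (it is the restriction of the supercuspidal dihedral lift of the unique nontrivial quadratic character $\chi_E$ of $E^1$, cf.\ Theorem~\ref{thm:L-packet}), and its members each generate their own cuspidal Bernstein components; none of them is a constituent of a parabolically induced representation. The constituents of the reducible ramified $i_\chi$ with $\chi^2=1$, $\chi\neq 1$ form an $L$-packet of size two and are tempered but not supercuspidal. So the irregular ramified PS component contributes nothing supercuspidal, and the formula for $b_{\lam_0,n}$ with $\lam_0^2=1$ is (up to a constant $2$) just Moy--Tadic's $e_n$ for that component — no supercuspidal recombination is present or needed.

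Relatedly, the ``symmetrization against $(\lam_0^{-1},-n)$'' you propose for the regular ramified component is vacuous: $e_n$ as Moy--Tadic write it is already $W$-invariant and is literally the $b_{\lam_0,n}$ in the statement. The point you should be isolating is conceptually simpler than what you wrote. For $\SL_2$, any distribution in $\CZ_G^{\fin}$ supported on a principal series Bernstein component is \emph{automatically} stable: the constituents of any reducible $i_\chi$ all have the same cuspidal support, hence the same inertial class, hence every element of $\CZ_G^{\fin}$ acts on them by the same scalar; so ``constant on $L$-packets'' is a free property on the PS components, including the unramified one where Steinberg and the trivial representation share the same inertial class. Stability is therefore a genuine constraint \emph{only} on the cuspidal components, where distinct members of a supercuspidal $L$-packet lie in distinct Bernstein components and one must sum the Moy--Tadic $e_{\pi'}$ over $\pi'\in[\pi]$ to obtain $b_\pi$. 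With that observation in place of the two incorrect claims, your derivation of the stated formulas from Moy--Tadic's basis is correct.
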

Notice that $|\vpi^{k}o^2-\vpi^{-k}| = \Del(\diag(\vpi^ko,\vpi^{-k}o^{-1}))$ where $\Del$ is the square-root of the norm of the Weyl discrimiant.

Based on Theorem \ref{thm:explicitbasisstable}, we only need to establish Theorem \ref{thm:vanishing:SBC} for each case individually.

For Part (1) of Theorem \ref{thm:vanishing:SBC}, it follows from the above explicit formula. Precisely, based on the above explicit basis, for any $\RJ_G = \big(\RJ_\Fc\circ \c\big)\ud^* g$, $\Del\RJ_\Fc$ is both compactly supported and globally bounded. Notice that for cuspidal component, one can find the explicit supercuspidal character table for $\SL_2(F)$ from \cite[p.6,~p.7]{characterSL2}, which shows that they are supported on compact locus and are globally bounded after multiplying by $\Del$. Hence the absolute integrability of $\RJ_\Fc$ follows from the local integrability of $\frac{1}{\Del}$. As a result $\CF_{\psi^{-1}}(\RJ_\Fc)$ is continuous and globally bounded.

It remains to establish Part (2) and Part (3) of Theorem \ref{thm:vanishing:SBC}.

(I) We first treat the cuspidal component. Since we assume that the residual characteristic of $F$ is not two, following subsection \ref{subsec:ggpsstable:gl2}, up to constant, we may assume that $\RJ_\Fc = \CG\CG_E(\phi_\chi)$ where $E$ is an étale quadratic field extension of $F$ and $\chi:E^1\to \BC^\times$ is a nontrivial character. From the work of Harish-Chandra, we know that $\CG\CG_E(\phi_\chi)$ is locally integrable and smooth over generic locus.

From Remark \ref{rmk:ggps:gl2}, $\CF_{\psi^{-1}}(\RJ_\Fc) =\eta_E \CF_{\psi^{-1}}(\phi_\chi)$. Since $\frac{1}{\Del} = \frac{1}{|\Tr^2-4|^{1/2}}$ is locally integrable on $F$, $\phi_\chi$ is a compactly supported and locally integrable function. Therefore $\CF_{\psi^{-1}}(\phi_\chi)$ is represented by a smooth function on $F$. Moreover, by definition, $\CF_{\psi^{-1}}(\phi_\chi)(0) = \int_{F}\phi_\chi(x)\ud x=\int_{E_1}\chi(e)\ud_{E,1}e = 0$. Hence $\CF_{\psi^{-1}}(\phi_\chi)$ is a smooth function on $F$ vanishing at zero. Since we are working with non-archimedean situation, it indicates that $\CF_{\psi^{-1}}(\phi_\chi)$ vanishes in a neighborhood of zero. In particular, $\frac{\CF_{\psi^{-1}}(\RJ_\Fc)}{|\cdot|}$ is still a smooth function on $F$ that vanishes at zero. This justifies the second part of the theorem.

Write $\wh{\RJ}_\Fc = \CF_{\psi^{-1}}(\RJ_\Fc)$ and $\wh{\RJ}^\p_\Fc = \frac{\wh{\RJ}_\Fc}{|\cdot|}$. By the above argument, for any étale quadratic $F$-algebra $K$, $\eta_K\wh{\RJ}^\p_\Fc$ is locally integrable in a neighborhood of zero and globally bounded, hence $\CF_{\psi}(\eta_K\cdot \wh{\RJ}^\p_\Fc)$ is a well-defined distribution. It remains to show that $\CF_\psi(\eta_K\cdot \wh{\RJ}^\p_\Fc)$ is locally integrable. To see this, since $\frac{1}{\Del} = \frac{1}{|\Tr^2-4|^{1/2}}$, $\phi_\chi$ lies in $L^p(F,\ud x)$ for any $1\leq p<2$. By Hausdorff-Young inequality (\cite[Thm.~4.27]{FGAbharmonic}), $\wh{\RJ}_\Fc = \eta_E\cdot \CF_{\psi^{-1}}(\phi_\chi)$ lies $L^q(F,\ud x)$ with $1/p+1/q = 1$. It follows that by Hölder's inequality, for sufficiently small positive constant $\veps>0$, 
$$
\int_{|\xi|\geq \veps}
\bigg|\frac{\wh{\RJ}_\Fc(\xi)}{|\xi|}
\eta_K(\xi)
\bigg|\ud \xi
\leq 
\bigg(
\int_{|\xi|\geq\veps}
|\wh{\RJ}_\Fc|^q
\ud x
\bigg)^{1/q}
\cdot 
\bigg(
\int_{|\xi|\geq\veps}
\frac{1}{|x|^p}
\ud x
\bigg)^{1/p}
<\infty
$$
for any $1<p<2$ i.e. the Fourier transform of $\wh{\RJ}^\p_\Fc\cdot \eta_K\cdot \mathrm{ch}_{|\xi|\geq\veps}$ is uniformly bounded, where $\mathrm{ch}_{|\xi|\geq \veps}$ is the (unnormalized) characteristic function of $\{\xi\in F\mid |\xi|\geq \veps\}$. On the other hand, due to the integrability of $\wh{\RJ}^\p_\Fc\cdot \eta_K\cdot \mathrm{ch}_{|\xi|\leq \veps}$, the Fourier transform of $\wh{\RJ}^\p_\Fc\cdot \eta_K\cdot \mathrm{ch}_{|\xi|\leq \veps}$ is a globally bounded smooth function. Therefore $\frac{\wh{\RJ}_\Fc}{|\cdot|}$ is the summation of two integrable function which is therefore still integrable. Hence $\CF_{\psi}(\eta_K\cdot \wh{\RJ}^\p_\Fc)$ is a globally bounded function which is clearly locally integrable.

Notice that the above proof also works for the stable distribution characters of the discrete series representations of $G(\BR)$.

(II) Next we treat the ramified principal series component. Write $\RJ_\Fc = b_{\lam_0,n}$. The local integrability of $\RJ_\Fc$ follows from the local integrability of $\frac{1}{\Del}$. Moreover, in this situation, $\RJ_\Fc$ is actually compactly supported on $F$, and the integration of $\RJ_\Fc$ on $F$ can be pulled back to the integration of the character $\lam_0$ on the split torus using \eqref{eq:ggps:measure}. The vanishing of the integral $\int_{F}\RJ_\Fc(\xi)\ud \xi$ follows from the fact that $\lam_0$ is nontrivial. The rest arguments are the same as the cuspidal situation and we omit.

(III) Finally we treat the unramified principal series component. After dividing $b_n$ by $-(q-1)(q^{n}+q^{-n})$, we may write 
\begin{align*}
\RJ_\Fc(\tr(g)) = &
\Bigg\{
\begin{matrix}
\big(
1-\frac{q^{k}+q^{-k}}{|\vpi^{k}o^2-\vpi^{-k}|}
\big)+\frac{\wt{I}_{n+k}+\wt{I}_{n-k}}{|\vpi^{k}o^2-\vpi^{-k}|}, & g=\diag(\vpi^ko,\vpi^{-k}o^{-1})
\\
1 & g\text{ elliptic}
\end{matrix}
\end{align*}
where 
$$
\wt{I}_l = 
\bigg\{
\begin{matrix}
\frac{q^{-|l|}}{q^{n}+q^{-n}} & l\neq 0\\
-\frac{2}{(q-1)(q^{n}+q^{-n})} & l=0
\end{matrix}
$$
We first show that $\RJ_\Fc$ is actually compactly supported. Essentially we only need to show that $\RJ_\Fc$ is compactly supported on split locus. By symmetry let us assume that $k>n$ (or $k<-n$), then $\RJ_\Fc(\tr(\diag(\vpi^{k}o,\vpi^{-k}o^{-1})))$ is equal to
$$
= 
\big(
1-\frac{q^k+q^{-k}}{q^{k}}
\big)
+q^{-k}\big(
\frac{q^{-(n+k)}}{q^{n}+q^{-n}}
+\frac{q^{-(k-n)}}{q^{n}+q^{-n}}
\big)
=-q^{-2k}+q^{-2k}=0.
$$
It follows that $\RJ_\Fc$ is compactly supported on $F$. Moreover it is integrable on $F$ since $\Del \RJ_\Fc$ is globally bounded. Hence $\CF_{\psi^{-1}}(\RJ_\Fc)$ is represented by a smooth function.

Next we show that $\CF_{\psi^{-1}}(\RJ_\Fc)(0) = \int_{\xi\in F}\RJ_\Fc(\xi)\ud \xi =0$. The proof follows from tedious calculation. We first explain our idea for $n=0$. Using the above formula, when $n=0$
$$
\RJ_\Fc(\tr(g)) = 
\bigg\{
\begin{matrix}
1-\frac{2q}{(q-1)|o^2-1|} & g=\diag(o,o^{-1}),o\in \Fo^\times\\
1 & g\text{ elliptic}
\end{matrix}
$$
We can break $\RJ_\Fc$ as $\RJ_\Fc = \RJ_{\Fc,1}+\RJ_{\Fc,2}$ with $\RJ_{\Fc,1}$ is identically $=1$ on compact elements, and hence is equal to the characteristic function of $\Fo\subset F$, and $\RJ_{\Fc,2}$ is supported on (compact) split elements given by 
$$
\RJ_{\Fc,2}(\tr(g)) = 
\bigg\{
\begin{matrix}
-\frac{2q}{(q-1)|o^2-1|} & g=\diag(o,o^{-1}),o\in \Fo^\times\\
0 & g\text{ elliptic}
\end{matrix}
$$
Therefore after descending down to the trace variable and with the help of \eqref{eq:ggps:measure}, 
\begin{align*}
\int_{\xi\in F}
\RJ_{\Fc}(\xi)\ud \xi =& 
\int_{|\xi|\leq 1}
\RJ_{\Fc,1}(\xi)\ud \xi
-\int_{\xi=o+o^{-1}}
\frac{2q}{(q-1)|o^2-1|}\ud \xi
\\
=&\vol(\Fo)-
\frac{q}{(q-1)}
\int_{o\in \Fo^\times}
\ud^\times o
=1-\frac{q}{q-1}(1-q^{-1}) = 0.
\end{align*}
It follows that we complete the vanishing statement for $n=0$.

Next let us assume that $n\geq 1$. As the case of $n=0$, we break $\RJ_\Fc = \RJ_{\Fc,1}+\RJ_{\Fc,2}$ where $\RJ_{\Fc,1}$ is supported on compact elements given by 
$$
\RJ_{\Fc,1}(\tr(g)) = 
\bigg\{
\begin{matrix}
(1-\frac{2}{|o^2-1|})+\frac{2\wt{I}_n}{|o^2-1|},  & g=\diag(o,o^{-1})\\
1, & g \text{ elliptic}
\end{matrix}
$$
and $\RJ_{\Fc,2}$ is supported on non-compact elements given by 
$$
\RJ_{\Fc,2}(\tr(g)) = 
\bigg\{
\begin{matrix}
\big(
1-\frac{q^{k}+q^{-k}}{|\vpi^{k}o^2-\vpi^{-k}|}
\big)+\frac{\wt{I}_{n+k}+\wt{I}_{n-k}}{|\vpi^{k}o^2-\vpi^{-k}|} &  g=\diag(\vpi^ko,\vpi^{-k}o^{-1}),-n\leq k\leq n,k\neq 0\\
0  & g \text{ elliptic or }g=\diag(o,o^{-1})
\end{matrix}
$$
Explicitly, for $1\leq k\leq n-1$ and $g=\diag(\vpi^ko,\vpi^{-k}o^{-1})$,
$$
\RJ_{\Fc,2}(\tr(g)) = 
-q^{-2k}+\frac{q^{-k}\big(q^{-(n+k)}+q^{-(n-k)}\big)}{q^{n}+q^{-n}}
=
\frac{-q^{-2k+n}+q^{-n}}{q^{n}+q^{-n}}
$$
For $k=n$ and $g=\diag(\vpi^no,\vpi^{-n}o)$,
$$
\RJ_{\Fc,2}(\tr(g)) = 
-q^{-2n}
+q^{-n}
\bigg(
\frac{q^{-2n}}{q^{n}+q^{-n}}-\frac{2}{(q-1)(q^{n}+q^{-n})}  
\bigg).
$$
Therefore 
\begin{align*}
\int_{\xi\in F}
\RJ_{\Fc,2}(\xi)\ud \xi 
=&
\int_{\xi = t+t^{-1}}
\RJ_{\Fc,2}(\xi)\ud \xi 
=
\frac{1}{2}
\int_{t\in F^\times}
\RJ_{\Fc,2}(\diag(t,t^{-1}))\Del(\diag(t,t^{-1}))\ud^\times t
\\
=&
\frac{1}{2}
\sum_{k=-n,k\neq 0}^{n}
\int_{o\in \Fo^\times}
\RJ_{\Fc,2}(\diag(\vpi^{k}o,\vpi^{-k}o^{-1}))
\Del(\diag(\vpi^{k}o,\vpi^{-k}o^{-1}))\ud^\times o
\\
=&
\vol(\Fo^\times)
\cdot
\sum_{k=1}^{n}
\RJ_{\Fc,2}(\diag(\vpi^{k},\vpi^{-k}))
\Del(\diag(\vpi^{k},\vpi^{-k})).
\end{align*}
Notice that for $k\geq 1$, $\Del(\diag(\vpi^{k}o,\vpi^{-k}o^{-1})) = |\vpi^{k}o-\vpi^{-k}o^{-1}| = q^{k}$. Moreover
\begin{align*}
&\vol(\Fo^\times)
\cdot 
\RJ_{\Fc,2}(\diag(\vpi^{n},\vpi^{-n}))
\Del(\diag(\vpi^{n},\vpi^{-n})) 
\\
=& 
(1-q^{-1})
\bigg(
-q^{-2n}+q^{-n}
\big(
\frac{q^{-2n}}{q^{n}+q^{-n}}
-\frac{2}{(q-1)(q^{n}+q^{-n})}
\big)
\bigg)q^{n},
\end{align*}
and 
\begin{align*}
&\vol(\Fo^\times)
\cdot
\sum_{k=1}^{n-1}
\RJ_{\Fc,2}(\diag(\vpi^{k},\vpi^{-k}))
\Del(\diag(\vpi^{k},\vpi^{-k}))
\\
=&
(1-q^{-1})
\sum_{k=1}^{n-1}
\bigg(
\frac{q^{-n}-q^{n-2k}}{q^{n}+q^{-n}}
\bigg)q^{k}
=
\frac{(1-q^{-1})}{(q^{n}+q^{-n})}
\sum_{k=1}^{n}
\bigg(
q^{-n+k}-q^{n-k}
\bigg)
\\
=&
\frac{(1-q^{-1})}{(q^{n}+q^{-n})}
\cdot 
\bigg(
\frac{q^{1-n}-q-1+q^{n}}{1-q}
\bigg)
\end{align*}
It follows that 
\begin{align*}
\int_{\xi\in F}\RJ_{\Fc,2}(\xi)\ud \xi =&
(1-q^{-1})
\bigg(
-q^{-2n}+q^{-n}
\big(
\frac{q^{-2n}}{q^{n}+q^{-n}}
-\frac{2}{(q-1)(q^{n}+q^{-n})}
\big)
\bigg)q^{n}
\\
&+
\frac{(1-q^{-1})}{(q^{n}+q^{-n})}
\cdot 
\bigg(
\frac{q^{1-n}-q-1+q^{n}}{1-q}
\bigg)
\\
=&
\frac{(1-q^{-1})}{(q^n+q^{-n})}
\bigg(
\frac{q^{-n+1}+q^{n}}{1-q}
\bigg).
\end{align*}
Similarly, following the same idea as the case for $n=0$, 
\begin{align*}
\int_{\xi\in F}
\RJ_{\Fc,1}(\xi)\ud \xi =& 
\int_{|\xi|\leq 1}
\RJ_{\Fc,1}(\xi)\ud \xi = 
\int_{\xi\in \Fo}\ud\xi
+
\int_{\xi = o+o^{-1}}
\frac{2\wt{I}_n-2}{|o^2-1|}\ud\xi
\\
=&
1+\int_{o\in \Fo^\times}
\big(\frac{q^{-n}}{q^{n}+q^{-n}}-1\big)\ud^\times  o
=1-\frac{q^n}{q^{n}+q^{-n}}\vol(\Fo^\times) = 
1-\frac{q^{n}(1-q^{-1})}{q^{n}+q^{-n}}
\\
=&
\frac{(1-q^{-1})}{(q^{n}+q^{-n})}
\bigg(
\frac{q^{-n+1}+q^{n}}{q-1}
\bigg)
.
\end{align*}
It follows that 
$$
\int_{\xi\in F}
\RJ_{\Fc}(\xi)\ud \xi = 
\int_{\xi\in F}
\big(
\RJ_{\Fc,1}(\xi)+\RJ_{\Fc,2}(\xi)
\big)
\ud \xi = 0
$$
and hence $\CF_{\psi^{-1}}(\RJ_\Fc) = 0$. The rest argument is the same as cuspidal situation and we omit.
\end{proof}

\subsubsection{Archimedean variant}

Now we establish Theorem \ref{thm:vanishing:SBC} for $F$ archimedean and $G=\SL_2$. When $F=\BR$ and $\RJ_G = \big(\RJ_\Fc\circ \c\big)\ud^*g$ being equal to the stable discrete series character of $G(F)$, using the explicit character formula from \cite[1.3.1]{langlands-singularites-transfert}, we see that $\RJ_\Fc$ is absolutely integrable. The rest argument are the same as the proof of the cuspidal component in non-archimedean case. Notice that unlike the non-archimedean case where the vanishing of $\wh{\RJ}_\Fc$ at zero implies that $\wh{\RJ}_\Fc$ vanishes in a neighborhood of zero, for archimedean situation we only know that $\wh{\RJ}_\Fc$ vanishes at zero, while its derivatives might no longer vanish at zero.

It remains to treat $\RJ_G\in \CZ^{\st,\fin}_G$ supported on finitely many principal series component. 

\begin{proof}

We first treat the case when $F=\BC$. The tempered dual of $G(\BC)$ is parametrized by $\BZ\times i\BR$. For $(n,\lam)\in \BZ\times i\BR$, the corresponding tempered principal series representation $\pi_{n,\lam}\simeq \pi_{-n,-\lam}$ has distribution character given by 
$$
\Theta_{\lam,n}(\diag(e^{t}e^{i\theta},e^{-t}e^{-i\theta})) = 
\frac{e^{\lam t}e^{in\theta}+e^{-\lam t}e^{-in\theta}}{
|e^te^{i\theta}-e^{-t}e^{-i\theta}|
},\quad t\in \BR, \theta\in [0,2\pi).
$$
Here $e^te^{i\theta}\in \BC^\times \simeq \BR_{>0}\times \BS^1$. Any fixed pairs $(n,\lam)$ and $(-n,-\lam)$ uniquely determine a fixed connected component of the tempered dual of $G(\BC)$. By Weyl integration formula, for a test function $f$ on $G(\BC)$, 
\begin{equation}\label{eq:generalization}
\Theta_{\lam,n}(f) = 
\int_{t\in \BR}
\int_{\theta\in [0,2\pi)}
e^{in\theta}
e^{\lam t}
\RF_f(e^te^{i\theta})
\frac{\ud \theta}{2\pi}
\ud t
\end{equation}
where $\RF_f(e^te^{i\theta})$ is the orbital integral function (normalized by the square-root of Weyl discriminant) given by $= \Del(\diag(e^te^{i\theta},e^{-t}e^{-i\theta}))\int_{g\in G(\BC)/T(\BC)}f(g^{-1}\diag(e^te^{i\theta},e^{-t}e^{-i\theta}) g)\ud g$ and $T\subset G$ is the diagonal split torus. 
By \cite[p.123]{ggps}, up to constant, the Plancherel formula for $G(\BC)$ reads as follows:
$$
\del_e(g) = 
\sum_{n\in \BZ}
\int_{\BR}
\Theta_{ix,n}(g)(n^2+x^2)\ud x.
$$
Following the same idea as \cite[\S 3.4-\S 3.5]{Moy-Tadic-Bernstein}, for a fixed integer $n\in \BZ$ corresponding to components of tempered dual $\{n,-n\}\times \BR$, we are going to calculate the invariant distribution given by an entire function $\xi(\lam)$ that is rapidly decreasing function in vertical strip. Notice that when $n=0$, we need to assume that $k$ is even since the regular functions need to be invariant under the involution $(n,\lam)\mapsto (-n,-\lam)$. 

For notational convenience, we switch $\lam$ to $i\lam$ with $\lam\in \BR$. For any $n\in \BZ$ and $k\geq 0$, consider the invariant distribution $\FD_{n,\xi}$ on $G(\BC)$ that is given by 
\begin{align*}
\FD_{n,\xi}*f(1) =& 
\int_{\BR}
\bigg(
\xi(\lam)
\Theta_{i\lam,n}(f)
+
\xi(-\lam)
\Theta_{i\lam,-n}(f)
\bigg)
(n^2+\lam^2)\ud \lam
\\
=&
\bigg\{
\begin{matrix}
2
\int_{\BR}
\xi(\lam)
\Theta_{i\lam,n}(f)
(n^2+\lam^2)\ud \lam & (n\neq 0)\text{ or }(n=0,k\text{ even})
\\
0 & n=0, k \text{ odd}
\end{matrix}
\end{align*}
with $\xi(\lam)$ entire on $\BC$ and rapidly decay in bounded horizional strip (since we switch $\lam$ to $i\lam$). 
Using the formula \eqref{eq:generalization}, 
$$
\int_{\BR}
\xi(\lam)
\Theta_{i\lam,n}(f)
(n^2+\lam^2)\ud \lam
=
\int_{\BR}
\xi(\lam)
(n^2+\lam^2)
\ud \lam
\int_{t\in \BR}
\int_{\theta\in [0,2\pi)}
e^{in\theta}
e^{i\lam t}
\RF_f(e^te^{in\theta})
\frac{\ud\theta}{2\pi}\ud t.
$$
In other words, $\FD_{n,\xi}$ represents the following tempered distribution on $\Fc_G(F) = F$:
\begin{num}
\item\label{num:SBC:complex} As a tempered distribution on the trace variable $\in F = \BC$, $\FD_{n,\xi}$ represents the following tempered distribution: For any Schwartz function $\phi\in \CS(\BC)$, 
$$
\FD_{n,\xi}(\phi) = 
\int_{\BR}
\xi(\lam)
(n^2+\lam^2)
\ud \lam
\int_{t\in \BR}
\int_{\theta\in [0,2\pi)}
e^{in\theta}
e^{i\lam t}
\phi(e^te^{in\theta}+e^{-t}e^{-in\theta})
\frac{\theta}{2\pi}\ud t
$$
i.e. after pulling back $\phi$ along the trace map to get a Schwartz function $\phi(e^te^{in\theta}+e^{-t}e^{-in\theta})$ on $T(\BC)$, $\FD_{n,\xi}$ acts on $\phi$ via the above formula.
\end{num}
The integration in $\theta\in [0,2\pi)$ is compact and won't affect the analytical issue. We focus on $t\in \BR$. After changing variable, essentially we are looking at the distribution 
$$
\phi\in \CS(\BR)\mapsto 
\int_{\BR}
\xi(\lam)(n^2+\lam^2)
\ud \lam
\int_0^\infty|x|^{i\lam}
\phi(x+x^{-1})\ud^*x.
$$
Now by Mellin inversion (\cite[Defin.~2.2]{jiang2021certain}, \cite[Thm.~4.3]{igusa_higher}), 
$$
x\mapsto \int_{\BR}|x|^{i\lam}\xi(\lam)(n^2+\lam^2)\ud \lam
$$
represents a Schwartz function $f_\xi(x)$ on $\CS(\BR^\times)$, which is rapidly decreasing at both $0$ and $\infty$. Hence the distribution becomes
\begin{align}\label{eq:archimedean:vanishing:PScomplex}
\int_{0}^\infty
f_{\xi}(x)\phi(x+x^{-1})\ud^\times x = 
\int_{|y|>2}
\frac{f_\xi(\frac{-y+\sqrt{y^2-4}}{2})}{\sqrt{y^2-4}}
\phi(y)\ud y
\end{align}
Notice that the function $\frac{\mathbbm{1}_{|y|>2}f_\xi(\frac{-y+\sqrt{y^2-4}}{2})}{|\sqrt{y^2-4}|}$ is integrable near $y=2$, and smooth of rapidly decay near infinity. Hence its Fourier transform is represented by a smooth function that is globally bounded. To show that the Fourier transform vanishes at zero, Finally to show that the Fourier transform vanishes at zero, it suffices to see that in \eqref{num:SBC:complex}, if we plug $\phi$ to be the constant function, then for $n\neq 0$, the following integral vanishes identically
$$
\int_{0}^{2\pi}e^{in\theta}\ud \theta = 0.
$$
For $n=0$, we are reduced to show that 
$$
\int_{t\in \BR}
e^{i\lam t}\ud t
\int_\BR
\xi(\lam)\lam^2\ud \lam=0.
$$
But it follows from Fourier inversion directly. In conclusion we finished the proof for Part (2) of Theorem \ref{thm:vanishing:SBC} for this situation. For Part (3), following the same idea as before, up to the compact integration in $e^{i\theta}$, we only need to observe that in the equation \eqref{eq:archimedean:vanishing:PScomplex}, after multiplying $\sqrt{|y^2-4|}$, which is the square-root of Weyl discriminant, we get 
$$
\mathbbm{1}_{|y|>2}
f_\xi\big(
\frac{-y+\sqrt{y^2-4}}{2}
\big)
$$
that is globally bounded. Hence Part (3) of Theorem \ref{thm:vanishing:SBC} follows from the same argument as the Hausdorff-Young inequality in the non-archimedean case.

The case when $F=\BR$ can be treated in a similar vein as $F=\BC$ situation, and we give a brief sketch.
Up to constant, the Plancherel formula for $G(\BR)$ reads as follows (\cite[p.123]{ggps})
$$
\del_e(g) = 
(\mathrm{Dis})+
\int_{\lam\in \BR}
\Theta_{+,i\lam}(g)\lam \mathrm{tanh}\big(\frac{\pi \lam}{2}\big)
\ud \lam
+
\int_{\BR}
\Theta_{-,i\lam}
(g)
\lam\mathrm{coth}\big(\frac{\pi \lam}{2}\big)
\ud \lam
$$
where $(\mathrm{Dis})$ is the discrete series part, and for $a\in \BR^\times$, the distribution character is given as follows
$$
\Theta_{\pm ,i\lam}(\diag(a,a^{-1})) = 
\frac{
|a|^{i\lam}\sgn(a)^{n_\pm}+|a|^{-i\lam}\sgn(a)^{n_\pm}
}{|a-a^{-1}|}
$$
with $n_+ = 0$ and $n_- = 1$. It suffices to treat $\RJ_G =\big(\RJ_\Fc\circ \c\big)\ud^*g\in \CZ_G^{\st,\fin}$ supported on each principal series component individually. 

Fix an even entire function $\xi(\lam)$ on $\BC$ that is rapidly decay in bounded horizontal vertical strips. We are studying the following distributions on $f\in \CC^\infty_c(G(\BR))$
\begin{align*}
\FD_{+,\xi}(f) &= 
\int_{\lam\in \BR}
\Theta_{+,i\lam}(f)
\xi(\lam)
\lam 
\tanh\big(
\frac{\pi \lam}{2}
\big)\ud \lam,
\\
\FD_{-,\xi}(f) &= 
\int_{\lam\in \BR}
\Theta_{-,i\lam}(f)
\xi(\lam)
\lam 
\coth\big(
\frac{\pi \lam}{2}
\big)\ud \lam
\end{align*}
Following the complex case, let $\RF_f(\pm e^t) = 
\Del(\pm\diag(e^t,e^{-t}))
\int_{g\in G(\BR)/T(\BR)}
f(\pm g^{-1}
\diag(e^t,e^{-t})g)
\ud g
$
be the normalized orbital integral where $T\subset G$ is the diagonal split torus. Then 
\begin{align*}
\FD_{+,\xi}(f) &= 
\int_\BR
\xi(\lam)
\lam\tanh(\lam)
\ud \lam
\int_{t\in \BR}
e^{i\lam t}
\big(\RF_f(e^t)+\RF_f(-e^t)\big)\ud t
\\
\FD_{-,\xi}(f) &= 
\int_\BR
\xi(\lam)
\lam\coth(\lam)
\ud \lam
\int_{t\in \BR}
e^{i\lam t}
\big(\RF_f(e^t)-\RF_f(-e^t)\big)\ud t
\end{align*}
In other words, 
\begin{itemize}
\item 
As a tempered distribution on the trace variable $\in F=\BR$, $\FD_{+,\xi}$ represents the following tempered distribution: For any Schwartz function $\phi\in \CS(\BR)$,
\begin{align*}
\FD_{+,\xi}(\phi) =& 
\int_\BR
\xi(\lam)
\lam\tanh(\lam)
\ud \lam
\int_{t\in \BR}
e^{i\lam t}
\big(\phi(e^t+e^{-t})+\phi(-e^t-e^{-t})\big)\ud t
\\
=&
\int_\BR
\xi(\lam)
\lam\tanh(\lam)
\ud \lam
\int_{x\in \BR}
|x|^{i\lam}
\phi(x+x^{-1})\ud^*x
\end{align*}

\item 
As a tempered distribution on the trace variable $\in F=\BR$, $\FD_{-,\xi}$ represents the following tempered distribution: For any Schwartz function $\phi\in \CS(\BR)$,
$$
\FD_{-,\xi}(\phi) = 
\int_\BR
\xi(\lam)
\lam\coth(\lam)
\ud \lam
\int_{x\in \BR}
|x|^{i\lam}
\sgn(x)\phi(x+x^{-1})\ud^*x
$$
where $\sgn(x)$ is the sign character.
\end{itemize}

Following the same analytical estimations in the complex case, the results follow from the following two facts:

\begin{itemize}
    \item For the tempered distribution $\FD_{+,\xi}$, the Mellin transform of 
    $$
    \lam\in \BC\mapsto \xi(\lam)\lam \tanh(\lam)
    $$
    represents a Schwartz function on $\CS(\BR^\times)$. Moreover, $\xi(\lam)\lam \tanh(\lam)$ vanishes at zero which proves the vanishing statement in Part (2) of Theorem \ref{thm:vanishing:SBC}. The rest analytical estimations are the same as complex situation and we omit;

    \item For the tempered distribution $\FD_{-,\xi}$, the Mellin transform of 
    $$
    \lam\in \BC\mapsto \xi(\lam)\lam \coth(\lam)
    $$
    represents a Schwartz function on $\CS(\BR^\times)$ that is even (notice that $\lam\coth(\lam)$ is still an entire function). Moreover, after multiplying by the sign function $\sgn(x)$, the resulting function is an odd function on $\BR$ whose integration on $\BR$ automatically vanishes. This proves the desired vanishing property in 
    Part (2) of Theorem \ref{thm:vanishing:SBC}. The rest analytical estimations are the same as complex situation and we omit.
\end{itemize}
\end{proof}


\section{Calculation of the descent}\label{sec:descent}

In this section, we calculate the action of an element in the stable Bernstein center of $\SL_2(F)$ or $\GL_2(F)$ on a dihedral representation over a local field $F$ whose residual characteristic is not equal to two. Each element $\RJ_G\in \CZ^{\rm st}$ acts on the dihedral representation $\CW_E(\chi)$ associated with a quadratic extension $E/F$ and a character $\chi$ of $E^1$ as a scalar $\gamma(\RJ_G,\CW_E(\chi))$. From the integral formula expressing the scalar $\gamma(\RJ_G,\CW_E(\chi))$, we derive Theorem \ref{thm:stableLaf} providing an integral formula for the stable Lafforgue transform.  We also establish supplementary results for the image of the Lafforgue transform for $G=\SL_2$.

\subsection{Descent formula}\label{subsec:proofsofdescent}
In this subsection, we will state and prove the following descent formula. We prove it for $G=\SL_2$. The case for $G=\GL_2$ is similar and we leave it to the reader.

\begin{thm}\label{thm:descent:sl2}
\begin{enumerate}
	\item
Let $G=\SL_2$. Let $\RJ_G = (\RJ_\Fc\circ \c)\ud^*g\in \CZ_G^{\st,\fin}$. Then $\RJ_G$ acts on the dihedral representation $\CS(\chi)\subset \CW_E(\chi)$ associated with an étale quadratic $F$-algebra $E$ and a character $\chi$ of $E^1$ via the scalar $\gam(\RJ_G,\CW_E(\chi))$ given by the formula
\begin{equation} \label{eq:descent:sl2}
\gam(\RJ_G,\CW_E(\chi)) = 
\lam_{E/F}
\int_{E^1}
\CF_{\psi}
\bigg(
\CF_{\psi^{-1}}(\RJ_\Fc)\frac{\eta}{|\cdot|}
\bigg)(\tr(e))
\chi(e)\ud_{E^1}e.
\end{equation}

	\item
Let $G=\GL_2$. Let $\RJ_G = (\RJ_\Fc\circ \c)\ud^*g\in \CZ^{\st,\fin}_G$. Then $\RJ_G$ acts on the dihedral representation $\CW_E(\chi)$ associated with an étale quadratic $F$-algebra $E$ and a character $\chi$ of $E^\times$ via the scalar $\gam(\RJ_G,\CW_E(\chi))$ given by the formula
\begin{equation}\label{eq:descent:gl2}
\gam(\RJ_G,\CW_E(\chi)) = 
\lam_{E/F}
\int_{E^\times}
\CF_{\psi}
\bigg(
\CF_{\psi^{-1}}(\RJ_\Fc)\frac{\eta}{|\cdot|}
\bigg)(\tr(e))
\chi(e)|\Nr(e)|^{-\frac{1}{2}}\ud_{E^\times}e.
\end{equation}
\end{enumerate}
\end{thm}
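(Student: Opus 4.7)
The plan is to compute the scalar $\gamma(\RJ_G, \CW_E(\chi))$ by directly evaluating the action of $\RJ_G$ on the Schrödinger model $\CS(\chi) \subset \CS(E)$ of the dihedral representation. Since $\RJ_G \in \CZ^{\st,\fin}_G$ is represented by a locally integrable function on $G(F)$ (Theorem \ref{thm:vanishing:SBC} and Remark \ref{rmk:descent:localintegrable}), for every $\phi \in \CS(\chi)$ I set
$$\gamma(\RJ_G, \CW_E(\chi))\,\phi(z) \;=\; \int^{\reg}_{G(F)} \RJ_\Fc(\c(g))\,\bigl(\CW_E(g)\phi\bigr)(z)\,\ud g,$$
the regularization being controlled by the analytic properties of $\wh{\RJ}_\Fc := \CF_{\psi^{-1}}(\RJ_\Fc)$ established in Theorem \ref{thm:vanishing:SBC}. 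The idea is to unfold this integral along the Bruhat decomposition, use Fourier inversion in the trace variable to trigger two successive $\delta$-constraints, and then identify the final one-variable integral as a Fourier transform.

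Concretely, I parametrize the open Bruhat cell of $\SL_2(F)$ by $g = \bigl(\begin{smallmatrix} 1 & \\ v & 1 \end{smallmatrix}\bigr)\bigl(\begin{smallmatrix} t & \\ & t^{-1} \end{smallmatrix}\bigr)\bigl(\begin{smallmatrix} 1 & u \\ & 1 \end{smallmatrix}\bigr)$ with Haar measure $|t|^{2}\,\ud^{\times} t\,\ud u\,\ud v$, so that $\tr(g) = t + t^{-1} + tuv$, and substitute the explicit formula of Lemma \ref{lem:ell:sl2:3} for $\CW_E(g)\phi$. After changing variable $w = tuv$ in the $u$-integration and Fourier-expanding $\RJ_\Fc(t+t^{-1}+w) = \int_F \wh{\RJ}_\Fc(\xi)\,\psi((t+t^{-1}+w)\xi)\,\ud\xi$, the $u$-integral collapses via $\int_F \psi(ua)\,\ud u = \delta(a)$ to the constraint $\xi = -t\Nr(x)/v$.

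The substitutions $y = tx$ in the $E$-integral and $w = tv$ in the $v$-integral then reduce the residual $t$-integral to $\int_F \psi(t(\Nr(z)-\Nr(y))/w)\,\ud t = |w|\,\delta(\Nr(z)-\Nr(y))$. Combined with the polar decomposition $\ud_E y = \ud_{E^1} e \cdot \ud\Nr(y)$ from subsection \ref{subsec:notation}, this localizes $y$ to $\{ze : e \in E^1\}$, on which $\tr(z\bar y) = \Nr(z)\tr(e)$ and $\Nr(y) = \Nr(z)$. Finally, the change of variable $\xi = -\Nr(z)/w$ in the remaining $w$-integral, using $\eta(\Nr(z)) = 1$ and $\eta^2 = 1$, converts it into $\CF_\psi(\eta\,\wh{\RJ}_\Fc/|\cdot|)(\tr(e))$, yielding
$$\bigl(\CW_E(\RJ_G)\phi\bigr)(z) \;=\; \lam_{E/F}\int_{E^1} \phi(ze)\,\CF_\psi\!\left(\frac{\eta\,\wh{\RJ}_\Fc}{|\cdot|}\right)\!(\tr(e))\,\ud_{E^1}e.$$
The $\CS(\chi)$-equivariance $\phi(ze) = \chi(e)\phi(z)$ then produces the scalar \eqref{eq:descent:sl2}. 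The $\GL_2$ case runs in parallel using Lemma \ref{lem:elliptic:2}: the extra factor $|a|^{1/2}\chi(e_a)$ from $\diag(a,1)$ together with integration over the fiber $E^a = \Nr^{-1}(a)$ instead of $E^1$ generates the weight $|\Nr(e)|^{-1/2}$ and the integration over $E^\times$ in \eqref{eq:descent:gl2}.

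The main obstacle is rigorously justifying the sequence of formal manipulations above, since the integrand is not jointly absolutely integrable in $(u,v,t,x)$ and the $u$- and $t$-integrations producing the two $\delta$-constraints are distributional rather than absolute. The cleanest way to proceed is to approximate $\RJ_\Fc$ by smooth stable functions supported on the regular semisimple locus $\{c \in F : c^2 - 4 \neq 0\}$, carry out the calculation there in absolutely convergent form, and pass to the limit using the global boundedness of $\CF_\psi(\eta\,\wh{\RJ}_\Fc/|\cdot|)$ guaranteed by Theorem \ref{thm:vanishing:SBC}(3), together with the compactness of $E^1$ in the nonsplit case. The split case $E = F \times F$ requires minor modifications because $E^1 \simeq F^\times$ is non-compact, but $\phi \in \CS(\chi)$ is Schwartz in the relevant direction, and consistency with the principal-series stable character identity implicit in \eqref{eq:ggps:gl2:interpret:sl2} furnishes a cross-check.
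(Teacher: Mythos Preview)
Your core strategy---unfolding the action of $\RJ_G$ on $\CS(\chi)$ via the Bruhat cell and Lemma \ref{lem:ell:sl2:3}, then using Fourier inversion and variable changes to collapse the integral onto $E^1$---matches the paper's approach for the supercuspidal case (subsection \ref{subsec:descent:supercuspidal}). The paper's changes of variable are organized slightly differently (it first substitutes $\mu = t+t^{-1}+tuv$ to expose $\CF_{\psi^{-1}}(\RJ_\Fc)$, then performs a chain of changes in $x,v,t$), but the endpoint is the same.

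The regularization scheme, however, is substantively different. The paper does \emph{not} approximate $\RJ_\Fc$; instead it keeps $\RJ_\Fc$ fixed, inserts cutoffs $\mathbbm{1}_i(v)\mathbbm{1}_j(t)\mathbbm{1}_k(u)$ and complex weights $|v|^{s_1}|t|^{s_2}$, takes the iterated limit $k\to\infty$, $j\to\infty$, $i\to\infty$ in that order (each step justified by dominated convergence using the specific analytic bounds of Theorem \ref{thm:vanishing:SBC}), and only at the end continues from $0<\Re(s)<1$ to $s=0$. Your proposal to approximate $\RJ_\Fc$ by smooth functions supported on the regular semisimple locus is not obviously harmless: such approximants do not lie in the Bernstein center, so there is no a priori reason their action converges to the scalar $\gamma(\RJ_G,\CW_E(\chi))$, and the formal $\delta$-manipulations still need justification for each approximant. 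In the supercuspidal case this gap is recoverable (the original integral is already absolutely convergent because matrix coefficients are compactly supported), but your writeup obscures where the actual analytic work happens.

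The real gap is the split case. You call it a ``minor modification,'' but the paper explicitly remarks after Theorem \ref{thm:descent:Bruhatreg} that while the Bruhat computation formally goes through for $E=F\times F$, one cannot show that the Bruhat-regularized integral $\RI_{\phi,z}$ agrees with the Bernstein-center regularization \eqref{eq:descent:supercuspidal:Bernsteinreg} when $E$ is split---precisely because $E^1$ is noncompact and $\CW_E(\chi)$ is not supercuspidal. The paper therefore uses an entirely separate argument (subsection \ref{subsec:descent:PS}): it works in the universal principal series model $i_U=\ind_U^G$, computes $r_U(\RJ_G*f)$ for specific test functions $f$ supported on the Bruhat cell, and identifies the result via a complex deformation $\RI(s)$ that is shown to be absolutely convergent as a quadruple integral for $0\le\Re(s)<1$. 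Your appeal to ``consistency with the principal-series stable character identity'' is circular, since that identity is exactly what the theorem asserts in the split case.
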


Based on the Gelfand-Graev character formula \eqref{eq:ggps:gl2:interpret:sl2} and the self-adjointness of the Fourier transform, the following corollary holds, which confirms Theorem \ref{thm:stableLaf}.

\begin{cor}\label{cor:descent:sl2}
    With the above notation the following identity holds
    $$
\gam(\RJ_G,\CW_E(\chi))
=\int_{\xi\in \Fc_G(F)}
\CF_{\psi}
\bigg(
\frac{\CF_{\psi^{-1}}(\RJ_\Fc)}{|\cdot|}
\bigg)(\xi)
\theta^\st_\chi(\xi)
\ud \xi.
    $$
Here $\ud \xi$ is the additive Haar measure introduced in subsection \ref{subsec:measure} when $G=\SL_2$, and is the product of additive Haar measure on trace variable with the multiplicative Haar measure on determinant variable when $G=\GL_2$.
\end{cor}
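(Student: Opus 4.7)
The plan is to match the right-hand side of the corollary with the right-hand side of Theorem \ref{thm:descent:sl2} through a change of variables, self-adjointness of the Fourier transform, and the Gelfand-Graev character formula \eqref{eq:ggps:gl2:interpret:sl2}. I carry out the argument for $G=\SL_2$; the $\GL_2$ case is parallel upon tracking the additional $|\det|^{1/2}$ factor in \eqref{eq:ggps:interpret:2}, which cancels the $|\Nr(e)|^{-1/2}$ appearing in \eqref{eq:descent:gl2}.

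First I descend the dihedral integral on the right of \eqref{eq:descent:sl2} to the Steinberg-Hitchin base. Setting $F(\xi) = \CF_\psi\!\left(\CF_{\psi^{-1}}(\RJ_\Fc)\,\eta/|\cdot|\right)(\xi)$, the integrand $F(\tr(e))\chi(e)$ is automatically $\tau_E$-symmetric in $\tr(e)$, so only the symmetric component $\tfrac{1}{2}(\chi+\chi^\iota)$ contributes. Applying the Jacobian relation $\ud\xi = \Del\,\ud_{E^1}e$ from \eqref{eq:ggps:measure} and using that $\tr:E^1\to \tr(E^1)$ is generically two-to-one yields
$$\int_{E^1} F(\tr(e))\,\chi(e)\,\ud_{E^1}e \;=\; \int_F F(\xi)\,\phi_\chi(\xi)\,\ud\xi,$$
where $\phi_\chi=(\chi+\chi^\iota)/\Del$ is the function from \eqref{eq:phi-chi}, extended by zero off $\tr(E^1)$. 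Hence
$$\gam(\RJ_G,\CW_E(\chi))\;=\;\lam_{E/F}\int_F \CF_\psi\!\left(\tfrac{\eta}{|\cdot|}\CF_{\psi^{-1}}(\RJ_\Fc)\right)(\xi)\,\phi_\chi(\xi)\,\ud\xi,$$
and by the self-adjointness identity $\int \CF_\psi(a)\,b=\int a\,\CF_\psi(b)$ this equals
$$\lam_{E/F}\int_F \CF_{\psi^{-1}}(\RJ_\Fc)(\xi)\,\frac{\eta(\xi)}{|\xi|}\,\CF_\psi(\phi_\chi)(\xi)\,\ud\xi. \quad (\ast)$$

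Next I reduce the right-hand side of the corollary to the same expression $(\ast)$. Substituting the Gelfand-Graev formula \eqref{eq:ggps:gl2:interpret:sl2}, $\theta^\st_\chi = \lam_{E/F}\CF_{\psi^{-1}}(\eta\,\CF_\psi(\phi_\chi))$, one more application of Parseval combined with $\CF_\psi\!\circ\!\CF_{\psi^{-1}}=\Id$ yields
$$\int_\Fc \CF_\psi\!\left(\tfrac{\CF_{\psi^{-1}}(\RJ_\Fc)}{|\cdot|}\right)(\xi)\,\theta^\st_\chi(\xi)\,\ud\xi \;=\; \lam_{E/F}\int_F \CF_{\psi^{-1}}(\RJ_\Fc)(\xi)\,\frac{\eta(\xi)}{|\xi|}\,\CF_\psi(\phi_\chi)(\xi)\,\ud\xi,$$
which agrees with $(\ast)$, completing the identification. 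The main technical point is to justify all of these manipulations as absolutely convergent integrals rather than formal identities of tempered distributions: this is exactly where the analytic input of Theorem \ref{thm:vanishing:SBC} enters, ensuring that $\CF_{\psi^{-1}}(\RJ_\Fc)/|\cdot|$ is absolutely integrable on $F$ and that $F(\xi)$ is globally bounded, while $\phi_\chi$ is compactly supported and integrable (so $\CF_\psi(\phi_\chi)$ is bounded and smooth). These bounds legitimize each Parseval step and make the descent to $\Fc(F)$ rigorous.
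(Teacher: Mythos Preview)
Your proof is correct and follows exactly the route the paper indicates: the paper states the corollary follows from ``the Gelfand-Graev character formula \eqref{eq:ggps:gl2:interpret:sl2} and the self-adjointness of the Fourier transform,'' and you have spelled out precisely those two steps, together with the descent from $E^1$ to $\Fc(F)$ via \eqref{eq:ggps:measure} and \eqref{eq:phi-chi}. Your invocation of Theorem \ref{thm:vanishing:SBC} to justify the Parseval manipulations is also the appropriate analytic input.
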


We will use three different approaches to establish the theorem for different types of dihedral representations:

\begin{enumerate}
    \item 
    
    For $F$ non-archimedean and $\chi$ a nontrivial character of $E^1$, so that $\CW_E(\chi)$ is supercuspidal, we establish Theorem \ref{thm:descent:sl2} using the so called \emph{Bruhat regularization};

    \item For principal series representations, we establish Theorem \ref{thm:descent:sl2} through studying the action of $\RJ_G$ on a dense subspace of the \emph{universal principal series model};
     
    \item Finally, for $F$ the real field and $\chi$ a nontrivial character of $E^1$, so that $\CW_E(\chi)$ is a discrete series representation, we establish Theorem \ref{thm:descent:sl2} through connecting the stable discrete series character with the \emph{Chebyshev polynomials}.
\end{enumerate}
The discrete series representations are subrepresentations of principal series representations, hence a prior the third situation follows from the second. However we still find it   fascinating to highlight the connection with the Chebyshev polynomials.

\subsection{Supercuspidal representations and Bruhat regularization}\label{subsec:descent:supercuspidal}

Let $F$ be a non-archimedean local field, $E$ a quadratic extension of $F$, and $\chi:E^1\to \BC^\times$ a nontrivial character. In this case, $\pi=\CW_E(\chi)$ can be identified as the subspace of functions in $\CW_E$ that are $\chi$-equivariant. Moreover, the matrix coefficients for $\CW_E(\chi)$ are compactly supported. Therefore for $\RJ_G = \big(\RJ_\Fc\circ \c\big)\ud^*g\in \CZ_G^{\st,\fin}$, the following limit  
\begin{equation}\label{eq:descent:supercuspidal:Bernsteinreg}
\lim_{n\mapsto \infty}
\int_{g\in G(F)}
\big(
\RJ_G*e_{K_n}
\big)(g)
\CW_E(\chi)(g)v\ud^*g,\quad v\in \CW_E(\chi)
\end{equation}
is convergent to the following absolutely convergent integral 
$$
\int_{g\in G(F)}
\RJ_\Fc\circ \c(g)
\CW_E(\chi)(g)v\ud^*g.
$$
Here we use the fact that $\RJ_G$ is locally integrable from Remark \ref{rmk:descent:localintegrable}. By the absolute convergence, we can restrict the integration to the open dense Bruhat cell based on \eqref{eq:notation:sl2mes:2} and the following lemma.

\begin{lem}\label{lem:ell:sl2:coordinates}
The following identity holds
$$
\RJ_G
\bigg(
\begin{pmatrix}
1 & \\
v & 1
\end{pmatrix}
\begin{pmatrix}
t & \\
 & t^{-1}
\end{pmatrix}
\begin{pmatrix}
1 & u\\
 & 1
\end{pmatrix}
\bigg)
=\RJ_\Fc(t+t^{-1}+tuv) |t|^2 \ud^*  t
\ud u \ud v
$$
\end{lem}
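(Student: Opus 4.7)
The plan is purely computational: the lemma records the pullback of the invariant distribution $\RJ_G = (\RJ_\Fc \circ \c)\,\ud^* g$ along the Bruhat coordinate chart, so the content is (i) evaluating $\tr$ in these coordinates and (ii) quoting the Jacobian of the Bruhat decomposition.

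First I would carry out the matrix multiplication
\[
\begin{pmatrix}1 & \\ v & 1\end{pmatrix}\begin{pmatrix}t & \\ & t^{-1}\end{pmatrix}\begin{pmatrix}1 & u\\ & 1\end{pmatrix}
= \begin{pmatrix} t & tu \\ tv & t^{-1}+tuv \end{pmatrix},
\]
which gives $\tr = t+t^{-1}+tuv$ (and determinant $1$, as it must, since the three factors lie in $\SL_2$). Since $\c(g) = \tr(g)$ on $\SL_2$, this immediately identifies the scalar factor $\RJ_\Fc(t+t^{-1}+tuv)$.

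Next I would invoke the measure normalization fixed in subsection~\ref{subsec:notation}: equations~\eqref{eq:notation:sl2mes:1}--\eqref{eq:notation:sl2mes:2} assert exactly that, in the Bruhat coordinates $(v,t,u)$ on the open dense cell, the fixed Haar measure on $\SL_2(F)$ is $\ud g = |t|^2\,\ud^\times t\,\ud u\,\ud v$. Substituting this into $\RJ_G = (\RJ_\Fc\circ \c)\,\ud^* g$ yields the stated formula.

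There is no real obstacle here — the only point to be careful about is consistency of the Haar measure convention with the one used to define the multiplicative convolution in \eqref{eq:descent:supercuspidal:Bernsteinreg}, which is already taken care of by the fixed normalization in subsection~\ref{subsec:notation}. The lemma then serves in the next step to reduce the orbital integral of $\RJ_G$ against $\CW_E(\chi)(g)v$ to an explicit triple integral in $(v,t,u)$, which is the form needed to apply Lemma~\ref{lem:ell:sl2:3}.
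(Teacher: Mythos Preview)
Your proposal is correct and matches the paper's proof essentially verbatim: the paper's argument is precisely to cite \eqref{eq:notation:sl2mes:2} for the measure and verify the trace identity $\tr = t+t^{-1}+tuv$ by the same matrix multiplication you wrote out.
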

\begin{proof}
    It follows from \eqref{eq:notation:sl2mes:2} and the following identity 
    $$
\tr
\bigg(
\begin{pmatrix}
1 & \\
v & 1
\end{pmatrix}
\begin{pmatrix}
t & \\
 & t^{-1}
\end{pmatrix}
\begin{pmatrix}
1 & u\\
 & 1
\end{pmatrix}
\bigg)
=t+t^{-1}+tuv.
    $$
\end{proof}
It follows that for any $\phi\in \CW_E(\chi)$ and $z\in E$, 
\begin{align*}
\gam(\RJ_G,\CW_E(\chi))\phi(z)=
\int_{u,v\in F,t\in F^\times}
&\RJ_\Fc(t+t^{-1}+tuv)
\\
&\bigg(
\CW_E(\chi)
\bigg(
\begin{pmatrix}
1 & \\
v & 1
\end{pmatrix}
\begin{pmatrix}
t & \\
 & t^{-1}
\end{pmatrix}
\begin{pmatrix}
1 & u\\
 & 1
\end{pmatrix}
\bigg)
\phi
\bigg)(z)
|t|^2\ud^*t\ud u\ud v.
\end{align*}

Furthermore, by Lemma \ref{lem:ell:sl2:3}, the following corollary holds.

\begin{cor}\label{cor:ell:sl2:coordinates}
With the above notation the following identity holds for any $\phi\in \CW_E(\chi)$,
\begin{align*}
\gam(\RJ_G,\CW_E(\chi))\phi(z)=
\lam_{E/F}
\int_{u,v\in F,t\in F^\times}
&\RJ_\Fc(t+t^{-1}+tuv)
\frac{\eta(-v)}{|v|}
\int_{x\in E}
\psi
\bigg(
\frac{\Nr(z-x)}{v}
\bigg)
\\
&
\eta(t)|t|
\psi\big(ut^2\Nr(x)\big)\phi(tx)\ud_Ex
|t|^2\ud^*t\ud u\ud v.
\end{align*}
\end{cor}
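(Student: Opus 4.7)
The corollary is really a direct computation: the display immediately preceding it already expresses $\gam(\RJ_G,\CW_E(\chi))\phi(z)$ as an integral over Bruhat coordinates $(u,v,t)\in F\times F\times F^\times$, with integrand
$$\RJ_\Fc(t+t^{-1}+tuv)\cdot\bigg(\CW_E(\chi)\bigg(\begin{pmatrix}1&\\v&1\end{pmatrix}\begin{pmatrix}t&\\&t^{-1}\end{pmatrix}\begin{pmatrix}1&u\\&1\end{pmatrix}\bigg)\phi\bigg)(z)\cdot|t|^{2}.$$
My plan is simply to substitute the explicit formula for this action of $\CW_E$ on the open Bruhat cell, supplied by Lemma \ref{lem:ell:sl2:3}, and pull the constant $\lam_{E/F}$ outside the integral.

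More precisely, Lemma \ref{lem:ell:sl2:3} was stated and proved at the level of $\CS(E)$, but $\CW_E(\chi)$ is a quotient of $\CS(E)$ by the $(E^1,\chi)$-coinvariants (or equivalently, for nonsplit $E$, a direct summand consisting of $(E^1,\chi)$-equivariant functions). Since the $\SL_2(F)$-action on $\CS(E)$ commutes with the $E^1$-action, the formula of Lemma \ref{lem:ell:sl2:3} descends verbatim to $\CW_E(\chi)$ after identifying $\phi$ with a representative in $\CS(E)$. Substitution then yields exactly the claimed expression.

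The one legitimate worry is whether the resulting quadruple integral in $(u,v,t,x)$ can be written with the $x$-integration pulled inside the $(u,v,t)$-integration, as displayed. This is a matter of Fubini, and it is justified by the absolute-convergence analysis carried out in subsection \ref{subsec:descent:supercuspidal}: matrix coefficients of the supercuspidal $\CW_E(\chi)$ are compactly supported on $G(F)$, and $\RJ_\Fc\circ\c$ is locally integrable by Remark \ref{rmk:descent:localintegrable}, so the outer $(u,v,t)$-integral of the original display is absolutely convergent; for each fixed $(u,v,t)$ in the support of that integrand, the inner $x$-integral in Lemma \ref{lem:ell:sl2:3} converges absolutely as a Schwartz-Bruhat integral over $E$. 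Hence no regularization is needed at this step, the interchange is legal, and the corollary follows. The harder analytic work, namely justifying Fourier-type manipulations on $\RJ_\Fc$, is deferred to the subsequent argument where one reinterprets the $u$- and $v$-integrations as Fourier transforms against $\psi$ to extract the $\CF_\psi(\eta\,\CF_{\psi^{-1}}(\RJ_\Fc)/|\cdot|)$ appearing in Theorem \ref{thm:descent:sl2}.
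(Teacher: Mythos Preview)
Your proposal is correct and matches the paper's approach exactly: the paper simply states that the corollary follows from Lemma \ref{lem:ell:sl2:3} applied to the preceding display, which is precisely the substitution you describe. Your additional remarks about the descent of the formula from $\CS(E)$ to $\CW_E(\chi)$ and the Fubini justification are more explicit than the paper, but they only elaborate on what is implicit there.
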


In the following, in order to establish Theorem \ref{thm:descent:sl2}, we introduce the so called \emph{Bruhat regularization} for the above right hand side integral. We outline the procedure in the following steps:

\begin{enumerate}
    \item Let $\Lam = \BN^3$ be the index set parameterizing triples of natural numbers. Fix a family of test functions $\{\mathbbm{1}_n\}_{n\geq 1}\subset \CC^\infty_c(F^\times)$ such that $\lim_{n\mapsto \infty}\mathbbm{1}_n$ tends to the identity function on $F^\times$. For $\lam = (i,j,k)\in \BN^3$, let 
    $$
    \mathbbm{1}_\lam(v,t,u) = \mathbbm{1}_i(v)
    \mathbbm{1}_j(t)\mathbbm{1}_k(u).
    $$

    \item 
By absolute convergence, Corollary \ref{cor:ell:sl2:coordinates} can be rewritten as follows 
\begin{align*}
\gam(\RJ_G,\CW_E(\chi))
\phi(z) = 
\lam_{E/F}
\lim_{i\mapsto \infty}
\lim_{j\mapsto \infty}
\lim_{k\mapsto \infty}
\int_{u,v\in F,t\in F^\times}
&
\mathbbm{1}_\lam(v,t,u)
\RJ_{\Fc}(t+t^{-1}+tuv)
\frac{\eta(-v)}{|v|}
\\
\int_{x\in E}&
\psi\bigg(
\frac{\Nr(z-x)}{v}
\bigg)
\eta(t)
|t|
\psi\big(
ut^2\Nr(x)
\big)
\phi(tx)
\\
&\ud_E x
|t|^2
\ud^*t\ud u\ud v.
\end{align*}

\item 
For $\Re(s_1),\Re(s_2)\geq 0$ and $z\in E^\times$, consider the following complex deformation of above integral
\begin{align*}
\RI_{\phi,z}(\lam,s_1,s_2)
=
\lam_{E/F}
\int_{u,v\in F,t\in F^\times}
&
\mathbbm{1}_\lam(v,t,u)
|v|^{s_1}|t|^{s_2}
\RJ_{\Fc}(t+t^{-1}+tuv)
\frac{\eta(-v)}{|v|}
\\
\int_{x\in E}&
\psi\bigg(
\frac{\Nr(z-x)}{v}
\bigg)
\eta(t)
|t|
\psi\big(
ut^2\Nr(x)
\big)
\phi(tx)
\\
&\ud_E x
|t|^2
\ud^*t\ud u\ud v
\end{align*}
and set 
$$
\RI_{\phi,z}(s_1,s_2) = 
\lim_{i\mapsto \infty}
\lim_{j\mapsto \infty}
\lim_{k\mapsto \infty}
\RI_{\phi,z}(\lam,s_1,s_2).
$$
By absolute convergence, $\RI_{\phi,z}(s_1,s_2)$ is holomorphic for $\Re(s_1),\Re(s_2)\geq 0$. In particular 
$$
\gam(\RJ_G,\CW_E(\chi))\phi(z) = \RI_{\phi,z}(0,0).
$$
\end{enumerate}

Now we are ready to introduce the Bruhat regularization.

\begin{defin}[Bruhat regularization]\label{defin:descent:Bruhatreg}
Let $F$ be any local field and $E$ an étale quadratic $F$-algebra. For any $\phi\in \CW_E$, $z\in E^\times$ and $\Re(s_1),\Re(s_2)$ to be sufficiently large, let
\begin{align*}
\RI_{\phi,z}(\lam,s_1,s_2)
=
\lam_{E/F}
\int_{u,v\in F,t\in F^\times}
&
\mathbbm{1}_\lam(v,t,u)
|v|^{s_1}|t|^{s_2}
\RJ_{\Fc}(t+t^{-1}+tuv)
\frac{\eta(-v)}{|v|}
\\
\int_{x\in E}&
\psi\bigg(
\frac{\Nr(z-x)}{v}
\bigg)
\eta(t)
|t|
\psi\big(
ut^2\Nr(x)
\big)
\phi(tx)
\\
&\ud_E x
|t|^2
\ud^*t\ud u\ud v
\end{align*}
and set 
$$
\RI_{\phi,z}(s_1,s_2) = 
\lim_{i\mapsto \infty}
\lim_{j\mapsto \infty}
\lim_{k\mapsto \infty}
\RI_{\phi,z}(\lam,s_1,s_2),\quad 
\RI_{\phi,z} = \RI_{\phi,z}(0,0)
$$
whenever the limit exists and $\RI_{\phi,z}(s_1,s_2)$ has a holomorphic continuation to $s_1=s_2=0$.
\end{defin}

The main theorem in this subsection is the following.

\begin{thm}\label{thm:descent:Bruhatreg}
With the notation in Definition \ref{defin:descent:Bruhatreg}, $\RI_{\phi,z}(s_1,s_2)$ is holomorphic when $s_1=s_2=s$ satisfying $0<\Re(s)<1$ with a holomorphic continuation to $s=0$, and the following identity holds
$$
\RI_{\phi,z} = 
\lam_{E/F}
\int_{E^1}
\CF_{\psi}
\bigg(
\CF_{\psi^{-1}}(\RJ_\Fc)\frac{\eta}{|\cdot|}
\bigg)(\tr(e))\phi(ez)
\ud_{E^1}e.
$$
\end{thm}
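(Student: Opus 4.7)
The plan is to perform the iterated integral in the order $u, v, x, t$, carrying out Fourier analysis at each stage and using the regulators $|v|^{s_1}|t|^{s_2}$ to legitimize each manipulation. First I would apply the change of variables $u \mapsto u/(tv)$ in the $u$-integral (valid for $v, t \neq 0$), which converts the combination $\RJ_\Fc(t+t^{-1}+tuv)\psi(ut^2\Nr(x))$ into a Fourier pairing:
$$
\int_F \psi(ut^2\Nr(x))\RJ_\Fc(t+t^{-1}+tuv)\ud u = \frac{\psi(-(t+t^{-1})t\Nr(x)/v)}{|tv|}\CF_\psi(\RJ_\Fc)(t\Nr(x)/v).
$$
By Theorem \ref{thm:vanishing:SBC}, $\CF_\psi(\RJ_\Fc)$ is a bounded continuous function vanishing at $0$, so this evaluation is well-defined; in the limit $k\to\infty$ the truncation $\mathbbm{1}_k(u)$ then disappears uniformly.

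Next I would perform the $v$-integral after collecting phases: the remaining $v$-dependence is
$$
|v|^{s_1-1}\eta(-v)\,\psi\!\big((\Nr(z-x)-(t^2+1)\Nr(x))/v\big)\,\CF_\psi(\RJ_\Fc)(t\Nr(x)/v).
$$
Substituting $\xi = t\Nr(x)/v$ converts this into an $\xi$-integral of the form $\eta(\xi)|\xi|^{-1-s_1}\CF_\psi(\RJ_\Fc)(\xi)\psi(c_{t,x,z}\xi)$ for an explicit $c_{t,x,z}$. For $0<\Re(s_1)<1$ this is absolutely convergent by Tate-type estimates, with the singularity at $\xi=0$ tamed by Parts $(2)$ and $(3)$ of Theorem \ref{thm:vanishing:SBC}. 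After this step, one recognizes the emerging Fourier-dual expression $\CF_\psi(\CF_{\psi^{-1}}(\RJ_\Fc)\,\eta/|\cdot|)$ evaluated at $c_{t,x,z}$, up to a bookkeeping factor involving $\eta(t\Nr(x))|t\Nr(x)|^{s_1}$.

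The $x$-integration is then evaluated using the Weil formula for the Fourier transform of $\psi\circ\Nr$ (\cite[Lem.~1.1]{jlgl2}), which produces the Weil constant $\lam_{E/F}$ and the character $\eta_E$ that combines with the character $\eta$ already present to simplify the coefficients. After splitting $x\in E^\times$ through $1\to E^1\to E^\times\to \Nr(E^\times)\to 1$, the $\Nr(E^\times)$-factor merges with the $t\in F^\times$-integral via a change of variable, exploiting $\eta|_{\Nr(E^\times)}=1$; the residual $E^1$-integration couples $\phi(tx)$ to $\phi(ez)$ and supplies the $d_{E^1}e$ measure. Finally, taking $s_1=s_2=s\to 0$ inside the regime of convergence and assembling the Weil constants via the identity $\frac{2\gamma(1,\eta_E,\psi)}{\vol(E^1,\ud_{E^1})}=\lam_{E/F}$ recorded in Remark \ref{rmk:ggps:gl2} produces exactly the claimed formula.

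The principal obstacle is the rigorous justification of these manipulations: the individual integrals are only conditionally convergent at $s=0$, and the $\xi$-integral against $\eta(\xi)|\xi|^{-1-s}$ sits precisely at the boundary of Tate's classical domain of convergence. The decisive input is Theorem \ref{thm:vanishing:SBC}, which both provides the absolute integrability of $\CF_{\psi^{-1}}(\RJ_\Fc)/|\cdot|$ and the vanishing of $\CF_{\psi^{-1}}(\RJ_\Fc)$ at $0$ required for contour motion to $s=0$. Control of the three truncations $\mathbbm{1}_i,\mathbbm{1}_j,\mathbbm{1}_k$ should follow from uniform absolute convergence for $0<\Re(s)<1$ together with dominated convergence, ensuring that the iterated limit $i,j,k\to\infty$ agrees with the regularized integral.
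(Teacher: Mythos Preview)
Your approach has a genuine gap at the $v$-integral step. After your $u$-change, the $v$-dependence of the integrand is $|v|^{s_1-2}\,dv$ (from $|v|^{s_1}\cdot|v|^{-1}$ in the original together with the Jacobian $|tv|^{-1}$), and the substitution $\xi=t\Nr(x)/v$ converts this to $|t\Nr(x)|^{s_1-1}|\xi|^{-s_1}\,d\xi$, \emph{not} $|\xi|^{-1-s_1}$. Consequently the $\xi$-integral you obtain is $\CF_\psi\big[|\cdot|^{-s_1}\eta\,\CF_\psi(\RJ_\Fc)\big](c_{t,x,z})$, which at $s_1=0$ is $\CF_\psi[\eta\,\CF_\psi(\RJ_\Fc)]$ with no $|\cdot|^{-1}$; the expression $\CF_\psi(\CF_{\psi^{-1}}(\RJ_\Fc)\,\eta/|\cdot|)$ does not emerge here. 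Moreover, for $0<\Re(s_1)<1$ this $\xi$-integral is not absolutely convergent at infinity (Theorem~\ref{thm:vanishing:SBC}(3) only gives integrability of $|\xi|^{-1}\CF_{\psi^{-1}}(\RJ_\Fc)$, i.e.\ the case $s_1=1$), so your convergence claim fails. After the $v$-step there is no remaining $\psi\circ\Nr$ structure in $x$, so Weil's lemma does not apply as you describe. In a corrected version of your route the missing $|\cdot|^{-1}$ would actually arise from the $t$-integral producing a $\delta$ at $\Nr(x)=1$; but this is a distributional argument you have not supplied, and it corresponds to the limit order $k,i,j$, not the order $k,j,i$ prescribed in Definition~\ref{defin:descent:Bruhatreg}.

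The paper proceeds quite differently: before taking any limits it performs six changes of variables ($\mu=t+t^{-1}+tuv$, then $x\to x/t$, $x\to xz$, $v\to v\Nr(z)$, $t\to -t\Nr(x)/v$, and finally $t\to t^{-1}$, $v\to v^{-1}$). These are arranged so that the $\mu$-integral (the $k$-limit) gives $\CF_{\psi^{-1}}(\RJ_\Fc)(t)$ directly, the $t$-integral (the $j$-limit) already yields $\CF_\psi\!\big(\eta|\cdot|^{-s_2-1}\CF_{\psi^{-1}}(\RJ_\Fc)\big)(\tr(1/x))$ with the factor $|\cdot|^{-1}$ built in via the accumulated Jacobians, and the $v$-integral (the $i$-limit) then acts as Fourier inversion localizing to $\Nr(x)=1$. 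No appeal to Weil's formula is made in this step.
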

It is clear that Theorem \ref{thm:descent:Bruhatreg} implies Theorem \ref{thm:descent:sl2} for supercuspidal representations. On the other hand, it is worth mentioning that Theorem \ref{thm:descent:Bruhatreg} also works for $E$ split over $F$. However to derive Theorem \ref{thm:descent:sl2} for $E$ split over $F$, or say the principal series representations, one need to show that the regularization \eqref{eq:descent:supercuspidal:Bernsteinreg} is equal to $\RI_{\phi,z}$, which is unknown to us when $E$ splits over $F$. In the next subsection, we will use another strategy to resolve the split situation.

In the following, we are going to establish Theorem \ref{thm:descent:Bruhatreg}.

\begin{proof}

For each fixed $\lam\in \BN^3$, by Part (1) of Theorem \ref{thm:vanishing:SBC}, the integral defining $\RI_{\phi,z}(\lam,s_1,s_2)$ is absolutely convergent as a quadruple integral whenever $\Re(s_1),\Re(s_2)\geq 0$. We first do a series of changing variables to simplify the integral:

\begin{enumerate}
    \item 
    Write 
\begin{align*}
\RI_{\phi,z}(\lam,s_1,s_2)
=
\lam_{E/F}
\int_{u,v\in F,t\in F^\times}
&
\mathbbm{1}_\lam(v,t,u)
|v|^{s_1}|t|^{s_2}
\RJ_{\Fc}(t+t^{-1}+tuv)
\frac{\eta(-v)}{|v|}
\\
\int_{x\in E}&
\psi\bigg(
\frac{\Nr(z-x)}{v}
\bigg)
\eta(t)
|t|
\psi\big(
ut^2\Nr(x)
\big)
\phi(tx)
\\
&\ud_E x
|t|^2
\ud^*t\ud u\ud v.
\end{align*}
We change
$$
\mu = t+t^{-1}+tuv\longleftrightarrow 
u = \frac{\mu-t-t^{-1}}{tv}
$$
and get
\begin{align*}
\RI_{\phi,z}(\lam,s_1,s_2)
=
\lam_{E/F}
\int_{u,v\in F,t\in F^\times}
&
\mathbbm{1}_i(v)\mathbbm{1}_j(t)\mathbbm{1}_k\bigg(\frac{\mu-t-t^{-1}}{tv}\bigg)
|v|^{s_1}|t|^{s_2}
\RJ_{\Fc}(\mu)
\frac{\eta(-v)}{|v|^2}
\\
\int_{x\in E}&
\psi\bigg(
\frac{\Nr(z-x)}{v}
\bigg)
\eta(t)
\psi\bigg(
\frac{(\mu t-t^2-1)\Nr(x)}{v}
\bigg)
\phi(tx)
\\
&\ud_E x
|t|^2
\ud^*t\ud \mu\ud v.
\end{align*}

\item 
Change variable $x\mapsto x/t$ and get
\begin{align*}
\RI_{\phi,z}(\lam,s_1,s_2)
=
\lam_{E/F}
\int_{u,v\in F,t\in F^\times}
&
\mathbbm{1}_i(v)\mathbbm{1}_j(t)\mathbbm{1}_k\bigg(\frac{\mu-t-t^{-1}}{tv}\bigg)
|v|^{s_1}|t|^{s_2}
\RJ_{\Fc}(\mu)
\frac{\eta(-v)}{|v|^2}
\\
\int_{x\in E}&
\psi\bigg(
\frac{\Nr(z-\frac{x}{t})}{v}
\bigg)
\eta(t)
\psi\bigg(
\frac{(\frac{\mu}{t} -1-\frac{1}{t^2})\Nr(x)}{v}
\bigg)
\phi(x)
\\
&\ud_E x
\ud^*t\ud \mu\ud v.
\end{align*}
The term in the additive character can be calculated as follows
\begin{align*}
&\psi\bigg(
\frac{
\Nr(z)-
\tr(\bar{z}x/t)
+\frac{\Nr(x)}{t^2}
}{v}
\bigg)
\psi\bigg(
\frac{(\frac{\mu \Nr(x)}{t} -\Nr(x)-\frac{\Nr(x)}{t^2})}{v}
\bigg)
\\
=&
\psi\bigg(
\frac{
\Nr(z)-\tr(\bar{z}x/t)
+\frac{\mu \Nr(x)}{t}
-\Nr(x)
}{v}
\bigg).
\end{align*}
Therefore 
\begin{align*}
\RI_{\phi,z}(\lam,s_1,s_2)
=
\lam_{E/F}
\int_{u,v\in F,t\in F^\times}
&
\mathbbm{1}_i(v)\mathbbm{1}_j(t)\mathbbm{1}_k\bigg(\frac{\mu-t-t^{-1}}{tv}\bigg)
|v|^{s_1}|t|^{s_2}
\RJ_{\Fc}(\mu)
\frac{\eta(-v)}{|v|^2}
\\
\int_{x\in E}&
\eta(t)
\psi\bigg(
\frac{
\Nr(z)-\tr(\bar{z}x/t)
+\frac{\mu \Nr(x)}{t}
-\Nr(x)
}{v}
\bigg)
\phi(x)
\\
&\ud_E x
\ud^*t\ud \mu\ud v.
\end{align*}

\item 
Change $x\mapsto xz$ and get
\begin{align*}
\RI_{\phi,z}(\lam,s_1,s_2)
=
\lam_{E/F}
\int_{u,v\in F,t\in F^\times}
&
\mathbbm{1}_i(v)\mathbbm{1}_j(t)\mathbbm{1}_k\bigg(\frac{\mu-t-t^{-1}}{tv}\bigg)
|v|^{s_1}|t|^{s_2}
\RJ_{\Fc}(\mu)
\frac{\eta(-v)}{|v|^2}
\\
\int_{x\in E}&
\eta(t)
\psi\bigg(
\frac{
\Nr(z)-\Nr(z)\tr(x/t)
+\frac{\mu \Nr(x)\Nr(z)}{t}
-\Nr(x)\Nr(z)
}{v}
\bigg)
\\
&
\phi(xz)
|\Nr(z)|\ud_E x
\ud^*t\ud \mu\ud v.
\end{align*}

\item 
Change $v\mapsto v\Nr(z)$ and get 
\begin{align*}
\RI_{\phi,z}(\lam,s_1,s_2)
=
\lam_{E/F}
\int_{u,v\in F,t\in F^\times}
&
\mathbbm{1}_i\big(v\Nr(z)
\big)\mathbbm{1}_j(t)\mathbbm{1}_k\bigg(\frac{\mu-t-t^{-1}}{tv\Nr(z)}\bigg)
\big|v\Nr(z)\big|^{s_1}|t|^{s_2}
\\
&\RJ_{\Fc}(\mu)
\frac{\eta(-v)}{|v|^2}
\int_{x\in E}
\eta(t)
\psi\bigg(
\frac{
1-\tr(x/t)
+\frac{\mu \Nr(x)}{t}
-\Nr(x)
}{v}
\bigg)
\\
&
\phi(xz)
\ud_E x
\ud^*t\ud \mu\ud v.
\end{align*}

\item 
Change $t\mapsto -\frac{t\Nr(x)}{v}$ and get 
\begin{align*}
\RI_{\phi,z}(\lam,s_1,s_2)
=
\lam_{E/F}
\int_{u,v\in F,t\in F^\times}
&
\mathbbm{1}_i\big(v\Nr(z)
\big)\mathbbm{1}_j
\big(-\frac{t\Nr(x)}{v}\big)\mathbbm{1}_k\bigg(-\frac{\mu+\frac{t\Nr(x)}{v}+\frac{v}{\Nr(x)t}}{t\Nr(x)\Nr(z)}\bigg)
\\
&
\big|v\Nr(z)\big|^{s_1}
\big|\frac{t\Nr(x)}{v}\big|^{s_2}
\RJ_{\Fc}(\mu)
\frac{\eta(t)}{|v|^2}
\\
&
\int_{x\in E}
\psi\bigg(
\frac{
1
-\Nr(x)
}{v}
+\tr\big(\frac{1}{tx}\big)
-\frac{\mu}{t}
\bigg)
\phi(xz)
\ud_E x
\ud^*t\ud \mu\ud v.
\end{align*}

\item 
Finally we change $t\mapsto t^{-1}$ and $v\mapsto v^{-1}$, and get
\begin{align*}
\RI_{\phi,z}(\lam,s_1,s_2)
=
\lam_{E/F}
\int_{u,v\in F,t\in F^\times}
&
\mathbbm{1}_i\big(\frac{\Nr(z)}{v}
\big)\mathbbm{1}_j
\big(-\frac{v\Nr(x)}{t}\big)\mathbbm{1}_k\bigg(-\frac{\mu t+v\Nr(x)+\frac{t^2}{\Nr(x)v}}{\Nr(x)\Nr(z)}\bigg)
\\
&
\big|\frac{\Nr(z)}{v}\big|^{s_1}
\big|\frac{v\Nr(x)}{t}\big|^{s_2}
\RJ_{\Fc}(\mu)
\eta(t)
\\
&
\int_{x\in E}
\psi\bigg(
\big(1
-\Nr(x)\big)v
+\tr\big(\frac{t}{x}\big)
-\mu t
\bigg)
\phi(xz)
\ud_E x
\ud^*t\ud \mu\ud v.
\end{align*}
\end{enumerate}

After the tedious variable changes we are ready to take the iterated limit 
$$
\RI_{\phi,z}(s_1,s_2) = \lim_{i\mapsto \infty}
\lim_{j\mapsto \infty}
\lim_{k\mapsto \infty}
\RI_{\phi,z}(\lam,s_1,s_2).
$$

We first evaluate the limit 
$$
\lim_{k\mapsto \infty}
\RI_{\phi,z}(\lam,s_1,s_2).
$$
Since $\{\mathbbm{1}_n\}\subset \CC^\infty_c(F^\times)$, for any $s_1,s_2\in \BC$, the integration in $v,t$ and $x$ are all absolutely convergent. By Part (1) of Theorem \ref{thm:descent:sl2}, $\RJ_\Fc(\mu)$ is absolutely integrable. Hence by the dominated convergence theorem we can switch the integration in $u,v,x$ and $\lim_{k\mapsto \infty}$ to get 
$$
\lim_{k\mapsto \infty}
\int_{\mu\in F}
\mathbbm{1}_k\bigg(-\frac{\mu t+v\Nr(x)+\frac{t^2}{\Nr(x)v}}{\Nr(x)\Nr(z)}\bigg)
\psi(-\mu t)
\RJ_\Fc(\mu)\ud \mu = 
\CF_{\psi^{-1}}(\RJ_\Fc)(t).
$$
Hence 
\begin{align}\label{eq:sl2:descent:Bruhat:1}
    \lim_{k\mapsto \infty}
\RI_{\phi,z}(\lam,s_1,s_2)
=
\lam_{E/F}
\int_{u,v\in F,t\in F^\times} \nonumber
&
\mathbbm{1}_i\big(\frac{\Nr(z)}{v}
\big)\mathbbm{1}_j
\big(-\frac{v\Nr(x)}{t}\big)
\CF_{\psi^{-1}}(\RJ_\Fc)(t)
\big|\frac{\Nr(z)}{v}\big|^{s_1} \nonumber
\\
&
\big|\frac{v\Nr(x)}{t}\big|^{s_2}
\eta(t)
\int_{x\in E}
\psi\bigg(
\big(1
-\Nr(x)\big)v
+\tr\big(\frac{t}{x}\big)
\bigg) \nonumber
\\
&\phi(xz)
\ud_E x
\ud^*t\ud v.
\end{align}
Now by Part (2) of Theorem \ref{thm:descent:sl2}, $\CF_{\psi^{-1}}(\RJ_\Fc)(t)$ is smooth and globally bounded. Moreover $\CF_{\psi^{-1}}(\RJ_\Fc)$ vanishes at zero, it implies the following fact:
\begin{itemize}
    \item When $F$ is non-archimedean, the globally bounded smooth function $\CF_{\psi^{-1}}(\RJ_\Fc)$ vanishes in a neighborhood of zero. Therefore $\CF_{\psi^{-1}}(\RJ_\Fc)(t)|t|^{-s}$ is absolutely integrable for $\Re(s)>1$;

    \item When $F$ is archimedean, the globally bounded smooth function $\CF_{\psi^{-1}}(\RJ_\Fc)$ vanishes at zero. Therefore $\CF_{\psi^{-1}}(\RJ_\Fc)(t)|t|^{-s}$ is 
    \begin{itemize}
    \item
    absolutely integrable near zero for $\Re(s)<2$,
    
    \item and absolutely integrable near infinity for $\Re(s)>1$.
    \end{itemize}
    In conclusion $\CF_{\psi^{-1}}(\RJ_\Fc)(t)|t|^{-s}$ is absolutely integrable for $1<\Re(s)<2$.
\end{itemize}
Therefore whenever $1<\Re(s_2+1)<2$ (notice that we use the multiplicative Haar measure $\ud^*t$), the integration in $u,v,x$ are all absolutely convergent. Hence we can apply the dominated convergence theorem again to evaluate the limit $\lim_{j\mapsto \infty}$:
$$
\lim_{j\mapsto \infty}
\int_{t\in F^\times}
\mathbbm{1}_j
\big(-
\frac{v\Nr(x)}{t}
\big)
\CF_{\psi^{-1}}(\RJ_\Fc)(t)
|t|^{-s_2-1}\eta(t)
\psi
\bigg(
\tr\big(
\frac{t}{x}
\big)
\bigg)\ud t = 
\CF_{\psi}
\bigg(
\frac{\eta
\CF_{\psi^{-1}}(\RJ_\Fc)
}{|\cdot|^{-s_2-1}}
\bigg)
\big(
\tr(
\frac{1}{x})
\big).
$$
In other words, for $0<\Re(s_2)<1$ and any $s_1\in \BC$,
\begin{align*}
\lim_{j\mapsto \infty}
\lim_{k\mapsto \infty}
\RI_{\phi,z}(\lam,s_1,s_2) =& 
\lam_{E/F}
\int_{v\in F}
\mathbbm{1}_i
\big(
\frac{\Nr(z)}{v}
\big)
\big|
\frac{\Nr(z)}{v}
\big|^{s_1}
\big|
v\Nr(x)
\big|^{s_2}
\\
&\int_{x\in E}
\CF_{\psi}
\bigg(
\frac{\eta
\CF_{\psi^{-1}}(\RJ_\Fc)
}{|\cdot|^{-s_2-1}}
\bigg)
\big(
\tr(
\frac{1}{x}
)
\big)
\psi
\bigg(
\big(
1-\Nr(x)
\big)v
\bigg)
\phi(xz)\ud_E x\ud v.
\end{align*}
Notice that the right hand side above is indeed absolutely convergent, since $\mathbbm{1}_i\in \CC^\infty_c(F^\times)$, and $\CF_{\psi}
\bigg(
\frac{\eta
\CF_{\psi^{-1}}(\RJ_\Fc)
}{|\cdot|^{-s_2-1}}
\bigg)$ is globally bounded for $0<\Re(s_2)<1$. 

Finally we evaluate the limit $\lim_{i\mapsto \infty}$. Since the right hand side integral is holomorphic for any $s_1\in \BC$, we can let $s_1=s_2$, and break the integral in $x\in E$ according to the short exact sequence $1\to E^1\to E^\times \to \Nr(E^\times)\to 1$ to get
\begin{align*}
\lim_{j\mapsto \infty}
\lim_{k\mapsto \infty}
\RI_{\phi,z}(\lam,s_1,s_2) =
\lam_{E/F}
&\int_{v\in F}
\mathbbm{1}_i
\big(
\frac{\Nr(z)}{v}
\big)
\psi\big(
(1-a)v
\big)
\ud v
\int_{a\in \Nr(E^\times)}
\big|\Nr(z)a\big|^{s_2}\ud a
\\
&
\int_{e\in E^1}
\CF_{\psi}
\bigg(
\frac{\eta
\CF_{\psi^{-1}}(\RJ_\Fc)
}{|\cdot|^{-s_2-1}}
\bigg)
\big(
\tr(
\frac{1}{e_ae}
)
\big)
\phi(e_aez)\ud_{E^1} e
\end{align*}
where $e_a\in E^\times$ such that $\Nr(e_a) = a$.
For abbreviation, let $\RF(a)$ be the function supported on $\Nr(E^\times)$ given by
$$
\RF(a) = 
\psi(-av)
\big|\Nr(z)a\big|^{s_2}
\int_{e\in E^1}
\CF_{\psi}
\bigg(
\frac{\eta
\CF_{\psi^{-1}}(\RJ_\Fc)
}{|\cdot|^{-s_2-1}}
\bigg)
\big(
\tr(
\frac{1}{e_ae}
)
\big)
\phi(e_aez)\ud_{E^1} e.
$$
Then the above integral can be rewritten as 
\begin{align*}
\lam_{E/F}
\int_{v\in F}
\mathbbm{1}_i
\big(
\frac{\Nr(z)}{v}
\big)
\psi(v)
\ud v
\int_{a\in F}
\psi(-av)
\RF(a)\ud a.
\end{align*}
By Fourier inversion, 
$$
\lim_{i\mapsto \infty}
\int_{v\in F}
\mathbbm{1}_i
\big(
\frac{\Nr(z)}{v}
\big)
\psi(v)
\ud v
\int_{a\in F}
\psi(-av)
\RF(a)\ud a
=\RF(1).
$$
Therefore 
$$
\lim_{i\mapsto \infty}
\lim_{j\mapsto \infty}
\lim_{k\mapsto \infty}
\RI_{\phi,z}(\lam,s_2,s_2) = 
\lam_{E/F}
\big|
\Nr(z)
\big|^{s_2}
\int_{e\in E^1}
\CF_{\psi}
\bigg(
\frac{\eta \CF_{\psi^{-1}}(\RJ_\Fc)}{|\cdot|^{-s_2-1}}
\bigg)
\big(
\tr(e)
\big)
\phi(ez)\ud_{E^1}e.
$$
Now $\phi|_{E^1}$ is a test function on $E^1$, hence by Part (3) of Theorem \ref{thm:descent:sl2}, the right hand side has a holomorphic continuation to $s_2=0$ in the sense of tempered distributions. It follows that we complete the proof.

\end{proof}

\subsection{Principal series representations}
\label{subsec:descent:PS}

In this subsection, we establish the theorem for principal series representations of $G=\SL_2$.

Let $F$ be a local field. The principal series reprsentations of $G(F)=\SL_2(F)$ can be realized in the following induced representation model
$$
i_\chi = \Ind^G_B(\chi) = \{f:\CC^\infty(G(F))\to \BC\mid f(bg) = \del^{1/2}_B(b)\chi(b)f(g)\}
$$
where $B=TU$ is the upper triangular Borel subgroup of $G$, $\del_B$ is the modular character of $B(F)$, and $\chi:B(F)\to \BC^\times$ is a quasi-character factoring through the diagonal torus $T(F)$. As explained in \cite{jlgl2}, the relation between $i_\chi$ and those constructed via Weil representations are connected by the Whittaker models for $i_\chi$. In the following, instead of working with a particular principal series, we use the idea of Theorem \ref{thm:descent:Bruhatreg} to work with the universal principal series model. Precisely, we consider the following right $G$-equivariant commutative diagram
\begin{align}\label{eq:descent:PS:diagram}
\xymatrix{
\CS(G(F))\ar[r]^{\RJ_G} \ar[d]^{r_U}& \CS(G(F))  \ar[d]^{r_U} \\
i_U \ar[r]^{\RJ_{G/U}} \ar[d]^{\CP_\chi} & i_U\ar[d]^{\CP_\chi} \\
i_\chi \ar[r]^{\gam(\RJ_G,i_\chi)} & i_\chi
}
\end{align}
Here $\CS(G(F)) = \CC^\infty_c(G(F))$ when $F$ is non-archimedean, and is the Schwartz algebra of $G(F)$ when $F$ is archimedean. The space $\CS(G(F))$ has a natural $G$-equivariant projection $r_U$: 
$$
i_U = \ind^G_U= \big\{r_U(f)(g) = 
g\in G(F)\mapsto \int_{u\in U(F)}f(ug)\ud u\mid f\in \CS(G(F))
\big\}.
$$
The projection $\CP_\chi$ is defined via the following integral
\begin{align}\label{eq:descent:operatorCPchi}
\CP_\chi: i_U &\to i_\chi \nonumber
\\
\phi &\mapsto \bigg(g\mapsto \int_{t\in T(F)}
\chi^{-1}(t)\del_B^{-1/2}(t)
\phi(tg)\ud^*t\bigg).
\end{align}
The integrals above are all absolutely convergent since the restriction of Schwartz functions on $G(F)$ to closed subgroups are still Schwartz (\cite{BZ76}\cite{AGSchwartznashmd}). In particular the map $\CP_{\chi}$ is entire in $\chi$. 

The following lemma shows that the diagram \eqref{eq:descent:PS:diagram} is well-defined.

\begin{lem}\label{lem:descent:PS:diagram:Welldefined}
The operators $\RJ_{G/U}$ is well-defined. Moreover, under the right $G$-equivariant projection $\CP_\chi$, it descends down to the multiplication by the function $\chi\in \wh{T}(F)\mapsto \gam(\RJ_G,i_\chi)$ between $i_{\chi}$. 
\end{lem}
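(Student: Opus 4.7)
The plan is to prove both assertions by exploiting the right $G(F)$-equivariance of every arrow in the diagram, together with the fact that any $\RJ_G\in\CZ^{\st,\fin}$ is essentially of compact support in the sense of Bernstein--Deligne and acts on smooth $G(F)$-representations via the canonical Bernstein center action. To set the stage, I let $\RJ_G$ act on $\CS(G(F))$ by left convolution $\phi\mapsto\RJ_G\ast\phi$; this operator preserves $\CS(G(F))$ by the multiplier property (in the archimedean case via Definition \ref{defin:multiplierarchimedean} and Remark \ref{rmk:proposal}~(5)) and, by the $\Ad(G(F))$-invariance of $\RJ_G$, commutes with every left translation of functions on $G(F)$.

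For the first assertion, I use that $r_U$ is precisely the averaging under left translation by $U(F)$. The commutation of $\RJ_G\ast(\cdot)$ with left translation therefore yields the pointwise identity $r_U(\RJ_G\ast\phi)=\RJ_G\ast r_U(\phi)$ after integrating over $U(F)$; in particular, $r_U(\RJ_G\ast\phi)=0$ whenever $r_U(\phi)=0$. Thus the rule $\RJ_{G/U}(r_U\phi):=r_U(\RJ_G\ast\phi)$ is independent of the chosen lift, and defines the required operator on $i_U$.

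For the second assertion, the composite $\CP_\chi\circ r_U\colon\CS(G(F))\to i_\chi$ is right-$G(F)$-equivariant and surjective, since its image is a nonzero $G(F)$-submodule of $i_\chi$ which contains the canonical $B$-eigenvector generating $i_\chi$ under the convolution action of $\CS(G(F))$. The Bernstein center element $\RJ_G\in\CZ^{\st,\fin}$ acts on any smooth $G(F)$-representation as a scalar on each Bernstein component; all irreducible subquotients of $i_\chi$ share the cuspidal support $(T,\chi)$ and therefore lie in a single Bernstein component, so $\RJ_G$ acts on $i_\chi$ by a unique scalar we denote by $\gam(\RJ_G,i_\chi)$. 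Pushing $\phi\in\CS(G(F))$ around the diagram yields
$$
\CP_\chi(r_U(\RJ_G\ast\phi))=\gam(\RJ_G,i_\chi)\cdot\CP_\chi(r_U(\phi)),
$$
and the surjectivity of $r_U$ onto $i_U$ promotes this to the identity $\CP_\chi\circ\RJ_{G/U}=\gam(\RJ_G,i_\chi)\cdot\CP_\chi$, which is the claimed descent.

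The main technical point will be justifying the interchange of $\RJ_G\ast$ with $r_U$ (and then with $\CP_\chi$), since the right-hand side of $r_U(\RJ_G\ast\phi)=\RJ_G\ast r_U(\phi)$ involves convolving $\RJ_G$ against a function in $i_U$ which is no longer of compact support. In the non-archimedean setting this is automatic from Bernstein's essentially-compact-support property together with the uniform smoothness of $r_U(\phi)$. In the archimedean case it uses the multiplier property of Definition \ref{defin:multiplierarchimedean} and the Schwartz decay of $r_U(\phi)$ along $B(F)$, which also ensures the absolute convergence of the integral defining $\CP_\chi$ in \eqref{eq:descent:operatorCPchi}. Once these convergences are in place, the two assertions reduce to the bookkeeping above.
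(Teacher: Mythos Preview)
Your approach is different from the paper's, and for the first assertion it is arguably cleaner: the paper proves well-definedness of $\RJ_{G/U}$ by characterizing $\ker r_U$ as the strongly cuspidal functions (via \cite[(5.3.1)]{beuzart2015local}) and checking this subspace is preserved under $f\mapsto \RJ_G\ast f$, whereas you use directly that an $\Ad$-invariant convolution kernel commutes with left translations and hence with the $U(F)$-average $r_U$. Both arguments are valid once the Fubini/convergence points you flag in the last paragraph are handled.

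For the second assertion your argument has a genuine, if small, gap. You implicitly identify ``left convolution by $\RJ_G$ on $\CS(G(F))$ (as a right $G(F)$-module)'' with ``the Bernstein-center action of $\RJ_G$''. These are not the same in general: any right-$G$-equivariant map $A_v:\CS(G(F))\to\pi$ is of the form $\phi\mapsto\pi(\phi^\vee)v$, so
\[
A_v(\RJ_G\ast\phi)=\pi\big((\RJ_G\ast\phi)^\vee\big)v=\pi(\phi^\vee)\,\pi(\RJ_G^\vee)v=\gamma(\RJ_G^\vee,\pi)\,A_v(\phi)=\gamma(\RJ_G,\pi^\vee)\,A_v(\phi).
\]
Thus the scalar produced on $i_\chi$ by your diagram chase is $\gamma(\RJ_G,i_{\chi^{-1}})$, not a priori $\gamma(\RJ_G,i_\chi)$. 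For $G=\SL_2$ (the case treated in this subsection) this is harmless, because $g$ and $g^{-1}$ are conjugate for every $g\in\SL_2(F)$, whence $\RJ_G=\RJ_G^\vee$; equivalently, $i_\chi\simeq i_{\chi^{-1}}$. The paper's explicit computation with $\RJ_G\ast e_{K_n}$ in fact lands on $\gamma(\RJ_G,i_{\chi^{-1}})$ and then invokes exactly this isomorphism to conclude. You should make the same observation explicit.
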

\begin{proof}
    To show that $\RJ_{G/U}$ is well-defined, we only need to show that for $f\in \CS(G(F))$, if $r_U(f)=0$, then $r_U(\RJ_G*f) = 0$. Since $G(F)$ is semi-simple, $\CS(G(F))$ is a subspace of $\CC(G(F))$, the space of Harish-Chandra Schwartz functions on $G(F)$ (\cite[Lem.~1.1]{Delorme-limites} for archimedean. The result is manifest for non-archimedean). In particular, $f$ is a strongly cuspidal function in the sense of \cite[\S 5.1]{beuzart2015local}. By \cite[(5.3.1)]{beuzart2015local}, $f$ is strongly cuspidal if and only if the operator $i_\chi(f) = 0$ for any (tempered) character of $T(F)$. Since $i_{\chi}(\RJ_G*f) = \gam(\RJ_G,i_\chi)i_\chi(f)$, it follows that $i_\chi(\RJ_G*f)=0$ and hence $\RJ_G*f$ is also strongly cuspidal. Hence $r_U(\RJ_G*f) = 0$ identically, from which we deduce that the operator $\RJ_{G/U}$ is well-defined.

    Next we show that under the right $G$-equivariant projection $\CP_\chi$, $\RJ_{G/U}$ descends to the multiplication by the function $\chi\in \wh{T}(F)\mapsto \gam(\RJ_G,i_\chi)$. 
    
    First, we show that the map is well-defined. Essentially we need to show that for $\phi\in i_U$, if $\CP_{\chi}(\phi) = 0$ identically, then $\CP_\chi\big(\RJ_{G/U}(\phi)\big)=0$ identically. By Mellin transform, $\CP_\chi(\phi) = 0$ for any $\chi$ implies that $\phi = 0$. Therefore $\RJ_{G/U}(\phi) =0$ which implies that $\CP_{\chi}\big(\RJ_{G/U}(\phi)\big)=0$. Hence, the bottom map is well-defined. 
    It remains to show that the bottom map is the multiplication by the function $\chi\in \wh{T}(F)\mapsto \gam(\RJ_G,i_\chi)$. For generic $\chi$, $i_\chi$ is irreducible. Hence by Schur's lemma, the bottom map is given by multiplying a constant. It suffices to show that the constant is equal to $\gam(\RJ_G,i_\chi)$. 

    For any $f\in \CS(G(F))$, fix a test function $e_K\in \CC^\infty_c(G(F))$. Let us compute $$\CP_\chi\bigg(r_U\big(( \RJ_G*e_K)*f\big)\bigg).$$ By definition, it is equal to the following iterated integral
    \begin{align*}
    &\CP_\chi\bigg(r_U\big( (\RJ_G*e_K)*f\big)\bigg)(g) = 
    \int_{t\in T(F)}
    \chi^{-1}(t)\del^{-1/2}_B(t)
    r_U\big(
    (\RJ_G*e_K)*f
    \big)(tg)\ud^*t 
    \\
    &= 
     \int_{t\in T(F)}
    \chi^{-1}(t)\del^{-1/2}_B(t)
    \int_{u\in U(F)}
    \big((\RJ_G*e_K)*f\big)(utg)\ud u \ud^*t
    \\
    &=
    \int_{t\in T(F)}
    \chi^{-1}(t)
    \del_B^{-1/2}(t)
    \ud^*t
    \int_{u\in U(F)}
    \ud u
    \int_{h\in G(F)}
    (\RJ_G*e_K)(h)f(h^{-1}utg)\ud^*h.
    \end{align*}
Let $g$ be the identity element, and put $f^\vee(g) = f(g^{-1})$, then the above identity becomes 
\begin{align*}
=& 
\int_{t\in T(F)}
    \chi^{-1}(t)
    \del_B^{-1/2}(t)
    \ud^*t
    \int_{u\in U(F)}
    \ud u
    \int_{h\in G(F)}
    (\RJ_G*e_K)(h)f^\vee(t^{-1}u^{-1}h)\ud^*h
    \\
    =&
    \int_{t\in T(F)}
    \chi^{-1}(t)
    \del_B^{-1/2}(t)
    \ud^*t
    \int_{u\in U(F)}
    \ud u
    \int_{h\in G(F)}
    (\RJ_G*e_K)(h)f^\vee(t^{-1}uh)\ud^*h.
\end{align*}
The above integration is absolutely convergent since $\RJ_G*e_K\in \CS(G(F))$, hence we may change variable $u\mapsto tut^{-1}$ and get 
\begin{align*}
=&   \int_{t\in T(F)}
    \chi^{-1}(t)
    \del_B^{1/2}(t)
    \ud^*t
    \int_{u\in U(F)}
    \ud u
    \int_{h\in G(F)}
    (\RJ_G*e_K)(h)f^\vee(ut^{-1}h)\ud^*h
    \\
=&
\int_{h\in G(F)}
(\RJ_G*e_K)(h)
\CP_{\chi^{-1}}\big(r_U(f^\vee)\big)(h)\ud^*h.
\end{align*}
In conclusion, we have established the following identity
\begin{align*}
\CP_{\chi}
\bigg(
r_U
\big(
(\RJ_G*e_K)*f
\big)
\bigg)(\Id)=
\int_{h\in G(F)}
(\RJ_G*e_K)(h)
\CP_{\chi^{-1}}
\big(
r_U(f^\vee)
\big)(h)\ud^*h.
\end{align*}
Now we choose $\{e_{K_n}\}_{n\geq 1}\subset \CC^\infty_c(G(F))$ to be a delta sequence tending to the identity element at $G(F)$, then $(\RJ_G*e_{K_n})*f = e_{K_n}*(\RJ_G*f)$ is stably convergent to $\RJ_G*f$ if $F$ is non-archimedean, and is an approximation to identity if $F$ is archimedean. Precisely, when $F$ is archimedean, $e_{K_n}*(\RJ_G*f)$ converges to $\RJ_G*f$, at least under the $L^1$-norm on $G(F)$. As a result, by Fubini's theorem and smoothness, $\CP_{\chi}\big(r_U(e_{K_n}*\RJ_G*f) \big)$ converges to $\CP_{\chi}\big(r_U(\RJ_G*f) \big)$ pointwise.

Hence by continuity, the left hand side converges to 
$$
\CP_{\chi}
\bigg(
r_U
\big(
(\RJ_G*f)
\big)
\bigg)(\Id) 
$$
and the right hand side converges to 
$$
\gam(\RJ_G,i_{\chi^{-1}})
\CP_{\chi^{-1}}
\big(
r_U(f^\vee)
\big)(\Id) = \gam(\RJ_G,i_\chi)
\CP_{\chi}
\big(
r_U(f)
\big)(\Id),\quad i_\chi\simeq i_{\chi^{-1}}.
$$
We complete the proof of the theorem.
\end{proof}

Based on Lemma \ref{lem:descent:PS:diagram:Welldefined}, to examine the action of $\RJ_G$ on the principal series $\{i_\chi\}_{\chi\in \wh{T}}$ which provides the constant $\gam(\RJ_G,i_\chi)$, it suffices to examine the action of $\RJ_{G/U}$ on some special class of vectors in $i_U$, which we view as the \emph{universal principal series model}. 

We are going to establish Theorem \ref{thm:descent:sl2} for principal series representations. We choose functions $f\in \CS(G(F))$ that are supported on the open dense Bruhat cell given by 
\begin{align}\label{eq:descent:PS:bruhatcoordinate:3}
f
\bigg(
\begin{pmatrix}
    1 & \\
    v & 1
\end{pmatrix}
\begin{pmatrix}
t & \\
  &t^{-1}
\end{pmatrix}
\begin{pmatrix}
1 &u \\
  &1
\end{pmatrix}
\bigg) = \phi_0(v)\phi_1(t)\phi_2(u)
\end{align}
such that $\phi_0,\phi_2\in \CC^\infty_c(F)$ and $\phi_1\in \CC^\infty_c(F^\times)$.
It is clear that $\CP_{\chi}\big(r_U(f^\vee) \big)$ maps into a dense subspace of $i_\chi$. In the following, we examine the action of $\RJ_{G/U}$ on $\big(r_U(f^\vee)\big)$. 

By the commutativity of the right $G$-equivariant diagram \eqref{eq:descent:PS:diagram}, it suffices to calculate $r_U\big( \RJ_G*f\big)(\Id)$. By definition, 
\begin{align*}
r_U\big( \RJ_G*f\big)(\Id)
=\int_{\wt{u}\in U(F)}
\ud \wt{u}
\int_{h\in G(F)}
\RJ_\Fc\circ \c(\wt{u}h^{-1})f(h)
\ud^*h.
\end{align*}
It is worth mentioning that in general, the above integral might only be convergent as an iterated integral. Write $h$ via the coordinate \eqref{eq:descent:PS:bruhatcoordinate:3}, then 
$r_U\big(\RJ_G*f \big)(\Id)$ is equal to 
\begin{align}\label{eq:descent:PS:afterbruhat}
    =&
    \int_{\wt{u}\in F}
    \ud\wt{u}
    \int_{v,u\in F,t\in F^\times}
    \RJ_\Fc
    \big(
    t^{-1}+t+tv(u-\wt{u})
    \big)
    \phi_0(v)\phi_1(t)\phi_2(u)
    |t|^2\ud^*t\ud u\ud v \nonumber
    \\
    =&
    \int_{\wt{u}\in F}
    \ud\wt{u}
    \int_{v,u\in F,t\in F^\times}
    \RJ_\Fc
    (t^{-1}+t+tvu)
    \phi_0(v)\phi_1(t)\phi_2(u+\wt{u})
    |t|^2\ud^*t\ud u\ud v\quad (u\mapsto u+\wt{u}).
\end{align}
Here we abuse the notation and write 
$
\wt{u} = 
\left( 
\begin{smallmatrix}
    1 & \wt{u}\\
      & 1
\end{smallmatrix}   
\right).
$
By the unitarity of Fourier transform, we can write 
\begin{align*}
\int_{v\in F}
\RJ_\Fc(t+t^{-1}+tvu)
\phi_0(v)
\ud v = 
\int_{\xi\in F}
\CF_{\psi^{-1}}(\RJ_\Fc)
(\xi)
\psi
\big(
\xi(t+t^{-1})
\big)
\CF_{\psi}(\phi_0)(tu\xi)
\ud \xi.
\end{align*}
Hence \eqref{eq:descent:PS:afterbruhat} can be rewritten as the following iterated integral
\begin{align*}
r_U\big(\RJ_G*f\big)(\Id) =\int_{\wt{u}\in F}
\ud \wt{u}
&\int_{u\in F,t\in F^\times}
\phi_1(t)
|t|^2
\phi_2(u+\wt{u})
\ud^*t
\ud u
\\
&\int_{\xi\in F}
\CF_{\psi^{-1}}(\RJ_\Fc)
(\xi)
\psi
\big(
\xi(t+t^{-1})
\big)
\CF_{\psi}(\phi_0)(tu\xi)
\ud \xi.
\end{align*}
Motivated from the proof for the supercupsidal case, we introduce the following complex deformation of the above integral. Precisely, for $0<\Re(s)<1$, consider the following integral
\begin{align}\label{eq:descent:complexdeform:5}
    \RI(s) = 
    \int_{\wt{u}\in F}
\ud \wt{u}
&\int_{u\in F,t\in F^\times}
\phi_1(t)
|t|^2
\phi_2(u+\wt{u})
\ud^*t
\ud u   \nonumber
\\
&\int_{\xi\in F}
|\xi|^{-s}
\CF_{\psi^{-1}}(\RJ_\Fc)
(\xi)
\psi
\big(
\xi(t+t^{-1})
\big)
\CF_{\psi}(\phi_0)(tu\xi)
\ud \xi.
\end{align}
It is clear that $\RI(0) = r_U\big(\RJ_G*f \big)(\Id)$.

We establish the following theorem, which is one of the main results in this subsection.

\begin{thm}\label{thm:descent:PS:1}
For $0\leq \Re(s)<1$, the quadruple integral defining $\RI(s)$ is absolutely convergent. Moreover, the following identity holds
\begin{align*}
\RI(s) = 
\int_{t\in F^\times}
\CF_{\psi}
\bigg(
\frac{\CF_{\psi^{-1}}(\RJ_\Fc)}{|\cdot|^{s+1}}
\bigg)(t+t^{-1})
|t|^{-1}
r_U(f^\vee)
\begin{pmatrix}
    t & \\
      &t^{-1}
\end{pmatrix}
\ud^*t,\quad 0\leq \Re(s)<1.
\end{align*}
\end{thm}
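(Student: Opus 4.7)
The strategy is to establish absolute integrability of the quadruple integrand for $0\le\Re(s)<1$, apply Fubini to rearrange the order of integration, carry out the $\wt u$- and $u$-integrations explicitly using translation invariance and Fourier inversion, and finally identify the outcome with the claimed formula via a direct Bruhat-coordinate computation of $r_U(f^\vee)$ on the diagonal torus.

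For absolute convergence, the essential input is Theorem~\ref{thm:vanishing:SBC}: $\CF_{\psi^{-1}}(\RJ_\Fc)$ is a globally bounded smooth function vanishing at the origin, while $\CF_{\psi^{-1}}(\RJ_\Fc)/|\cdot|$ is absolutely integrable on $F$. The plan is to integrate $|\CF_\psi(\phi_0)(tu\xi)|$ in $u$ against Haar measure (the Schwartz property of $\CF_\psi(\phi_0)$ together with the change of variable $u\mapsto u/(t\xi)$ gives $\int_F|\CF_\psi(\phi_0)(tu\xi)|\ud u=\|\CF_\psi(\phi_0)\|_1|t\xi|^{-1}$), then bound the resulting $\xi$-integral $\int_F|\xi|^{-s-1}|\CF_{\psi^{-1}}(\RJ_\Fc)(\xi)|\ud\xi$ by splitting at $|\xi|=1$: on $|\xi|\le 1$ the vanishing at zero controls $|\xi|^{-s-1}$ for $\Re(s)<1$, while on $|\xi|\ge 1$ the bound $|\xi|^{-s-1}\le|\xi|^{-1}$ valid for $\Re(s)\ge 0$ reduces to the $L^1$-hypothesis of Theorem~\ref{thm:vanishing:SBC}(3). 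The compactly supported $\phi_1(t)|t|^2$ together with the $\wt u$-integration of $\phi_2(u+\wt u)$ then yield absolute convergence of the quadruple integral for $0\le\Re(s)<1$.

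With Fubini available, first carry out the $\wt u$-integration: by translation invariance, $\int_F\phi_2(u+\wt u)\ud\wt u=\int_F\phi_2(\wt u)\ud\wt u$, independent of $u$. Next swap the $u$- and $\xi$-integrations and apply Fourier inversion: for $t,\xi\neq 0$ the change of variable $u\mapsto u/(t\xi)$ gives $\int_F\CF_\psi(\phi_0)(tu\xi)\ud u=\phi_0(0)/|t\xi|$. Combining yields
\begin{equation*}
\RI(s)=\phi_0(0)\Bigl(\int_F\phi_2(\wt u)\ud\wt u\Bigr)\int_{F^\times}\phi_1(t)|t|\,\CF_\psi\!\left(\frac{\CF_{\psi^{-1}}(\RJ_\Fc)}{|\cdot|^{s+1}}\right)\!(t+t^{-1})\ud^*t.
\end{equation*}
Substituting $t\mapsto t^{-1}$ (which preserves $\ud^*t$ and $t+t^{-1}$ while sending $|t|\mapsto|t|^{-1}$ and $\phi_1(t)\mapsto\phi_1(t^{-1})$) produces the same expression with $|t|$ replaced by $|t|^{-1}$ and $\phi_1(t)$ by $\phi_1(t^{-1})$.

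To conclude, one identifies the prefactor $\phi_0(0)\phi_1(t^{-1})\int_F\phi_2(\wt u)\ud\wt u$ with $r_U(f^\vee)(\diag(t,t^{-1}))$. By definition $r_U(f^\vee)(\diag(t,t^{-1}))=\int_F f\bigl((\wt u\cdot\diag(t,t^{-1}))^{-1}\bigr)\ud\wt u$, and the matrix inversion $\bigl(\begin{smallmatrix}t&\wt u/t\\ 0&t^{-1}\end{smallmatrix}\bigr)^{-1}=\bigl(\begin{smallmatrix}t^{-1}&-\wt u/t\\ 0&t\end{smallmatrix}\bigr)$ shows that in the Bruhat coordinates of \eqref{eq:descent:PS:bruhatcoordinate:3} the inverse has $(v,t',u')=(0,t^{-1},-\wt u)$, so the integrand equals $\phi_0(0)\phi_1(t^{-1})\phi_2(-\wt u)$, and integration over $\wt u$ (with the harmless change $\wt u\mapsto-\wt u$) reproduces $\phi_0(0)\phi_1(t^{-1})\int_F\phi_2$, completing the argument. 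The main obstacle throughout is securing absolute convergence at $s=0$: a pointwise bound on the $\xi$-integral alone yields only $|u|^{-1}$ decay in $u$, which is non-integrable on $F$; the remedy is to perform the $u$- and $\xi$-estimates jointly (by swapping those two integrations before estimating), thereby converting the $u$-integrability at infinity into the $L^1$-statement of Theorem~\ref{thm:vanishing:SBC}(3).
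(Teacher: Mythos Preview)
Your proof is correct and follows essentially the same route as the paper: both make the change of variable $u\mapsto u/(t\xi)$ to reduce absolute convergence to the integrability of $|\xi|^{-s-1}|\CF_{\psi^{-1}}(\RJ_\Fc)(\xi)|$ furnished by Theorem~\ref{thm:vanishing:SBC}, then use Fubini to carry out the $\wt u$- and $u$-integrations (yielding $\int\phi_2$ and $\phi_0(0)$ respectively) and recognize the remaining $\xi$-integral as the Fourier transform. Your account is in fact slightly more explicit than the paper's---you spell out the splitting of the $\xi$-integral at $|\xi|=1$, the substitution $t\mapsto t^{-1}$, and the Bruhat-coordinate computation of $r_U(f^\vee)(\diag(t,t^{-1}))$, all of which the paper compresses into a single displayed equality.
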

\begin{proof}
We first show that the quadruple integral defining $\RI(s)$ is absolutely convergent for $0\leq \Re(s)<1$. Changing variable $u\mapsto u/(t\xi)$, we get
\begin{align*}
    \RI(s) = 
    \int_{\wt{u}\in F}
\ud \wt{u}
&\int_{u\in F,t\in F^\times}
\phi_1(t)
|t|
\phi_2\big(\frac{u}{t\xi}+\wt{u}\big)
\ud^*t
\ud u
\\
&\int_{\xi\in F}
|\xi|^{-s-1}
\CF_{\psi^{-1}}(\RJ_\Fc)
(\xi)
\psi
\big(
\xi(t+t^{-1})
\big)
\CF_{\psi}(\phi_0)(u)
\ud \xi.
\end{align*}
The integration in $\wt{u}$ and $u$ are clearly absolutely integrable since $\phi_0,\phi_2$ are both Schwartz-Bruhat functions. Similarly the integration in $t$-variable is also absolutely convergent since $\phi_1\in \CC^\infty_c(F^\times)$. Finally, based on Theorem \ref{thm:vanishing:SBC}, $\CF_{\psi^{-1}}(\RJ_\Fc)(\xi)|\xi|^{-(s+1)}$ is absolutely integrable when $0\leq \Re(s)<1$. It follows that the above quadruple integral defining $\RI(s)$ is indeed absolutely convergent.

Now the integration in $\wt{u}$ provides $\int_{\wt{u}\in F}\phi_2(\wt{u})\ud \wt{u}$, the integration in $u$ provides $\phi_0(0)$ by Fourier inversion, and the integration in $\xi$ yields a Fourier transform, which, in conclusion, provides 
\begin{align*}
\RI(s) =& 
    \phi_0(0)
    \int_{u\in F}
    \phi_2(u)\ud u
    \int_{t\in F^\times}
    \phi_1(t)|t|
    \CF_{\psi}
    \bigg(
    \frac{\CF_{\psi^{-1}}(\RJ_\Fc)}{|\cdot|^{s+1}}
    \bigg)(t+t^{-1})
        \ud^*t
        \\
    =&
    \int_{t\in F^\times}
\CF_{\psi}
\bigg(
\frac{\CF_{\psi^{-1}}(\RJ_\Fc)}{|\cdot|^{s+1}}
\bigg)(t+t^{-1})
|t|^{-1}
r_U(f^\vee)
\begin{pmatrix}
    t & \\
      &t^{-1}
\end{pmatrix}
\ud^*t.
\end{align*}
It follows that we complete the proof of the proposition.
\end{proof}

Now to finalize the proof of Theorem \ref{thm:descent:sl2} for principal series representations, it suffices to show that both sides of the equality in Theorem \ref{thm:descent:PS:1} has a continuous extension to $s=0$, which is the following lemma.

\begin{lem}\label{lem:descent:PS:continuousextentozero}
Both sides of the equality in Theorem \ref{thm:descent:PS:1} has a continuous extension to $s=0$.
\end{lem}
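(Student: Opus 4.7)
The plan is to verify continuity at $s=0$ on each side separately by a dominated convergence argument that relies crucially on Theorem \ref{thm:vanishing:SBC}.

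For the right-hand side, I would first establish that $\xi\mapsto \CF_{\psi^{-1}}(\RJ_\Fc)(\xi)\,|\xi|^{-(s+1)}$ admits an integrable majorant in $\xi$ which is uniform for $s$ in a small real interval $[0,\veps]$. Near $\xi=\infty$, the bound $|\xi|^{-(s+1)}\le |\xi|^{-1}$ for $\Re(s)\ge 0$, combined with Part $(3)$ of Theorem \ref{thm:vanishing:SBC}, furnishes the required dominator. Near $\xi=0$, we use Part $(2)$: in the non-archimedean case $\CF_{\psi^{-1}}(\RJ_\Fc)$ vanishes identically on a neighborhood of the origin, so there is nothing to control; in the archimedean case, smoothness together with vanishing at the origin yields $|\CF_{\psi^{-1}}(\RJ_\Fc)(\xi)|\le C|\xi|$ locally, hence $|\CF_{\psi^{-1}}(\RJ_\Fc)(\xi)|\,|\xi|^{-(s+1)}\le C|\xi|^{-\veps}$, which is locally integrable once $\veps<1$. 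Dominated convergence then shows that $\CF_{\psi}(\CF_{\psi^{-1}}(\RJ_\Fc)/|\cdot|^{s+1})(x)$ converges to $\CF_{\psi}(\CF_{\psi^{-1}}(\RJ_\Fc)/|\cdot|)(x)$ as $s\to 0^+$, uniformly in $x\in F$, and by the same majorant the limit is globally bounded. Since $\phi_1\in \CC^\infty_c(F^\times)$, the outer $t$-integral runs over a compact subset of $F^\times$, so continuity at $s=0$ of the right-hand side follows.

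For the left-hand side, apply the same majorant to the innermost $\xi$-integral. After the change of variable $u\mapsto u/(t\xi)$ from the proof of Theorem \ref{thm:descent:PS:1}, the integrand of $\RI(s)$ is bounded uniformly for $s\in [0,\veps]$ by the product of the above $\xi$-majorant with $|\phi_1(t)|$, $|\phi_2(u/(t\xi)+\wt{u})|$, and $|\CF_{\psi}(\phi_0)(u)|$. Because $\phi_0,\phi_2\in \CS(F)$ have Schwartz Fourier transforms and $\phi_1\in \CC^\infty_c(F^\times)$, the residual $\wt{u}$-, $u$-, and $t$-integrations are absolutely convergent, Fubini applies, and dominated convergence yields continuity of $\RI(s)$ at $s=0$.

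The only delicate point is the uniform integrable majorant in $\xi$ on a one-sided neighborhood of $s=0$, and this is precisely what Parts $(2)$ and $(3)$ of Theorem \ref{thm:vanishing:SBC} were designed to provide; once they are in hand, the remainder is a routine application of Fubini and dominated convergence. I do not anticipate any obstacle beyond what has already been addressed in the proof of Theorem \ref{thm:descent:PS:1}; indeed the argument above can be read as the observation that the estimates established there hold uniformly for $s$ in a right-sided neighborhood of $0$.
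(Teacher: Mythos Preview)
Your proposal is correct and follows essentially the same approach as the paper: both arguments hinge on the integrability properties of $\CF_{\psi^{-1}}(\RJ_\Fc)/|\cdot|^{s+1}$ supplied by Theorem~\ref{thm:vanishing:SBC}, and pass to $s=0$ by a continuity/dominated convergence argument. The paper is terser: it observes that it suffices to treat the right-hand side (the left-hand side is already continuous at $s=0$ by the absolute convergence established in Theorem~\ref{thm:descent:PS:1}), then appeals to the fact that $r_U(f^\vee)$ restricted to the diagonal torus is a test function and that $\CF_{\psi^{-1}}(\RJ_\Fc)/|\cdot|^{s+1}$ is a continuous $s$-family of tempered distributions, so the Fourier transform paired against a test function is continuous in $s$. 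Your version unpacks this into an explicit dominated convergence argument with separate treatment of the archimedean and non-archimedean cases near $\xi=0$, which is a fine and slightly more detailed way to reach the same conclusion; your treatment of the left-hand side is redundant but harmless.
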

\begin{proof}
It suffices to show that the right hand side has a continuous extension to $s=0$. In other words, 
\begin{align*}
\lim_{s\mapsto 0^+}
&\int_{t\in F^\times}
\CF_{\psi}
\bigg(
\frac{\CF_{\psi^{-1}}(\RJ_\Fc)}{|\cdot|^{s+1}}
\bigg)
(t+t^{-1})
|t|^{-1}
r_U(f^\vee)
\begin{pmatrix}
    t & \\
      &t^{-1}
\end{pmatrix}
\ud^*t
\\
=&
\int_{t\in F^\times}
\CF_{\psi}
\bigg(
\frac{\CF_{\psi^{-1}}(\RJ_\Fc)}{|\cdot|}
\bigg)
(t+t^{-1})
|t|^{-1}
r_U(f^\vee)
\begin{pmatrix}
    t & \\
      &t^{-1}
\end{pmatrix}
\ud^*t.
\end{align*}
Precisely, the function $r_U(f^\vee)$, when restricted to the diagonal torus, is a test function on $F^\times$. Hence the above identity follows from the continuity of Fourier transform and the continuous $s$-family of tempered distributions $\CF_{\psi^{-1}}(\RJ_\Fc)/|\cdot|^{s+1}$ for $0\leq \Re(s)<1$.

\end{proof}


\section{Bernstein decomposition of the cocenter} \label{sec:summands}

The stable transfer gives rise to certain quotients of the stable cocenter ${\rm SO}(\CH)$. From Corollary \ref{cor:SBC=SBCprime}, at least when $G=\SL_2$ or $\GL_2$, we know that ${\rm SO}(\CH)$ is a module over the stable Bernstein center $\CZ^{\rm st}$. It turns out that some of those quotients are canonically direct summands of ${\rm SO}(\CH)$. In this section, we construct these summands by explicit sections and integral transforms based on the orthogonality of elliptic characters for $G=\SL_2$ and $\GL_2$. The existence of explicit sections realizing these direct summands was inspired by the work of Johnstone and the first-named author (\cite{DanielZhilin}).

\subsection{Total transfer to tori}\label{subsec:totaltransfertori}

In this subsection, following Remark \ref{rmk:GGformulaforstabletransfer}, we describe the total stable transfer map explicitly. Let $F$ be a local field whose residual characteristic is not equal to two. Following \eqref{indexset:parametrizequadratic} up to stable conjugacy, there are four, two or one maximal tori in $G=\SL_2$ and $\GL_2$ depending on $F$ being non-archimedean, real, or complex. As in \eqref{indexset:parametrizequadratic} we let $\CI$ be the index set parametrizing étale quadratic $F$-algebras and let $\{T_\alp\}_{\alp\in \CI}$ be the set of maximal tori in $G(F)$ up to stable conjugacy. Let $\eta_\alp$ be the corresponding quadratic characters of $F^\times$ attached to $E_\alp$. For $\alp\in \CI$, there is a homomorphism of algebras
$$
\rho_\alp^*:\CZ^{\rm fin,st} \to \CZ_\alpha^{\rm fin,st}=\CZ(T_\alp(F))^{\rm fin, st}
$$
whose image is the subring $(\CZ_\alpha^{\rm fin,st})^{\tau_\alpha}$ of $\tau_\alpha$-invariant elements, where $\tau_\alp$ is the unique nontrivial involution over $F$ for $E_\alp$. Its kernel are given by elements $J\in \CZ^{\rm fin, st}$ such that $\gamma(J,\pi)=0$ for all representations $\pi$ belonging to an $L$-packet $[\pi]$ coming from $T_\alpha$ by dihedral lifting.

We also have the Langlands stable transfer map 
$$\CT_\alpha:{\rm SO}(\CH) \to \CC^\infty_c(T_\alp(F))$$
whose image is the space of $\tau_\alpha$-invariants $\CC^\infty_c(T_\alp(F))^{\tau_\alpha}$. When $F$ is archimedean, the Langlands stable transfer map also extends to the Schwartz algebra $\CS$
$$
\CT_\alp:\RS\RO(\CS)\to \CS(T_\alp(F))^{\tau_\alp}
$$
which follows from the following adjoint identity and the Paley-Wiener theorem for the Schwartz algebras (\cite[p.100]{Barker}\cite[Chap.~IV,~\S 5.6]{GGV} and the variant for $\GL_2$)
$$
\langle \CT_\alp(f),\chi\rangle = 
\langle f,\theta^\st_\chi\rangle,\quad f\in \RS\RO(\CS)\supset \RS\RO(\CH).
$$

In this subsection, we consider the total transfer to tori
\begin{equation} \label{eq:TTS}
	\CT_\oplus=\bigoplus_{\alpha \in \CI} \CT_{\alpha}:{\rm SO}(\CH) \to \bigoplus_{\alpha \in \CI} \CC_c^\infty(T_\alp(F))^{\tau_\alp}
\end{equation}

The following theorem reflects our knowledge of $L$-packets of $G(F)$ by means of the dihedral representations, which are reviewed in subsection \ref{subsec:dihedral rep}, notably Proposition \ref{pro:dihedral:Cas72:1} and Theorem \ref{thm:L-packet} (\cite[Thm.~1.7]{Casselma-Quadratic}).

\begin{thm}\label{thm:total-transfer}
Let $F$ be non-archimedean of odd residual characteristic.
\begin{enumerate}
	\item
	Let $G=\SL_2$.	The kernel of the total transfer map $\CT_\oplus$ is the one-dimensional vector space generated by the normalized elliptic character of the Steinberg representation. 	
	The image of the total transfer $\CT_\oplus$ is the finite-codimensional subspace of $\bigoplus_{\alpha \in \CI} \mathcal C_c^\infty(T_\alp(F))^{\tau_\alp}$ consisting of quadruples $$(q_\alpha \mid \alpha\in \CI)\in \bigoplus_{\alpha \in\CI} \CC_c^\infty(T_\alp(F))^{\tau_\alp}$$ satisfying the equations
	\begin{enumerate}
		\item $\langle q_0,\eta_\alpha\rangle =\langle q_\alpha,1_{\alpha}\rangle $ for all $\alpha\in\CI\bs \{0\}$, where  $1_\alpha$ are the trivial characters of $T_\alp = E_\alpha^1$ and $\eta_\alpha$ is the quadratic character of $T_0 = F^\times=E_0^1$ corresponding to the quadratic extension $E_\alpha$ of $F$.   
		
		\item $\langle q_\alpha,\chi^\qd_{\alpha}\rangle= \langle q_\beta,\chi^\qd_{\beta}\rangle$ for all $\alpha,\beta\in \CI\bs \{0\}$, where $\chi^\qd_\alpha$ and $\chi^\qd_\beta$ are the unique nontrivial quadratic characters of $T_\alp = E_\alpha^1$ and $T_\bet= E_\beta^1$ respectively. 
	\end{enumerate}
	Here $\langle \cdot,\cdot \rangle$ is the standard pairing between characters and test functions on $T_\alp$.

\item 
Let $G=\GL_2$. The kernel of the total transfer map $\CT_\oplus$ is the one-parameter family generated by normalized elliptic characters of twisted Steinberg representations. The image of the total transfer $\CT_\oplus$ is the subspace of $\bigoplus_{\alp\in\CI }\CC^\infty_c(T_\alp(F))^{\tau_\alp}$ consisting of quadruples 
$$(q_\alpha \mid \alpha\in \CI)\in \bigoplus_{\alpha \in \CI} \mathcal \CC_c^\infty(T_\alp(F))^{\tau_\alp}$$ satisfying the equations
	\begin{enumerate}
		\item $\langle q_0,(\chi,\chi\otimes\eta_\alpha)\rangle =\langle q_\alpha,\chi\circ \Nr_\alp\rangle $ for all $\alpha\in \CI \bs \{0\}$, where  $\chi$ is a character of $F^\times$ and $\Nr_\alp:E^\times_\alp\to F^\times$ is the norm map.
		
		\item $\langle q_\alpha,\chi^\qd_{\alpha}\rangle= \langle q_\beta,\chi^\qd_{\beta}\rangle$ for all $\alpha,\beta\in \CI\bs \{0\}$, where $\chi^\qd_\alpha$ and $\chi^\qd_\beta$ are the characters of $E^\times_\alp$ and $E^\times_\bet$ such that $\chi_\alp$ and $\chi_\bet$ does not factor through the norm map but their square does, and their dihedral lifting to $G(F)$ are isomorphic.
	\end{enumerate}
\end{enumerate}
\end{thm}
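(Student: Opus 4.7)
My plan is to translate everything into trace pairings via the adjoint Gelfand-Graev formula from Remark \ref{rmk:GGformulaforstabletransfer}, namely
\[
\langle \CT_\alpha(f),\chi\rangle=\langle f,\theta^{\st}_\chi\rangle,
\]
and then to combine the classification of $L$-packets of $\SL_2(F)$ and $\GL_2(F)$ by dihedral lifting (\cite{labesse-langlands}, \cite{Casselma-Quadratic}) with the density of stable tempered characters (Theorem \ref{thm:main:SBC}(1)) and the trace Paley-Wiener theorem (\cite{Bernstein-Deligne-Kazhdan}). I will focus on $G=\SL_2$; the $\GL_2$ case proceeds identically, with the center accounting both for the one-parameter family of twisted Steinberg characters in the kernel and for the richer shape of the compatibility (a).

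The compatibilities (a) and (b) on the image are forced because the associated dihedral representations share the same Langlands parameter, hence the same stable character. For (a), both $1_\alpha$ and $\eta_\alpha$ correspond to the induced parameter $1\oplus\eta_\alpha:W_F\to\GL_2(\BC)$, so $\CW_{E_\alpha}(1_\alpha)$ and $\Ind_B^G(\eta_\alpha)$ produce the same stable character on $\SL_2(F)$. For (b), the induced parameter $\Ind^{W_F}_{W_{E_\alpha}}\chi^{\qd}_\alpha$ is independent of the nonsplit $\alpha$ and singles out the unique $4$-element $L$-packet of Labesse-Langlands. To see that no further constraints are needed, I will check that the map $\bigsqcup_\alpha(\wh{T}_\alpha/\tau_\alpha)\to\Ome^{\st}(G)$ sending $\chi$ to the $L$-packet of $\CW_{E_\alpha}(\chi)$ is surjective onto every $L$-packet except $[\Theta_{\triv}]$, with fibres governed exactly by (a), (b), and $\tau_\alpha$-invariance; given a tuple $(q_\alpha)$ satisfying these, the prescription $[\pi]\mapsto\langle q_\alpha,\chi\rangle$ yields a well-defined linear functional on $\Ome^{\st}(G)\setminus\{[\Theta_\triv]\}$, which using the Moy-Tadic basis (Theorem \ref{thm:explicitbasisstable}) to control supports and finiteness is realized via trace Paley-Wiener by some $h\in\CH$, so that $\RS\RO(h)$ matches $(q_\alpha)$ up to the kernel ambiguity of $\CT_\oplus$ computed below.

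For the kernel, any $f\in\ker\CT_\oplus$ has $\tr(f,\pi)=0$ for every irreducible tempered $\pi$ other than $\Theta_{\St}$ (since all other tempered $L$-packets are dihedral from unitary characters), together with the relation $\tr(f,\Theta_\triv)+\tr(f,\Theta_{\St})=0$ coming from the non-unitary evaluation at $\delta_B^{1/2}\in\wh T_0$. By the density of stable tempered characters (Theorem \ref{thm:main:SBC}(1)) applied to the Plancherel decomposition of $f\in\CC^{\st,\fin}$, the only such $f$ are scalar multiples of $\Theta^{\el}_{\St}$; that $\Theta^{\el}_{\St}$ lies in the kernel follows from elliptic orthogonality: $\CT_0(\Theta^{\el}_{\St})=0$ because $\Theta^{\el}_{\St}$ is supported on the elliptic locus, while for $\alpha\ne 0$ the pairing $\langle\CT_\alpha(\Theta^{\el}_{\St}),\chi\rangle$ equals the elliptic inner product of $\Theta_{\St}$ with the dihedral $L$-packet of $\chi$ and vanishes because no such packet contains the Steinberg. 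The main obstacle will be the rigorous verification that the dihedral parameterization of $L$-packets has exactly the claimed fibres, especially at the reducible points $\chi=\eta_\alpha$ (yielding $2$-element $L$-packets) and the quadratic characters $\chi^{\qd}_\alpha$ (yielding the Labesse-Langlands $4$-element packet), and the delicate bookkeeping needed to isolate the \emph{elliptic} Steinberg character from the trivial character by combining the unitary and non-unitary (at $\delta_B^{1/2}$) evaluations.
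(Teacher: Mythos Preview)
Your approach is essentially the same as the paper's: both use the adjoint identity $\langle \CT_\alpha(f),\chi\rangle=\langle f,\theta^{\st}_\chi\rangle$, the dihedral classification of $L$-packets, and density of stable tempered characters (Theorem~\ref{thm:main:SBC}). The paper's proof is in fact terser than yours; it handles the kernel exactly as you do and for the image only argues \emph{necessity} of conditions (a) and (b) by citing Theorem~\ref{thm:L-packet} and \cite[Thm.~4.6]{jlgl2}. Sufficiency (that every compatible tuple is hit) is not proved there but is effectively established later in Section~\ref{sec:summands} via the explicit sections $\CE_\alpha$. Your direct route through trace Paley--Wiener is a legitimate alternative and makes the theorem self-contained.

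A few small corrections to your write-up. First, the reference to the Moy--Tadic basis (Theorem~\ref{thm:explicitbasisstable}) is misplaced: that is a basis for $\CZ^{\st,\fin}$, not for the cocenter, and you do not need it; compact support of the $q_\alpha$ already forces the functional to be supported on finitely many Bernstein components, which is all trace Paley--Wiener requires. Second, the phrase ``applied to the Plancherel decomposition'' is extraneous; density of stable tempered characters alone gives that $\ker\CT_\oplus$ is at most one-dimensional. Third, your description of the parameterization map as ``surjective onto every $L$-packet except $[\Theta_\triv]$'' is slightly off: the relevant omission among \emph{tempered} packets is $[\Theta_\St]$, not $[\Theta_\triv]$. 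The evaluation at $\delta_B^{1/2}$ gives only the sum $\tr(f,\Theta_\St)+\tr(f,\Theta_\triv)$, so when you invoke trace Paley--Wiener you must make an arbitrary choice of how to split this value between the two constituents; this choice is exactly the one-dimensional freedom that is then absorbed by the Steinberg kernel.
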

\begin{proof}
	We only prove the statement for $G=\SL_2$ and the case for $G=\GL_2$ is similar. By the density of stable tempered characters from Theorem \ref{thm:main:SBC}, an element $f\in {\rm SO}(\CH)$ is zero if and only if its stable trace on tempered representations are zero. For $G=\SL_2$, it is equivalent to say that $f=0$ if and only if $\CT_\alpha(f)=0$ for all $\alpha\in \{0,1,\pm 1/2\}$ and if the trace $\tr{\rm St}(f)$ of $f$ on the Steinberg representation is zero. Thus the kernel of the total transfer $\CT_\oplus$ is at most one-dimensional generated by the (stable) orbital integral of the pseudo-coefficient of Steinberg representation, which is exactly equal to the normalized elliptic character of Steinberg representation. 

	The equations describing the compatibility between different tori follows from the explicit description of the dihedral lifting, in particular Theorem \ref{thm:L-packet} and \cite[Thm.~4.6]{jlgl2}.
\end{proof}

We have the parallel theorem for $F$ archimedean. The proof is identical and we omit.

\begin{thm}\label{thm:total-transfer:archimedean}
Let $F$ be archimedean.

\begin{enumerate}
\item When $F$ is the field of complex numbers, the stable transfer $\CT_0$ is the identity map. Hence $\CT_0:\RS\RO(\CH)\simeq \CC^\infty_c(T_0(F))^{\tau_0}$.

\item When $F$ is the field of real numbers, the total transfer map $\CT_\oplus$ is injective. The image of the total transfer $\CT_\oplus$ is the subspace of $\bigoplus_{\alp\in \CI}\CC^\infty_c(T_\alp(F))^{\tau_\alp}$ enjoying the same characterization as Theorem \ref{thm:total-transfer}.

\item The above statements also hold if we replace $\CH$ by $\CS$ and $\CC^\infty_c(T_\alp(F))^{\tau_\alp}$ by $\CS(T_\alp(F))^{\tau_\alp}$.
\end{enumerate}
\end{thm}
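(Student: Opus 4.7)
The plan is to imitate the non-archimedean proof of Theorem \ref{thm:total-transfer} almost verbatim, with the Steinberg representation replaced by the absence of any discrete-spectrum analog on $\SL_2(\BC)$ and by the stable discrete series on $\SL_2(\BR)$, which are however already detected by $\CT_1$. The backbone is Part (1) of Theorem \ref{thm:main:SBC}: stable tempered characters are dense in the space of stable distributions also for archimedean $F$. Hence an element $f\in\RS\RO(\CH)$ (or in $\RS\RO(\CS)$) is determined by the collection of stable traces $\langle f, \Theta^\st_{[\pi]}\rangle$ as $[\pi]$ runs over tempered $L$-packets.

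For $F=\BC$, every irreducible tempered representation of $\SL_2(\BC)$ (or $\GL_2(\BC)$) is a unitary principal series $\pi_\chi$ induced from the split torus $T_0$, and the dihedral lifting from $T_0$ realizes every such $\pi_\chi$. By adjointness $\langle \CT_0(f),\chi\rangle = \langle f,\Theta^\st_{\pi_\chi}\rangle$, so $\CT_0$ is injective. Surjectivity onto $\CC^\infty_c(T_0(F))^{\tau_0}$ (respectively $\CS(T_0(F))^{\tau_0}$) follows from the matrix Paley-Wiener theorem for $\CC^\infty_c$ and for $\CS$ recalled in subsection \ref{subsec:BCandKC} (\cite{Delorme-Paley-Wiener} and \cite[Chap.~IV,\S~5.6]{GGV}): any Weyl-invariant function of the desired analytic type on $\wh{T_0}$ arises as the stable trace of some test function, and its inverse Mellin transform is an element of $\CC^\infty_c(T_0(F))^{\tau_0}$ (or $\CS(T_0(F))^{\tau_0}$).

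For $F=\BR$, the tempered dual of $\SL_2(\BR)$ decomposes as principal series induced from $T_0$ (accounting for limits of discrete series among subquotients, but as $L$-packet constituents their stable sum equals that of a reducible principal series) together with the stable discrete series $L$-packets indexed by integers $\ell\geq 1$, which are the dihedral lifts of nontrivial characters of $T_1(\BR)=\BC^1$. Consequently $\CT_\oplus=\CT_0\oplus\CT_1$ sees every tempered $L$-packet, so injectivity follows from the density statement together with adjointness, exactly as in the proof of Theorem \ref{thm:total-transfer}; note that unlike the non-archimedean case there is no Steinberg representation, so the kernel is zero. The compatibility conditions describing the image are derived in the same way: whenever two torus characters produce isomorphic dihedral lifts, the corresponding stable traces must agree. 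The only nontrivial instance over $\BR$ is the matching $\langle q_0,\eta_1\rangle=\langle q_1,\mathbbm{1}_1\rangle$ produced by the reducible principal series $\Ind^G_B(\eta_1)$, parallel to condition (a) in Theorem \ref{thm:total-transfer}; the quadratic-character matching (b) is vacuous because there is only one nonsplit torus.

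The passage from $\CH$ to $\CS$ is the step most likely to require care. For surjectivity one must promote the Paley-Wiener characterization from compactly supported test functions to Schwartz functions, and the compatibility equations must continue to cut out exactly the image. Both are handled by the matrix Paley-Wiener theorem for $\CS$ over $\BR$ (\cite[p.100]{Barker}) and over $\BC$ (\cite[Chap.~IV,\S~5.6]{GGV}, as revisited in Remark \ref{rmk:mainresult:complexMPW}) together with Delorme's trace Paley-Wiener theorem for $\CS$ (\cite{Delorme-limites}); the adjointness identity $\langle\CT_\alpha(f),\chi\rangle=\langle f,\theta^\st_\chi\rangle$ used just above Theorem \ref{thm:total-transfer} extends verbatim to $f\in\RS\RO(\CS)$ by the archimedean extension of the stable transfer already recorded in subsection \ref{subsec:totaltransfertori}. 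The hard part is therefore the careful bookkeeping of the Fréchet topologies and of the analytic growth conditions in the image, rather than any new representation-theoretic input.
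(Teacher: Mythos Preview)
Your proposal is correct and follows exactly the approach the paper intends: the paper simply states that the proof is identical to that of Theorem \ref{thm:total-transfer} and omits it. Your write-up spells out the adaptations that the paper leaves implicit, correctly noting that over $\BR$ every tempered $L$-packet (including the limits of discrete series) arises as a dihedral lift, so the kernel of $\CT_\oplus$ is zero and no Steinberg correction is needed.
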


\subsection{The discrete part of the cocenter}\label{subsec:discrete-cocenter}

Let $F$ be non-archimedean. The action of the Bernstein center $\CZ$ on the Hecke algebra $\CH$ preserves the commutatator $[\CH,\CH]$ and hence induces an action on the cocenter ${\rm O}(\CH)=\CH/[\CH,\CH]$. Assuming the stable center conjecture and the identification of $\CZ^{\st,\prime}$ with the stable Bernstein center $\CZ^\st$ that is discussed in Remark \ref{rmk:anotherdef:SBC}, which in particular is valid for $G=\SL_2$ and $\GL_2$ by Theorem \ref{thm:main:SBC}, we derive an action of $\CZ^{\rm st}$ on the stable cocenter ${\rm SO}(\CH)$ which induces a decomposition of ${\rm SO}(\CH)$ according to the connected components of the stable Bernstein variety $\Omega^{\rm st}(G(F))$. In particular, for $G=\SL_2$ and $\GL_2$, the stable Bernstein variety $\Omega^{\rm st}(G(F))$ is the disjoint union of the supercuspidal components and the union of parabolically induced components,
$$\Omega^{\rm st}(G(F)) = \Omega^{\rm st}(G(F))_{\rm cus} \sqcup \Omega^{\rm st}(G(F))_{\rm ind},$$
which induces a decomposition of the stable cocenter ${\rm SO}(\CH)$ into a direct sum
$$
{\rm SO}(\CH)= {\rm SO}(\CH)_{\rm cus} \oplus {\rm SO}(\CH)_{\rm ind}.
$$

When $G=\SL_2$, ${\rm SO}(\CH)_{\rm cus}$ is supported by the union of zero-dimensional components and ${\rm SO}(\CH)_{\rm ind}$ is supported by the union of one-dimensional components. We stress that ${\rm SO}(\CH)_{\rm ind}$ may and does contain elements supported by zero-dimensional subschemes of $\Omega^{\rm st}(G(F))$ which are contained in one-dimensional components. These functions are multiple of the normalized elliptic character of the Steinberg representation.

We have the following theorem.
\begin{thm}\label{thm:discete-cocenter}
The kernel of the stable transfer factor for the split torus which is the restriction map
$$
\CT_0:{\rm SO}(\CH) \to \CC^\infty_c(T_0(F))^{\tau_0}
$$
consists of elements $f\in  {\rm SO}(\CH)$ which, as functions on the Steinberg-Hitchin base, are supported on the elliptic locus 
	$${\rm supp}(f) \subset \Fc(F)^{\rm ell}= \bigcup_{\alpha\in\CI\bs \{0\}} \pi_\alpha(T_\alp(F)).$$ 
We will call the discrete part of the cocenter the kernel of $\CT_0$
$${\rm ker}(\CT_0)={\rm SO}(\CH)_{\rm dis}.$$

When $F$ is non-archimedean, there is a direct sum decomposition
$$
{\rm SO}(\CH)_{\rm dis}= {\rm SO}(\CH)_{\rm cus} \oplus {\rm SO}(\CH)_{\rm St}.
$$
Moreover, 
${\rm SO}(\CH)_{\rm cus}$ is the direct summand of ${\rm SO}(\CH)$ supported by the cuspidal components of the stable Bernstein variety, and ${\rm SO}(\CH)_{\rm St}$ is 
the subspace of ${\rm SO}(\CH)$ that are annihilated by the stable dihedral characters, and is precisely generated by the normalized elliptic Steinberg characters $($twisted by $\CH(F^\times)$ along the center when $G=\GL_2$$)$. 
\end{thm}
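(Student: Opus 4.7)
My plan is to prove both parts by decomposing the problem along the stable Bernstein decomposition and then exploiting Theorem \ref{thm:total-transfer}.

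For Part 1, I would invoke Remark \ref{rmk:GGformulaforstabletransfer}: for the split algebra $E_0$ we have $\eta_{E_0}=1$, so $\CT_0(f) = \tr_{E_0}^*(f|_{\tr(E_0^1)})$ is essentially the restriction of the function $f$ on $\Fc(F)$ to the hyperbolic trace locus $\tr(E_0^1)$. Hence $f\in \ker(\CT_0)$ if and only if $f$ vanishes on $\tr(E_0^1)$, i.e.\ $\mathrm{supp}(f)\subset \Fc(F)^{\mathrm{ell}}=\bigcup_{\alpha\neq 0}\pi_\alpha(T_\alpha(F))$. This gives the claimed description of $\mathrm{ker}(\CT_0)={\rm SO}(\CH)_{\mathrm{dis}}$.

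For Part 2, I would first use the action of $\CZ^{\mathrm{st}}$ on $\mathrm{SO}(\CH)$ (Corollary \ref{cor:SBC=SBCprime}) together with the Bernstein partition $\Omega^{\mathrm{st}}=\Omega^{\mathrm{st}}_{\mathrm{cus}}\sqcup\Omega^{\mathrm{st}}_{\mathrm{ind}}$ to write $\mathrm{SO}(\CH)=\mathrm{SO}(\CH)_{\mathrm{cus}}\oplus\mathrm{SO}(\CH)_{\mathrm{ind}}$. The inclusion $\mathrm{SO}(\CH)_{\mathrm{cus}}\subset\ker\CT_0$ follows from the fact that pseudocoefficients of supercuspidal $L$-packets are compactly supported modulo center with vanishing constant terms along the Borel, so their stable orbital integrals vanish on the split regular semisimple locus; equivalently, by a calculation parallel to Theorem \ref{thm:imageLafsupercuspidal}, such stable orbital integrals are of the form $\Delta\mathbf{v}\mathbbm{1}_{\mathrm{ell}}\Theta^{\mathrm{st}}_{[\pi]}$ and are thus supported on $\Fc(F)^{\mathrm{ell}}$. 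The direct sum decomposition of $\mathrm{SO}(\CH)_{\mathrm{dis}}$ then reduces to computing $\mathrm{SO}(\CH)_{\mathrm{ind}}\cap\ker\CT_0$.

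The main step, and where the heart of the proof lies, is showing that $\mathrm{SO}(\CH)_{\mathrm{ind}}\cap\ker\CT_0$ is spanned by the normalized elliptic Steinberg character. For $f\in\mathrm{SO}(\CH)_{\mathrm{ind}}$ and $\alpha\in\CI\setminus\{0\}$, the Mellin transform of $\CT_\alpha(f)$ at a character $\chi$ of $E_\alpha^1$ equals $\langle f,\Theta^{\mathrm{st}}_{\CW_{E_\alpha}(\chi)}\rangle$ by the defining adjunction of the Langlands stable transfer; by Proposition \ref{pro:dihedral:Cas72:1} and Theorem \ref{thm:L-packet}, $\CW_{E_\alpha}(\chi)$ is supercuspidal for every nontrivial $\chi$, so this pairing vanishes whenever $\chi\neq 1_\alpha$ because $f$ lies in the induced component. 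Therefore $\CT_\alpha(f)$ is a constant function on the compact group $E_\alpha^1$. If in addition $\CT_0(f)=0$, the compatibility relation $\langle \CT_0(f),\eta_\alpha\rangle=\langle\CT_\alpha(f),1_\alpha\rangle$ of Theorem \ref{thm:total-transfer} forces that constant to be zero. Hence $\CT_\oplus(f)=0$, and by Theorem \ref{thm:total-transfer} $f$ must be a scalar multiple of the normalized elliptic Steinberg character. To identify $\mathrm{SO}(\CH)_{\mathrm{St}}$ with the subspace annihilated by stable dihedral characters, I would use the defining property of the Steinberg pseudocoefficient ($\tr_{\pi'}(f_{\mathrm{St}})=\delta_{\pi',\mathrm{St}}$ for tempered $\pi'$) together with the observation that the Steinberg representation is not of the form $\CW_E(\chi)$, so it pairs trivially with every irreducible dihedral representation; the case $G=\GL_2$ follows by tensoring along the center. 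The main obstacle is the careful matching of the spectral characterization of $\mathrm{SO}(\CH)_{\mathrm{ind}}$ with the constraints of Theorem \ref{thm:total-transfer}, which is where the dihedral classification of tempered representations enters decisively.
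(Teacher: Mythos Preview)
Your proof is correct. For Part~1, both you and the paper treat $\CT_0$ as the restriction to the split locus, so this is identical (the paper does not even spell it out).

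For Part~2, your route genuinely differs from the paper's. The paper argues via the Plancherel formula: for $f\in\mathrm{SO}(\CH)_{\mathrm{dis}}$ and $J\in\CZ^{\mathrm{st}}$, the spectral expansion $J(f)=\int_{\widehat{G}^{\mathrm{temp}}}\gamma(J,\pi)\langle f,\theta^{\mathrm{st}}_\pi\rangle\,d\pi$ reduces to an integral over $\widehat{G}^{\mathrm{disc}}=\widehat{G}^{\mathrm{cusp}}\sqcup\{\mathrm{St}\}$ because principal series characters vanish against functions supported on the elliptic locus. This directly exhibits $\mathrm{SO}(\CH)_{\mathrm{St}}$ as a $\CZ^{\mathrm{st}}$-stable one-dimensional complement of the cuspidal summand. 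Your argument instead leverages Theorem~\ref{thm:total-transfer}: you intersect the Bernstein decomposition with $\ker\CT_0$, observe that $\CT_\alpha(f)$ is constant on $E_\alpha^1$ for $f$ in the induced part (since nontrivial dihedral liftings are supercuspidal), and then use the compatibility relation (a) to kill that constant, forcing $f\in\ker\CT_\oplus$. What your approach buys is that it stays entirely within the transfer framework already set up and avoids invoking the Plancherel formula; it also makes the identification of $\mathrm{SO}(\CH)_{\mathrm{St}}$ with the annihilator of dihedral characters transparent, since that is literally $\ker\CT_\oplus$. What the paper's approach buys is brevity and a direct explanation of why the decomposition is $\CZ^{\mathrm{st}}$-stable. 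One small point you leave implicit but should state: the normalized elliptic Steinberg character lies in $\mathrm{SO}(\CH)_{\mathrm{ind}}$ (not $\mathrm{SO}(\CH)_{\mathrm{cus}}$) because $\mathrm{St}$ sits in the unramified principal series Bernstein component.
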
 
\begin{proof}
It suffices to prove the decomposition for $\RS\RO(\CH)_{\disc}$. We prove it for $G=\SL_2$ and the case for $G=\GL_2$ is similar. Let $F$ be local non-archimedean. For $J\in \CZ^\st$ and $f\in \RS\RO(\CH)$, by the spectral characterization of distributions in the (stable) Bernstein center, 
\begin{equation}\label{eq:discete-cocenter:1}
J(f) = 
\int_{\pi\in \wh{G}^{\mathrm{temp}}}
\gam(J,\pi)
\langle f,\theta^\st_\pi\rangle \ud \pi
\end{equation}
where $\langle f,\theta^\st_\pi\rangle$ is the stable trace pairing between $f$ and the stable tempered character $\theta^\st_\pi$. Here we use the fact that the Plancherel measure is constant on the tempered local $L$-packet of $\SL_2(F)$ (\cite{ggps}). For $f\in \RS\RO(\CH)_\disc$ vanishing on the split locus, its trace pairing vanishes for principal series representations. Hence \eqref{eq:discete-cocenter:1} can be rewritten as 
$$
J(f) = 
\int_{\pi\in \wh{G}^{\mathrm{disc}}}
\gam(J,\pi)
\langle f,\theta^\st_\pi\rangle \ud \pi.
$$
where $\wh{G}^{\disc}$ is the set of discrete series representations of $G(F)$ that can be written as the disjoint union $= \wh{G}^{\cusp}\sqcup \mathrm{St}$. Here $\wh{G}^{\mathrm{cusp}}$ is the set of supercuspidal representations of $G(F)$ and $\mathrm{St}$ is the Steinberg representation. Hence $\RS\RO(\CH)_\St$ is stable under the action of $\CZ^\st$ and is the one-dimensional complement of $\RS\RO(\CH)_\cusp$, the cuspidal support of $\Ome^\st(G(F))_\cusp$. It follows that we complete the proof of the theorem.
\end{proof}

Similarly, for $F$ archimedean, we also have a decomposition 
$$
\RS\RO(\CS) = \RS\RO(\CS)_\disc\oplus \RS\RO(\CS)_\ind
$$
with $\RS\RO(\CS)_\disc$ the subspace of functions supported on elliptic locus of $G(F)$, and $\RS\RO(\CS)_\ind$ the subspace supported on split locus only.

\subsection{Scalar product on the discrete part of the cocenter}\label{subsec:scalarproductdiscrete}

In this subsection, we will reformulate the orthogonality of elliptic characters (\cite{MR874042}\cite{MR1237898}) as a scalar product on the discrete part of the cocenter.

Let $G$ be a reductive group over a local field $F$. Let $Z$ be the split part of the center of $G$ over $F$. For two discrete series representations $\{\pi_i\}_{i=1}^2$ of $G(F)$ with the same unitary central character, let $\Theta_i$ be their distribution characters and $\Theta^C_i$ be the induced function on $C(F)$, the set of strongly regular semi-simple conjugacy classes of $G(F)$. Then the following identity holds
\begin{equation}\label{eq:scalarproductdiscrete:1}
\int_{\wb{C}(F)^\el}
\Theta^C_1(c)\wb{\Theta}^C_2(c)
\Del(c)\v(c)\ome_{\wb{C}} = 
\bigg\{
\begin{matrix}
1 & \pi_1\simeq \pi_2\\
0 & \text{otherwise}
\end{matrix}
.
\end{equation}
The identity follows from the orthogonality of elliptic tempered characters. Here following subsection \ref{subsec:measure}, $\wb{C}(F) = C(F)/Z(F)$ is the set of strongly regular semi-simple conjugacy classes of $\wb{G}(F) = G(F)/Z(F)$ equipped with the induced measure $\ome_{\wb{C}}$, $\wb{C}(F)^\el$ is the subset of elliptic locus, and $\v(c)$ is the inverse of the volume of the connected centralizer of $c\in \wb{C}(F)^\el$ in $\wb{G}(F)$. The pairing \eqref{eq:scalarproductdiscrete:1} can be upgraded to a scalar product on the discrete part of the cocenter as follows.

\begin{thm}\label{thm:scalarproductdiscrete}
For any unitary character $\chi$ of $Z(F)$, let $\CH_\chi$ be the space of smooth functions on $G(F)$ that are compactly supported modulo $Z(F)$, and are $(Z(F),\chi)$-equivariant. Let $\RO(\CH_\chi)$ be the $(Z(F),\chi)$-equivariant cocenter and let $\RO(\CH_\chi)_\disc$ be the subspace of $\RO(\CH_\chi)$ consisting of functions on $C(F)$ whose restriction to the hyperbolic locus vanishes. Then $\RO(\CH_\chi)_\disc$ is a $\CZ$-module. Moreover, $\RO(\CH_\chi)_\disc$ has a Hermitian definite inner product 
$$
(f_1,f_2) = 
\int_{\wb{C}(F)^\el}
f_1(c)
\wb{f}_2(c)
\Del(c)^{-1}
\v(c)\ome_{\wb{C}}.
$$
\end{thm}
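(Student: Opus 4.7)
The plan is to treat the two claims in turn. For the $\CZ$-module structure, my first step is to establish a spectral characterization of the discrete part: $f \in \RO(\CH_\chi)$ lies in $\RO(\CH_\chi)_\disc$ if and only if the trace pairing $\tr_\pi(f) = \langle f, \Theta_\pi^C \rangle_{\omega_C}$ vanishes for every irreducible representation $\pi$ of $G(F)$ that is parabolically induced from a representation of a proper Levi subgroup. The forward direction follows from the Weyl integration formula, which expresses such a trace as a sum of integrals of $f$ against induced characters along orbits of proper maximal tori, i.e., on the hyperbolic locus. The reverse direction reflects the density of characters of induced representations among invariant distributions supported on the hyperbolic locus, which is a consequence of Harish-Chandra descent and the Weyl integration formula. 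Once this characterization is in place, $\CZ$-stability is immediate: for $z \in \CZ$ one has $\tr_\pi(z*f) = \gamma(z,\pi)\,\tr_\pi(f)$, so the vanishing condition is preserved under the $\CZ$-action.

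For the Hermitian pairing, convergence of the integral is straightforward: elements of $\RO(\CH_\chi)_\disc$ are smooth on the regular semisimple locus and compactly supported modulo $Z(F)$, the elliptic quotient $\bar C(F)^\el$ is compact, and $\Delta^{-1}$ is locally integrable against $\omega_{\bar C}$ since the singular locus $\{\Delta = 0\}$ is a real-codimension-one subvariety along which $\Delta^{-1}$ has integrable singularity. Hermitian symmetry $(f_1,f_2) = \overline{(f_2,f_1)}$ is immediate from the reality of $\Delta$ and $\v$. Writing $f(c) = \Delta(c)\,\phi_f(c)$ with $\phi_f$ the orbital integral in the traditional normalization of Corollary \ref{cor:normalizationoforb}, the pairing rewrites as
$$(f_1,f_2) = \int_{\bar C(F)^\el} \phi_{f_1}(c)\,\overline{\phi_{f_2}(c)}\,\Delta(c)\,\v(c)\,\omega_{\bar C}.$$

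Positive-definiteness is the main content, which I plan to deduce from the orthonormality relation \eqref{eq:scalarproductdiscrete:1} together with Kazhdan's completeness theorem for elliptic orbital integrals. The latter furnishes a finite expansion $\phi_f|_{\bar C(F)^\el} = \sum_\pi a_\pi(f)\,\Theta_\pi^C$ indexed by discrete series representations $\pi$ of $G(F)$ with central character $\chi$, where $a_\pi(f) = \tr_\pi(h)$ for any $h \in \CH_\chi$ with $\RO(h) = f$; finiteness reflects the fact that for any open compact subgroup $K$ modulo $Z(F)$, only finitely many discrete series admit $K$-fixed vectors and hence contribute to the trace of a $K$-biinvariant function. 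The orthogonality relation then yields $(f_1,f_2) = \sum_\pi a_\pi(f_1)\,\overline{a_\pi(f_2)}$, so $(f,f) = \sum_\pi |a_\pi(f)|^2 \geq 0$ with equality forcing $\phi_f \equiv 0$ on the elliptic locus; combined with the vanishing on the hyperbolic locus built into the definition of $\RO(\CH_\chi)_\disc$, one concludes $f = 0$. The main technical obstacle is ensuring that Kazhdan's completeness theorem applies uniformly in the $(Z(F),\chi)$-equivariant setting; in the archimedean case one should additionally replace $\CH_\chi$ by the appropriate Schwartz algebra and allow a rapidly convergent (rather than finite) expansion in discrete series characters, after which the same Parseval-type argument goes through.
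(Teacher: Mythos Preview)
Your argument for the $\CZ$-module structure is essentially the paper's own proof: characterize $\RO(\CH_\chi)_\disc$ spectrally as those $f$ with $\langle f,\Theta_\pi^C\rangle=0$ for all principal series $\pi$ (the paper invokes density of principal series characters on the split locus), then use that $\CZ$ acts by scalars on each $\pi$. This is exactly what the paper does, and in fact the paper's proof addresses \emph{only} this point --- it begins ``It suffices to show that $\RO(\CH)_\disc$\ldots is a $\CZ$-module'' and stops there, treating the Hermitian pairing as a direct reformulation of the elliptic orthogonality \eqref{eq:scalarproductdiscrete:1} rather than something requiring separate proof.

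Your treatment of the Hermitian pairing is therefore more detailed than the paper's. The Parseval-type argument you sketch --- expand $f\in\RO(\CH_\chi)_\disc$ as a finite combination of normalized elliptic characters of discrete series and apply \eqref{eq:scalarproductdiscrete:1} --- is the right idea and is implicitly what the paper has in mind when it calls the pairing a ``reformulation'' of elliptic orthogonality. One normalization caveat: from Theorem~\ref{thm:imageLafsupercuspidal}(2) the normalized elliptic character in the paper's orbital-integral normalization is $\Delta\,\mathbf{v}\,\mathbbm{1}_\el\,\Theta_\pi$, so after dividing by $\Delta$ your $\phi_f$ should expand as $\sum_\pi a_\pi(f)\,\mathbf{v}\,\Theta_\pi^C$ rather than $\sum_\pi a_\pi(f)\,\Theta_\pi^C$. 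This does not affect positive-definiteness (you still get $\sum_\pi c_\pi|a_\pi(f)|^2$ with $c_\pi>0$), but the diagonal entries are not all equal to $1$ in general since $\mathbf{v}$ varies over the different elliptic tori.
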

Comparing with \eqref{eq:scalarproductdiscrete:1}, it is $\Del(\cdot)^{-1}$ rather than $\Del(\cdot)$ appearing in the above pairing because of the normalization in Corollary \ref{cor:normalizationoforb}.
\begin{proof}
It suffices to show that $\RO(\CH)_\disc$, the subspace of $\RO(\CH)$ consisting of functions vanishing on the hyperbolic locus, is a $\CZ$-module. By the density of principal series characters on the split locus, $f\in \RO(\CH)$ lies in $\RO(\CH)_\disc$ if and only if 
$$
\langle f,\Theta^C_\pi\rangle =0
$$
for any principal series representation $\pi$ of $G(F)$. But since $\CZ$ acts via scaling on any principal series representations, it follows that $\RO(\CH)_\disc$ is a $\CZ$-module. 
\end{proof}

Similarly, we can introduce a inner product pairing on the discrete part of the stable cocenter. The split center $Z$ acts on the Steinberg-Hitchin base $\Fc=\Fc_G$. Let $\wb{\Fc}(F) = \Fc_{\wb{G}}(F) = \Fc_G(F)/Z(F)$ which can be viewed as the Steinberg-Hitchin base for $\wb{G}$ equipped with the induced measure $\ome_{\wb{\Fc}}$. Let $\wb{\Fc}(F)^\el$ be the subset of $\wb{\Fc}(F)$ corresponding to elliptic locus. For $a\in \wb{\Fc}(F)^\el$, let $\v(a)$ be the inverse of the volume of the connected centralizer of $a\in \wb{\Fc}(F)^\el$ in $\wb{G}(F)$. For any unitary character $\chi$ of $Z(F)$, there is the following Hermitian definite inner product on $\RS\RO(\CH_\chi)_\disc$, which is the subspace of $\RS\RO(\CH_\chi)$ vanishing on the split locus of $\wb{\Fc}(F)$,
\begin{equation}\label{eq:scalarproductdiscrete:2}
(f_1,f_2) = 
\int_{\wb{\Fc}(F)^\el}
f_1(a)
\wb{f}_2(a)
\Del(a)^{-1}\v(a)\ome_{\wb{\Fc}}.
\end{equation}

\begin{rmk}\label{rmk:scalardiscreteproduct}
For the purpose of next subsection, we introduce the following inner product pairing that is parallel to \eqref{eq:scalarproductdiscrete:2}: For two stable distribution characters $\theta_1$, $\theta_2$ of $G(F)$ with the same unitary central character,
\begin{equation}\label{eq:scalarproductdiscrete:3}
(\theta_1,\theta_2)_{\wb{\Fc}^\el} = 
\int_{\wb{\Fc}(F)^\el}
\theta_1(a)\wb{\theta}_2(a)\Del(a)
\v(a)\ome_{\wb{\Fc}}.
\end{equation}
Similarly, for two functions $f_1,f_2\in \RS\RO(\CH)$, define
\begin{equation}\label{eq:scalarproductdiscrete:4}
(f_1,f_2)_{\Fc^\el} = 
\int_{\Fc(F)^\el}
f_1(a)\wb{f}_2(a)\Del(a)
\v(a)\ome_{\Fc}.
\end{equation}
\end{rmk}

\subsection{Another unitary structure on the affine line}\label{subsec:anotherunitary}

Starting from this subsection, we focus on $G=\SL_2$ and $\GL_2$ over a local field $F$. Let $Z$ be the split center of $G$ and $\wb{G}(F) = G(F)/Z(F)$. In particular, when $G=\SL_2$, $\wb{G}(F)=G(F)$.

The Gelfand-Graev transform for $G(F)$ defined by the formulae \eqref{eq:ggps:interpret:1} \eqref{eq:ggps:interpret:2} is clearly unitary with respect to the standard measure on the affine line since both the Fourier transform and the multiplication by the quadratic characters are unitary. It is a remarkable fact that it is unitary with respect to another measure derived from the orthogonality of elliptic tempered characters, which is stated below. 

\begin{thm}\label{thm:anotherunitary:sl2}
Let $G=\SL_2$ over a local field $F$ of residual characteristic other than two. For any $\alp\in \CI$, let 
\begin{align*}
\CG\CG_{E_\alp} = \CG\CG_\alp: L^2(F,\ud x) &\to L^2(F,\ud x)
\\
\phi&\mapsto \lam_{\alp}\big(\CF_{\psi^{-1}}(\eta_\alp\cdot \CF_{\psi}(\phi)) \big)
\end{align*}
Then the following statements hold.
\begin{enumerate}
\item Fix a quadratic field extension $E=E_\alp$ of $F$ with $\alp\in \CI\bs \{0\}$. Let $\phi_i$ $(i=1,2)$ be two functions supported on $\tr(E^1)$ that are square-integrable with respect to the measure $\Del(a)\ud a$, and $(\phi_i,\phi_{\mathbbm{1}_{\alp}})_{\wb{\Fc}^\el} =0$ for $i=1$ or $2$, where $\mathbbm{1}_{\alp}$ is the trivial character of $E^1_\alp$ and $\phi_{\mathbbm{1}}$ is the function supported on $\tr(E^1_\alp)\subset \Fc_G(F)=F$ defined by \eqref{eq:phi-chi}, i.e. the constant Fourier coefficient of $\phi_1$ or $\phi_2$, viewed as a function on $E^1_\alp$, vanishes. Then the following identity holds.
$$
\big(\CG\CG_\alp(\phi_1),\CG\CG_\alp(\phi_2)\big)_{\wb{\Fc}^\el}
= 
(\phi_1,\phi_2)_{\wb{\Fc}^\el}.
$$

\item 
Fix $\alp,\bet\in \CI\bs \{0\}$ with $\alp\neq \bet$. For $\phi$ supported on $\tr(E^1_\alp)$ that is square-integrable with respect to the measure $\Del(a)\ud a$, the following identity holds 
$$
\big(\CG\CG_{\alp}(\phi_1),\CG\CG_{\bet}(\phi_{\chi^\qd_{\bet}})\big)_{\wb{\Fc}^\el} = 
(\phi,\phi_{\chi^\qd_{\alp}})_{\wb{\Fc}^\el}
$$
where $\chi^\qd_{*}$ is the unique non-trivial character of $E^1_*$ whose square is trivial. In particular, it happens only when $F$ is non-archimedean.
\end{enumerate}
\end{thm}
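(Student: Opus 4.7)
The plan is to reduce both identities to the orthogonality of stable elliptic characters of dihedral representations through the Gelfand--Graev character formula \eqref{eq:ggps:interpret:1}. The functions $\phi_{\chi}$ defined in \eqref{eq:phi-chi} form an orthogonal family spanning a dense subspace of the Hilbert space of $\tau_\alp$-invariant functions on $\tr(E^1_\alp)$ that are square-integrable against $\Del(a)\ud a$, indexed by the characters $\chi$ of $E^1_\alp$ modulo the involution $\chi\mapsto \chi^{-1}=\chi\circ \iota_\alp$. By continuity it suffices to prove both identities assuming $\phi$, $\phi_1$, $\phi_2$ are finite linear combinations of such $\phi_\chi$.

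First, I would reinterpret the operator $\CG\CG_\alp$ in spectral terms. The Gelfand--Graev formula \eqref{eq:ggps:interpret:1} reads exactly
\[
 \CG\CG_\alp(\phi_\chi)(a) = \theta^{\st}_\chi(a),\quad a\in \Fc(F),
\]
where $\theta^{\st}_\chi$ is the stable character of the dihedral representation $\CW_{E_\alp}(\chi)$ restricted to $\SL_2(F)$. For $\chi$ nontrivial this stable character is supported on the elliptic locus (since $\CW_{E_\alp}(\chi)$ is a discrete series/supercuspidal $L$-packet, the only tempered contributions to the hyperbolic locus would come from $\chi=1$), while for $\chi=1$ the representation $\CW_{E_\alp}(1)$ is a principal series whose character has a nonzero hyperbolic part. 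This is precisely why the hypothesis that the constant Fourier coefficient of $\phi_i$ along $E^1_\alp$ vanishes (equivalently, $(\phi_i,\phi_{\mathbbm{1}_\alp})_{\bar\Fc^\el}=0$, i.e.\ the coefficient $c_{i,\mathbbm{1}_\alp}=0$ in the expansion $\phi_i=\sum_\chi c_{i,\chi}\phi_\chi$) is needed: it kills the principal series component, so that $\CG\CG_\alp(\phi_i)$ is again supported on the elliptic locus and the inner product $(\cdot,\cdot)_{\bar\Fc^\el}$ is meaningful.

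For part (1), expanding $\phi_i=\sum_{\chi\neq \mathbbm{1}_\alp} c_{i,\chi}\phi_\chi$ (sum over $\{\chi,\chi^{-1}\}$-orbits) yields
\[
 \bigl(\CG\CG_\alp(\phi_1),\CG\CG_\alp(\phi_2)\bigr)_{\bar\Fc^\el}
 \;=\; \sum_{\chi,\chi'} c_{1,\chi}\,\overline{c_{2,\chi'}}\;
 \bigl(\theta^{\st}_\chi,\theta^{\st}_{\chi'}\bigr)_{\bar\Fc^\el},
\]
and by the orthogonality of stable elliptic tempered characters (equivalent to the Harish-Chandra/Kazhdan elliptic orthogonality \cite{MR874042}\cite{MR1237898} summed over an $L$-packet) only the diagonal terms $[\chi]=[\chi']$ survive, contributing a universal constant equal to $(\phi_\chi,\phi_\chi)_{\bar\Fc^\el}$. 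Computing this constant on one side directly from the definition \eqref{eq:phi-chi} using the Weyl integration change of variables \eqref{eq:ggps:measure} $\omega_\Fc = \Del\,\omega_{T_\alp}/2$ (pulling back a $\tau_\alp$-invariant integrand from $\tr(E^1_\alp)$ to $E^1_\alp$ is a two-fold cover), and on the other side from the elliptic orthogonality pairing on the $L$-packet $[\CW_{E_\alp}(\chi)]$, shows that both constants agree, giving the identity $(\CG\CG_\alp(\phi_1),\CG\CG_\alp(\phi_2))_{\bar\Fc^\el}=(\phi_1,\phi_2)_{\bar\Fc^\el}$.

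For part (2), the same spectral expansion reduces the claim to the single cross-pairing
\[
 \bigl(\theta^{\st}_{\chi^\qd_\alp},\theta^{\st}_{\chi^\qd_\bet}\bigr)_{\bar\Fc^\el},
\]
for the distinguished quadratic characters $\chi^\qd_\alp$ on $E^1_\alp$ and $\chi^\qd_\bet$ on $E^1_\bet$ attached to the two distinct nonsplit quadratic extensions of $F$ (this case occurs only in the non-archimedean setting with three nonsplit extensions). The key representation-theoretic input here is the Labesse--Langlands observation \cite{labesse-langlands} recalled in the introduction: the dihedral representations $\CW_{E_\alp}(\chi^\qd_\alp)$ and $\CW_{E_\bet}(\chi^\qd_\bet)$ give rise to the \emph{same} four-element $L$-packet of $\SL_2(F)$. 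Consequently $\theta^{\st}_{\chi^\qd_\alp}=\theta^{\st}_{\chi^\qd_\bet}$ and the cross-pairing equals $(\theta^{\st}_{\chi^\qd_\alp},\theta^{\st}_{\chi^\qd_\alp})_{\bar\Fc^\el}$, which by the computation of part (1) is $(\phi_{\chi^\qd_\alp},\phi_{\chi^\qd_\alp})_{\bar\Fc^\el}$. Summing against the expansion of $\phi$ on $E^1_\alp$ and using that $\CG\CG_\bet(\phi_{\chi^\qd_\bet})=\theta^{\st}_{\chi^\qd_\bet}=\theta^{\st}_{\chi^\qd_\alp}$ (which in turn equals $\CG\CG_\alp(\phi_{\chi^\qd_\alp})$) yields the claimed equality.

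The main obstacle I expect is bookkeeping of the normalization constants: the Weil constants $\lam_\alp$, the measure factor $\v(a)$ (equal to $\vol(E^1_\alp,\omega_{T_\alp})^{-1}$, which depends on $\alp$), the discriminant $\Del$, and the two-fold covering of $\tr(E^1_\alp)$ by $E^1_\alp$ must combine so that the identity \eqref{eq:ggps:measure}, the explicit formula $\vol(E^1,\ud_{E,1}e)$ in \eqref{eq:notation:volumeE1}, and the Plancherel/orthogonality constants for the $L$-packets (whose cardinality is $2$ in the generic case and $4$ in the exceptional Labesse--Langlands case) all match. A clean way to organize this is to normalize everything so that $(\phi_\chi,\phi_\chi)_{\bar\Fc^\el}$ equals the cardinality of the $L$-packet of $\CW_{E_\alp}(\chi)$, after which the orthogonality identity $(\theta^{\st}_\chi,\theta^{\st}_{\chi'})_{\bar\Fc^\el}=|[\CW_{E_\alp}(\chi)]|\cdot \delta_{[\chi],[\chi']}$ produces the required isometry on the nose.
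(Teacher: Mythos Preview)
Your proposal is correct and follows the same approach as the paper: expand in the spanning family $\{\phi_\chi\}$, identify $\CG\CG_\alpha(\phi_\chi)=\theta^{\st}_\chi$ via the Gelfand--Graev formula, invoke the elliptic orthogonality of stable tempered characters (together with the Labesse--Langlands coincidence of the four-element $L$-packet for part (2)), and extend by linearity and continuity. One small correction to your explanatory reasoning: discrete series characters over $\BR$ are \emph{not} supported on the elliptic locus, and the pairing $(\cdot,\cdot)_{\wb\Fc^\el}$ is in any case well-defined since it only integrates over $\Fc^\el$; the true reason the vanishing-constant-coefficient hypothesis is needed is that $\theta^{\st}_{\mathbbm{1}_\alpha}$ is a principal-series character whose restriction to the elliptic locus vanishes, so $(\CG\CG_\alpha(\phi_{\mathbbm{1}}),\CG\CG_\alpha(\phi_{\mathbbm{1}}))_{\wb\Fc^\el}=0\neq(\phi_{\mathbbm{1}},\phi_{\mathbbm{1}})_{\wb\Fc^\el}$, which breaks the isometry on that line.
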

We first treat the case when $F=\BR$, $\alp = 1$ and hence $E_\alp = \BC$. For two characters $\chi_1,\chi_2$ of $E^1_\alp$, let $\theta_{\chi_i}^\st$ be the stable character for the discrete series character for $G(F)$. From the orthogonality of elliptic discrete series characters for $\SL_2(\BR)$, or based on the stable character formula in \cite[(1.3.1)]{langlands-singularites-transfert}, the following identity holds
$$
\int_{\wb{C}(F)^\el = C(F)^\el}
\theta^\st_\chi(c)
\wb{\theta}^\st_\chi(c)
\Del(c)
\v(c)
\ome_{\wb{C}} = 
\bigg\{
\begin{matrix}
2 & \phi_{\chi_1} = \phi_{\chi_2} \neq \phi_{\mathbbm{1}} \\
0 & \text{otherwise}
\end{matrix}
$$
where $\mathbbm{1}$ is the trivial character of $E^1_\alp = \BC^1$ and $\phi_\chi$ is defined in \eqref{eq:phi-chi} attached to $\chi$. Descending down to the Steinberg-Hitchin base, and using the fact that the trace map is generically a $2$-fold cover, the elliptic orthogonality can be reformulated as follows
\begin{align*}
(\theta_{\chi_1}^\st,\theta_{\chi_2}^\st)_{\wb{\Fc}^\el}= 
\bigg\{
\begin{matrix}
1 & \phi_{\chi_1} = \phi_{\chi_2}\neq \phi_{\mathbbm{1}}\\
0 & \text{otherwise}
\end{matrix}
\end{align*}
By the Gelfand-Graev stable character formula \eqref{eq:ggps:interpret:1}, the above identity can be reformulated as follows
\begin{align}\label{eq:ggps:sl2:orthogonality:real}
\big(
\CG\CG_\alp(\phi_{\chi_1}),
\CG\CG_\alp(\phi_{\chi_2})
\big)_{\wb{\Fc}^\el} = 
\bigg\{
\begin{matrix}
(\phi_{\chi_1},\phi_{\chi_2})_{\wb{\Fc}^\el} & \chi_1=\chi_2=\mathbbm{1}\\
0 & \text{otherwise}
\end{matrix}
\end{align}
By Mellin transform (which boils down to Fourier series in this case), the characters $\{\chi\}_{\chi\in \wh{E}^1_\alp}$ form an $L^2$-basis for square integrable functions on $E^1_\alp$. In particular, for functions descending down to $\tr(E^1_\alp)$, $\{\phi_\chi\}_{\chi\in \wh{E}^1_\alp}$ is a spanning set. Hence as a corollary, for two smooth (or even square-integrable) functions on $E^1_\alp$, by linearity and continuity, the following identity holds as long as $(\phi_i,\phi_{\mathbbm{1}})_{\wb{\Fc}^\el} = 0$ for $i=1$ or $2$ (i.e. the constant Fourier coefficient of $\phi_1$ or $\phi_2$ vanishes),
\begin{align}\label{eq:ggps:sl2:orthogonality:real:3}
\big(
\CG\CG_\alp(\phi_1),\CG\CG_\alp(\phi_2)
\big)_{\wb{\Fc}^\el} = 
(\phi_1,\phi_2)_{\wb{\Fc}^\el},\quad \text{if} \quad 
(\phi_i,\phi_{\mathbbm{1}})_{\wb{\Fc}^\el} = 0\text{ for $i=1$ or $2$}.
\end{align}
As a consequence, we have established the following theorem.

\begin{thm}\label{thm:ggps:sl2:orthognality:real:summary}
For $F=\BR$ and $\alp\in \CI\bs \{0\}$ and hence $E_\alp = \BC$, the operator $\CE_\BC := \CG\CG_\BC\cdot \mathbbm{1}_{\tr(\BC^1)}$ is unitary on
$$
L^2_0(\tr(\BC^1),\Del(a)\ud a) := 
\{\phi\in L^2(\tr(\BC^1),\Del(a)\ud a) \mid (\phi,\phi_{\mathbbm{1}}) = 0\}.
$$
Here $\mathbbm{1}_{\tr(\BC^1)}$ is the characteristic function of $\tr(\BC^1)\subset \BR$.
\end{thm}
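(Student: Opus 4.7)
My plan is to package the identity \eqref{eq:ggps:sl2:orthogonality:real:3} into the three defining properties of a unitary operator: well-definedness as a map $L^2_0 \to L^2_0$, isometry, and surjectivity. Since $\v$ is a positive constant on $\tr(\BC^1)$ and $\ome_{\wb{\Fc}}$ is Lebesgue measure on the affine line, the inner product $(\cdot,\cdot)_{\wb{\Fc}^\el}$ is proportional to the $L^2(\tr(\BC^1),\Del(a)\ud a)$ inner product. The multiplication by $\mathbbm{1}_{\tr(\BC^1)}$ inside the definition of $\CE_\BC$ is invisible to this inner product, so identity \eqref{eq:ggps:sl2:orthogonality:real:3} immediately furnishes the isometry property on $L^2_0$.

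To verify that $\CE_\BC$ lands in $L^2_0$, I would compute its value on the orthogonal basis of characters. For $\chi \neq \mathbbm 1$, the Gelfand--Graev formula \eqref{eq:ggps:interpret:1} gives $\CE_\BC(\phi_\chi) = \theta_\chi^{\rm st}\cdot \mathbbm 1_{\tr(\BC^1)}$, which is the elliptic part of a discrete series stable character. By the orthogonality of elliptic discrete series characters (equation \eqref{eq:scalarproductdiscrete:3} applied at $\chi$ versus the trivial weight), these are orthogonal to $\phi_{\mathbbm 1}$. By linearity, continuity, and the density of $\operatorname{span}\{\phi_\chi : \chi \neq \mathbbm 1\}$ in $L^2_0$ (Fourier series on $\BC^1 = S^1$), the operator $\CE_\BC$ extends to a bounded isometric map $L^2_0 \to L^2_0$.

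For surjectivity, I would use that an isometry on a Hilbert space is unitary iff its image is dense. The image contains all $\theta_{\chi_\ell}^{\rm st}\cdot \mathbbm 1_{\tr(\BC^1)}$ for weights $\ell \geq 1$, which by \cite[(1.3.1)]{langlands-singularites-transfert} are (up to sign) Chebyshev polynomials of the second kind $U_{\ell-1}(a/2)$, restricted to $[-2,2]$. These form a complete orthogonal system in $L^2([-2,2],\sqrt{4-a^2}\,\ud a)$, with $U_{-1} \equiv 0$ corresponding precisely to the excluded trivial character. Hence the image of $\CE_\BC$ is dense in $L^2_0$, closed by the isometry, and thus equals $L^2_0$.

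The main obstacle is bookkeeping the normalizations: carefully tracing through the constants $\lam_{\BC/\BR}$, the measure conversion between $\ome_{\wb{\Fc}}$ and $\Del(a)\ud a$ on the elliptic torus, and verifying that the completeness of Chebyshev polynomials matches exactly the orthogonality structure given by the elliptic pairing of discrete series characters. Once these normalizations are aligned, the statement becomes a direct corollary of \eqref{eq:ggps:sl2:orthogonality:real:3} together with the classical Chebyshev completeness.
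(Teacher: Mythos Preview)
Your isometry argument is exactly what the paper does: the theorem is stated immediately after identity \eqref{eq:ggps:sl2:orthogonality:real:3} as a direct consequence, and the paper offers nothing beyond that identity. On the isometry the two approaches coincide.

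However, your step verifying that $\CE_\BC$ lands in $L^2_0$ has a genuine gap. You invoke orthogonality of elliptic discrete series characters to conclude that $\CE_\BC(\phi_\chi) = \theta_\chi^{\rm st}\cdot\mathbbm 1_{\tr(\BC^1)}$ is orthogonal to $\phi_{\mathbbm 1}$. But elliptic orthogonality only gives $\theta_\chi^{\rm st}|_{\rm el} \perp \theta_{\mathbbm 1}^{\rm st}|_{\rm el}$, and the latter is identically zero since the dihedral lift of the trivial character is a principal series whose character vanishes on the elliptic locus. That is \emph{not} the same as orthogonality to $\phi_{\mathbbm 1} = 2/\Del$. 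Concretely, $\theta_{\chi_\ell}^{\rm st}|_{\rm el}(x) = -\RU_{\ell-1}(x/2)$ and
\[
(\theta_{\chi_\ell}^{\rm st}|_{\rm el},\, \phi_{\mathbbm 1}) \;\propto\; \int_{-2}^{2} \RU_{\ell-1}(x/2)\,\ud x \;=\; 2\int_0^\pi \sin(\ell\theta)\,\ud\theta \;=\; \frac{2\bigl(1-(-1)^\ell\bigr)}{\ell},
\]
which is nonzero for odd $\ell$. So $\CE_\BC$ does not carry $L^2_0$ into $L^2_0$.

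Your surjectivity step is in fact consistent with this failure: the images $\{\RU_{\ell-1}(x/2)\}_{\ell\ge 1} = \{\RU_m(x/2)\}_{m\ge 0}$ span all of $L^2(\tr(\BC^1),\Del\,\ud a)$, not the codimension-one subspace $L^2_0$. The Chebyshev completeness you cite is for the full weighted $L^2$ space; the remark that ``$\RU_{-1} \equiv 0$ corresponds to the excluded trivial character'' conflates $\phi_{\mathbbm 1}$ on the source side with its image $\theta_{\mathbbm 1}^{\rm st}|_{\rm el}=0$ on the target side. The sharp statement your computation actually yields is that $\CE_\BC$ is a unitary isomorphism from $L^2_0$ onto the full $L^2(\tr(\BC^1),\Del\,\ud a)$. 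The paper's one-line derivation establishes only the isometry on $L^2_0$, which is what is used downstream.
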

Notice that when $F=\BR$, up to stable conjugacy, there is only one elliptic torus in $\SL_2(\BR)$. Hence we can drop the volume factor $\v(a)$. 

\begin{rmk}\label{rmk:ggps:sl2:orthognality:real:extra}
It would be interesting to find a direct proof of Theorem \ref{thm:ggps:sl2:orthognality:real:summary} without referring to the orthogonality of elliptic tempered characters. 
\end{rmk}

Next, we assume that $F$ is non-archimedean with odd residual characteristic. There is a parallel unitary structure on $F$ with mild modification (the modification is needed due to the existence of the unique supercuspidal local $L$-packet of $\SL_2(F)$ of cardinality four).

We follow the same idea as the real case. Let $E$ be a quadratic $F$-algebra. The stable distribution character of $\SL_2(F)$ attached to a character $\chi:E^1\to \BC^\times$ is given by 
\begin{equation}\label{eq:ggps:padic:sl2:char}
\theta^\st_\chi(g) = \CG\CG_E(\phi_\chi)(\tr(g)),\quad g\in \SL_2(F)
\end{equation}

From \cite[Thm.~4.6]{jlgl2}\cite[Thm.~1.9]{Casselma-Quadratic}\cite[\S 11]{shelstad_notes}, the stable distribution character  $\theta^\st_\chi$ admits the following descriptions:
\begin{num}
\item\label{num:inv:sl2:padic:basic}
$
\theta^\st_\chi = \theta^\st_{\chi^{-1}} = \theta^\st_{\chi^\iota}
$ for any $\chi:E^1\to \BC^\times$. Here $\iota = \iota_E$ is the unique nontrivial involution of $E$ over $F$;

\item\label{num:inv:sl2:padic:ps} $\theta^\st_\chi$ is the restriction of the distribution character of a principal series of $\GL_2(F)$ when $\chi = \mathbbm{1}$, which in particular vanishes over elliptic locus;

\item\label{num:inv:sl2:padic:supcusp} $\theta^\st_\chi$ is the restriction of the distribution character of a supercuspidal representation of $\GL_2(F)$ when $\chi\neq \mathbbm{1}$. Moreover, the following statements hold:
\begin{enumerate}
\item[(i)] If $\chi^2\neq \mathbbm{1}$, then $\theta^\st_\chi$ is the sum of the distribution characters of two supercuspidal representations of $\SL_2(F)$ lying in the same local $L$-packet of cardinality two with the local $L$-parameter of dihedral type determined by $\chi$. For $\phi_{\chi_1}\neq \phi_{\chi_2}$ with $\chi_i^2\neq \mathbbm{1}$, the corresponding supercuspidal representations of $\SL_2(F)$ are not isomorphic to each other; 

\item[(ii)] If $\chi$ is the unique nontrivial character of $E^1$ with $\chi^2 =\mathbbm{1}$, then $\theta^\st_\chi$ is the sum of the distribution characters of four supercuspidal representations of $\SL_2(F)$ lying in the unique local $L$-packet with four elements. Moreover, for two different quadratic extensions $E_\alp$, $E_\bet$ of $F$ with $\alp,\bet\in \CI\bs \{0\}$ and $\alp\neq \bet$, let $\chi_*$ be the unique character of $E^1_*$ such that $\chi_*\neq \mathbbm{1}$ and $\chi_*^2 = \mathbbm{1}$, then $\theta^\st_{\chi_\alp} = \theta^\st_{\chi_\bet}$.
\end{enumerate}
\end{num}
The elliptic orthogonality for the stable distribution characters provides the following identity
$$
\int_{\wb{C}(F)^\el}
\theta^\st_{\chi_1}(c)
\wb{\theta}^\st_{\chi_2}(c)
\Del(c)
\v(c)
\ome_{\wb{C}}
=
\Bigg\{
\begin{matrix}
2 & \phi_{\chi_1} = \phi_{\chi_2}\text{ and }\chi^2_i\neq \mathbbm{1} \\
4 & \chi_i\neq \mathbbm{1} \text{ and }\chi^2_i=\mathbbm{1}\\
0 & \text{otherwise}
\end{matrix}
$$
From \cite[p.335]{Moy-Tadic-Conj-Orb}, up to conjugation by $\SL_2(F)$ there are four (resp. six) elliptic tori in $\SL_2(F)$ within three stable conjugacy classes parametrized by quadratic extensions $E/F$ when $-1\notin F^{\times 2}$ (resp. $-1\in F^{\times 2}$). Moreover the number of tori within its stable conjugacy class is equal to the cardinality of the Weyl group of the tori. Hence, by \eqref{eq:ggps:padic:sl2:char} and the fact that the trace map is generically a $2$-fold cover, the elliptic orthogonality can be reformulated as follows: 
\begin{itemize}
\item For $*=\alp,\bet\in \CI\bs \{0\}$, 
\begin{align}\label{eq:ggps:sl2:padic:refororthogonal}
\big(
\CG\CG_\alp(\phi_{\chi_\alp}),\CG\CG_{\bet}(\phi_{\chi_\bet})
\big)_{\wb{\Fc}^\el} 
=& 
\Bigg\{
\begin{matrix}
1 & \phi_{\chi_\alp} = \phi_{\chi_\bet} \text{ and }\chi^2_*\neq \mathbbm{1}   \\
2 & \chi_*\neq \mathbbm{1} \text{ and }\chi^2_*=\mathbbm{1}\\
0 & \text{otherwise}
\end{matrix}
\\
=&
\Bigg\{
\begin{matrix}
(\phi_{\chi_\alp},\phi_{\chi_\bet})_{\wb{\Fc}^\el} & \phi_{\chi_\alp} = \phi_{\chi_\bet} \text{ and }\chi^2_*\neq \mathbbm{1}   \\
2 & \chi_*\neq \mathbbm{1} \text{ and }\chi^2_*=\mathbbm{1}\\
0 & \text{otherwise}
\end{matrix} 
\nonumber
\end{align}
Notice that when $\chi^2_* =\mathbbm{1}$, $\chi_* = \chi_*^{-1}$.
\end{itemize}
For $\alp\in \CI\bs \{0\}$, following the same idea as the real case, the set of characters of $E^1_\alp$, $\{\chi\}_{\chi\in \wh{E}^1_\alp}$, forms an $L^2$-basis for square-integrable functions on $E^1_\alp$. Hence for smooth functions on $E^1_\alp$ descending down to $\tr(E^1_\alp)$, $\{\phi_\chi\}_{\chi\in \wh{E}^1_\alp}$ is a generating set. Therefore for smooth functions $\phi_1,\phi_2$ supported on $\tr(E^1_\alp)$, by linearity and continuity, the following identity holds as long as $(\phi_i,\phi_{\mathbbm{1}})_{\wb{\Fc}^\el} = 0$ for $i=1$ or $2$ (i.e. the constant Fourier coefficient of $\phi_1$ or $\phi_2$ vanishes)
\begin{align}\label{eq:ggps:sl2:ggps:inv:main:1}
\big(
\CG\CG_\alp(\phi_1),
\CG\CG_\alp(\phi_2)
\big)_{\wb{\Fc}^\el} = 
(\phi_1,\phi_2)_{\wb{\Fc}^\el}.
\end{align}
In other words, the operator $\CG\CG_\alp$ is unitary on the affine line with respect to the measure $\mathbbm{1}_\el(a)\Del(a)\v(a)\ud a$ for functions supported on $\tr(E^1_\alp)$ with vanishing constant Fourier coefficient.

On the other hand, when $*=\alp,\bet\in \CI\bs \{0\}$ and $\alp\neq \bet$, let $\chi_*^\qd$ be the unique nontrivial character of $E^1_*$ whose square is trivial, by \eqref{eq:ggps:sl2:padic:refororthogonal} and Mellin inversion, for any smooth function $\phi$ on $\tr(E^1_\alp)$,
\begin{equation}\label{eq:ggps:sl2:ggps:inv:main:2}
\big(
\CG\CG_\alp(\phi),
\CG\CG_\bet(\phi_{\chi_\bet^\qd})
\big)_{\wb{\Fc}^\el}
=
\big(
\CG\CG_\alp(\phi),
\CG\CG_\alp(\phi_{\chi_\alp^\qd})
\big)_{\wb{\Fc}^\el}
=
(\phi,\phi_{\chi^\qd_\alp})_{\wb{\Fc}^\el}.
\end{equation}

\subsubsection{Variant for $G=\GL_2$}

Finally we make a remark for the elliptic orthogonality of $G=\GL_2$ over a local field $F$ of residual characteristic not equal to two. Notice that there are no stability issue for $G$, up to conjugation, the maximal tori are parametrized by quadratic extensions of $F$, and the local $L$-packets for $G(F)$ are singleton. We are working with irreducible admissible representations of $G(F)$ with a fixed unitary central character $\chi:Z(F) = F^\times \to \BC^\times$. For two irreducible admissible square-integrable (modulo the center) representations $\pi_1,\pi_2$ of $G(F)$ with the same central character $\chi$, let $\theta_i$ $(i=1,2)$ be their distribution character, then the orthogonality of elliptic tempered characters show that 
$$
\int_{\wb{C}(F)^\el}
\theta_1(c)\wb{\theta}_2(c)
\Del(c)\v(c)\ome_{\wb{C}} = 
\bigg\{
\begin{matrix}
1 & \pi_1\simeq \pi_2\\
0 & \text{otherwise}
\end{matrix}
$$
Descending down the above identity to $\wb{\Fc}(F)^\el$, the Gelfand-Graev character identity \eqref{eq:ggps:interpret:2} shows that the following statements hold.

\begin{num}
\item\label{num:ellorthogonalgl2:1} Fix a quadratic extension $E=E_\alp$ of $F$ with $\alp\in \CI\bs \{0\}$. For a character $\chi_\alp:E^\times_\alp\to \BC^\times$. The central character of the distribution character of the dihedral lifting $\theta_{\chi_\alp}$ is $\chi_{\alp}|_{F^\times}\eta_\alp$. Fix two unitary characters $\chi_{\alp,i}:E_\alp^\times\to \BC^\times$ $(i=1,2)$ that both do not factor through the norm map (hence the corresponding dihedral representations are discrete series, which are supercuspidal when $F$ is non-archimedean) such that $\chi_{\alp,1}|_{F^\times} = \chi_{\alp,2}|_{F^\times}$,
$$
\big(
\CG\CG_\alp(\phi_{\chi_{\alp,1}}),\CG\CG_\alp(\phi_{\chi_{\alp,2}})
\big)_{\wb{\Fc}^\el} = 
(\phi_{\chi_1},\phi_{\chi_2})_{\wb{\Fc}^\el}
=
\bigg\{
\begin{matrix}
1 & \phi_{\chi_1} = \phi_{\chi_2}\\
0 & \text{otherwise}
\end{matrix}
$$
The identity shows that the operator $\CG\CG_\alp$ is unitary on $\wb{\Fc}(F)$ with respect to the measure $\mathbbm{1}_\el(a) \Del(a)\v(a)\ud a$ for functions supportd on $\c(E^\times_\alp/Z(F))$ with vanishing Fourier coefficient for characters that factor through the norm (and modulo the center). 

To be precise, following the same argument as the $\SL_2(F)$ case, for two smooth and compactly supported (even square-integrable modulo the center) functions $\phi_i$ on $E_\alp^\times$, let $\wb{\phi}_i$ be the Mellin transform of $\phi_i$ along the center $Z(F)$ against the character $\chi$. Suppose that for $i=1$ or $2$, $(\wb{\phi}_i,\phi_{\xi\circ \Nr_\alp})_{\wb{\Fc}^\el} = 0$ for any character $\xi$ of $F^\times$ such that $\xi\circ \Nr_\alp|_{F^\times} = \chi$, then the following identity is true
$$
\big(
\CG\CG_\alp(\wb{\phi}_1),\CG\CG_\alp(\wb{\phi}_2)
\big)_{\wb{\Fc}^\el} = 
(\wb{\phi}_{1},\wb{\phi}_{2})_{\wb{\Fc}^\el}.
$$
Similarly, with the above assumption, after extracting the central character for $\wb{\phi}_2$, we also have the following identity
$$
\big(
\CG\CG_\alp(\phi_1),
\CG\CG_\alp(\wb{\phi}_2)\big)_{\Fc^\el} = 
(\phi_1,\wb{\phi}_2)_{\Fc^\el}.
$$

\item\label{num:ellorthogonalgl2:2}  Fix $*=\alp, \bet\in \CI\bs \{0\}$ and $\alp\neq \bet$. For characters $\chi_*:E^\times_*\to \BC^\times$ both do not factor through the norm map and $\chi_\alp|_{F^\times}\eta_\alp = \chi_\bet|_{F^\times}\eta_\bet$ (and hence their dihedral liftings share the same central character)
\begin{align*}
\big(
\CG\CG_\alp(\phi_{\chi_\alp}),
\CG\CG_\bet(\phi_{\chi_\bet})
\big)_{\wb{\Fc}^\el}
=
\bigg\{
\begin{matrix}
1 & \text{$\chi^2_*$ factor through the norm map}\\
0 & \text{otherwise}
\end{matrix}
\end{align*} 
Notice that the above identity is parallel to the existence of the unique supercuspidal local $L$-packet with four elements in the $\SL_2(F)$ situation.
\end{num}

\subsection{Decomposition of the stable cocenter: motivation}\label{subsec:decompositionstablecocenter}

As in last subsection we take $G=\SL_2$ or $\GL_2$ over a local field of residual characteristic not equal to two. Recall that in subsection \ref{subsec:totaltransfertori} we have defined a total transfer map 
$$
\CT_\oplus: \RS\RO(\CH) \to \bigoplus_{\alp\in \CI} \CC^\infty_c(T_\alp(F))^{\tau_\alp}
$$
that is $\CZ^\st$-linear, with the $\CZ^\st$-module on $\CC^\infty_c(T_\alp(F))^{\tau_\alp}$ given by the homomorphism $\rho_\alp^*:\CZ^\st\to \CZ_\alp^{\tau_\alp}$. It is natural to ask the following question:
\begin{quest}\label{quest:inversion}
Given $f\in \RS\RO(\CH)$ with $\CT_{\oplus}(f) = (f_\alp)_{\alp\in \CI}\in \bigoplus_{\alp\in \CI}\CC^\infty_c(T_\alp(F))^{\tau_\alp}$ enjoying the characterization in Theorem \ref{thm:total-transfer}, can we reconstruct $f$ explicitly from the datum $(f_\alp)_{\alp\in \CI}$? 
\end{quest}
Roughly speaking, an answer to the above question would provide a decomposition of the stable cocenter by $F$-quadratic algebras, i.e.
$$
\RS\RO(\CH)\simeq \bigoplus_{\alp\in \CI}
\RS\RO(\CH)_{\alp}
$$
with $\RS\RO(\CH)_\alp$ being the image of $\CC^\infty_c(T_\alp(F))^{\tau_\alp}$ under the section from $\CC^\infty_c(T_\alp(F))^{\tau_\alp}$ to $\RS\RO(\CH)_\alp$. Before answering Question \ref{quest:inversion}, we would like to ask if the datum $(f_\alp)_{\alp\in \CI}$ determines $f\in \RS\RO(\CH)$ uniquely, which is summarized as follows.

\begin{num}
\item\label{num:stableBC::dense:cplx} When $F=\BC$ and hence $\CT_\oplus =\CT_0$ is the identity map, $f=f_0$;

\item\label{num:stableBC:dense:real} When $F=\BR$, the collection of stable tempered characters from the dihedral lifting $\{\theta^\st_{\chi_\alp}\}_{\chi_\alp,\alp\in \CI}$ is dense in the space of stable distributions (Theorem \ref{thm:main:SBC}), hence by adjunction, $(f_\alp)_{\alp\in \CI}$ is sufficient to determine $f$;

\item\label{num:stableBC:dense:nonarchi} When $F$ is non-archimedean of odd residual characteristic, $\{\theta^\st_{\chi_\alp}\}_{\chi_\alp,\alp\in \CI}$ is \textbf{not} dense in the space of stable distributions. One need to take into account of the distribution character of (twisted) Steinberg representations.
\end{num}

We introduce the following two subspaces of $\CC^\infty_c(T_\alp(F))^{\tau_\alp}$ for $\alp\in \CI\bs \{0\}$.
\begin{defin}\label{defin:directsummand:subspace}
Fix $\alp\in \CI\bs \{0\}$.
\begin{enumerate}
	\item Let $\CC^\infty_c(T_\alp(F))^{\tau_\alp}_+$ be the subspace of functions $\phi_\alp\in \CC^\infty_c(T_\alp(F))^{\tau_\alp}$ with the following properties:
	\begin{enumerate}
	\item If $G=\SL_2$, then $\langle \phi_\alp,\mathbbm{1}_{T_\alp}\rangle = 0$, i.e. the constant Fourier coefficient of $\phi_\alp$ vanishes; 

	\item If $G=\GL_2$, then $\langle \phi_\alp,\chi\circ \Nr_{\alp}\rangle =0$ for any $\chi$ a character of $F^\times$;
	\end{enumerate}

	\item Let $\CC^\infty_c(T_\alp(F))^{\tau_\alp}_{++}$ be the subspace of functions $\phi_\alp\in \CC^\infty_c(T_\alp(F))^{\tau_\alp}_{+}$ with the following properties. In particular the situation occurs only when $F$ is non-archimedean:
	\begin{enumerate}
	\item If $G=\SL_2$, then $\langle \phi_\alp, \chi_{\alp}^\qd\rangle =0$ where $\chi^\qd_\alp$ is the unique nontrivial character of $E^1_\alp$ whose square is trivial;

	\item If $G=\GL_2$, then $\langle \phi_\alp, \chi^\qd_\alp\rangle =0$ where $\chi^\qd_\alp$ runs over all the characters of $E^\times_\alp$ that do not factor through the norm map, but their square factor through the norm map.
	\end{enumerate}

	\item For $J_\alp\in \CC^\infty_c(T_\alp(F))^{\tau_\alp}$, let $J_{\alp}^+$ and $J^{++}_\alp$ be the image of the natural projection from $\CC^\infty_c(T_\alp(F))^{\tau_\alp}$ onto $\CC^\infty_c(T_\alp(F))^{\tau_\alp}_+$ and $\CC^\infty_c(T_\alp(F))^{\tau_\alp}_{++}$ through subtracting the corresponding Fourier coefficients.
\end{enumerate}
We also introduce the parallel subspaces $\CS(T_\alp(F))^{\tau_\alp}_+$ and $\CS(T_\alp(F))^{\tau_\alp}_{++}$ for $\CS(T_\alp(F))^{\tau_\alp}$. 
\end{defin}
These subspaces are introduced to avoid the overlap between dihedral liftings from different tori of $G(F)$.

When $F$ is non-archimedean, for any $\alp\in \CI$, $\CC^\infty_c(T_\alp(F))^{\tau_\alp}$ is a $\CZ_\alp^{\tau_\alp} = \CZ(T_\alp(F))^{\tau_\alp}$-module since for abelian group the center and the cocenter are the same. As a result, $\CC^\infty_c(T_\alp(F))^{\tau_\alp}_+$ and $\CC^\infty_c(T_\alp(F))^{\tau_\alp}_{++}$ are both $\CZ_\alp^{\tau_\alp}$-modules. Following the discussion in subsection \ref{subsec:totaltransfertori}, the homomorphism 
$$
\rho_\alp^*:\CZ^\st\to \CZ_\alp^{\tau_\alp}
$$
equips $\CC^\infty_c(T_\alp(F))^{\tau_\alp}$, $\CC^\infty_c(T_\alp(F))^{\tau_\alp}_+$ and $\CC^\infty_c(T_\alp(F))^{\tau_\alp}_{++}$ with a $\CZ^\st$-module structure via Mellin transform. The same statements are true for $F$ archimedean after replacing smooth and compactly supported functions by Schwartz functions.

The main theorem that we are going to establish in the next subsections is the following, which provides a decomposition for $\RS\RO(\CH)$ parametrized by quadratic $F$-algebras.

\begin{thm}\label{thm:explicitsection}
Let $F$ be a local field of residual characteristic not equal to two, and $G=\SL_2$ or $\GL_2$. 
\begin{enumerate}
\item \rm{(Nonsplit summands)}

\begin{enumerate}
\item For $\alp\in \CI\bs\{0\}$, consider the map
\begin{align*}
\CE_\alp:\CC^\infty_c(T_\alp(F))^{\tau_\alp}&\to \RS\RO(\CH)
\\
J_\alp &\mapsto \CE_\alp(J_\alp) = \mathbbm{1}_\el \cdot 
\v\Del\cdot 
\CG\CG_\alp
\bigg(
\frac{\nu_\alp(J_\alp)}{\v\Del}
\bigg)
\end{align*}
When restricted to $\CC^\infty_c(T_\alp(F))^{\tau_\alp}_+$, $\CE^+_\alp = \CE_\alp|_{\CC^\infty_c(T_\alp(F))^{\tau_\alp}_+}$ is a homomorphism of $\CZ^\st$-modules that provides a section for $\CT_\alp$. Here $\nu_\alp(J_\alp)$ is the unique function on $\pi_\alp(T_\alp(F))\subset \Fc_G(F)$ such that $\pi^*_\alp(\nu_\alp(J_\alp)) = J_\alp$. Moreover, for $\bet\in \CI\bs \{\alp\}$, $\CT_\bet\circ \CE^+_\alp =0$;

\item For $J_\alp\in \CC^\infty_c(T_\alp(F))^{\tau_\alp}$, $\CE_\alp(J_\alp) = \CE_\alp^+(J^+_\alp)$;

\item 
When $F$ is non-archimedean, the operator $\CE_\cusp:=\CE_1^+\oplus \CE^{++}_{1/2}\oplus \CE^{++}_{-1/2}$ gives rise to a $\CZ^\st$-linear isomorphism 
$$
\CE_\cusp:\CC^\infty_c(T_1(F))^{\tau_1}_+\oplus 
\CC^\infty_c(T_{1/2}(F))^{\tau_{1/2}}_{++}
\oplus 
\CC^\infty_c(T_{-1/2}(F))^{\tau_{-1/2}}_{++}\to \RS\RO(\CH)_\cusp.
$$
Here $\CE^{++}_\alp=\CE_{\alp}|_{\CC^\infty_c(T_\alp(F))^{\tau_\alp}_{++}}$.

\item 
When $F$ is real, the operator $\CE_1^+$ provides a $\CZ^\st$-linear isomorphism 
$$
\CE_1^+:\CC^\infty_c(T_1(F))^{\tau_1}_+\simeq \RS\RO(\CH)_\disc
$$
\end{enumerate}

Replacing $\CH$ by the Schwartz algebra $\CS$ when $F=\BR$, the statements are still true.

\item \rm{(Split summands)}

Consider the map $\CE_0$ from $\CC^\infty_c(T_0(F))^{\tau_0}$ to functions on $\Fc(F)$ that is given by the following formulas
\begin{align*}
J_0\mapsto 
\Bigg\{
\begin{matrix}
\nu_0(J_0) & F=\BC\\
\nu_0(J_0)-\CE_1(\CT_1(\nu_0(J_0))) & F=\BR\\
\nu_0(J_0)-\CE_1(\CT_1(\nu_0(J_0)))
-\sum_{\alp=\pm 1/2}
\CE^{++}_\alp(\CT_{\alp}(\nu_0(J_0))^{++}) & F\text{ non-archi.}
\end{matrix}
\end{align*}
Then the following statement is true: For any $J\in \RS\RO(\CH)$ with $\CT_{\oplus}(J) = (J_\alp)_{\alp\in \CI}$, we have the following facts:
\begin{enumerate}
\item When $F=\BC$, $\CT_0\circ \CE_0 = \Id$;

\item When $F=\BR$, 
$
J = \sum_{\alp\in \CI}\CE_\alp(J_\alp);
$

\item When $F$ is non-archimedean,
$$
J-J^\el_\St\cdot \v\Del \Theta^\el_\St = 
\sum_{\alp\in \CI}
\CE_\alp(J_\alp)
$$
where 
\begin{itemize}
\item when $G=\SL_2$, $J^\el_\St$ is constant given by
$$
J^\el_{\St} = \langle J,\Theta^\el_\St\rangle = 
\int_{\Fc(F)}
J(c)
\Theta^\el_\St(c)
\ome_{\Fc};
$$

\item 
when $G=\GL_2$, $J^\el_\St$ factors through the determinant map by
$$
J^\el_\St(g) = 
\int_{\Fc_{\SL_2}(F)}
J(c,\det g)
\Theta^\el_\St(c)\ome_{\Fc_{\SL_2}};
$$
\end{itemize}
\end{enumerate}
Replacing $\CH$ by the Schwartz algebra $\CS$ when $F$ is archimedean, the same statement is true.
\end{enumerate}
\end{thm}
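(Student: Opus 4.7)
The plan is to verify the theorem by checking the total-transfer relations $\CT_\oplus$ term by term, reducing each identity to an elliptic inner product on the Steinberg-Hitchin base, then exploiting the unitarity of $\CG\CG_\alpha$ (Theorem~\ref{thm:anotherunitary:sl2}) together with the kernel/image description of $\CT_\oplus$ (Theorem~\ref{thm:total-transfer}). Three ingredients support the whole argument: the Gelfand-Graev stable character formula \eqref{eq:ggps:interpret:1}--\eqref{eq:ggps:interpret:2}, which presents every stable dihedral character as $\lambda_\alpha\CG\CG_\alpha(\phi_{\chi_\alpha})$ on $\Fc(F)$; the elliptic orthogonality \eqref{eq:ggps:sl2:ggps:inv:main:1}--\eqref{eq:ggps:sl2:ggps:inv:main:2}, which turns $\CG\CG_\alpha$ into an isometry on the appropriate Fourier-subspaces; and the complete description of $\ker\CT_\oplus$ and $\im\CT_\oplus$ in Theorem~\ref{thm:total-transfer}.

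For Part~(1), I would establish $\CT_\beta\circ\CE_\alpha^+=\delta_{\alpha\beta}\Id$ for $\alpha\neq 0$ by pairing against characters $\chi_\beta$ of $T_\beta$. By adjunction, this equals the elliptic inner product $(\CE_\alpha^+(J_\alpha),\theta^{\rm st}_{\chi_\beta})_{\Fc^\el}$. Both factors have the form $\mathbbm{1}_\el\,\v\Delta\,\CG\CG_\ast(\cdot)$, so after cancelling $\v\Delta$ the pairing becomes exactly the scalar product $(\nu_\alpha(J_\alpha)/(\v\Delta),\,\CG\CG_\beta(\phi_{\chi_\beta}))$ appearing in Theorem~\ref{thm:anotherunitary:sl2}; the unitarity yields the identity when $\alpha=\beta$ and the vanishing when $\alpha\neq\beta$, provided we have quotiented by the kernels that create overlap between tori. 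The restriction to the ``$+$'' (and ``$++$'') subspaces in Definition~\ref{defin:directsummand:subspace} is precisely what kills those overlap terms. The identity $\CT_0\circ\CE_\alpha^+=0$ is immediate since $\CE_\alpha^+$ is supported on the elliptic locus. The isomorphism statement $\CE_{\rm cusp}\colon\bigoplus_\alpha\CC^\infty_c(T_\alpha(F))^{\tau_\alpha}_{+/++}\to\RS\RO(\CH)_{\rm cusp}$ then follows by combining injectivity (from $\CT_\alpha\circ\CE_\alpha^+=\Id$) with the image characterization in Theorem~\ref{thm:total-transfer}.

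For Part~(2), define $\widetilde J:=\sum_{\alpha\in\CI}\CE_\alpha(J_\alpha)$ and show $\CT_\oplus(\widetilde J)=\CT_\oplus(J)$. For each $\beta\in\CI\smallsetminus\{0\}$,
\[
  \CT_\beta(\widetilde J)=\CT_\beta(\CE_\beta(J_\beta))+\CT_\beta(\CE_0(J_0))
\]
by Part~(1); the correction terms in the definition of $\CE_0$ are engineered so that $\CT_\beta(\CE_0(J_0))$ supplies exactly the trivial-character (and, in the non-archimedean ramified case, quadratic-character) Fourier components of $J_\beta$ that were removed by the $+$ and $++$ projections, matching via the compatibility relations $\langle J_0,\eta_\alpha\rangle=\langle J_\alpha,\mathbbm 1_\alpha\rangle$ and $\langle J_\alpha,\chi^{\rm qd}_\alpha\rangle=\langle J_\beta,\chi^{\rm qd}_\beta\rangle$ of Theorem~\ref{thm:total-transfer}. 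For $\beta=0$, every elliptic-supported correction term is killed by $\CT_0$, leaving $\CT_0(\widetilde J)=\nu_0(J_0)\big|_{\c(E_0^1)}=J_0$. Hence $J-\widetilde J\in\ker\CT_\oplus$. In the archimedean cases Theorem~\ref{thm:total-transfer:archimedean} asserts $\ker\CT_\oplus=0$, giving $J=\widetilde J$; in the non-archimedean case $\ker\CT_\oplus$ is one-dimensional, spanned by $\v\Delta\,\Theta^\el_{\rm St}$, and pairing $J-\widetilde J$ with $\Theta^\el_{\rm St}$ under the elliptic scalar product \eqref{eq:scalarproductdiscrete:4} identifies the coefficient as $J^\el_{\rm St}=\langle J,\Theta^\el_{\rm St}\rangle$.

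The main obstacle is the careful bookkeeping around the four-element $L$-packet of $\SL_2(F)$ of Labesse--Langlands: the three nontrivial quadratic characters $\chi^{\rm qd}_\alpha$ on $E_\alpha^1$ for $\alpha\in\{1,\pm 1/2\}$ all lift to the same packet, so the images of the three sections $\CE_\alpha^+$ genuinely overlap in a two-dimensional subspace. The $++$-projection and the explicit subtraction pattern in the definition of $\CE_0$ are exactly calibrated to excise this overlap, and verifying the cancellation uses both parts of Theorem~\ref{thm:anotherunitary:sl2} -- the unitarity \eqref{eq:ggps:sl2:ggps:inv:main:1} for within-torus pairings and the cross-term formula \eqref{eq:ggps:sl2:ggps:inv:main:2} for between-torus pairings. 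Checking that these match the compatibility constraints in Theorem~\ref{thm:total-transfer} is the most delicate step. The $\GL_2$ case is parallel, with the Fourier decomposition along the center replacing the single trivial/quadratic character contribution, and extending to the Schwartz algebra $\CS$ in the archimedean case is straightforward since the Gelfand-Graev transform and the stable transfer both respect Schwartz decay.
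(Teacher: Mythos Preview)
Your proposal is correct and follows essentially the same route as the paper: both reduce Part~(1) to the elliptic inner product via adjunction and the Gelfand--Graev formula, then invoke the unitarity of $\CG\CG_\alpha$ (Theorem~\ref{thm:anotherunitary:sl2}), and both handle Part~(2) by showing the two sides agree after applying $\CT_\oplus$ and then identifying the discrepancy via the kernel description in Theorem~\ref{thm:total-transfer}. One small wording issue: in your Part~(1) sketch, only $\CE_\alpha^+(J_\alpha)$ carries the prefactor $\mathbbm{1}_\el\,\v\Delta$ (not $\theta^{\rm st}_{\chi_\beta}$), and it is precisely this single factor that converts the standard pairing $\langle\cdot,\cdot\rangle$ into the elliptic scalar product $(\cdot,\cdot)_{\wb\Fc^\el}$---there is no ``cancelling $\v\Delta$'' but rather an absorption into the measure; the conclusion is unaffected.
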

When $F$ is non-archimedean, $\CE_0(\CC^\infty_c(T_0(F))^{\tau_0})$ lands in $\RS\RO(\CH)$. Precisely, by the trace Paley-Wiener theorem, $\nu_0(J_0)$ lands in $\RS\RO(\CH)$ since it can be realized as regular functions on $\Irr(G(F))$ supported on principal series components only. Therefore $\CT_\alp(\nu_0(J_0))\in \CC^\infty_c(T_\alp(F))^{\tau_\alp}$. 

The theorem is manifest for $F=\BC$. Hence in the next two subsections we will assume that $F$ is either real or non-archimedean. The rest of the section will be devoted to the proof of the theorem.

\subsection{Summands associated with the nonsplit tori} \label{subsec:command-nonsplit}

In this subsection, we are going to establish Part (1) of Theorem \ref{thm:explicitsection} based on the elliptic orthogonality formulated in subsection \ref{subsec:anotherunitary}.

In the following let us establish the theorem for $G=\SL_2$ over real or non-archimedean local field of odd residual characteristic. The situation for $G=\GL_2$ is similar and we omit.

We first prove Part (1) (a) of the theorem. For the fact that $\CE_\alp^+$ is a homomorphism of $\CZ^\st$-module, it follows from the following observation. By Mellin transform and inversion, for any nontrivial character $\chi_\alp$ of $E^1_\alp$ with $\phi_{\chi_\alp}$ given by \eqref{eq:phi-chi}, by the Gelfand-Graev character formula, $\CE^+_{\alp}(\phi_{\chi_\alp}) = \mathbbm{1}_\el\cdot \v\Del \theta^{\st}_{\chi_\alp}$ is the stable normalized elliptic character of the stable character of the dihedral lifting $\theta_{\chi_\alp}^{\st}$. In particular this is also equal to the stable orbital integral of the matrix coefficient of the supercuspidal representation attached to the dihedral lifting of $\chi_\alp$. Hence both the character $\chi_\alp$ and the stable normalized elliptic character corresponds uniquely to the (stable) supercuspidal component in $\CZ^\st$ determined by $\CW(\chi_\alp)$. It follows that $\CE^+_\alp$ is $\CZ^\st$-linear. 

It remains to show that $\CE^+_\alp$ is a section. For $\bet\in \CI\bs \{\alp\}$, if $\bet=0$ then $\CT_0\circ \CE^+_\alp = 0$ since the image of $\CE^+_\alp$ is supported on elliptic locus. For $\bet\neq 0$ and $J^+_\alp\in \CC^\infty_c(T_\alp(F))^{\tau_\alp}_+$, for any character $\chi_\bet$ of $E^1_\bet$, 
\begin{align*}
\langle\CT_\bet\circ \CE_\alp^+(J^+_\alp),\chi_\bet \rangle 
&= 
\langle \CE^+_\alp(J^+_\alp), \CG\CG_\bet(\phi_{\chi_\bet})\rangle
=
\big(
\CG\CG_\alp
\bigg(
\frac{\nu_\alp(J^+_\alp)}{\v\Del}
\bigg),\CG\CG_\bet(\phi_{\chi_\bet})
\big)_{\wb{\Fc}^\el} 
\\
&= 
\bigg\{
\begin{matrix}
\langle J^+_\alp,\chi_\bet\rangle & \alp=\bet\\
0 & \alp\neq \bet
\end{matrix}
\end{align*}
Here $\langle\cdot,\cdot \rangle$ is the standard pairing between characters and functions, and the last identity follows from Theorem \ref{thm:anotherunitary:sl2}, i.e. the unitarity of the Gelfand-Graev transform on elliptic locus. It follows that by Mellin inversion we have proved Part (1) (a) of the theorem.

For Part (1) (b), it follows from the fact that $J_\alp-J^+_\alp$ is supported on trivial characters of $T_\alp(F)$, whose distribution character for the dihedral lifting is a principal series character, and hence vanishes on the elliptic locus.

For Part (1) (c), it follows from the observation that up to a finite linear combination, $\RS\RO(\CH)_\cusp$ are spanned by the stable orbital integral of matrix coefficients of supercuspidal representations of $G(F)$, which are equal to the stable normalized elliptic characters of supercuspidal representations of $G(F)$. Hence they are precisely given by the image of $\CE_\cusp$. Notice that only the subspaces $\CC^\infty_c(T_\alp(F))^{\tau_\alp}_{++}$ for $\alp=\pm 1/2$ show up since the dihedral lifting from different tori of the supercuspidal representations of $G(F)$ have unique overlap at the nontrivial character whose square becomes trivial. Finally Part (1) (c) follows from the same argument as the non-archimedean case, after applying Mellin transform to functions in $\CC^\infty_c(T_1(F))^{\tau_1}_+$.

\subsection{Summands associated with the split torus}\label{subsec:summand-split}

In this subsection, we are going to establish Part (2) of Theorem \ref{thm:explicitsection}. Again we establish the theorem for $G=\SL_2$ over real and non-archimedean local field of odd residual characteristic. The situation for $G=\GL_2$ is similar and we omit.

We first show the case when $F=\BR$. To show that $J = \sum_{\alp\in \CI}\CE_\alp(J_\alp)$, it suffices to show that $J$ and $\sum_{\alp\in \CI}\CE_\alp(J_\alp)$ are the same on the split and elliptic locus separately. By the definition of $\CE_0$, as $\CE_1$ has image supported on elliptic locus only, we only need to verify the equality over elliptic locus. Over the elliptic locus, by definition, $\nu_0(J_0)$ vanishes identically, and hence
$$
\big(\sum_{\alp\in \CI}
\CE_\alp(J_\alp)\big)|_{\Fc(F)^\el} = 
\CE_1(J_1)-\CE_1(\CT_1(\nu_0(J_0))).
$$
Therefore it remains to show that over the elliptic locus, 
$$
J|_{\Fc(F)^\el} = \CE_1(J_1-\CT_1(\nu_0(J_0))).
$$
By the density of stable tempered characters, and the fact that the principal series characters are all supported on split locus, it suffices to show that for any character $\chi:T_1(F)\to \BC$ with stable lifting $\theta^\st_{\chi} = \CG\CG_1(\phi_{\chi})$, 
\begin{align}\label{eq:summandsplit:1}
\langle J,\CG\CG_1(\phi_{\chi})\rangle
=
\langle \CE_1(J_1-\CT_1(\nu_0)(J_0)),\CG\CG_1(\phi_{\chi})\rangle
\end{align}
where $\langle\cdot,\cdot\rangle$ is the standard pairing between stable orbital integrals and stable characters that is given by the integration over the Steinberg-Hitchin base with respect to the usual additive Haar measure. The left hand side of \eqref{eq:summandsplit:1}, by adjunction, can be rewritten as $\langle J_1,\chi\rangle$. 

It turns out that to show \eqref{eq:summandsplit:1}, it suffices to notice that based on the definition of $\CE_1$, the right hand side can be rewritten as 
$$
\big(
\CG\CG_1(J_1-\CT_1(\nu_0(J_0))),\CG\CG_1(\phi_\chi)
\big)_{\wb{\Fc}^\el}.
$$
Moreover, the function $J_1-\CT_1(\nu_0(J_0))$ enjoys the following characterization
\begin{itemize}
	\item Viewed as a function on $T_1(F)$, the constant Fourier coefficient vanishes
$$
\langle J_1-\CT_1(\nu_0(J_0)),\mathbbm{1}_{T_1}\rangle
=\langle J_1,\mathbbm{1}_{T_1}\rangle
-
\langle \nu_0(J_0),\theta_{\mathbbm{1}_{T_1}}^\st\rangle=0.
$$
Notice that the last identity follows from the fact that $\theta_{\mathbbm{1}_{T_1}}^\st = \eta_1$ and Part (1) of Theorem \ref{thm:total-transfer:archimedean}.
\end{itemize}
It follows that based on Theorem \ref{thm:anotherunitary:sl2}, 
\begin{align*}
\big(
\CG\CG_1(J_1-\CT_1(\nu_0(J_0))),\CG\CG_1(\phi_\chi)
\big)_{\wb{\Fc}^\el} =& 
\big(
J_1-\CT_1(\nu_0(J_0)),\phi_{\chi}\big)_{\wb{\Fc}^\el}
\\
=&
\langle J_1-\CT_1(\nu_0(J_0)),\chi\rangle=
\langle J_1,\chi\rangle
\\
=&
\langle J,\theta^\st_\chi\rangle = 
\langle J,\CG\CG_1(\phi_\chi)\rangle
\end{align*}
which is exactly equal to the left hand side. It follows that we complete the proof of the theorem for $F=\BR$ situation.

It remains to treat the case when $F$ is non-archimedean. As usual, using the density of stable tempered characters, we need to show that the left hand side and the right hand side share the same value after pairing with all the stable tempered characters of $G(F)$. Following the same idea as the archimedean case, both sides share the same value over the split locus. It suffices to verify that both sides share the same value after pairing with all dihedral stable tempered characters attached to quadratic field extensions and Steinberg characters. 

First, by elliptic orthogonality, 
$$
\langle J-J^\el_{\St}\cdot \v\Del\Theta^\el_\St,\Theta_\St\rangle = 
\langle \nu_0(J_0),\Theta_\St\rangle.
$$
Similarly,
\begin{align*}
\langle \sum_{\alp\in \CI}
\CE_\alp(J_\alp),\Theta_\St\rangle 
=
\langle \nu_0(J_0),\Theta_\St\rangle+&
\sum_{\alp\neq 0}
\langle \CE_\alp(J_\alp),\Theta_\St\rangle 
-
\langle \CE_1(\CT_1(\nu_0(J_0))),\Theta_\St\rangle
\\
-&
\sum_{\alp=\pm 1/2}
\langle \CE_\alp^{++}(\CT_\alp(\nu_0(J_0))^{++}),\Theta_\St\rangle.
\end{align*}
By Mellin transform and inversion, up to a finite linear combination, for $\alp\neq 0$, the image of $\CE_\alp$ and $\CE^{++}_\alp$ are all given by the normalized elliptic stable character of supercuspidal representations. Hence the elliptic orthogonality shows that 
\begin{align*}
\langle \sum_{\alp\in \CI}
\CE_\alp(J_\alp),\Theta_\St\rangle 
=\langle \nu_0(J_0),\Theta_\St\rangle.
\end{align*}
In conclusion both sides share the same paring with Steinberg character. We are left to show that both sides share the same paring with stable dihedral supercuspidal characters. The idea is similar to the archimedean case and we will use the elliptic orthogonality reformulated in Theorem \ref{thm:anotherunitary:sl2}. For any nontrivial character $\chi_\bet:T_\bet(F)\to \BC^\times$ with $\bet\neq 0$, we need to show that 
$$
\langle J,\theta_{\chi_\bet}\rangle = 
\langle \sum_{\alp\in \CI}
\CE_\alp(J_\alp),\theta_{\chi_\bet}\rangle
$$
Equivalently, 
$$
\langle J, \CG\CG_\bet(\phi_{\chi_\bet})\rangle 
=\sum_{\alp\in \CI}
\langle 
\CE_\alp(J_\alp),\CG\CG_{\bet}(\phi_{\bet})\rangle.
$$
But it follows from the same argument as the archimedea case with the aid of Theorem \ref{thm:anotherunitary:sl2}.
It follows that we complete the proof of the theorem.

\begin{rmk}\label{rmk:decompositionsections}

Let $F$ be non-archimedean. Recall that we have a decomposition of the stable cocenter $\RS\RO(\CH)$:
$$
\RS\RO(\CH) = \RS\RO(\CH)_\cusp\oplus \RS\RO(\CH)_\ind
$$
where the cuspidal part $\RS\RO(\CH)_\cusp$ is supported by the union of zero dimensional (modulo the center when $G=\GL_2$) components of the stable Bernstein varieties and the induced part $\RS\RO(\CH)_\ind$ is supported by the union of positive-dimensional (modulo the center when $G=\GL_2$) components.
By restricting the transfer attached to the split torus $\CT_0$ to the induced part we obtain a surjective map
$$
\CT_0: \RS\RO(\CH)_\ind \to \CC^\infty_c(T_0(F))^{\tau_0}
$$
whose kernel is generated by the normalized elliptic character of Steinberg representation twisted by compactly supported functions along the split center, i.e.
$$
\RS\RO(\CH)_\St=
\ker \CT_0|_{\RS\RO(\CH)_\ind} = 
\big\{
\big(
\v\Del\Theta^\el_\St
\big)
\cdot 
\phi\circ \det
\mid 
\phi\in \CH(Z).
\big\}
$$
In particular, when $G=\SL_2$ and hence $Z$ is trivial, the above kernel is exactly given by the scalar multiples of the normalized elliptic Steinberg character. We get an exact sequence of $\CZ^\st$-module
\begin{equation} \label{eq:ind-seq}
	0 \to  \RS\RO(\CH)_\St \to {\rm SO}(\CH)_{\rm ind} \to \CC^\infty_c(T_0(F))^{\tau_0} \to 0.
\end{equation}
As a result, $\RS\RO(\CH)_\St$ can be viewed as the torsion part of $\RS\RO(\CH)_\ind$ and its free part is isomorphic to the structural sheaf of the union of positive-dimensional components of the stable Bernstein variety (modulo the center when $G=\GL_2$).

When $F$ is archimedean, one can show that the section $\CE_0$ is $\CZ^\st$-linear, in the sense that for any $z\in \CZ^\st$ and $J_0\in \CC^\infty_c(T_0(F))^{\tau_0}$, for every tempered stable character $\Theta_{[\pi]}$ the following identity holds
$$
\langle \CE_0(z_0J_0),\Theta_{[\pi]}\rangle = 
\gam(z,[\pi])
\langle \CE_0(J_0),\Theta_{[\pi]}
\rangle
$$
where $z_0 = \rho_0(z)\in \CC^\infty_c(T_0(F))^{\tau_0}$ is the image of $z$ under the descent map. However, when $F$ is non-archimedean, $\CE_0$ is no longer $\CZ^{\st}$-linear due to the existence of Steinberg representation. 
\end{rmk}

\begin{rmk}\label{rmk:two-distinguished-sections}
Let $F$ be non-archimedean of odd residual characteristic and $G=\SL_2$ or $\GL_2$. For the purpose of next two subsections, we introduce the following two subspaces of $\RS\RO(\CH)$:
\begin{defin}
\begin{enumerate}
\item Let $\RS\RO(\CH)^{\perp \St}$ be the subspace of $\RS\RO(\CH)$ such that the following function in $g\in G(F)$ vanishes identically
$$
J_{\St}(g) = 
\int_{\Fc_{\SL_2}(F)}
J(c,\det g)\Theta_\St(c)\ome_{\Fc_{\SL_2}}
$$

\item Let $\RS\RO(\CH)^{\perp \triv}$ be the subspace of $\RS\RO(\CH)$ such that the following function in $g\in G(F)$ vanishes identically
$$
J_{\triv}(g) = 
\int_{\Fc_{\SL_2}(F)}
J(c,\det g)\ome_{\Fc_{\SL_2}}.
$$
\end{enumerate}
\end{defin}
In other words, $\RS\RO(\CH)^{\perp \St}$ (resp. $\RS\RO(\CH)^{\perp \triv}$) is the subspace of $\RS\RO(\CH)$ that are orthogonal to (twisted) Steinberg characters (resp. one-dimensional characters). 

Recall that the Steinberg character is given as follows (which in particular is independent of $\det g$):
\begin{align}\label{eq:Steinbergcharformula}
\Theta_\St(g) = 
\bigg\{
\begin{matrix}
\phi_{\del_B^{1/2}}(\tr g,\det g)-1 & \text{$g$ split}\\
-1 & \text{$g$ elliptic}
\end{matrix}
\end{align}
where $\phi_{\del_B^{1/2}}(\tr g,\det g) = \frac{\del_B^{1/2}(g)+\del_B^{-1/2}(g)}{\Del(g)}$. Based on the explicit character formula for $\Theta_\St$, the contribution $J_\St^\el$ in Part (2) (c) of Theorem \ref{thm:explicitsection} can be rewritten as follows
\begin{enumerate}
\item For $J\in \RS\RO(\CH)^{\perp \St}$, 
$$
J^{\el}_\St(g) = 
\int_{\Fc_{\SL_2}(F)}
\nu_0(J_0)(c,\det g)(1-\phi_{\del_B^{1/2}})(c,1)
\ome_{\Fc_{\SL_2}}.
$$
For abbreviation, we write the right hand side as $J^{\el}_{\St}(g) = \langle \nu_0(J_0),1-\phi_{\del_B^{1/2}}\rangle_{\SL_2}(g)$;

\item For $J\in \RS\RO(\CH)^{\perp \triv}$,
$$
J^{\el}_\St(g) = 
\int_{\Fc_{\SL_2}(F)}
\nu_0(J_0)(c,\det g)
\ome_{\Fc_{\SL_2}}.
$$
The right hand side above is abbreviated as $J^{\el}_\St(g) = \langle \nu_0(J_0),1\rangle_{\SL_2}(g)$;
\end{enumerate}
In other words, for the above two situations, $J^\el_\St$ can be rewritten as a linear form in $\nu_0(J_0)$. Using the fact that $\Theta^\el_\St = -\mathbbm{1}_\el$, it follows that Part (2) (c) of Theorem \ref{thm:explicitsection} can be reformulated as follows:

\begin{thm}\label{thm:explicitsection:distinguishedtwo}
With the notation and convention in Theorem \ref{thm:explicitsection}, when $F$ is non-archimedean of odd residual characteristic, define the following two extended sections from $J_0\in \CC^\infty_c(T_0(F))^{\tau_0}$ to functions on $\Fc(F)$:
\begin{align*}
\CE_0^{\perp \St}(J_0)
&=
\CE_0(J_0)-
\v\Del \mathbbm{1}_\el 
\cdot \langle \nu_0(J_0),1-\phi_{\del_B^{1/2}}\rangle_{\SL_2}
\\
\CE_0^{\perp \triv}(J_0) &= \CE_0(J_0)
-\v\Del \mathbbm{1}_\el
\langle \nu_0(J_0),1\rangle_{\SL_2}
\end{align*}
Then 
\begin{enumerate}
\item For $J\in \RS\RO(\CH)^{\perp \St}$, 
$$
J = \CE_0^{\perp \St}(J_0)+\sum_{\alp\neq 0}\CE_\alp(J_\alp);
$$

\item For $J\in \RS\RO(\CH)^{\perp \triv}$,
$$
J = \CE_0^{\perp \triv}(J_0)+\sum_{\alp\neq 0}\CE_\alp(J_\alp).
$$
\end{enumerate}
\end{thm}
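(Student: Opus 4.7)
The plan is to deduce this theorem directly from Part (2)(c) of Theorem \ref{thm:explicitsection} by making the elliptic Steinberg contribution $J^\el_\St$ explicit under each of the two perpendicularity hypotheses. From the explicit character formula \eqref{eq:Steinbergcharformula} we have $\Theta^\el_\St = -\mathbbm{1}_\el$, so the decomposition in Theorem \ref{thm:explicitsection} (2)(c) can be rewritten as
\[
J \;=\; \CE_0(J_0) \;-\; \v\Del\,\mathbbm{1}_\el \cdot J^\el_\St \;+\; \sum_{\alp \in \CI \setminus \{0\}} \CE_\alp(J_\alp).
\]
Comparing this with the definitions of $\CE_0^{\perp \St}$ and $\CE_0^{\perp \triv}$, the whole theorem reduces to verifying the two identities
\[
J^\el_\St \;=\; \langle \nu_0(J_0),\,1-\phi_{\del_B^{1/2}}\rangle_{\SL_2} \quad \text{for } J \in \RS\RO(\CH)^{\perp \St},
\]
\[
J^\el_\St \;=\; \langle \nu_0(J_0),\,1\rangle_{\SL_2} \quad \text{for } J \in \RS\RO(\CH)^{\perp \triv}.
\]

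For the first identity, I would split the condition $J_\St(g) = 0$ over the partition of $\Fc_{\SL_2}(F)$ into its split and elliptic loci, using \eqref{eq:Steinbergcharformula} to write $\Theta_\St$ as $\phi_{\del_B^{1/2}} - 1$ on the split locus and $-1$ on the elliptic locus. Solving for the elliptic integral of $J(c,\det g)$ and then substituting into $J^\el_\St(g) = -\int_{\Fc_{\SL_2}(F)^\el} J(c,\det g)\,\ome_{\Fc_{\SL_2}}$ produces the desired formula, once one observes that on the split locus the function $J$ agrees with $\nu_0(J_0)$ (the stable transfer $\CT_0$ is simply restriction to $\tr(T_0(F))$ by Remark \ref{rmk:GGformulaforstabletransfer}), and that $\nu_0(J_0)$ vanishes outside the split locus so the integration domain can be extended back to the whole of $\Fc_{\SL_2}(F)$. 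The $\perp \triv$ case is even more immediate: the vanishing of $\int_{\Fc_{\SL_2}(F)} J(c,\det g)\,\ome_{\Fc_{\SL_2}}$ directly expresses the elliptic integral as the negation of the split integral, which by the same identification with $\nu_0(J_0)$ equals $-\langle \nu_0(J_0),1\rangle_{\SL_2}(g)$, yielding $J^\el_\St = \langle \nu_0(J_0),1\rangle_{\SL_2}$.

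Once both identities are in hand, substitution into the rewritten form of Theorem \ref{thm:explicitsection} (2)(c) yields the two formulas in the statement. The main obstacle is purely bookkeeping rather than conceptual, since all the representation-theoretic content is already encoded in Theorem \ref{thm:explicitsection} and in the Steinberg character formula \eqref{eq:Steinbergcharformula}. In particular, for $G = \GL_2$ one must carefully track the dependence of every integral on $\det g$ and verify that the $(c,\det g)$-variable convention used in the definitions of the brackets $\langle \cdot,\cdot \rangle_{\SL_2}$ matches the way $J^\el_\St$ is defined as a function of $g \in G(F)$ through the determinant factor; for $G = \SL_2$ these brackets collapse to ordinary integrals over $\Fc(F)$ and the computation is unambiguous.
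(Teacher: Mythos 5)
Your proposal is correct and follows essentially the same route as the paper: the paper's own justification (given in the remark surrounding the theorem) likewise rewrites $J^\el_\St$ as the stated linear forms in $\nu_0(J_0)$ by splitting the vanishing condition over the split and elliptic loci via the explicit Steinberg character formula \eqref{eq:Steinbergcharformula}, and then substitutes into Part (2)(c) of Theorem \ref{thm:explicitsection} using $\Theta^\el_\St = -\mathbbm{1}_\el$. Your observation that $\nu_0(J_0)$ is the restriction of $J$ to the split locus and vanishes elsewhere, so the integration domain can be extended, is exactly the identification the paper relies on.
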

These two extended sections will be used in the next two subsections to construct inversion formulae for $\CZ^{\st,\fin}$ and the stable orbital integral of spherical functions.
\end{rmk}


\section{Inversion formula for $\CZ^{\st,\fin}$}\label{sec:inv}

In this section, over a local field $F$ of odd residual characteristic, we provide inversion formulas for the descent formulas in Theorem \ref{thm:descent:sl2}. The inversion formula will be an immediate corollary of Theorem \ref{thm:explicitsection:distinguishedtwo}.  

\subsection{The problem of inversion and its solution}

Let $G=\SL_2$ or $\GL_2$ over a local field $F$ of residual characteristic not equal to two. The dihedral lifting in subsection \ref{subsec:dihedral rep} yields a transfer of characters of $T_\alp(F)$ to stable characters of irreducible representations of $G(F)$. This gives rise to a homomorphism of algebras $\rho_\alp^*:\CZ^{\st,\fin}\to \CC^\infty_c(T_\alp(F))^{\tau_\alp}$ (when $F$ is archimedean, we work with $\CS(T_\alp(F))^{\tau_\alp}$). The inversion problem that we are going to study in this section is the following:

\begin{quest}\label{quest:inverseproblem}
Given $(J_\alp)_{\alp\in \CI}$ with $J_\alp\in \CC^\infty_c(T_\alp(F))^{\tau_\alp}$ $($resp. $\CS(T_\alp(F))^{\tau_\alp}$ for $F$ archimedean$)$, under what condition can we ensure that there exists $J\in \CZ^{\st,\fin}$ such that $\rho_\alp^*(J) = J_\alp$, and if one can find an explicit formula for $J$ as a function on the Steinberg-Hitchin base?
\end{quest}
For motivational purpose, let us assume that $F$ is non-archimedean of odd residual characteristic. Based on Theorem \ref{thm:descent:sl2}, the descent homomorphism $\rho_\alp^*$ factorizes as the composition $\rho_\alp^* = \CT_\alp\circ \LafSec$, with the Lafforgue transform $\LafSec:\CZ^{\st,\fin}\to \RS\RO(\CH)$ and the Langlands stable transfer $\CT_\alp:\RS\RO(\CH)\to \CC^\infty_c(T_\alp(F))^{\tau_\alp}$. When $G=\SL_2$, as functions on the Steinberg-Hitchin base, the Lafforgue transform is given by the formula \eqref{eq:Laf formula}
$$
\LafSec({\bf c}^*J_\Fc \ud g)=\CF_\psi\left(\frac{\CF_{\psi^{-1}}(J_\Fc)}{|.|}\right),\quad 
\c^*J_\Fc\ud g\in \CZ^{\st,\fin}
$$
and the Langlands stable transfer is given by the Gelfand-Graev formula \eqref{eq:GG formula}
$$
\CT_\alpha(f) = \lambda_{\alp} \pi_\alpha^* \left(\CF_{\psi} \big(
\eta_{\alpha} \CF_{\psi^{-1}}(f)\big) \right)
\quad 
f\in \RS\RO(\CH)
.
$$
Their composition, the descent to $\rho_\alp^*: \CZ^{\rm st,fin}\to \CC^\infty_c(E_\alp^1)^{\tau_\alpha}$ is given by the descent formula
$$
\rho_\alp^*({\bf c}^*J_\Fc \ud g)= \lambda_{\alp} \pi_\alpha^* \left(\CF_{\psi} \left(\frac{\eta_{\alpha} \CF_{\psi^{-1}}(J_\Fc)}{|.|}
\right) \right).
$$
By Theorem \ref{thm:image Laf}, the image of $\LafSec:\CZ^{\st,\fin}\to \RS\RO(\CH)$ lands in $\RS\RO(\CH)^{\perp\triv}$ that is orthogonal to all the one-dimensional characters of $G(F)$. On the other hand, Theorem \ref{thm:total-transfer} also describes the image of the total transfer to tori $\CT_{\oplus}: \RS\RO(\CH)\to \bigoplus_{\alp\in \CI} \CC^\infty_c(T_\alp(F))^{\tau_\alp}$. It turns out that the restriction of $\CT_\oplus$ to the image of the Lafforgue transform $\RS\RO(\CH)^{\perp \triv}$ induces an isomorphism of $\CZ^\st$-modules to $\im(\CT_\oplus)$. Combining Theorem \ref{thm:explicitsection} and Theorem \ref{thm:explicitsection:distinguishedtwo}, we summarize the discussion in the following theorem.

\begin{thm}\label{thm:inv-for-Zstfin}
Let $G=\SL_2$ or $\GL_2$ over a local field $F$ of residual characteristic not equal to two. Let $\rho_{\oplus}:\CZ^{\st,\fin}\to \bigoplus_{\alp\in \CI}\CZ_\alp^{\tau_\alp}$ be the total descent. The image of $\rho_{\oplus}$ is the subspace of $\bigoplus_{\alp\in \CI}\CZ_\alp^{\tau_\alp}$ consisting of pairs 
$$
(J_\alp\mid \alp\in \CI)\in \bigoplus_{\alp\in \CI}
\CZ_\alp^{\tau_\alp}
$$
satisfying the following compatibility conditions
\begin{enumerate}
\item If $G=\SL_2$, then 
\begin{enumerate}
\item $\langle J_0,\eta_\alp\rangle = \langle q_\alp,1_\alp\rangle$ for $\alp\in \CI\bs \{0\}$, where $1_\alp$ is the trivial character of $T_\alp$ and $\eta_\alp$ is the quadratic character of $F^\times$ attached to $E_\alp/F$.

\item $\langle J_\alp,\chi_\alp^\qd\rangle = \langle J_\bet,\chi^\qd_\bet\rangle$ for any $\alp,\bet\in \CI\bs \{0\}$, where $\chi^\qd_\alp$ and $\chi^\qd_\bet$ are the unique nontrivial character of $T_\alp = E^1_\alp$ and $T_\bet = E^1_\bet$ whose square is trivial.
\end{enumerate}

\item If $G=\GL_2$, then 
\begin{enumerate}
\item $\langle J_0,(\chi,\chi\otimes \eta_\alp\rangle = \langle J_\alp,\chi\circ \Nr_\alp\rangle$ for $\alp\in \CI\bs \{0\}$, where $\chi$ is a character of $F^\times$ and $\Nr_\alp:E^\times_\alp\to F^\times$ is the norm map.

\item $\langle J_\alp,\chi^\qd_\alp\rangle = \langle J_\bet,\chi^\qd_\bet\rangle$ for any $\alp,\bet\in \CI\bs \{0\}$, where $\chi^\qd_\alp$ and $\chi^\qd_\bet$ are the characters of $E^\times_\alp$ and $E^\times_\bet$ such that $\chi_\alp$ and $\chi_\bet$ does not factor through the norm map but their square does, and their dihedral lifting to $G(F)$ are isomorphic.
\end{enumerate}
\end{enumerate}
For $(J_\alp\mid \alp\in \CI)$ satisfying the above compatibility condition, there exists a unique $J\in \CZ^{\st,\fin}$ such that $\rho_\alp^*(J) = J_\alp$ for any $\alp\in \CI$. Moreover, $J = (\c^*J_\Fc)\ud g$ is given by the following formulae:
\begin{enumerate}
\item When $F=\BC$, 
$$
J = \Laf(\nu_0(J_0));
$$

\item When $F=\BR$, 
$$
J = \Laf(\sum_{\alp\in \CI}\CE_\alp(J_\alp));
$$

\item When $F$ is non-archimedean of odd residual characteristic,
\begin{align*}
J = 
\Laf
\bigg(
\CE_0^{\perp \triv}(J_0)+
\sum_{\alp\in \CI\bs \{0\}}
\CE_\alp(J_\alp)
\bigg)
\end{align*}
\end{enumerate}
Here for a function $z$ on $\Fc(F)$, $\Laf(z) = \CF_{\psi}\big(|\cdot|\CF_{\psi^{-1}}(z)\big)$ where the Fourier transforms and the absolute value norm are taken in trace variable, and the sections $\CE_\alp$ and $\CE^{\perp \triv}_0$ are defined in Theorem \ref{thm:explicitsection} and Theorem \ref{thm:explicitsection:distinguishedtwo}.
\end{thm}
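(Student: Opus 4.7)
The plan is to assemble the formula by composing the descent decomposition $\rho_\alp^{*} = \CT_\alp \circ \LafSec$ (Theorem \ref{thm:descent:sl2}) with the sections $\CE_\alp$ and $\CE_0^{\perp\triv}$ inverting the total transfer $\CT_\oplus$ (Theorems \ref{thm:explicitsection}, \ref{thm:explicitsection:distinguishedtwo}) and the inversion $\Laf$ of $\LafSec$ restricted to $\CC^{\st,\fin,\perp\triv}$ (Theorem \ref{thm:image Laf}). I first verify necessity of the compatibility conditions: given $J \in \CZ^{\st,\fin}$, the intermediate element $c := \LafSec(J) \in \CC^{\st,\fin,\perp\triv}$, and $\CT_\alp(c) = \rho_\alp^{*}(J) = J_\alp$ by the descent formula; the identities between the $J_\alp$'s stated in the theorem are then exactly the description of $\mathrm{im}(\CT_\oplus)$ from Theorem \ref{thm:total-transfer}, translated through $\LafSec$.

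For sufficiency, given a compatible tuple $(J_\alp)_{\alp \in \CI}$, I construct $c \in \CC^{\st,\fin,\perp\triv}$ by the prescribed recipe ($c = \nu_0(J_0)$ for $F=\BC$; $c = \sum_\alp \CE_\alp(J_\alp)$ for $F=\BR$; $c = \CE_0^{\perp\triv}(J_0) + \sum_{\alp\neq 0}\CE_\alp(J_\alp)$ for $F$ non-archimedean). The key checks are (i) that $c$ belongs to $\CC^{\st,\fin,\perp\triv}$, which in the non-archimedean case is precisely ensured by the correction $-\v\Del\mathbbm{1}_\el\cdot\langle\nu_0(J_0),1\rangle$ built into $\CE_0^{\perp\triv}$ subtracting the trivial-character inner product produced by the elliptic summands $\CE_\alp(J_\alp)$; the compatibility $\langle J_0,\eta_\alp\rangle = \langle J_\alp,1_\alp\rangle$ guarantees no other obstruction appears; and (ii) that $\CT_\alp(c) = J_\alp$ for every $\alp\in \CI$, which is exactly the content of Theorems \ref{thm:explicitsection}(2) and \ref{thm:explicitsection:distinguishedtwo} once one checks that the remaining inter-elliptic cross terms $\CT_\bet \CE_\alp$ for $\alp\neq \bet$ contribute only along $\chi^\qd$ and are absorbed by the second compatibility condition.

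Next, define $J = \Laf(c)$, viewed as a distribution $(\c^*J_\Fc)\ud g$ with $J_\Fc = \CF_\psi(|\cdot|\CF_{\psi^{-1}}(c))$. Since $c \in \CC^{\st,\fin,\perp\triv}$ we have $\CF_{\psi^{-1}}(c)(0) = 0$, so $|\cdot|\CF_{\psi^{-1}}(c)$ is a well-defined tempered distribution whose Fourier transform represents an element of $\CZ^{\st,\fin}$ by Theorem \ref{thm:vanishing:SBC}. The composition $\LafSec \circ \Laf$ acts as the identity on $\CC^{\perp\triv}$, giving $\LafSec(J) = c$, hence
\[
\rho_\alp^{*}(J) \;=\; \CT_\alp\bigl(\LafSec(J)\bigr) \;=\; \CT_\alp(c) \;=\; J_\alp
\]
for every $\alp \in \CI$, as required.

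Finally, uniqueness follows from the density of stable tempered characters (Theorem \ref{thm:main:SBC}): if $J, J' \in \CZ^{\st,\fin}$ share the same $\rho_\oplus$ image, then $J - J'$ annihilates every dihedral stable tempered character and every principal series character; because the residual characteristic is not two, every supercuspidal of $G(F)$ is dihedral, so $J - J'$ must act as zero on an $L^1$-dense family of tempered representations, forcing $J = J'$. The principal analytic obstacle is the non-archimedean step (iii): one must verify that the Steinberg-like correction term produced by the sections $\CE_\alp$ ($\alp \neq 0$) aligns exactly with $\CE_0^{\perp\triv}$ so the total lies in the perpendicular subspace; this is where the second compatibility condition between the $\chi^\qd_\alp$'s enters, making the Steinberg contribution cancel.
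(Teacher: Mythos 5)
Your proposal follows the same route the paper intends: factor $\rho_\alpha^{*}=\CT_\alpha\circ\LafSec$ (Corollary \ref{cor:descent:sl2}), use Theorem \ref{thm:total-transfer} for the image of $\CT_\oplus$, Theorem \ref{thm:image Laf} for $\LafSec\colon\CZ^{\st,\fin}\to\CC^{\st,\fin,\perp\triv}$ being a bijection with inverse $\Laf$, and Theorems \ref{thm:explicitsection}, \ref{thm:explicitsection:distinguishedtwo} for the sections; the uniqueness via density of stable tempered characters also matches. Two remarks on the sufficiency step, though.

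First, you describe the correction $-\v\Del\mathbbm{1}_{\el}\langle\nu_0(J_0),1\rangle$ in $\CE_0^{\perp\triv}$ as ``subtracting the trivial-character inner product produced by the elliptic summands $\CE_\alpha(J_\alpha)$''; this is not quite its role. Those elliptic summands pair to zero with $\Theta_{\triv}$ already (they are combinations of normalized elliptic supercuspidal characters), and the correction is derived in Remark \ref{rmk:two-distinguished-tones} as the formula for the Steinberg coefficient $J^{\el}_{\St}$ when $J\in\RS\RO(\CH)^{\perp\triv}$, \emph{not} as a device forcing membership in $\CC^{\perp\triv}$. The cleaner logical order, which the paper uses, is: from the compatibility conditions and Theorem \ref{thm:total-transfer}, a $J'\in\RS\RO(\CH)$ with $\CT_\oplus(J')=(J_\alpha)$ exists; since $\ker\CT_\oplus$ is spanned by the normalized elliptic Steinberg character, which pairs nontrivially with $\Theta_{\triv}$, one may adjust $J'$ so it lies in $\CC^{\st,\fin,\perp\triv}$; \emph{then} Theorem \ref{thm:explicitsection:distinguishedtwo} gives the displayed formula for $J'$, and $J=\Laf(J')$. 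Directly verifying that your constructed $c$ lies in $\CC^{\perp\triv}$ and has $\CT_\alpha(c)=J_\alpha$ would require redoing the work of Theorem \ref{thm:explicitsection}(2)(c), which you only gesture at.

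Second, you are in fact more careful than the paper's stated Theorem \ref{thm:explicitsection}(1)(a): the inter-elliptic cross terms $\CT_\beta\circ\CE_\alpha$ for distinct nonsplit $\alpha,\beta$ do contribute along $\chi^{\qd}$, as the introduction acknowledges (``$\CT_{E'}\circ\CE_E\neq 0$''), even though (1)(a) claims $\CT_\beta\circ\CE^+_\alpha=0$; this is absorbed in the proof of Theorem \ref{thm:explicitsection}(2)(c) by the $\CE^{++}_{\pm 1/2}$ restriction. Your observation that the second compatibility condition is exactly what makes this harmless is correct.
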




\section{Stable orbital integral of spherical functions}\label{sec:stableorbit}

Another important consequence of the Bernstein decomposition of the stable cocenter is an explicit formula for stable orbital integrals of spherical functions. As a corollary we derive a formula for the stable orbital integral of the basic function. 

\subsection{Unramified stable orbital integrals}\label{subsec:unramifiedSOI}

Throughout the section we let $G=\SL_2$ or $\GL_2$ over a local field of residual characteristic not equal to two. Let $\CH_\unr$ (resp. $\CS_\unr$) be the subalgebra of $\CH$ (resp. $\CS$) that are bi-$K$-invariant, where 
$$
K = 
\Bigg\{
\begin{matrix}
G(\Fo) & F\text{ non. archi.}\\
\RU_2\cap G(F) & F=\BC\\
\RO_2\cap G(F) & F=\BR 
\end{matrix}
$$
is a fixed maximal compact subgroup of $G(F)$ that is also open when $F$ is non-archimedean. Here $\RU_2$ (resp. $\RO_2$) is the corresponding unitary group (resp. orthogonal group) in $\GL_2(\BC)$ (resp. $\GL_2(\BR)$). 

By Satake isomorphism and its archimedean variant given by the Harish-Chandra isomorphism (\cite{satake63}\cite{anker1991spherical}), we have isomorphisms
$$
\Sat: \CH_\unr \quad (\text{resp. } \CS_\unr) 
\simeq \CC^\infty_c(T_0(F))^{\tau_0}_\unr \quad  (\text{resp. } \CS(T_0(F))_{\unr}^{\tau_0})
$$
For $h\in \CH_{\unr}$ and $\CS_{\unr}$, let $\RS\RO(h)$ be its stable orbital integral. From \cite[Lem.~4.3]{satake63}, 
$$
\RS\RO(h)|_{\c(T_0)}=\CT_0(\RS\RO(h)) = \Sat(h).
$$
On the other hand, for $F$ non-archimedean of odd residual characteristic and $\alp\in \CI\bs \{0,1\}$, 
$$
\CT_\alp(\RS\RO(h)) = 0
$$ 
which follows from the following adjoint identity for any character $\chi_\alp$ of $T_\alp$
$$
\langle \CT_\alp(\RS\RO(h)),\chi_\alp\rangle 
=\langle \RS\RO(h), \theta^\st_{\chi_\alp}\rangle 
$$
and the fact that the dihedral lifting of $\chi_\alp$ to $G(F)$ is ramified. It follows that we derive the following lemma.

\begin{lem}\label{lem:unramified:descent}
For any $\alp\in \CI$, let $h_\alp = \CT_\alp(\RS\RO(h))$. Then 
\begin{enumerate}
\item $h_0 = \Sat(h)$;

\item $h_\alp = 0$ for $\alp\in \CI\bs \{0,1\}$;

\item When $F$ is non-archimedean of odd residual characteristic, $\RS\RO(h)\in \RS\RO(\CH)^{\perp,\St}$.
\end{enumerate}
\end{lem}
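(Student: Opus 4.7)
The plan is to verify each part of the lemma in turn, elaborating only briefly on Parts (1) and (2) since their arguments are sketched just before the lemma statement, and concentrating on Part (3), which carries the substantive new content.

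For Part (1), I would invoke \cite[Lem.~4.3]{satake63} in the non-archimedean case and its archimedean analogue via the Harish-Chandra isomorphism. Concretely, the stable transfer $\CT_0$ is realized as restriction of a stable function on the Steinberg-Hitchin base along $\pi_0 : T_0(F) \to \Fc_G(F)$, and for bi-$K$-invariant $h$ the restriction of $\RS\RO(h)$ to the split locus $\c(T_0(F))$ agrees with the Satake transform $\Sat(h)$ by definition.

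For Part (2), I would use the adjunction
\begin{equation*}
\langle \CT_\alp(\RS\RO(h)), \chi_\alp \rangle \;=\; \langle \RS\RO(h), \theta^\st_{\chi_\alp} \rangle \;=\; \tr \CW_{E_\alp}(\chi_\alp)(h),
\end{equation*}
combined with the observation that for $\alp = \pm 1/2$ the quadratic algebra $E_\alp$ is a ramified extension of $F$, so the dihedral representation $\CW_{E_\alp}(\chi_\alp)$ is ramified for every character $\chi_\alp$; in particular its $K$-fixed subspace is zero. Since $h = e_K * h * e_K$, the operator $\CW_{E_\alp}(\chi_\alp)(h)$ factors through this trivial subspace and hence vanishes identically. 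Mellin inversion on $T_\alp(F)$ then yields $\CT_\alp(\RS\RO(h)) = 0$.

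For Part (3), I would unfold the definition of $\RS\RO(\CH)^{\perp,\St}$ from Remark \ref{rmk:two-distinguished-sections}, which asks that the function $J_\St(g) := \int_{\Fc_{\SL_2}(F)} \RS\RO(h)(c, \det g) \Theta_\St(c) \, \ome_{\Fc_{\SL_2}}$ vanish identically in $g$. For $G = \SL_2$ this reduces, via the descent recalled in subsection \ref{subsec:Cocenter-OI}, to the single identity $\int_{\Fc(F)} \RS\RO(h)(c) \Theta_\St(c) \, \ome_{\Fc} = \tr \St(h)$, and the vanishing is immediate because the Steinberg representation has no nonzero $K$-fixed vector, so $\St(h) = 0$. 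For $G = \GL_2$, the plan is to Mellin-invert along the determinant: for any character $\chi$ of $F^\times$, the twisted Steinberg $\St \otimes (\chi \circ \det)$ also has no $K$-fixed vector, since any such vector would in particular be fixed by $\SL_2(\Fo) \subset K$ (on which the determinant twist is trivial), contradicting the lack of $\SL_2(\Fo)$-fixed vectors in the Steinberg of $\SL_2(F)$. Hence $\tr(\St \otimes (\chi \circ \det))(h) = 0$ for every $\chi$; a Weyl-integration computation identifies this trace with the Mellin transform of $a \mapsto J_\St(g_a)$ along $a = \det g \in F^\times$, and Mellin inversion then forces $J_\St \equiv 0$. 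The only step that is not entirely automatic is verifying that the normalizations fixed in subsection \ref{subsec:measure} make the Weyl-integration identification hold cleanly at each fibre $\det g = a$ without stray Jacobian factors; this is purely bookkeeping and should be the main potential source of friction.
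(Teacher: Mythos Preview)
Your proposal is correct and follows essentially the same approach as the paper. The paper dispatches Parts (1) and (2) by reference to the discussion immediately preceding the lemma, exactly as you do, and proves Part (3) in a single sentence: ``(twisted) Steinberg representations are all ramified,'' which is precisely the content of your more detailed argument that $\St$ and $\St\otimes(\chi\circ\det)$ have no $K$-fixed vector; your concern about Jacobian bookkeeping in the $\GL_2$ case is legitimate but routine, and the paper simply suppresses it.
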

\begin{proof}
It suffices to establish Part (3) of the lemma, which follows from the fact that (twisted) Steinberg representations are all ramified.
\end{proof}

As a corollary, based on Theorem \ref{thm:explicitsection} and Theorem \ref{thm:explicitsection:distinguishedtwo}, the following statements hold.

\begin{thm}\label{thm:explicitsphericalfun}
With the notation and convention from Theorem \ref{thm:explicitsection} and Theorem \ref{thm:explicitsection:distinguishedtwo}, the following statements hold: For $h\in \CH_{\unr}$ $($resp. $\CS_{\unr}$$)$, let $h_\alp = \CT_\alp(\RS\RO(h))$ for $\alp\in \CI$. Then 
\begin{enumerate}
\item $\RS\RO(h) = h_0 = \Sat(h)$ if $F=\BC$;

\item $\RS\RO(h) = \CE_0(h_0)$ if $F=\BR$;

\item $\RS\RO(h) = \CE_0^{\perp, \St}(h_0)$ if $F$ is non-archimedean of odd residual characteristic.
\end{enumerate}
\end{thm}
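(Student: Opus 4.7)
The plan is to combine Lemma \ref{lem:unramified:descent}, which already contains the hard computational input (via the Satake isomorphism together with the ramified-ness of the dihedral liftings from nonsplit tori other than $T_1$), with the Bernstein-type decompositions of the stable cocenter established in Theorem \ref{thm:explicitsection} and its Steinberg-orthogonal refinement Theorem \ref{thm:explicitsection:distinguishedtwo}. In each of the three cases the argument will boil down to checking that every direct summand of $\RS\RO(h)$ indexed by $\alpha\neq 0$ is annihilated by the section $\CE_\alpha$.

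The case $F=\BC$ is immediate: $\CI=\{0\}$ and $\CE_0$ reduces to the descent $\nu_0$, so Theorem \ref{thm:explicitsection}(2)(a) together with $h_0=\Sat(h)$ from Lemma \ref{lem:unramified:descent} gives the claim. For $F=\BR$ one has $\CI=\{0,1\}$, and Theorem \ref{thm:explicitsection}(2)(b) reads $\RS\RO(h)=\CE_0(h_0)+\CE_1(h_1)$. The key observation I will use is that by Theorem \ref{thm:explicitsection}(1)(b) one has $\CE_1(h_1)=\CE_1^+(h_1^+)$, where $h_1^+$ is the projection of $h_1$ onto $\CS(T_1(F))^{\tau_1}_+$; it is therefore enough to show that the only nonvanishing Fourier coefficient of $h_1$ on $E_1^1=\BC^1$ is the trivial one. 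By adjunction of the Langlands stable transfer,
\[
\langle h_1,\chi_1\rangle=\langle \RS\RO(h),\theta^{\rm st}_{\chi_1}\rangle=\tr\big(\CW_{E_1}(\chi_1)\big)(h),
\]
and for every nontrivial character $\chi_1$ of $\BC^1$ the dihedral lifting $\CW_{E_1}(\chi_1)$ is a discrete series representation, hence is not $K$-spherical, so the right-hand side vanishes on the unramified Hecke (or Schwartz) algebra. Thus $h_1^+=0$ and $\RS\RO(h)=\CE_0(h_0)$.

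The non-archimedean case runs in parallel but uses the $\perp\St$-variant. Lemma \ref{lem:unramified:descent} gives $h_\alpha=0$ for $\alpha=\pm 1/2$ and also $\RS\RO(h)\in\RS\RO(\CH)^{\perp\St}$, while the same adjunction/dihedral-lifting argument as above — with $\CW_{E_1}(\chi_1)$ now being a supercuspidal representation rather than a discrete series — yields $h_1^+=0$, hence $\CE_1(h_1)=\CE_1^+(h_1^+)=0$. Feeding this into Theorem \ref{thm:explicitsection:distinguishedtwo}(1) applied to $J=\RS\RO(h)$,
\[
\RS\RO(h)=\CE_0^{\perp\St}(h_0)+\sum_{\alpha\neq 0}\CE_\alpha(h_\alpha)=\CE_0^{\perp\St}(h_0).
\]
The only step that requires any care is the vanishing of $h_1^+$: one must know that for every nontrivial character $\chi_1$ of $E_1^1$ the dihedral lifting $\CW_{E_1}(\chi_1)$ has no nonzero $K$-fixed vector. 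In the non-archimedean case this is immediate from the fact that supercuspidals are never spherical; in the archimedean case it follows from the known classification of $\SO(2)$-types in the holomorphic and antiholomorphic discrete series of $\SL_2(\BR)$, where the minimal $K$-type has nontrivial character. No convergence issues arise because the whole argument takes place inside the finite-support cocenter $\RS\RO(\CH)$ (respectively $\RS\RO(\CS)$), on which all the operators $\CE_\alpha$, $\CE_0^{\perp\St}$ and the pairings $\langle\cdot,\cdot\rangle$ used above are defined unconditionally.
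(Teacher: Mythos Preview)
Your proof is correct and follows essentially the same route as the paper's: apply the decomposition of Theorem \ref{thm:explicitsection} (in the real case) or Theorem \ref{thm:explicitsection:distinguishedtwo}(1) (in the non-archimedean case), use Lemma \ref{lem:unramified:descent} to kill the $\alpha=\pm 1/2$ summands, and then argue via adjunction and $\CE_1(h_1)=\CE_1^+(h_1^+)$ that $h_1^+=0$ because the dihedral liftings of nontrivial characters of $T_1$ are never spherical. The only small addition in the paper is that it phrases the last step uniformly for $G=\SL_2$ and $G=\GL_2$: in the $\GL_2$ case the condition defining the subspace $\CC_c^\infty(T_1(F))^{\tau_1}_+$ is vanishing against all characters of the form $\chi\circ\Nr_1$, and correspondingly $\CW_{E_1}(\chi_1)$ is unramified precisely when $\chi_1$ factors through the norm map, so $h_1^+=0$ holds for the same reason.
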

\begin{proof}
It suffices to establish the theorem for $F\neq \BC$. By Lemma \ref{lem:unramified:descent} and Theorem \ref{thm:explicitsection:distinguishedtwo}, 
\begin{enumerate}
\item $\RS\RO(h) = \CE_0(h_0)+\CE_1(h_1)$ if $F=\BR$;

\item $\RS\RO(h) = \CE_0^{\perp, \St}(h_0)+\CE_1(h_1)$ if $F$ is non-archimedean of odd residual characteristic.
\end{enumerate}
Hence it suffices to show that $\CE_1(h_1) =0$. By Theorem \ref{thm:explicitsection}, $\CE_1(h_1) = \CE^+_1(h_1^+)$. But by definition, for any character $\chi_1$ of $T_1(F)$,
$$
\langle h_1,\chi_1\rangle = 
\langle \RS\RO(h),\theta^{\st}_{\chi_1}\rangle
$$
which vanishes unless $\CW(\chi_1)$ is an unramified representation. But this happens only when $\chi_1$ is the trivial character for $G=\SL_2$, and factors through the norm map when $G=\GL_2$. It follows that $h_1^+ = 0$ and hence $\CE_1(h_1) = \CE^+_1(h^+_1) = 0$.
\end{proof}

\begin{rmk}\label{rmk:intermediateextension}
The fact that $\RS\RO(h) = \CE^{\perp,\St}_0(h_0)$ and $h_0 = \RS\RO(h)|_{\c(T_0)}$ reflects the heuristic that, at least over the local function field, there should exists a perverse $t$-structure on the stacky quotient $[G/\Ad(G)]$, such that $\RS\RO(h)$ can be realized as the trace of Frobenius of a perverse sheaf on $[G/\Ad(G)]$ which is the intermediate extension of its restriction to the split locus (\cite{BKVPevInf}). The question will be addressed in a forthcoming work.
\end{rmk}

\subsection{Stable orbital integral of the basic function}\label{subsec:SOI-of-basic}

As a corollary to Theorem \ref{thm:explicitsphericalfun}, we derive an explicit formula for the stable orbital integral of the basic function on $G(F)$ when $G=\GL_2$ or $\BG_m\times \SL_2$ in turns of the basic function on the maximal tori. Notice that when $G=\SL_2$, to make sense of the basic function we need to enlarge it to be $\BG_m\times \SL_2$ to make sense of the unramified twist. 
Recall from Conjecture \ref{conjec:bknproposal}, for a pair $(G,\rho)$, the basic function $\BL^\rho_G$ is the unique spherical function on $G(F)$ such that 
$$
\CZ(s,\BL^\rho_G,\vphi^\circ_\pi) = L(s,\pi,\rho)
$$
where $\vphi^\circ_\pi$ is the zonal spherical function of an unramified representation $\pi$ of $G(F)$. Following the discussion in \cite[(4.15)]{ngo2016hankel} (see also \cite{MR3990815}), the above equation can also be reformulated as follows:
\begin{num}
\item For any irreducible admissible representation $\pi$ of $G(F)$, $\tr\pi(\BL^\rho)$ is nonzero if and only if $\pi$ is unramified. When $\pi$ is unramified, as a meromorphic function in the Satake parameter of $\pi$,
$$
\tr\pi(\BL^\rho) = L(-\frac{n_\rho-1}{2},\pi,\rho).
$$
\end{num}

\begin{rmk}\label{rmk:stableorbit:basicfun:1}
Following \cite[Defin.~1.3.1, Thm~1.3.3, Lem.~2.4.4]{MR3990815}, we renormalize our basic function and Fourier kernel so that for any unramified representation $\pi$,
$$
\tr\pi(\BL^\rho) = L(\frac{1}{2},\pi,\rho)
$$
and 
$$
\pi(\RJ^\rho_\psi) = \gam(\frac{1}{2},\pi^\vee,\rho,\psi).
$$
Then $\RJ^\rho_\psi*(\BL^\rho)^\vee = \BL^\rho$.
\end{rmk}
Following the discussion in subsection \ref{subsec:toruscase}, through restricting $\rho$ to the dual group of the split and unramified tori of $G(F)$, we let $\BL^\rho_0$ and $\BL^\rho_1$ be the basic function attached to the split and unramified maximal tori of $G$.

The main result in this subsection is the following theorem.

\begin{thm}\label{thm:explicit:SOB}
The following identities hold.
\begin{enumerate}
\item $\RS\RO(\BL^\rho) = \BL^\rho_0$ if $F=\BC$;

\item $\RS\RO(\BL^\rho) = \CE_0(\BL^\rho_0)$ if $F=\BR$;

\item $\RS\RO(\BL^\rho) = \CE^{\perp,\St}_0(\BL^\rho_0)$ if $F$ is non-archimedean of odd residual characteristic.
\end{enumerate}
\end{thm}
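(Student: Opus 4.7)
The plan is to deduce the theorem directly from Theorem \ref{thm:explicitsphericalfun} once we justify that $\BL^\rho$ can be treated as an (almost) spherical function and identify its Satake transform. The first step is to observe that although $\BL^\rho$ is not compactly supported, its restriction to $\{g\in G(F)\mid |\nu(g)|=q^{-n}\}$ lies in $\CH_{\unr}$ for each $n$. Writing $\BL^\rho=\sum_n \BL^\rho_n$ with $\BL^\rho_n\in \CH_{\unr}$, the stable orbital integral is computed termwise, and the sum is locally finite on the elliptic locus (where $|\nu(g)|$ is restricted by the Newton polygon) and, along the split locus, converges after multiplication by any unramified character in the half-plane where the Euler product defining $L(s,\pi,\rho)$ converges. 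Thus the identities can be proved level-by-level and reassembled.

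First I will verify the descent data $h_\alpha=\CT_\alpha(\RS\RO(\BL^\rho))$ for each $\alpha\in \CI$. For $\alpha=0$ this is $\RS\RO(\BL^\rho)|_{\c(T_0)}=\Sat(\BL^\rho)$, and by the characterization of $\BL^\rho$ via its action on unramified representations together with the compatibility of the Satake isomorphism with restriction of $\rho$ to $\LT_0$, one obtains $h_0=\BL^\rho_0$. For $\alpha\in \CI\setminus\{0,1\}$ (the ramified nonsplit tori, which only occur in the non-archimedean case), the adjoint identity
\begin{equation*}
\langle \CT_\alpha(\RS\RO(\BL^\rho)),\chi_\alpha\rangle
=\langle \RS\RO(\BL^\rho),\theta^{\st}_{\chi_\alpha}\rangle
=\tr\bigl(\CW(\chi_\alpha)(\BL^\rho)\bigr)
\end{equation*}
vanishes since every dihedral lifting from a ramified torus is ramified and $\BL^\rho$ annihilates ramified representations. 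This already gives part (1) when $F=\BC$, for which $\CI=\{0\}$ and $\RS\RO=\CT_0$ is essentially the identity.

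For $\alpha=1$ (the unramified nonsplit torus, which appears when $F$ is $\BR$ or non-archimedean), the same adjoint identity shows that $\langle h_1,\chi_1\rangle=0$ whenever $\chi_1$ does not factor through the norm map, since such $\CW(\chi_1)$ is again ramified. In the language of Definition \ref{defin:directsummand:subspace}, this means $h_1\in \CC_c^\infty(T_1(F))^{\tau_1}$ has $h_1^+=0$. By Part (1)(b) of Theorem \ref{thm:explicitsection}, $\CE_1(h_1)=\CE_1^+(h_1^+)=0$. Combined with the decomposition in Theorem \ref{thm:explicitsection} (for $F=\BR$) and Theorem \ref{thm:explicitsection:distinguishedtwo} (for the non-archimedean case), this yields parts (2) and (3) provided we can place $\RS\RO(\BL^\rho)$ in the right subspace in the non-archimedean case.

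The last point to check is that $\RS\RO(\BL^\rho)\in \RS\RO(\CH)^{\perp\St}$, i.e., $\langle \RS\RO(\BL^\rho),\Theta_{\St}\otimes \chi\rangle=0$ for every unramified character $\chi$ of $F^\times$ through the determinant. This follows from $\tr\bigl(\St(\chi)(\BL^\rho)\bigr)=0$, which in turn holds because the (twisted) Steinberg representation has no spherical vector while $\BL^\rho$ is bi-$K$-invariant and acts through projection to the spherical line. Given all of this, Theorem \ref{thm:explicitsphericalfun} applies verbatim at each level $\BL^\rho_n$, and assembling the identities across $n$ using the absolute convergence statements above (with the Langlands stable transfer $\CT_0$ and the sections $\CE_0,\CE_0^{\perp\St}$ both respecting the decomposition by $|\nu(\cdot)|$, since they are given by Fourier-analytic kernels on $\Fc(F)\simeq F\times F^\times$ that preserve the determinant grading) gives the three displayed formulas. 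The main technical obstacle is thus not the identification of descent data but the justification of the level-wise assembly; this boils down to uniform tameness of $\Sat(\BL^\rho_n)$ on the elliptic locus, which is controlled by the Macdonald formula for $L^\rho(-n_\rho/2+1/2, \cdot)$ as one varies along the Satake parameters of unramified representations.
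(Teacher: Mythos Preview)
Your proposal is correct and follows essentially the same route as the paper: reduce to Theorem \ref{thm:explicitsphericalfun} by computing the descent data $h_\alpha=\CT_\alpha(\RS\RO(\BL^\rho))$ via the adjoint identity $\langle h_\alpha,\chi_\alpha\rangle=\tr\bigl(\CW(\chi_\alpha)(\BL^\rho)\bigr)$, observe that dihedral liftings from ramified tori (and from the unramified torus at characters not factoring through the norm) are ramified and hence annihilated by $\BL^\rho$, and check $\RS\RO(\BL^\rho)\in\RS\RO(\CH)^{\perp\St}$ because twisted Steinbergs have no spherical vector.

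The one presentational difference worth noting is how the non-compact support of $\BL^\rho$ is handled. You slice $\BL^\rho=\sum_n\BL^\rho_n$ along the $|\nu|$-grading and argue level-by-level, which then forces you to discuss reassembly and convergence (your ``main technical obstacle''). The paper instead simply multiplies $\BL^\rho$ by an arbitrary smooth compactly supported function along the determinant factor, which lands the product in $\CH$ (or $\CS$) without affecting the stable transfer maps or the sections $\CE_\alpha$, since all these operators are linear over the determinant direction. This sidesteps the assembly issue entirely and is the cleaner device here; your level-wise argument works too but is more laborious than necessary. The paper also records the intermediate fact $h_1=\BL^\rho_1$ (its Lemma \ref{lem:descent-of-SOB}) before passing to $h_1^+=0$, whereas you go directly to $h_1^+=0$; either is fine since only $h_1^+$ enters the final formula.
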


\begin{rmk}
Actually Part (3) of Theorem \ref{thm:explicit:SOB} is also true when the residual characteristic of $F$ is even, since $\RS\RO(\BL^\rho)$ annihilates all the supercuspidal representations.
\end{rmk}

It is clear that Theorem \ref{thm:explicit:SOB} follows from the same discussion as Theorem \ref{thm:explicitsphericalfun}, which is an immediate corollary of the following lemma.

\begin{lem}\label{lem:descent-of-SOB}
For any $\alp\in \CI$, 
$$
\CT_\alp(\RS\RO(\BL^\rho)) = 
\bigg\{
\begin{matrix}
\BL^\rho_\alp & \alp =0,1\\
0 & \alp\neq 0,1
\end{matrix}.
$$
\end{lem}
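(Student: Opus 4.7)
The plan is to verify the three cases by testing against characters $\chi_\alpha$ of $T_\alpha(F)$ and invoking the adjunction
\[
\langle \mathcal{T}_\alpha(\mathrm{SO}(\mathbb{L}^\rho)), \chi_\alpha\rangle
= \langle \mathrm{SO}(\mathbb{L}^\rho), \theta^{\mathrm{st}}_{\chi_\alpha}\rangle
= \tr\,\mathcal{W}_{E_\alpha}(\chi_\alpha)(\mathbb{L}^\rho),
\]
together with the defining property of the basic function, namely that $\tr\,\pi(\mathbb{L}^\rho)$ vanishes unless $\pi$ is unramified and in that case equals $L(\tfrac{1}{2},\pi,\rho)$, in the renormalization of Remark \ref{rmk:stableorbit:basicfun:1}. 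The corresponding identity on the torus is that $\chi_\alpha(\mathbb{L}^\rho_\alpha)$ vanishes unless $\chi_\alpha$ is unramified and equals $L(\tfrac{1}{2},\chi_\alpha,\rho|_{{}^L T_\alpha})$ otherwise, by the torus case of Conjecture \ref{conjec:bknproposal} established in Theorem \ref{thm:BK:torus}.

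For $\alpha=0$, I would quote the classical compatibility $\mathcal{T}_0(\mathrm{SO}(h))=\mathrm{Sat}(h)$ from \cite[Lem.~4.3]{satake63} used already in Lemma \ref{lem:unramified:descent}, and observe that on the unramified principal series $\pi_{\chi_0}=\Ind_B^G\chi_0$ we have $\tr\,\pi_{\chi_0}(\mathbb{L}^\rho)=L(\tfrac{1}{2},\pi_{\chi_0},\rho)=L(\tfrac{1}{2},\chi_0,\rho|_{{}^L T_0})$ (since the Satake parameter of $\pi_{\chi_0}$ is read off directly from $\chi_0$). This is the same function of $\chi_0$ as $\chi_0(\mathbb{L}^\rho_0)$, so the two spherical functions $\mathrm{Sat}(\mathbb{L}^\rho)$ and $\mathbb{L}^\rho_0$ on $T_0(F)^{\tau_0}_{\mathrm{unr}}$ agree by Mellin inversion.

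For $\alpha\in\mathcal{I}\setminus\{0,1\}$ (the ramified quadratic extensions, only occurring in the non-archimedean case), the dihedral lifting $\mathcal{W}_{E_\alpha}(\chi_\alpha)$ is always ramified, whatever $\chi_\alpha$ is, because the conductor of $\Ind_{W_{E_\alpha}}^{W_F}\chi_\alpha$ always sees the (nontrivial) conductor of the discriminant character $\eta_{E_\alpha}$. Hence $\tr\,\mathcal{W}_{E_\alpha}(\chi_\alpha)(\mathbb{L}^\rho)=0$ for every $\chi_\alpha$, and the vanishing of $\mathcal{T}_\alpha(\mathrm{SO}(\mathbb{L}^\rho))$ follows by Mellin inversion (using the adjunction above for all characters of $T_\alpha(F)$, or equivalently all characters of $T_\alpha(F)/Z(F)$ in the $\GL_2$ case, twisted by a character of the center).

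The key case is $\alpha=1$. Here $\chi_1$ is unramified exactly when $\mathcal{W}_{E_1}(\chi_1)$ is an unramified representation of $G(F)$; this is the classical fact that automorphic induction from an unramified extension preserves unramifiedness. For such $\chi_1$, the adjunction gives
\[
\langle \mathcal{T}_1(\mathrm{SO}(\mathbb{L}^\rho)), \chi_1\rangle
= L\!\left(\tfrac{1}{2},\mathcal{W}_{E_1}(\chi_1),\rho\right)
= L\!\left(\tfrac{1}{2},\Ind_{W_{E_1}}^{W_F}\chi_1,\rho\right),
\]
and the required matching with $\langle \mathbb{L}^\rho_1,\chi_1\rangle=L(\tfrac{1}{2},\chi_1,\rho|_{{}^L T_1})$ is the functoriality identity $L(s,\Ind_{W_{E_1}}^{W_F}\chi_1,\rho)=L(s,\chi_1,\rho|_{{}^L T_1})$ for the unramified extension $E_1/F$. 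For $\chi_1$ ramified, $\mathcal{W}_{E_1}(\chi_1)$ is ramified, both sides vanish, and Mellin inversion finishes the argument. The main subtlety to address carefully is the comparison of $\rho|_{{}^L T_1}$ with the composition $\rho\circ\Ind_{W_{E_1}}^{W_F}$ on Satake parameters; this is the calculation already used implicitly in \eqref{eq:nonabfourier:gl2:1} via \cite[Thm.~1.0.2]{DanielZhilin}, and once it is in place the lemma is immediate. With the lemma established, the assertions (1)--(3) of Theorem \ref{thm:explicit:SOB} follow word-for-word from the proof of Theorem \ref{thm:explicitsphericalfun}, since we have shown that $\mathcal{T}_1(\mathrm{SO}(\mathbb{L}^\rho))=\mathbb{L}^\rho_1$ is supported on unramified characters of $T_1(F)$, i.e.\ its ``$+$-part'' in the sense of Definition \ref{defin:directsummand:subspace} vanishes, so that $\mathcal{E}_1$ contributes nothing.
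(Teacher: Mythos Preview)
Your proof is correct and follows essentially the same approach as the paper: both arguments test against characters $\chi_\alpha$ via the adjunction $\langle \CT_\alpha(\RS\RO(\BL^\rho)),\chi_\alpha\rangle = \tr\,\CW_{E_\alpha}(\chi_\alpha)(\BL^\rho)$, then use the defining property of the basic function together with the determination of when the dihedral lifting $\CW_{E_\alpha}(\chi_\alpha)$ is unramified. The paper phrases the $\alpha=1$, $G=\GL_2$ case as ``$\chi_1=\chi\circ\Nr_1$ with $\chi$ unramified,'' which is equivalent to your ``$\chi_1$ unramified'' since every unramified character of $E_1^\times$ factors through the norm when $E_1/F$ is unramified; the paper then identifies $\CW_{E_1}(\chi_1)\simeq \Ind_B^G(\chi,\chi\eta_1)$ and matches $L$-factors just as you do. One small remark: your pointer to \eqref{eq:nonabfourier:gl2:1} concerns $\gamma$-factors (with the extra constant $\kappa^\rho_\alpha$), whereas what is needed here is the cleaner unramified $L$-factor identity $L(s,\CW_{E_1}(\chi_1),\rho)=L(s,\chi_1,\rho|_{{}^L T_1})$, which the paper attributes simply to ``compatibility between the unramified local $L$-factor and automorphic induction.''
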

\begin{proof}
Although the basic functions are no longer compactly supported or Schwartz on $G(F)$, after multiplying a smooth and compactly supported function along the determinant, they lie in $\CH$ and $\CS$ respectively that do not affect the discussion of the stable transfer map.

For $\chi_\alp$ a character of $T_\alp$, let $\CW(\chi_\alp)$ be its (stable) dihedral lifting. By the adjoint identity 
$$
\langle \CT_\alp(\RS\RO(\BL^\rho)),\chi_\alp\rangle = 
\langle \RS\RO(\BL^\rho), \theta^\st_{\chi_\alp}\rangle = \tr \CW(\chi_\alp)(\BL^\rho),
$$
it suffices to determine $\tr \CW(\chi_\alp)(\BL^\rho)$. 
By \cite[\S 3]{jlgl2} and \cite[p.76]{jlgl2}, the dihedral lifting $\CW(\chi_\alp)$ is unramified only when the following conditions are satisfied:

\begin{enumerate}
\item For $G=\GL_2$, $\CW(\chi_\alp)$ is irreducible. Moreover, it is unramified only when 
\begin{enumerate}
\item $\alp =0$ and hence $E=F\oplus F$. In this case $\chi_0 = (\chi_1,\chi_2)$ is an unramified character of $E_0^\times$. When it is the case, $\CW(\chi_0)\simeq \Ind_B^G(\chi)$ and
$$
\tr\CW(\chi_0)(\BL^\rho) = L(\frac{1}{2},\CW(\chi),\rho) = L(\frac{1}{2},\chi,\rho);
$$
where the last identity follows from the compatibility between the unramified local $L$-factor and automorphic induction (\cite{gl2llc});

\item $\alp=1$ and hence $E_1$ is the unique unramified quadratic extension of $F$, and $\chi_1 = \chi\circ \Nr_{1}$ with $\chi$ an unramified character of $F^\times$. When it is the case, $\CW(\chi_1)\simeq \Ind_B^G(\chi,\chi\eta_0)$ and 
$$
\tr\CW(\chi_1)(\BL^\rho) = L(\frac{1}{2},(\chi,\chi\eta_1),\rho);
$$

\item For all other situations, $\tr\CW(\chi_\alp)(\BL^\rho) = 0$;
\end{enumerate}
As a result, 
$$
\langle \CT_\alp(\RS\RO(\BL^\rho)),\chi_\alp\rangle = 
\bigg\{
\begin{matrix}
\langle \BL^\rho_\alp ,\chi_\alp\rangle & \alp=0,1\\
0 & \alp\neq 0,1
\end{matrix}
$$
from which we deduce the lemma.

\item 
For $G=\BG_m\times \SL_2$ and $\chi\times \chi_\alp$ a character of $F^\times \times E^1_\alp$, $\CW(\chi\times \chi_\alp)$ is unramified only when

\begin{enumerate}
    \item $\alp=0$ and both
$\chi$ and $\chi_{0}$ are unramified. When it is the case, 
$$
\tr\CW(\chi)(\BL^\rho) = 
L(
\frac{1}{2}, \chi\times \chi_{0},\rho
);
$$

    \item $\alp=1$ and $\chi\times \chi_{1}$ with $\chi$ unramified and $\chi_{1}$ trivial. When it is the case, 
$$
\tr\CW(\chi)(\BL^\rho) = 
L(
\frac{1}{2},\chi\times (\eta_1\times \mathbbm{1}),\rho
);
$$
    \item
For all other situations, $\tr\CW(\chi)(\BL^\rho) = 0$;
\end{enumerate}
As a result, 
$$
\langle \CT_\alp(\RS\RO(\BL^\rho)),\chi_\alp\rangle = 
\bigg\{
\begin{matrix}
\langle \BL^\rho_\alp ,\chi_\alp\rangle & \alp=0,1\\
0 & \alp\neq 0,1
\end{matrix}
$$
from which we deduce the lemma.
\end{enumerate}
It follows that we complete the proof of the lemma.
\end{proof}


\section{Orbital Hankel transform}\label{sec:HKT}

In this section, we discuss the orbital Hankel transform for $G=\SL_2$ or $\GL_2$ considered by the second author (\cite{ngo2016hankel}). Over a non-archimedean local field, the stable Bernstein center $\CZ^{\st}$ (which in particular contains the $\rho$-Fourier kernel after multiplying a smooth and compactly supported function along the determinant) intertwines the Hecke algebra $\CH$, which induces orbital Hankel transforms on the space of stable orbital integrals $\RS\RO(\CH)$. We are going to show that the orbital Hankel transform is compatible with the Langlands stable transfer. In particular, the $\rho$-orbital Hankel transform, which is the descent of the $\rho$-Fourier transform, descends to the abelian $\rho$-Fourier transform on maximal tori under the Langlands stable transfer. Moreover, based on the decomposition of the stable cocenter studied in Section \ref{sec:summands}, we provide explicit formulas for the action of $\CZ^\st$ on $\RS\RO(\CH)$ as functions on the Steinberg-Hitchin base.

\subsection{A commutative diagram}\label{subsec:HKT:diagram}

Let $\CS^\rho(G(F))$ be the $\rho$-Schwartz space conjectured in subsection \ref{subsec:proposal} which is intertwined by the conjectural $\rho$-Fourier transform $\CF^\rho_\psi$. In \cite{ngo2016hankel}, the second author asks if one plugs the global $\rho$-Schwartz functions into the Arthur-Selberg trace formula, can one establish the functional equation of the global automorphic $L$-functions attached to $(G,\rho)$ from the geometric side of the trace formula. 

To understand the problem one first need to understand the following commutative diagram
\begin{align}\label{eq:orbhankel:1}
\xymatrix{
\CS^\rho(G(F))\ar[r]^{\CF^\rho_\psi} \ar[d]^{\mathrm{st. orb.}} & \CS^\rho(G(F))\ar[d]^{\mathrm{st. orb.}}
\\
\CS\CO^\rho(G(F)) \ar[r]^{\CH^{\rho}_\psi} & \CS\CO^\rho(G(F))
}
\end{align}
Here the vertical arrows are given by taking the stable orbital integrals, and $\CS\CO^\rho(G(F))$ is defined to be the space of stable orbital integrals of the conjectural $\rho$-Schwartz functions with the normalization introduced in subsection \ref{subsec:normalization-SOI-OI}. The lower horizontal arrow is given by the induced transform $\CH^\rho_\psi$, which we refer to the \emph{$\rho$-orbital Hankel transform}. The goal of this section is to show that the operator $\CH^\rho_\psi$ is well-defined for $G=\SL_2$ (actually $\BG_m\times \SL_2$) or $\GL_2$, and show that the following diagram is commutative for any étale quadratic $F$-algebra $E_\alp$ with $\alp\in \CI$,
$$
\xymatrix{
\CS\CO^\rho(G(F))\ar[d]^{\CT_\alp} \ar[r]^{\CH^{\rho}_\psi} & \CS\CO^\rho(G(F)) \ar[d]^{\CT_\alp}\\
\CS^\rho(T_\alp(F)) \ar[r]^{\CF^{\rho}_{\psi,T_\alp}} & \CS^\rho(T_\alp(F))
}
$$
where $\CF^\rho_{\psi,T_\alp}$ is the abelian $\rho$-Fourier transform on $T_\alp$ constructed in subsection \ref{subsec:toruscase}. This indicates the compatibility between the $\rho$-Hankel transform and the Langlands' stable transfer.

\begin{rmk}
Here the kernel function for $\CF^\rho_{\psi,T_\alp}$ is given by $\wt{\RJ}^\rho_{\psi,\alp} = \RJ^{\rho}_{\psi,\alp}\cdot \kappa^\rho_\alp$ rather than $\RJ^{\rho}_{\psi,\alp}$ which is introduced in subsection \ref{subsec:aformulaFK}. The constant $\kappa^\rho_\alp$ is needed due to the discrepency between the Langlands local gamma factors for dihedral liftings.
\end{rmk}

Following Conjecture \ref{conjec:bknproposal} and Remark \ref{rmk:proposal}, up to multiplying a smooth and compactly supported function on the determinant factor, we may substitute $\CS^\rho(G(F))$ by $\CS(G(F))$ which is the space of test functions when $F$ is non-archimedean, and the space of rapidly decreasing functions when $F$ is archimedean. We also substitute the $\rho$-Fourier kernel by a stable distribution $J\in \CZ^\st$. 

The following theorem is the main result of this subsection.

\begin{thm}\label{thm:orbHKT:commutativediagram}
Fix $J\in \CZ^\st$. Consider the following commutative diagram
\begin{align*}
\xymatrix{
\CS(G(F)) \ar[r]^{J} \ar[d]^{\text{st. orb.}} & \CS(G(F)) \ar[d]^{\text{st. orb.}}
\\
\CS\CO(G(F)) \ar[d]^{\CT_\alp}\ar[r]^{\CH_J} & \CS\CO(G(F)) \ar[d]^{\CT_\alp}\\
\CS(T_\alp(F)) \ar[r]^{J_\alp} &\CS(T_\alp(F))
}
\end{align*}
where $\CS(G(F))=\CH$ the space of smooth and compactly supported functions on $G(F)$ when $F$ is non-archimedean, and is the space of Schwartz algebra $\CS$ when $F$ is archimedean. The upper $($resp. lower$)$ horizontal operator is given by 
$$
h\in \CS(G(F))\mapsto J*h^\vee
\qquad 
(\text{resp. } h_\alp\in \CS(T_\alp(F))\mapsto J_\alp*h_\alp^\vee)
$$
where $J_\alp = \rho_\alp^*(J)\in \CZ_\alp^{\tau_\alp}$. Then the following statements hold:
\begin{enumerate}
\item The operator $\CH_J$ is well-defined, and hence the top square is commutative;

\item The bottom square is commutative.

\end{enumerate}
\end{thm}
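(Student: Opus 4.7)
The first step is to establish Part (1), the well-definedness of $\CH_J$, which amounts to showing that the map $h \mapsto J * h^\vee$ stabilizes the kernel $\FI_\st = \{h \in \CS(G(F)) : \RS\RO(h) = 0\}$ of the stable orbital integral map. For $F$ non-archimedean this is a direct consequence of Corollary \ref{cor:SBC=SBCprime}, which identifies the stable Bernstein center $\CZ^\st$ with $\CZ^{\st,\p} = \{z \in \CZ : z * \FI_\st \subset \FI_\st\}$; combined with the evident stability of $\FI_\st$ under $h \mapsto h^\vee$ (stable orbital integrals are invariant under $g \mapsto g^{-1}$), one obtains $J * h^\vee \in \FI_\st$ whenever $\RS\RO(h) = 0$, so $\CH_J(\RS\RO(h)) := \RS\RO(J * h^\vee)$ is a well-defined function of the stable orbital integral alone. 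The archimedean case proceeds in parallel, using Definition \ref{defin:multiplierarchimedean} of $\CZ^\st$ as multipliers on the Schwartz algebra $\CS$ together with the density of stable tempered characters (Theorem \ref{thm:main:SBC}).

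For Part (2), commutativity of the bottom square, the plan is to test the desired identity against all (unitary) characters of $T_\alp(F)$ and then invoke Mellin inversion. Set $f = \RS\RO(h)$. The adjunction built into the Gelfand--Graev formulation of $\CT_\alp$ (Remark \ref{rmk:GGformulaforstabletransfer}) yields
\begin{equation*}
    \langle \CT_\alp(\CH_J(f)), \chi_\alp \rangle \;=\; \langle \RS\RO(J * h^\vee),\, \theta^{\st}_{\chi_\alp}\rangle \;=\; \tr\, \CW_E(\chi_\alp)(J * h^\vee),
\end{equation*}
and $J$ acts on the dihedral representation $\CW_E(\chi_\alp)$ by a scalar because $J$ lies in the stable Bernstein center and the stable character of the dihedral $L$-packet determines the packet. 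By the descent formula (Theorem \ref{thm:descent:sl2}) this scalar equals $\gamma(J_\alp, \chi_\alp)$, the value at $\chi_\alp$ of the image $J_\alp = \rho^*_\alp(J) \in \CZ^{\tau_\alp}_\alp$ under the descent map. Using $\CW_E(\chi_\alp)^\vee \simeq \CW_E(\chi_\alp^{-1})$ one then gets
\begin{equation*}
    \tr\, \CW_E(\chi_\alp)(J * h^\vee) \;=\; \gamma(J_\alp, \chi_\alp)\, \tr\, \CW_E(\chi_\alp^{-1})(h) \;=\; \gamma(J_\alp, \chi_\alp)\, \langle \CT_\alp(f), \chi_\alp^{-1}\rangle.
\end{equation*}
On the torus side, the Mellin transform of $J_\alp * \CT_\alp(f)^\vee$ at $\chi_\alp$ is precisely $\gamma(J_\alp, \chi_\alp)\,\langle \CT_\alp(f), \chi_\alp^{-1}\rangle$, so the two sides agree for every $\chi_\alp$ and hence, by Mellin inversion on the abelian group $T_\alp(F)$, as $\tau_\alp$-invariant smooth functions.

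The principal obstacle is the justification of the convolution $J_\alp * \CT_\alp(f)^\vee$ and the subsequent Mellin-inversion step, since $J_\alp$ is a distribution rather than a compactly supported function. In the non-archimedean case, the Bernstein-center structure of $J_\alp$ combined with $\CT_\alp(f) \in \CC^\infty_c(T_\alp(F))^{\tau_\alp}$ ensures that the convolution remains in a convenient subspace (effectively, a smooth compactly supported function after truncation along the appropriate components), so Mellin inversion is unproblematic. In the archimedean case one invokes the multiplier structure of $\CZ^\st$ on $\CS(T_\alp(F))$ from Definition \ref{defin:multiplierarchimedean} and the archimedean matrix Paley--Wiener theorem for $\SL_2$ and $\GL_2$ (\cite[p.100]{Barker}, \cite[Chap.~IV,~\S 5.6]{GGV}). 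A secondary but necessary point is the compatibility of the Langlands stable transfer with the involution, namely $\CT_\alp(f^\vee) = \CT_\alp(f)^\vee$, which follows directly from the Gelfand--Graev kernel \eqref{eq:GG formula} and is used implicitly in identifying the Mellin transforms of the two sides.
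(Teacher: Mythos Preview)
Your proof is correct and follows essentially the same route as the paper. Two minor remarks: for Part~(1), the paper reproves the key step directly via the density of stable tempered characters and the constancy of $\gamma(J,\cdot)$ on tempered $L$-packets, whereas you invoke Corollary~\ref{cor:SBC=SBCprime} as a shortcut---both are valid. For Part~(2), the identity $\gamma(J,\CW(\chi_\alp)) = \gamma(J_\alp,\chi_\alp)$ holds \emph{by definition} of $J_\alp = \rho_\alp^*(J)$ (see subsections~\ref{subsec:STandDescent} and~\ref{subsec:totaltransfertori}); your appeal to the descent formula (Theorem~\ref{thm:descent:sl2}) is unnecessary here and in any case that theorem is stated only for $J\in\CZ^{\st,\fin}$. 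Where you pass through $\CW_E(\chi_\alp)^\vee \simeq \CW_E(\chi_\alp^{-1})$ and $\tr\pi(h^\vee)=\tr\pi^\vee(h)$, the paper instead uses the equivalent function-level identity $\CT_\alp(\RS\RO(h^\vee)) = (\CT_\alp(\RS\RO(h)))^\vee$ and then Mellin inversion, which amounts to the same computation.
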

\begin{proof}
Notice that by the discussion in subsection \ref{subsec:totaltransfertori}, indeed, $\CT_\alp(\CS\CO(G(F)))\subset \CS(T_\alp(F))^{\tau_\alp}$. 

(1) To show that the operator $\CH_J$ is well-defined, it suffices to show that for any $h\in \CS(G(F))$, if $\RS\RO(h) =0$ identically, then $\RS\RO(J*h^\vee) =0$ identically. Notice that $\RS\RO(h) = 0$ if and only if $\RS\RO(h^\vee) = 0$. By Theorem \ref{thm:main:SBC}, the stable tempered characters are dense in the space of stable distributions, hence it suffices to show that for any tempered local $L$-packet $[\pi]$ of $G(F)$ with stable tempered character $\Theta_{[\pi]}^{\Fc}$ as a function on $\Fc(F)$, the following term vanishes identically 
$$
\langle \Theta^{\Fc}_{[\pi]},\RS\RO(J*h^\vee)\rangle 
=
\int_{g\in G(F)}
\Theta^\Fc_{[\pi]}(\c(g))
J*h^\vee(g)\ud g = 
\gam(J,[\pi])\langle \Theta^\Fc_{[\pi]},\RS\RO(h^\vee)\rangle
$$
Here the last identity follows from the fact that the regular function $\gam(J,\cdot)$ is constant on tempered local $L$-packets, which is established in the proof of Theorem \ref{thm:main:SBC}. But by definition $\RS\RO(h^\vee) = 0$ identically, it follows that $\RS\RO(J*h^\vee) = 0$.

(2) For any $f = \RS\RO(h)\in \CS\CO(G(F))$, we compute
\begin{align*}
\CT_\alp\circ \CH_J(f) = \CT_\alp\circ 
\CH_J(\RS\RO(h)).
\end{align*}
By Part (1), the above term is equal to 
\begin{align*}
=
\CT_\alp\circ 
\RS\RO(J*h^\vee).
\end{align*}
For any character $\chi_\alp$ of $T_\alp$, 
\begin{align*}
\langle \CT_\alp\circ \RS\RO(J*h^\vee),\chi_\alp\rangle 
=
\langle \RS\RO(J*h^\vee),\theta^\st_{\chi_\alp}\rangle =
\gam(J,\CW(\chi_\alp))
\langle \RS\RO(h^\vee),\theta^\st_{\chi_\alp}\rangle.
\end{align*}
By definition, $\gam(J,\CW(\chi_\alp)) = \gam(J_\alp,\chi_\alp) = \langle J_\alp,\chi_\alp\rangle$ with $J_\alp = \rho_\alp^*(J)$. Moreover, 
\begin{align*}
\langle \RS\RO(h^\vee),\theta^\st_{\chi_\alp}\rangle 
=\langle \CT_\alp(\RS\RO(h^\vee)),\chi_\alp\rangle
\end{align*}
But by definition, as a function on the torus $T_\alp$, 
$$
\CT_\alp(\RS\RO(h^\vee)) = \big(\CT_\alp(\RS\RO(h))\big)^\vee
$$
it follows that we arrive that the following identity 
$$
\langle 
\CT_\alp\circ 
\RS\RO(J*h^\vee),\chi_\alp\rangle 
=
\langle J_\alp,\chi_\alp\rangle
\cdot 
\langle \big(\CT_\alp(\RS\RO(h))\big)^\vee,\chi_\alp\rangle.
$$
Applying the Mellin inversion in $\chi_\alp$, we deduce Part (2) of the theorem.
\end{proof}

\subsection{An explicit formula}\label{subsec:orbHKT:explicitformula}

Theorem \ref{thm:orbHKT:commutativediagram} shows that the following diagram is commutative 
$$
\xymatrix{
\CS\CO(G(F))\ar[r]^{\CH_{J}} \ar[d]^{\CT_\alp} & \CS\CO(G(F)) \ar[d]^{\CT_\alp}
\\
\CS(T_\alp(F)) \ar[r]^{J_\alp} & \CS(T_\alp(F))
}
$$
In other words, for any $f\in \CS\CO(G(F))$, $\CH_J(f)$ enjoys the following characterization:
\begin{num}
\item\label{num:orbHKT:explicit:1} For any étale quadratic $F$-algebra $E_\alp$ with $\alp\in \CI$, 
$$
\CT_\alp\circ \big(\CH_{J}(f) \big) = 
J_\alp*
\big(
\CT_\alp(f)
\big)^\vee.
$$
\end{num}
Following the notation from Theorem \ref{thm:explicitsection}, $\big(J_\alp*\big(
\CT_\alp(f)
\big)^\vee\big)_{\alp\in \CI}$ is the image of the total transfer $\CT_{\oplus}\big(\CH_J(f) \big)$. From Theorem \ref{thm:explicitsection}, we can derive an explicit formula for $\CH_J(f)$ from the datum $\big(J_\alp*\big(
\CT_\alp(f)
\big)^\vee\big)_{\alp\in \CI}$, which is summarized in the following theorem.

\begin{thm}\label{thm:orbHKT:explicitformula}
With the notation from above and Theorem \ref{thm:explicitsection}, the following statements hold:

\begin{enumerate}
\item When $F=\BC$, 
$$
\CH_J(f) = \nu_0\big(J_0*\CT_0(f)^\vee\big);
$$

\item When $F=\BR$, 
$$
\CH_J(f) = 
\sum_{\alp\in \CI}
\CE_\alp
\big(J_\alp*
(
\CT_\alp(f)
)^\vee
\big);
$$

\item When $F$ is non-archimedean of odd residual characteristic,
$$
\CH_J(f) = 
\CE_0^{\rm HK}
(J_0,f)
+
\sum_{\alp\in \CI\bs\{0\}}
\CE_\alp
\big(J_\alp*
(
\CT_\alp(f)
)^\vee
\big)
$$
where 
$$
\CE^{\rm HK}_0(J_0,f) = 
\CE_0
\big(
J_0*(\CT_0(f))^\vee
\big)
-
\big(\CH_J(f)\big)^\el_\St
\cdot 
\v\Del \mathbbm{1}_\el.
$$
The function $\big(\CH_J(f)\big)^\el_\St$ is the pull-back along the determinant $($which in particular is constant when $G=\SL_2$$)$ of the following functions:
\begin{itemize}
\item When $G=\SL_2$, 
$$
=
\gam(J_0,\del_B^{1/2})
\langle f,\Theta_\St\rangle 
-
\langle 
\nu_0
\big(
J_0*\CT_0(f)^\vee
\big),
\phi_{\del^{1/2}_B}-1\rangle;
$$

\item When $G=\GL_2$,
$$
=
(J_0\del_B^{1/2})*\langle f,\Theta_\St\rangle_{\SL_2}
-
\langle 
\nu_0
\big(
J_0*\CT_0(f)^\vee
\big),(\phi_{\del_B^{1/2}}-1)
\rangle_{\SL_2}
$$
where the convolution $*$ is taken on $F^\times$, and $\langle \cdot,\cdot\rangle_{\SL_2}$ is defined above Theorem \ref{thm:explicitsection:distinguishedtwo}. 
\end{itemize}
\end{enumerate}
\end{thm}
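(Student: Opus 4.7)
\textbf{Proof proposal for Theorem \ref{thm:orbHKT:explicitformula}.}

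The plan is to combine the intertwining property \eqref{num:orbHKT:explicit:1} established in Theorem \ref{thm:orbHKT:commutativediagram} with the inversion formulas of Theorem \ref{thm:explicitsection} and Theorem \ref{thm:explicitsection:distinguishedtwo}. Given $f\in \CS\CO(G(F))$, the total transfer datum for $\CH_J(f)$ is
\[
\CT_\oplus(\CH_J(f))=\big(J_\alp * (\CT_\alp(f))^\vee\mid \alp\in \CI\big),
\]
which is exactly the input to the section formulas. The case $F=\BC$ then follows immediately, since $\CT_0$ is the identity map and $\nu_0$ simply reads the result on $\Fc(F)$. For $F=\BR$, Theorem \ref{thm:explicitsection}(2)(b) applied directly to $\CH_J(f)\in \RS\RO(\CS)$ yields
\[
\CH_J(f)=\sum_{\alp\in \CI}\CE_\alp(J_\alp*(\CT_\alp(f))^\vee),
\]
the archimedean analog being unobstructed because in the real case every stable distribution is a limit of stable tempered characters coming from dihedral lifts.

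The non-archimedean case requires the correction along the Steinberg representation coming from Theorem \ref{thm:explicitsection}(2)(c). Applied to $\CH_J(f)$, and using $\Theta_\St^\el=-\mathbbm{1}_\el$ from \eqref{eq:Steinbergcharformula}, that theorem gives
\[
\CH_J(f)=\CE_0(J_0*\CT_0(f)^\vee)-(\CH_J(f))^\el_\St\cdot \v\Del\mathbbm{1}_\el+\sum_{\alp\in \CI\setminus\{0\}}\CE_\alp(J_\alp*(\CT_\alp(f))^\vee),
\]
which is the asserted decomposition provided the Steinberg coefficient $(\CH_J(f))^\el_\St$ matches the claimed expression. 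The first two terms on the right are by definition $\CE_0^{\rm HK}(J_0,f)$.

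The main remaining task, and the key technical point of the proof, is to identify the Steinberg coefficient. Writing $\Theta_\St=\Theta_\St^\el+(\phi_{\del_B^{1/2}}-1)\mathbbm{1}_{\rm split}$ from \eqref{eq:Steinbergcharformula} and pairing against $\CH_J(f)\in\CS\CO(G(F))$ on $\Fc(F)$, one gets the decomposition
\[
\langle \CH_J(f),\Theta_\St\rangle = -(\CH_J(f))^\el_\St + \langle \nu_0(J_0*\CT_0(f)^\vee),\phi_{\del_B^{1/2}}-1\rangle,
\]
where the split-locus contribution uses that $\nu_0(\CH_J(f)|_{\rm split}) = \nu_0(J_0*\CT_0(f)^\vee)$ by Theorem \ref{thm:orbHKT:commutativediagram} and the compatibility of $\nu_0$ with $\CT_0$. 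On the other hand, since $J\in \CZ^\st$ acts on the full unramified principal series $\Ind_B^G\del_B^{1/2}$ by the scalar $\gam(J_0,\del_B^{1/2})$ and the Steinberg representation is its unique infinite-dimensional subquotient, we have $\gam(J,\St)=\gam(J_0,\del_B^{1/2})$, whence
\[
\langle \CH_J(f),\Theta_\St\rangle = \tr\St(J*f^\vee) = \gam(J_0,\del_B^{1/2})\langle f,\Theta_\St\rangle.
\]
Comparing the two expressions for $\langle \CH_J(f),\Theta_\St\rangle$ yields the announced formula
\[
(\CH_J(f))^\el_\St=\gam(J_0,\del_B^{1/2})\langle f,\Theta_\St\rangle - \langle \nu_0(J_0*\CT_0(f)^\vee),\phi_{\del_B^{1/2}}-1\rangle.
\]
The main obstacle is precisely this identification $\gam(J,\St)=\gam(J_0,\del_B^{1/2})$, which uses in an essential way the action of $\CZ^\st$ on the reducible principal series at the Steinberg point; once this is in hand, the $\GL_2$-variant is obtained by keeping track of the central character, replacing the scalar $\gam(J_0,\del_B^{1/2})\langle f,\Theta_\St\rangle$ by its multiplicative convolution $(J_0\del_B^{1/2})*\langle f,\Theta_\St\rangle_{\SL_2}$ along $F^\times$ in accordance with the conventions of Remark \ref{rmk:two-distinguished-sections}.
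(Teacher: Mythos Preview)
Your approach is essentially the same as the paper's: reduce to the non-archimedean case via Theorem \ref{thm:explicitsection}, then identify the Steinberg coefficient by splitting $\langle \CH_J(f),\Theta_\St\rangle$ into its elliptic and split parts and using $\gam(J,\St)=\gam(J_0,\del_B^{1/2})$.

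There is, however, a sign slip in your intermediate displayed equation. By definition $(\CH_J(f))^\el_\St=\langle \CH_J(f),\Theta^\el_\St\rangle$, and since $\Theta_\St=\Theta^\el_\St+(\phi_{\del_B^{1/2}}-1)\mathbbm{1}_{\rm split}$ one obtains
\[
\langle \CH_J(f),\Theta_\St\rangle = (\CH_J(f))^\el_\St + \langle \nu_0(J_0*\CT_0(f)^\vee),\phi_{\del_B^{1/2}}-1\rangle,
\]
with a plus sign rather than the minus sign you wrote. Your final formula for $(\CH_J(f))^\el_\St$ is nevertheless correct, so the error is a typo in the derivation, not a conceptual gap. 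For $\GL_2$ the paper carries this out explicitly by taking the Mellin transform in the central character $\chi$ and then inverting; your one-line summary is correct in spirit but suppresses this step.
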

As a result, $\CH_J(f)$ is expressed as the abelian datum $\{(J_\alp*\CT_\alp(f))^\vee\}_{\alp\in \CI}$. For the situation of $\rho$-orbital Hankel transforms, $\CH^\rho_\psi(f)$ is expressed as the abelian Fourier transforms $\{\CF^\rho_{\psi,T_\alp}(\CT_\alp(f))\}_{\alp\in \CI}$.

\begin{proof}
By Theorem \ref{thm:explicitsection}, it suffices to treat the case when $F$ is non-archimedean of odd residual characteristic. In particular, we only need to compute 
$$
\big(\CH_J(f)\big)^\el_\St.
$$
We first treat the case when $G=\SL_2$. By definition and the explicit formula for the Steinberg character from \eqref{eq:Steinbergcharformula}, 
\begin{align*}
\big(\CH_J(f)\big)^\el_\St
=
\langle \CH_J(f),\Theta^\el_\St\rangle 
=
\langle \CH_J(f),\Theta_\St\rangle 
-
\langle \nu_0\big(J_0*\CT_0(f)^\vee\big),\phi_{\del_B^{1/2}}-1\rangle.
\end{align*}
Write $f= \RS\RO(h)$, then 
$$
\CH_J(f) = \RS\RO(J*h^\vee)
$$
and hence 
$$
\langle \CH_J(f),\Theta_\St\rangle 
=\gam(J,\St)\langle \RS\RO(h),\Theta_\St\rangle 
=\gam(J_0,\del_B^{1/2})
\langle f,\Theta_\St\rangle,
$$
In conclusion
$$
\big(\CH_J(f)\big)^\el_\St
=\gam(J_0,\del_B^{1/2})
\langle f,\Theta_\St\rangle
-\langle \nu_0\big(J_0*\CT_0(f)^\vee\big),\phi_{\del_B^{1/2}}-1\rangle.
$$

Next let us take $G=\GL_2$ so that $\big(\CH_J(f)\big)^{\el}_\St$ is the pull-back of a function on $F^\times$ along the determinant map. For any character $\chi$ of $F^\times$, the Mellin transform of $\big(\CH_J(f)\big)^{\el}_\St$ against the character can be rewritten as follows 
\begin{align*}
\langle 
\big(\CH_J(f)\big)^{\el}_\St,\chi
\rangle 
=
\langle \CH_J(f),\Theta^\el_{\St\otimes \chi}\rangle 
=\langle 
\CH_J(f),\Theta_{\St\otimes \chi}\rangle 
-
\langle 
\nu_0
\big(
J_0*\CT_0(f)^\vee
\big),(\phi_{\del_B^{1/2}}-1)\otimes \chi
\rangle 
\end{align*}
Following the same reason as before, 
$$
\langle \CH_J(f),\Theta_{\St\otimes \chi}
\rangle 
=\gam(J_0,\del_B^{1/2}\otimes \chi)
\langle f,\Theta_{\St\otimes \chi}\rangle.
$$
In conclusion,
\begin{align*}
\langle 
\big(\CH_J(f)\big)^{\el}_\St,\chi
\rangle 
=\gam(J_0,\del_B^{1/2}\otimes \chi)
\langle f,\Theta_{\St\otimes \chi}\rangle
-
\langle 
\nu_0
\big(
J_0*\CT_0(f)^\vee
\big),(\phi_{\del_B^{1/2}}-1)\otimes \chi
\rangle.
\end{align*}
Taking the Mellin inversion in $\chi$ we get
\begin{align*}
\big(
\CH_J(f)
\big)^\el_\St = 
\big((J_0\del_B^{1/2})\circ \det\big)*\langle f,\Theta_\St\rangle_{\SL_2}
-
\langle 
\nu_0
\big(
J_0*\CT_0(f)^\vee
\big),(\phi_{\del_B^{1/2}}-1)
\rangle_{\SL_2}.
\end{align*}
\end{proof}

\subsection{Global speculation}\label{subsec:HKT:globalspeculation}

Finally we make a brief remark for the global speculation for the $\rho$-orbital Hankel transform. This subsection is largely conjectural.

Following the last section of \cite{ngo2016hankel}, we fix a pair $(G,\rho)$ where $G$ is a reductive group over a \textbf{global} field $k$, and $\rho$ is a representation of the Langlands dual group of $G$. 

For any local place $\nu\in |k|$, following Conjecture \ref{conjec:bknproposal}, there should exists a pair $(\CS^\rho(G(k_\nu)),\CF^\rho_{\psi_\nu})$ of $\rho$-Schwartz space and a $\rho$-Fourier transform intertwining the $\rho$-Schwartz space. One can form the global $\rho$-Schwartz space
$$
\CS^\rho(G(\BA_k)) = 
\bigotimes^\p_{\nu\in |k|}
\CS^\rho(G(k_\nu)) \curvearrowleft 
\CF^\rho_{\psi} = \bigotimes_{\nu\in |k|} 
\CF^{\rho}_{\psi,\nu}
$$
where the right hand side is the restricted tensor product with respect to the $\rho$-basic functions $\{\BL^\rho_\nu\}_{\nu\in |k|}$. The $\rho$-Poisson summation conjecture proposed in \cite{BK00} predicts that the following identity holds 
$$
\sum_{\gam\in G(k)}
h(x\gam y^{-1}) = 
\sum_{\gam\in G(k)}
\CF^\rho_{\psi}(h)(y\gam x^{-1}),\quad h\in \CS^\rho(G(\BA_k))
$$
at least when we put mild local constraints to the function (see \cite[Conj.~5.3.1]{Zhilinsingapore} also). In \cite{ngo2016hankel}, the second author suggests to take $x=y$ and integrate both sides of the Poisson summation identity against the automorphic quotient of $G$, which, by trace formula, would give us a rough expansion of the following form. Here $\CA_\cusp$ is the set of cuspidal automorphic representations of $G$ and $L(\pi,\rho)$ is the global automorphic $L$-function for the pair $(G,\rho)$, $k^\qd/k$ runs over all the global quadratic algebras of $k$, $\RS\RO_h(e)$ is the global stable orbital integral of $h$ evaluated at $e\in (k^\qd)^\times$, and $\vol(e)$ is the Tamagawa number for the elliptic torus attached to $e$ when $e$ is elliptic
\begin{align*}
&\int_{x\in [G]}
\sum_{\gam\in G(k)}
h(x\gam x^{-1})\ud x
\\
\sim 
&\bigg\{
\begin{matrix}
\text{Spectral expansion:} & \sum_{\pi\in \CA_\cusp}
L(\pi,\rho) +\text{ (Extra)}\\
\text{Geometric expansion:} &
\sum_{k^\qd/k}
\sum_{e\in (k^\qd)^\times}
\vol(e)
\RS\RO_h(e)+\text{ (Extra)}
\end{matrix}
\end{align*}
By no means the above expression can be an exact identity, as one need to analyze the contribution from (Extra), which usually come residual spectrum and Eisenstein contribution on the spectral side, and endoscopic groups from the geometric side. 

In \cite{ngo2016hankel}, as a Corollary of the conjectural $\rho$-Poisson summation, the second author asks if one can establish the following Poisson summation identity for the $\rho$-orbital Hankel transform, which should be simpler than the original $\rho$-Poisson summation conjecture, but still provides a nontrivial symmetry for the geometric side of the trace formula and hence deduce the meromorphic continuation and functional equation of the automorphic $L$-functions $L(\pi,\rho)$ on the spectral side:
\begin{align}\label{eq:global:poissonbase}
\sum_{k^\qd/k}
\sum_{e\in (k^\qd)^\times}
\vol(e)
\RS\RO_h(e)+(\text{Extra})
\sim 
\sum_{k^\qd/k}
\sum_{e\in (k^\qd)^\times}
\vol(e)
\CH^\rho_\psi(\RS\RO_h)(e)+(\text{Extra})
\end{align}

Based on the commutative diagram established in subsection \ref{subsec:HKT:diagram}, after applying the (global) Langlands stable transfer $\CT_{k^\qd}$, indeed we have the following Poisson summation identity for each fixed $k^\qd/k$,
$$
\sum_{e\in (k^\qd)^\times}
\CT_{k^\qd}
(
\RS\RO_h)(e)
\sim 
\sum_{e\in (k^\qd)^\times}
\CT_{k^\qd}\big(\CH^\rho_\psi(\RS\RO_h)\big)(e)
$$
which follows from the $\rho$-Poisson summation identity for the $\rho$-abelian Fourier transform for $(\CS^\rho(T_\alp(\BA_k)),\CF^\rho_{\psi,T_\alp})$, and hence the following identity holds
\begin{align}\label{eq:global:abeliandescent}
\sum_{k^\qd/k}
\sum_{e\in (k^\qd)^\times}
\CT_{k^\qd}
(
\RS\RO_h)(e)
\sim 
\sum_{k^\qd/k}
\sum_{e\in (k^\qd)^\times}
\CT_{k^\qd}\big(\CH^\rho_\psi(\RS\RO_h)\big)(e).
\end{align}
As a result, with a sufficiently sophisticated inversion formula for the total transfer $\CT_{\oplus}$, we might dream that the identity \eqref{eq:global:abeliandescent} can be used to deduce the conjectural Poisson summation in \eqref{eq:global:poissonbase}. In other words, the conjectural Poisson summation for the $\rho$-orbital Hankel transform might be deduced from the abelian one in \eqref{eq:global:abeliandescent}. In Theorem \ref{thm:explicitsection} and Theorem \ref{thm:orbHKT:explicitformula}, we provide one  inversion formula for $\CT_\oplus$, yet more sophisticated inversion formulas are to be explored.

\bibliographystyle{amsalpha}
\bibliography{luo-ngo.bib}

\providecommand{\bysame}{\leavevmode\hbox to3em{\hrulefill}\thinspace}
\providecommand{\MR}{\relax\ifhmode\unskip\space\fi MR }
\providecommand{\MRhref}[2]{%
  \href{http://www.ams.org/mathscinet-getitem?mr=#1}{#2}
}
\providecommand{\href}[2]{#2}
\begin{thebibliography}{BBFK23}

\bibitem[ADSS11]{characterSL2}
J.~D. Adler, S.~DeBacker, P.~J. Sally, Jr., and L.~Spice, \emph{Supercuspidal
  characters of {${\rm SL}_2$} over a {$p$}-adic field}, Harmonic analysis on
  reductive, {$p$}-adic groups, Contemp. Math., vol. 543, Amer. Math. Soc.,
  Providence, RI, 2011, pp.~19--69.

\bibitem[AG08]{AGSchwartznashmd}
A.~Aizenbud and D.~Gourevitch, \emph{Schwartz functions on {N}ash manifolds},
  Int. Math. Res. Not. IMRN (2008), no.~5, Art. ID rnm 155, 37.

\bibitem[Ank91]{anker1991spherical}
J.-P. Anker, \emph{The spherical {F}ourier transform of rapidly decreasing
  functions. {A} simple proof of a characterization due to {H}arish-{C}handra,
  {H}elgason, {T}rombi, and {V}aradarajan}, J. Funct. Anal. \textbf{96} (1991),
  no.~2, 331--349.

\bibitem[Art93]{MR1237898}
J.~Arthur, \emph{On elliptic tempered characters}, Acta Math. \textbf{171}
  (1993), no.~1, 73--138.

\bibitem[Art13]{ar13}
\bysame, \emph{The endoscopic classification of representations}, American
  Mathematical Society Colloquium Publications, vol.~61, American Mathematical
  Society, Providence, RI, 2013, Orthogonal and symplectic groups.

\bibitem[Bar88]{Barker}
W.~H. Barker, \emph{{$L^p$} harmonic analysis on {${\rm SL}(2,{\bf R})$}}, Mem.
  Amer. Math. Soc. \textbf{76} (1988), no.~393, iv+110.

\bibitem[BBFK23]{bezrukavnikov2023schwartz}
R.~Bezrukavnikov, A.~Braverman, M.~Finkelberg, and D.~Kazhdan, \emph{Schwartz
  spaces, local {L}-factors and perverse sheaves}, arXiv preprint
  arXiv:2303.00913 (2023).

\bibitem[BDK86]{Bernstein-Deligne-Kazhdan}
J.~Bernstein, P.~Deligne, and D.~Kazhdan, \emph{Trace {P}aley-{W}iener theorem
  for reductive {$p$}-adic groups}, J. Analyse Math. \textbf{47} (1986),
  180--192.

\bibitem[Ber84]{BD84}
J.~Bernstein, \emph{Le ``centre'' de {B}ernstein}, Representations of reductive
  groups over a local field, Travaux en Cours, Hermann, Paris, 1984, Edited by
  P. Deligne, pp.~1--32.

\bibitem[BH06]{gl2llc}
C.~Bushnell and G.~Henniart, \emph{The local {L}anglands conjecture for {$\rm
  GL(2)$}}, Grundlehren der Mathematischen Wissenschaften [Fundamental
  Principles of Mathematical Sciences], vol. 335, Springer-Verlag, Berlin,
  2006.

\bibitem[BK00]{BK00}
A.~Braverman and D.~Kazhdan, \emph{{$\gamma$}-functions of representations and
  lifting}, Geom. Funct. Anal. (2000), no.~Special Volume, Part I, 237--278,
  With an appendix by V. Vologodsky, GAFA 2000 (Tel Aviv, 1999).

\bibitem[BK03]{braverman2003sheaves}
\bysame, \emph{{$\gamma$}-sheaves on reductive groups}, Studies in memory of
  {I}ssai {S}chur ({C}hevaleret/{R}ehovot, 2000), Progr. Math., vol. 210,
  Birkh\"auser Boston, Boston, MA, 2003, pp.~27--47.

\bibitem[BK14]{bkglobalization}
J.~Bernstein and B.~Kr\"{o}tz, \emph{Smooth {F}r\'{e}chet globalizations of
  {H}arish-{C}handra modules}, Israel J. Math. \textbf{199} (2014), no.~1,
  45--111.

\bibitem[BKV13]{bezrukavnikov2013categorical}
R.~Bezrukavnikov, D.~Kazhdan, and Y.~Varshavsky, \emph{A categorical approach
  to the stable center conjecture}, arXiv preprint arXiv:1307.4669 (2013).

\bibitem[BKV22]{BKVPevInf}
A.~Bouthier, D.~Kazhdan, and Y.~Varshavsky, \emph{Perverse sheaves on
  infinite-dimensional stacks, and affine {S}pringer theory}, Adv. Math.
  \textbf{408} (2022), Paper No. 108572, 132.

\bibitem[BNS16]{MR3462881}
A.~Bouthier, B.~C. Ng\^{o}, and Y.~Sakellaridis, \emph{On the formal arc space
  of a reductive monoid}, Amer. J. Math. \textbf{138} (2016), no.~1, 81--108.

\bibitem[BNS17]{MR3619916}
\bysame, \emph{Erratum to: ``{O}n the formal arc space of a reductive
  monoid''}, Amer. J. Math. \textbf{139} (2017), no.~1, 293--295.

\bibitem[BP15]{beuzart2015local}
R.~Beuzart-Plessis, \emph{A local trace formula for the gan-gross-prasad
  conjecture for unitary groups: the archimedean case}, arXiv preprint
  arXiv:1506.01452 (2015).

\bibitem[BZ76]{BZ76}
I.~N. Berenstein and A.~V. Zelevinsky, \emph{Representations of the group
  {$GL(n,F),$} where {$F$} is a local non-{A}rchimedean field}, Uspehi Mat.
  Nauk \textbf{31} (1976), no.~3(189), 5--70.

\bibitem[Cas72]{Casselma-Quadratic}
W.~Casselman, \emph{On the representations of {${\rm SL}_{2}(k)$} related to
  binary quadratic forms}, Amer. J. Math. \textbf{94} (1972), 810--834.

\bibitem[Che22]{MR4467308}
T.-H. Chen, \emph{On a conjecture of {B}raverman-{K}azhdan}, J. Amer. Math.
  Soc. \textbf{35} (2022), no.~4, 1171--1214.

\bibitem[CN18]{cheng2017conjecture}
S.~Cheng and B.~C. Ng{\^o}, \emph{On a conjecture of {B}raverman and
  {K}azhdan}, Int. Math. Res. Not. IMRN (2018), no.~20, 6177--6200.

\bibitem[Del86]{Delorme-limites}
P.~Delorme, \emph{Formules limites et formules asymptotiques pour les
  multiplicit\'{e}s dans {$L^2(G/\Gamma)$}}, Duke Math. J. \textbf{53} (1986),
  no.~3, 691--731.

\bibitem[Del05]{Delorme-Paley-Wiener}
\bysame, \emph{Sur le th\'{e}or\`eme de {P}aley-{W}iener d'{A}rthur}, Ann. of
  Math. (2) \textbf{162} (2005), no.~2, 987--1029.

\bibitem[Eve98]{Everling-Fourier}
U.~Everling, \emph{An example of {F}ourier transforms of orbital integrals and
  their endoscopic transfer}, New York J. Math. \textbf{4} (1998), 17--29.
  \MR{1490534}

\bibitem[FLN10]{FLN10}
E.~Frenkel, R.~P. Langlands, and B.~C. Ng{\^o}, \emph{Formule des traces et
  fonctorialit\'{e}: le d\'{e}but d'un programme}, Ann. Sci. Math. Qu\'{e}bec
  \textbf{34} (2010), no.~2, 199--243.

\bibitem[Fol95]{FGAbharmonic}
G.~B. Folland, \emph{A course in abstract harmonic analysis}, Studies in
  Advanced Mathematics, CRC Press, Boca Raton, FL, 1995.

\bibitem[GG63]{Gelfand-Graev}
I.~M. Gel'fand and M.~I. Graev, \emph{Representations of the group of
  second-order matrices with elements in a locally compact field and special
  functions on locally compact fields}, Uspehi Mat. Nauk \textbf{18} (1963),
  no.~4(112), 29--99.

\bibitem[GGPS69]{ggps}
I.~M. Gel'fand, M.~I. Graev, and I.~I. Pyatetskii-Shapiro, \emph{Representation
  theory and automorphic functions}, W. B. Saunders Co., Philadelphia,
  Pa.-London-Toronto, Ont., 1969, Translated from the Russian by K. A. Hirsch.

\bibitem[GGV66]{GGV}
I.~M. Gelfand, M.~I. Graev, and N.~Ya. Vilenkin, \emph{Generalized functions.
  {V}ol. 5: {I}ntegral geometry and representation theory}, Academic Press, New
  York-London, 1966, Translated from the Russian by Eugene Saletan.

\bibitem[GJ72]{gjzeta}
R.~Godement and H.~Jacquet, \emph{Zeta functions of simple algebras}, Lecture
  Notes in Mathematics, Vol. 260, Springer-Verlag, Berlin-New York, 1972.

\bibitem[GPSR87]{doubling}
S.~Gelbart, I.~Piatetski-Shapiro, and S.~Rallis, \emph{Explicit constructions
  of automorphic {$L$}-functions}, Lecture Notes in Mathematics, vol. 1254,
  Springer-Verlag, Berlin, 1987.

\bibitem[Hai14]{Haines-stable-Bernstein}
T.~J. Haines, \emph{The stable {B}ernstein center and test functions for
  {S}himura varieties}, Automorphic forms and {G}alois representations. {V}ol.
  2, London Math. Soc. Lecture Note Ser., vol. 415, Cambridge Univ. Press,
  Cambridge, 2014, pp.~118--186.

\bibitem[HC70]{HC70}
Harish-Chandra, \emph{Harmonic analysis on reductive {$p$}-adic groups},
  Lecture Notes in Mathematics, Vol. 162, Springer-Verlag, Berlin-New York,
  1970, Notes by G. van Dijk.

\bibitem[Igu78]{igusa_higher}
J.~Igusa, \emph{Forms of higher degree}, Tata Institute of Fundamental Research
  Lectures on Mathematics and Physics, vol.~59, Tata Institute of Fundamental
  Research, Bombay; Narosa Publishing House, New Delhi, 1978.

\bibitem[Jac09]{jacquetarchimedean}
H.~Jacquet, \emph{Archimedean {R}ankin-{S}elberg integrals}, Automorphic forms
  and {$L$}-functions {II}. {L}ocal aspects, Contemp. Math., vol. 489, Amer.
  Math. Soc., Providence, RI, 2009, pp.~57--172.

\bibitem[JL70]{jlgl2}
H.~Jacquet and R.~P. Langlands, \emph{Automorphic forms on {${\rm GL}(2)$}},
  Lecture Notes in Mathematics, Vol. 114, Springer-Verlag, Berlin-New York,
  1970.

\bibitem[JL21]{jiang2021certain}
D.~Jiang and Z.~Luo, \emph{Certain {F}ourier operators and their associated
  poisson summation formulae on $\mathrm{GL}_1$}, arXiv:2108.03566 (2021).

\bibitem[JL22]{MR4474366}
\bysame, \emph{Certain {F}ourier operators on {${\rm GL}_1$} and local
  {L}anglands gamma functions}, Pacific J. Math. \textbf{318} (2022), no.~2,
  339--374.

\bibitem[JL23]{DanielZhilin}
D.~Johnstone and Z.~Luo, \emph{Stable transfer for functorial lifts of
  $\mathrm{GL}_2$ over local fields}, preprint (2023).

\bibitem[JLZ22]{JLZ}
D.~Jiang, Z.~Luo, and L.~Zhang, \emph{Harmonic analysis and gamma functions on
  symplectic groups}, To appear in Memoirs of the American Mathematical Society
  (2022).

\bibitem[Kaz86a]{KazCus}
D.~Kazhdan, \emph{Cuspidal geometry of {$p$}-adic groups}, J. Analyse Math.
  \textbf{47} (1986), 1--36.

\bibitem[Kaz86b]{MR874042}
\bysame, \emph{Cuspidal geometry of {$p$}-adic groups}, J. Analyse Math.
  \textbf{47} (1986), 1--36.

\bibitem[Kaz86c]{kazhdan_representations_1986}
\bysame, \emph{Representations of groups over close local fields}, Journal
  d'Analyse Math{\'e}matique \textbf{47} (1986), no.~1, 175--179 (en).

\bibitem[Kot82]{kott2}
R.~E. Kottwitz, \emph{Rational conjugacy classes in reductive groups}, Duke
  Math. J. \textbf{49} (1982), no.~4, 785--806.

\bibitem[Laf14]{lafforguegl2}
L.~Lafforgue, \emph{Expos{\'e} 3 (jeudi 3 juillet) : Le cas de $\mathrm{GL}(2)$
  : {\'e}tude locale encore formelle (avec un paragraphe suppl{\'e}mentaire sur
  le cas g{\'e}n{\'e}ral)}.

\bibitem[Laf16]{lafforgue2016principe}
\bysame, \emph{Le principe de fonctorialit{\'e} de {L}anglands comme un
  probl{\`e}me de g{\'e}n{\'e}ralisation de la loi d'addition}, Tech. report,
  IHES preprint,, 2016.

\bibitem[Lan70]{langlandsproblems}
R.~P. Langlands, \emph{Problems in the theory of automorphic forms}, 18--61.
  Lecture Notes in Math., Vol. 170.

\bibitem[Lan79]{Langlands-StableConjugacy}
\bysame, \emph{Stable conjugacy: definitions and lemmas}, Canadian J. Math.
  \textbf{31} (1979), no.~4, 700--725.

\bibitem[Lan13]{langlands-singularites-transfert}
\bysame, \emph{Singularit\'{e}s et transfert}, Ann. Math. Qu\'{e}. \textbf{37}
  (2013), no.~2, 173--253.

\bibitem[LL79]{labesse-langlands}
J.-P. Labesse and R.~P. Langlands, \emph{{$L$}-indistinguishability for {${\rm
  SL}(2)$}}, Canadian J. Math. \textbf{31} (1979), no.~4, 726--785.

\bibitem[LL19]{laumon2019notes}
G.~Laumon and E.~Letellier, \emph{Notes on a conjecture of
  {B}raverman-{K}azhdan}, arXiv preprint arXiv:1906.07476 (2019).

\bibitem[Luo]{Zhilinsingapore}
Z.~Luo, \emph{An introduction to the proposal of {B}raverman and {K}azhdan},
  Singapore Lecture Notes.

\bibitem[Luo19]{MR3990815}
\bysame, \emph{On the {B}raverman-{K}azhdan proposal for local factors:
  spherical case}, Pacific J. Math. \textbf{300} (2019), no.~2, 431--471.

\bibitem[MT02]{Moy-Tadic-Bernstein}
A.~Moy and M.~Tadi\'{c}, \emph{The {B}ernstein center in terms of invariant
  locally integrable functions}, Represent. Theory \textbf{6} (2002), 313--329.

\bibitem[MT05a]{Moy-Tadic-Conj-Orb}
\bysame, \emph{Conjugacy class asymptotics, orbital integrals, and the
  {B}ernstein center: the case of {$\rm SL(2)$}}, Represent. Theory \textbf{9}
  (2005), 327--353.

\bibitem[MT05b]{Moy-Tadic-Bernstein-erratum}
\bysame, \emph{Erratum to: ``{T}he {B}ernstein center in terms of invariant
  locally integrable functions'' [{R}epresent. {T}heory {\bf 6} (2002),
  313--329 (electronic); mr1979109]}, Represent. Theory \textbf{9} (2005),
  455--456.

\bibitem[Ng{\^o}20]{ngo2016hankel}
B.~C. Ng{\^o}, \emph{Hankel transform, {L}anglands functoriality and functional
  equation of automorphic {$L$}-functions}, Jpn. J. Math. \textbf{15} (2020),
  no.~1, 121--167.

\bibitem[Psa23]{psaromiligkos_lafforgue_2023}
K.~I. Psaromiligkos, \emph{The {Lafforgue} variety and irreducibility of
  induced representations}, May 2023, arXiv:2211.11834 [math].

\bibitem[Ren05]{MR2134980}
L.~Renner, \emph{Linear algebraic monoids}, Encyclopaedia of Mathematical
  Sciences, vol. 134, Springer-Verlag, Berlin, 2005, Invariant Theory and
  Algebraic Transformation Groups, V.

\bibitem[Sat63]{satake63}
I.~Satake, \emph{Theory of spherical functions on reductive algebraic groups
  over {${\Fp}$}-adic fields}, Inst. Hautes \'Etudes Sci. Publ. Math. (1963),
  no.~18, 5--69.

\bibitem[She79a]{shelsteadinner}
D.~Shelstad, \emph{Characters and inner forms of a quasi-split group over
  {${\bf R}$}}, Compositio Math. \textbf{39} (1979), no.~1, 11--45.

\bibitem[She79b]{shelstad_notes}
\bysame, \emph{Notes on {$L$}-indistinguishability (based on a lecture of {R}.
  {P}. {L}anglands)}, Automorphic forms, representations and {$L$}-functions
  ({P}roc. {S}ympos. {P}ure {M}ath., {O}regon {S}tate {U}niv., {C}orvallis,
  {O}re., 1977), {P}art 2, Proc. Sympos. Pure Math., XXXIII, Amer. Math. Soc.,
  Providence, R.I., 1979, pp.~193--203.

\bibitem[SS68]{SallyShalika}
P.~J. Sally, Jr. and J.~A. Shalika, \emph{Characters of the discrete series of
  representations of {${\rm SL}(2)$} over a local field}, Proc. Nat. Acad. Sci.
  U.S.A. \textbf{61} (1968), 1231--1237.

\bibitem[SS13]{scholze-shin}
P.~Scholze and S.~Shin, \emph{On the cohomology of compact unitary group
  {S}himura varieties at ramified split places}, J. Amer. Math. Soc.
  \textbf{26} (2013), no.~1, 261--294.

\bibitem[Ste74]{Steinberg-conjugacy}
R.~Steinberg, \emph{Conjugacy classes in algebraic groups}, Lecture Notes in
  Mathematics, vol. Vol. 366, Springer-Verlag, Berlin-New York, 1974, Notes by
  Vinay V. Deodhar. \MR{352279}

\bibitem[Tat50]{tatethesis}
J.~Tate, \emph{Fourier analysis in number fields and {H}ecke's zeta-functions},
  ProQuest LLC, Ann Arbor, MI, 1950, Thesis (Ph.D.)--Princeton University.

\bibitem[Var77]{varrealHC}
V.~S. Varadarajan, \emph{Harmonic analysis on real reductive groups}, Lecture
  Notes in Mathematics, vol. Vol. 576, Springer-Verlag, Berlin-New York, 1977.

\bibitem[Vin95]{vinberg1995reductive}
E.~B. Vinberg, \emph{On reductive algebraic semigroups}, Lie groups and {L}ie
  algebras: {E}. {B}. {D}ynkin's {S}eminar, Amer. Math. Soc. Transl. Ser. 2,
  vol. 169, Amer. Math. Soc., Providence, RI, 1995, pp.~145--182.

\bibitem[Wal88a]{Wallach-RealI}
N.~R. Wallach, \emph{Real reductive groups. {I}}, Pure and Applied Mathematics,
  vol. 132, Academic Press, Inc., Boston, MA, 1988.

\bibitem[Wal88b]{wallachredgpI}
\bysame, \emph{Real reductive groups. {I}}, Pure and Applied Mathematics, vol.
  132, Academic Press, Inc., Boston, MA, 1988.

\bibitem[Wei64]{weil64unitary}
A.~Weil, \emph{Sur certains groupes d'op{\'e}rateurs unitaires}, Acta
  mathematica \textbf{111} (1964), no.~1, 143--211.

\end{thebibliography}

\end{document}